\patchcmd{\subsection}{-.5em}{.5em}{}{}
\patchcmd{\subsubsection}{-.5em}{.5em}{}{}
\definecolor{DarkGreen}{RGB}{0, 164, 0}
\DeclareMathAlphabet{\mathbbm}{U}{bbm}{m}{n}
\newcommand{\executeiffilenewer}[3]{%
	\ifnum\pdfs
	
	trcmp{\pdffilemoddate{#1}}%
	{\pdffilemoddate{#2}}>0%
	{\immediate\write18{#3}}\fi%
}
\newcommand{%
	\executeiffilenewer{.svg}{.pdf}%
	{inkscape -z -D --file=.svg %
		--export-pdf=.pdf --export-latex}%
	\input{.tex}%
}[1]{%
	\executeiffilenewer{#1.svg}{#1.pdf}%
	{inkscape -z -D --file=#1.svg %
		--export-pdf=#1.pdf --export-latex}%
	\input{#1.tex}%
}
\newtheorem{introthm}{Theorem}
\newtheorem{thm}{Theorem}[section]
\newtheorem{lem}[thm]{Lemma}
\newtheorem{cor}[thm]{Corollary}
\newtheorem{prop}[thm]{Proposition}
\theoremstyle{definition}
\newtheorem{rmk}[thm]{Remark}
\newtheorem{defi}[thm]{Definition}
\theoremstyle{definition}
\newcommand{\thmstep}[1]
{
\begin{tikzpicture}[baseline=(char.base)]
\node(char)[draw,fill=white,
shape=rounded rectangle,
minimum width=1.3cm]
{#1};
\end{tikzpicture}
}
\newcommand{\thmsteprect}[1]
{
\begin{tikzpicture}[baseline=(char.base)]
\node(char)[draw,fill=white,
shape=rectangle,
minimum width=1.3cm]
{#1};
\end{tikzpicture}
}
\newcommand{\itemchdir}[1]{\item[\thmstep{$#1$}]}
\newcommand{\itemcase}[1]{\item[\thmsteprect{#1}]}
\newcommand{\overtype}[2]{\underset{}{\overset{#1}{#2}}}
\newcommand{\mytitle}{Birational properties of tangent to the identity germs without non-degenerate singular directions}
\newcommand{\myshorttitle}{Birational properties of germs without non-degenerate singular directions}
\title{\mytitle}
\author{Samuele Mongodi}
\address{Dipartimento di Matematica, Politecnico di Milano, Via Bonardi 9, I-20133, Milano, Italy}
\email{\href{mailto:samuele.mongodi@gmail.com}{samuele.mongodi@gmail.com}}
\author{Matteo Ruggiero}
\address{Université de Paris, Sorbonne Université, CNRS, Institut de Mathématiques de Jussieu-Paris Rive Gauche, F-75013 Paris, France
}
\email{\href{mailto:matteo.ruggiero@imj-prg.fr}{matteo.ruggiero@imj-prg.fr}}
\date{\today}
\newcommand{\refprop}[1]{\hyperref[prop:#1]{Proposition~\ref*{prop:#1}}}
\newcommand{\refthm}[1]{\hyperref[thm:#1]{Theorem~\ref*{thm:#1}}}
\newcommand{\refcor}[1]{\hyperref[cor:#1]{Corollary~\ref*{cor:#1}}}
\newcommand{\refdef}[1]{\hyperref[def:#1]{Definition~\ref*{def:#1}}}
\newcommand{\refrmk}[1]{\hyperref[rmk:#1]{Remark~\ref*{rmk:#1}}}
\newcommand{\reflem}[1]{\hyperref[lem:#1]{Lemma~\ref*{lem:#1}}}
\newcommand{\refsec}[1]{\hyperref[sec:#1]{Section~\ref*{sec:#1}}}
\newcommand{\refssec}[1]{\hyperref[ssec:#1]{Section~\ref*{ssec:#1}}}
\newcommand{\reffig}[1]{\hyperref[fig:#1]{Figure~\ref*{fig:#1}}}
\newcommand{\refex}[1]{\hyperref[ex:#1]{Example~\ref*{ex:#1}}}
\newcommand{\refeqn}[1]{\eqref{eqn:#1}}
\newcommand{\vx}{x}
\newcommand{\refsssec}[1]{\hyperref[sssec:#1]{Subsection~\ref*{sssec:#1}}}
\def\tid{tangent to the identity}
\def\degspike{degenerate spike}
\def\Degspike{Degenerate spike}
\def\spincorner{spinning corner}
\def\Spincorner{Spinning corner}
\def\Halfcorner{Half corner}
\def\halfcorner{half corner}
\def\hcsimple{simple}
\def\hcnonsimple{non-simple}
\def\iforma{$N_1$-form}
\def\iiforma{$N_2$-form}
\def\iiiforma{$N_3$-form}
\def\aforma{{$H_1$-form}}
\newcommand{\ZeroC}{{\text{R}_0}} 
\newcommand{\DS}{{\text{R}_1}} 
\newcommand{\SpinC}{{\text{R}_2}}
\newcommand{\HalfC}{\text{R}_3}
\newcommand{\IIfor}{\text{N}_2}
\newcommand{\IIIfor}{\text{N}_3}
\newcommand{\OneC}{\text{H}_1}
\newcommand{\locint}[3]{\langle {#1}, {#2} \rangle_{#3} }
\renewcommand{\vect}[1]{\underset{}{\overrightarrow{#1}}}
\newcommand{\fps}[1]{{\left\llbracket {#1} \right\rrbracket}}
\begin{document}

\maketitle
\thispagestyle{empty}

\begin{abstract}\noindent 
We provide a family of isolated {\tid} germs $f:(\nC^3,0) \to (\nC^3,0)$ which possess only degenerate characteristic directions, and for which the lift of $f$ to any modification (with suitable properties) has only degenerate characteristic directions.
This is in sharp contrast with the situation in dimension $2$, where any isolated {\tid} germ $f$ admits a modification where the lift of $f$ has a non-degenerate characteristic direction.
We compare this situation with the resolution of singularities of the infinitesimal generator of $f$, showing that this phenomenon is not related to the non-existence of complex separatrices for vector fields of Gomez-Mont and Luengo.
Finally, we describe the set of formal $f$-invariant curves, and the associated parabolic manifolds, using the techniques recently developed by L\'opez-Hernanz, Raissy, Rib\'on, Sanz S\'anchez, Vivas.
\end{abstract}


\section*{Introduction}

In this paper, we investigate birational properties of {\tid} germs in $\nC^3$, in relation with the construction of (strong) separatrices and parabolic manifolds.

A holomorphic germ $f:(\nC^d,0)\to (\nC^d,0)$ is {\tid} when its differential at $0$ is the identity.
In the one-dimensional case, Leau-Fatou's flower theorem \cite{leau:etudeequationsfonctionnelles1,leau:etudeequationsfonctionnelles2,fatou:equationsfonctionnelles1} ensures the existence of simply connected invariant domains (petals) containing the origin at their boundary, where $f$ is conjugated to a translation. Petals for $f$ and $f^{-1}$ cover a pointed neighborhood of the origin, and allow a precise description of the local dynamics of these germs.
The properties of {\tid} germs and their petals are fundamental both in the global (see e.g. the monography \cite{milnor:dyn1cplxvar}) and in the local (see e.g. the topological classification of {\tid} germs \cite{camacho:tidgermstopoclass}) aspects of the theory of holomorphic dynamical systems in dimension $1$, as well as for understanding bifurcations via parabolic implosion (see e.g. the survey \cite{shishikura:bifurcationparabfixpoint}).

In higher dimensions, it is not possible to give, in general, such a precise description of the dynamics near the origin, but one can still aim at describing higher dimensional analogues of the petals, called \emph{parabolic manifolds}.

They are attached to complex tangent directions at $0$, called \emph{characteristic directions}.
Characteristic directions can be described as fixed points for the action induced by $f-\id$ on the exceptional divisor of the blow-up of the origin (or equivalently, of the action of the homogeneous part $H$ of smallest degree of $f-\id$ on $\nP^{d-1}$).
These fixed points can be either holomorphic (\emph{non-degenerate} case) or meromorphic (\emph{degenerate} case).
A fundamental result by Hakim \cite{hakim:analytictransformations} shows the existence of parabolic curves tangent to non-degenerate characteristic directions (in any dimension).

Later, Abate \cite{abate:residualindexdynholmaps} 
shows the existence of parabolic curves for isolated {\tid} germs in dimension $d=2$.
In analogy with Camacho-Sad's construction of complex separatrices for holomorphic foliations in dimension $2$ \cite{camacho-sad:invariantvarieties}, the proof consists in showing that, after a finite number of blow-ups along characteristic (and in fact singular, see \refdef{directions}) directions, one can always find a regular modification (i.e., a composition of blow-ups) where the lift of $f$ has at least one non-degenerate characteristic direction. This allows to apply Hakim's result to get a parabolic curve for the lifted germs, transversal to the exceptional divisor of the modification, so that it descends to a parabolic curve for $f$.
Several authors addressed the problem of finding stable manifolds for (possibly non-isolated) $2$-dimensional {\tid} germs, and the picture is quite complete now, see e.g. \cite{ecalle:fonctionsresurgentes3, weickert:attractingbasinsdim2, hakim:analytictransformations,abate:residualindexdynholmaps, abate-bracci-tovena:indextheorems,brochero-cano-lopezhernanz:parabolicrcurvesdiffeodim2, molino:dynamicstidnonvanishingindex, vivas:degchardirtid, rong:newinvattrdomainstiddim2, lopezhernanz-sanzsanchez:paraboliccurvesdiffeoasymptotic, lopezhernanz-raissy-ribon-sanzsanchez:stablemfld2dim, lopezhernanz-rosas:chardirdim2}.
The description of parabolic manifolds has been recently instrumental for the construction of examples of wandering domains, see \cite{astorg-buff-dujardin-peters-raissy:twodimpolywandering,astorg-bocthaler-peters:wanderingdomainslavaursmapssiegeldisks}.
(Semi-)parabolic implosion in dimension $2$ (or higher) and applications to bifurcation theory can also be found in the literature (see e.g. \cite{bedford-smillie-ueda:semiparabifurcdim2, dujardin-lyubich:stabbifurcdissipativepolyautoC2, bianchi:parabimplosionendoC2}), and mainly rely on a careful study of the dynamics on parabolic curves.

We briefly expose here some of the reasons why the study of {\tid} germs and their parabolic manifolds is much harder in higher dimensions.
Firstly, the homogenous part $H$ introduced above acts on $\nP^{d-1}$: for $d=2$ all indeterminacy points can be avoided by saturation, while they persist when $d \geq 3$.
Since $2$-dimensional modifications are composition of point blow-ups, most of the phenomenon are combinatorial. In higher dimensions, we can blow-up higher dimensional centers, and their geometry needs to be taken into account. Moreover, we only have a weak factorization theorem (see \cite{abramovich-karu-matsuki-wlodarczyk:torificationfactorizationbirationalmaps,bonavero:factorisationfaible}).
Resolution theorems for vector fields are available in dimension $2$ (see \cite{seidenberg:redsing}), and recently dimension $3$ (see \cite{panazzolo:ressingrealanalvfdim3, mcquillan-panazzolo:almostetaleresfol}): here we need in general to introduce singularities on the ambient space, by considering weighted blow-ups and orbifolds.
Finally, the \emph{infinitesimal generator} of a {\tid} germ may not admit complex separatrices when $d \geq 3$, as showed by Gomez-Mont and Luengo \cite{gomezmont-luengo:vectorfieldsnoseparatrix}.
Adapting their construction to {\tid} germs, Abate and Tovena \cite{abate-tovena:paraboliccurvesC3} give examples of {\tid} germs in dimension $3$ that do not admit \emph{robust} parabolic curves, i.e., parabolic curves attached to invariant formal curves, the analogue of (formal) complex separatrices in this setting.
In their examples, all characteristic directions are non-degenerate, and (non-robust) parabolic curves exist thanks to Hakim's theorem.

\smallskip

In this paper, we investigate the existence of parabolic manifolds attached to degenerate characteristic directions in dimension $3$, by studying the following family of {\tid} germs:
\begin{equation}\label{eqn:example}
f(x,y,z)=(x+yz(y-z)+P, y+x(x^2-z^2)+Q, z+xz(y-z)+R)	\text.
\end{equation}
Here $P,Q,R$ are holomorphic germs with order at least $4$ at the origin.
The coefficients of the formal power series expansion of $P,Q,R$ are considered as parameters of the family. We say that a certain property holds for a generic (resp., $\nR$-generic) element of the family if it holds for an open dense subset of the parameters with respect to the Zariski topology over $\nC$ (resp., over $\nR$).

Since characteristic directions are determined only by the homogeneous part of smallest degree of $f-\id$, all these maps share the same characteristic directions: there are five of them, which we label $v_1, v_2, v_3, v_4, v_5$, all of them degenerate. We denote by $p_1, p_2, p_3, p_4, p_5$ the corresponding points on the exceptional divisor of the blow-up of the origin.
Other examples are easy to construct, building on the examples of rational maps in $\nP^2$ with no holomorphic fixed points given by \cite{ivashkovich:fixedpointsratmaps}.

\medskip

For the maps described by \refeqn{example}, we investigate two possible strategies to find parabolic manifolds.
The first strategy, following \cite{abate:residualindexdynholmaps}, consists in looking for a suitable birational model, where we can find non-degenerate characteristic directions (that are non-exceptional, i.e., transverse to the exceptional divisor).
Since non-degenerate characteristic directions correspond to eigenvectors of the linear part of the saturated infintesimal generator $\hat{\chi}$ of $f$, it is natural to start our study from a birational model $\pi_0:X_{\pi_0} \to (\nC^3,0)$, which provides a resolution of the singularities of the infinitesimal generator.

Our example shows that, unlike dimension $2$, this first strategy may fail in higher dimensions.

\begin{introthm}\label{thm:mainnonondeg}
A generic element $f:(\nC^3,0)\to(\nC^3,0)$ of the family \refeqn{example} satisfies the following property:
\begin{quote}
For any regular modification $\pi:X \to (\nC^3,0)$ strongly adapted to $f$ and dominating $\pi_0$, and for any point $p \in \pi^{-1}(0)$ in the exceptional divisor, the lift $\wt{f}:X \to X$ of $f$ at $p$ has only degenerate characteristic directions.
\end{quote} 
\end{introthm}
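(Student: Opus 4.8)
The plan is to translate the statement into a question about the resolution of singularities of the infinitesimal generator of $f$, and then to run an induction on the modification. Write $f=\exp(\chi)$ for the formal infinitesimal generator; its lowest-order part is the homogeneous cubic vector field
\[
X_0 = yz(y-z)\,\partial_x + x(x^2-z^2)\,\partial_y + xz(y-z)\,\partial_z,
\]
whose coefficients are the components of the homogeneous part $H$ of $f-\id$ in \refeqn{example}. The five characteristic directions are the base points of $[H]\colon\nP^2\dashrightarrow\nP^2$, i.e.\ the common zeros $\{H_1=H_2=H_3=0\}$, which compute to $[0:0:1]$, $[0:1:0]$, $[0:1:1]$, $[1:1:1]$, $[-1:1:1]$; since $H$ vanishes identically at each of them, all five are degenerate. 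Using the correspondence recalled above (after \cite{hakim:analytictransformations,abate:residualindexdynholmaps}) between non-degenerate characteristic directions of a lift $\wt f$ and eigendirections of the linear part of the saturated generator $\hat\chi$, the theorem becomes equivalent to the following assertion: at every point $p$ of every exceptional divisor produced by an admissible modification, no eigendirection of $\hat\chi$ transverse to the exceptional divisor has nonzero radial eigenvalue. I would first make this reduction precise, using that ``strongly adapted'' forces the exceptional divisor to be a simple normal crossings divisor to which $\hat\chi$ is tangent, so that directions tangent to it are automatically degenerate and the transverse eigen-data at each $p$ is well defined.

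Next I would construct the base model $\pi_0$ as the canonical sequence of blow-ups resolving $X_0$ (hence $\chi$), beginning with the blow-up of the origin, on whose exceptional $\nP^2$ the saturated field $\hat\chi$ has exactly the five singularities above. Continuing the resolution, I would record at each point of each exceptional divisor the local model of $\hat\chi$ and organise these into a finite list of normal forms. The expected output is that every form occurring over $\pi_0$ has only degenerate characteristic directions: tangent directions are degenerate because $\wt f$ fixes the divisor to high order, while every transverse eigendirection of $\hat\chi$ has vanishing radial eigenvalue.

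The core of the argument is then a closure (invariance) statement proved by induction on the tower of blow-ups. I would isolate the finite list of normal forms and establish simultaneously that (i) each form has only degenerate characteristic directions for the lift of $f$, and (ii) the list is stable: blowing up any center admissible for a strongly adapted modification (a point, or a smooth curve compatible with the normal crossings structure) at a point carrying a form from the list produces only points whose forms again lie in the list. Genericity of $P,Q,R$ enters precisely in (ii): the higher-order terms perturb the linear part of $\hat\chi$ at infinitely near points, and I would show that the determinantal (resultant) conditions forcing the relevant transverse eigenvalue to vanish cut out a Zariski-dense open set of parameters, on which no perturbation can create a non-exceptional non-degenerate direction. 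Since any regular modification dominating $\pi_0$ can be presented---via weak factorization \cite{abramovich-karu-matsuki-wlodarczyk:torificationfactorizationbirationalmaps}, or directly through the combinatorics of centers contained in the exceptional locus---as a tower of such admissible blow-ups, the induction yields the conclusion at every $p\in\pi^{-1}(0)$.

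The main obstacle is step (ii): showing that the finite list of forms is both complete and self-reproducing under \emph{all} admissible blow-ups, uniformly across the infinitely many modifications dominating $\pi_0$. The difficulty is twofold---controlling the combinatorial proliferation of cases as one alternates between blowing up points and curves while preserving the simple normal crossings structure, and tracking how the generic higher-order data of $P,Q,R$ feeds into the linearization of $\hat\chi$ at points appearing only after several blow-ups. I expect the list to close only once the right finite set of invariants is attached to each form---the radial eigenvalue, the position of the eigendirections relative to the exceptional divisor, and the order of tangency of $\hat\chi$ to it---and that verifying the finitely many transition rules between forms, together with the genericity estimate, is where essentially all of the computation resides.
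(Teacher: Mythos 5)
Your proposal is correct and follows essentially the same route as the paper: reduce to singularities of the saturated infinitesimal generator, build the resolution model $\pi_0$ (refined to $\widetilde{\pi}_0$ by two further point blow-ups), exhibit a finite list of normal forms (simple corners, degenerate spikes, spinning corners, half corners, together with two types of curves of singularities, the ``patterns''), and prove precisely your closure statement (i)--(ii) under blow-ups of points and of curves contained in the singular locus, each form admitting only exceptional or degenerate singular directions. Two minor points of difference: the tower decomposition you want is definitional for regular strongly adapted modifications, so weak factorization is neither needed nor usable (it produces zigzags of blow-ups and blow-downs, not towers, and the intermediate models need not be adapted to $f$); and genericity enters only at the base case, to guarantee that the singularities on $X_{\widetilde{\pi}_0}$ fall into the four classes, while the closure step itself is unconditional.
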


Here ``regular'' means that we only allow sequences of blow-ups of smooth centers, while ``adapted to $f$'' means that we only allow blow-up of centers that are invariants by the saturated infinitesimal generator $\hat{\chi}$, and with ``strongly adapted'' we only allow to blow-up points or curves belonging to the singular locus of $\hat{\chi}$.

We can actually say a little more about this family: one cannot find any non-degenerate characteristic direction also for any regular modification (not necessarily strongly) adapted to $f$ above the points $p_1$ and $p_2$ (see \refprop{degspikecurve}), nor for any point modification (see \refsssec{pointmodif}).

\smallskip

The second strategy is in line with the recent works \cite{lopezhernanz-raissy-ribon-sanzsanchez:stablemfld2dim,lopezhernanz-rosas:chardirdim2,lopezhernanz-ribon-sanzsanchez-vivas:stablemanifoldsbiholoCn}.
It consists in looking for complex separatrices for the dynamics, and study parabolic manifolds attached to them.
While we know that this second strategy may fail in general by \cite{gomezmont-luengo:vectorfieldsnoseparatrix, abate-tovena:paraboliccurvesC3}, it proves quite fruitful in this case.
We are able to find formal invariant curves tangent to the directions $v_1, \ldots, v_4$, and deduce the existence of parabolic manifolds by \cite{lopezhernanz-ribon-sanzsanchez-vivas:stablemanifoldsbiholoCn}.

\begin{introthm}\label{thm:mainparmfld}
For generic elements $f:(\nC^3,0)\to(\nC^3,0)$ of the family \refeqn{example}, there exists formal invariant curves $C_1, \ldots, C_4$ tangent to $v_1, \ldots, v_4$ respectively.
These curves are smooth, and they are the only formal invariant curves tangent to any direction (but possibly $v_5$).

Finally, there are $3$ (resp., $5$) parabolic manifolds asymptotic to $C_1$ and $C_2$ (resp., $C_3$ and $C_4$), of dimension either $1$ or $2$ ($\nR$-generically they are all of dimension $2$).
\end{introthm}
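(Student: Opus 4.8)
The plan is to establish the two assertions in turn, using the resolution of the saturated infinitesimal generator together with the parabolic manifold criterion of \cite{lopezhernanz-ribon-sanzsanchez-vivas:stablemanifoldsbiholoCn}. For the formal invariant curves, the first point is that any formal $f$-invariant curve through the origin is tangent to a characteristic direction: its tangent line, viewed on the exceptional divisor $E\cong\nP^2$ of the blow-up of $0$, is a fixed point of the action induced by the cubic part $H$ of $f-\id$, hence coincides with one of $v_1,\dots,v_5$. Since the formal invariant curves of $f$ are exactly the formal separatrices of its saturated infinitesimal generator $\hat\chi$, I would then work on the resolution $\pi_0\colon X_{\pi_0}\to(\nC^3,0)$, where the lift of $\hat\chi$ has only elementary singularities along the exceptional divisor. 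For each $i\in\{1,2,3,4\}$ one locates the singular point over $p_i$, computes the linear part of the lifted field there, and checks that it carries a unique formal separatrix transverse to the exceptional divisor; pushing this separatrix down by $\pi_0$ produces $C_i$, which is smooth because it meets the divisor transversally at a smooth point of $X_{\pi_0}$. Uniqueness away from $v_5$ then follows by combining the tangency observation with this local count, together with the fact that no invariant curve may be tangent to $E$ without lying inside it; the direction $v_5$ is left aside since its local model is of the \degspike\ type (cf.\ \refprop{degspikecurve}), for which the count is inconclusive. The genericity of $f$ is used to guarantee that the eigenvalue ratios at the relevant singular points are non-resonant, so that the transverse separatrix is formally unique.

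For the parabolic manifolds, I would apply \cite{lopezhernanz-ribon-sanzsanchez-vivas:stablemanifoldsbiholoCn}, which attaches parabolic manifolds to a formal invariant curve once one knows the one-dimensional parabolic germ induced by $f$ along the curve together with its transverse directors. Parametrising $C_i(t)=t\,v_i+O(t^2)$ and writing $f(C_i(t))=C_i(\tau_i(t))$, the degeneracy $H(v_i)=0$ cancels the cubic term, so $\tau_i(t)-t$ has order strictly larger than $3$; a direct computation along $C_i$ determines this order and shows that $\tau_i$ has exactly $3$ attracting petals for $i=1,2$ and $5$ for $i=3,4$. Each petal, propagated along the attracting transverse directions, yields one parabolic manifold, which gives the counts $3$ and $5$. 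The dimension of such a manifold equals $1$ plus the number of transverse directors with positive real part: one verifies that one of the two transverse directors is non-attracting, being tied to the direction along $E$, while the other depends on $P,Q,R$, so that $\nR$-generically it is attracting and the dimension is $2$, dropping to $1$ only for exceptional real values of the parameters.

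The hard part is the explicit determination of the order of $\tau_i$ and of the transverse directors. Because the leading cubic part of $f-\id$ vanishes at every $v_i$, these invariants are governed by higher-order terms, and hence by the parameters $P,Q,R$; carrying this data through the (weighted) blow-ups defining $\pi_0$ and verifying that the decisive coefficients are generically non-zero is where the real work lies, the appeal to \cite{lopezhernanz-ribon-sanzsanchez-vivas:stablemanifoldsbiholoCn} being essentially a black box once these numbers are known.
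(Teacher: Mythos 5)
Your overall skeleton (produce the curves from a resolution of the saturated infinitesimal generator, then invoke \cite{lopezhernanz-ribon-sanzsanchez-vivas:stablemanifoldsbiholoCn} to attach parabolic manifolds) is the same as the paper's, but the middle of your argument breaks down exactly where the real difficulty sits, namely at $v_3$ and $v_4$. In the model $X_{\pi_0}$ the singular points over $p_3$ and $p_4$ have linear part with eigenvalues $-1,0,0$ and a Jordan block on the kernel; the two singular directions there, $v_{3,1}=[0:1:0]$ and $v_{3,2}=[1:2:0]$, are \emph{both exceptional}, so there is no separatrix transverse to the exceptional divisor to be read off the linear part, and your proposed local count returns nothing. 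Worse, your auxiliary claim that ``no invariant curve may be tangent to $E$ without lying inside it'' is false precisely for this example: the curves $C_3$ and $C_4$ are tangent to $E_1$ at $p_3$, $p_4$ (their strict transforms pass through the exceptional points $p_{3,2}$, $p_{4,2}$) while not being contained in $E_1$. The paper must instead blow up twice more, land on a {\spincorner} at $p_{3,2}$, blow up again to reach a {\hcnonsimple} {\halfcorner} at $[0:\tfrac{8}{11}\alpha:1]$, and verify a non-resonance condition (the ratio in \refcor{spincornerinvcurve} equals $-11\notin\nN^*$) before \refprop{invariantcurve} produces the curve as the limit of an infinite sequence of free singular infinitely near points; smoothness comes from that freeness (\refprop{constructioncurve}), not from transversality to the divisor. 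Your treatment of $v_5$ is also backwards: the {\degspike}s are at $p_1$ and $p_2$, and there the count is fully conclusive (\refprop{degspikeinvcurve} gives existence \emph{and} uniqueness, yielding $C_1$, $C_2$); what obstructs $v_5$ is the {\spincorner} at $q_1$, where \refcor{spincornerinvcurve} rules out curves transverse to the divisor but cannot exclude curves tangent to it (\refsssec{overq1}), not a \degspike-type ambiguity.

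The quantitative half of the theorem is likewise not established by your argument. The petal counts $3$ and $5$ and the dimensions do not follow from a generic appeal to transverse directors: in the paper they come from an explicit reduction to Ramis--Sibuya normal form by infinitely many point blow-ups along each curve (\refprop{RSreddegspike}, \refprop{RSredhalfcorner}), which yields $r=c+e=2+1=3$ attracting directions at $p_1,p_2$ and $r=c+1+e=4+1+0=5$ at the half corners above $p_{3,2}$, $p_{4,2}$, together with the normal-form data $d_1,d_2$. The dimension dichotomy is then a consequence of the specific structure of these data: for the {\degspike}s one has $d_1/d_2=\lambda/\mu\in\nR_{<0}$, so $R_1(\xi)+R_2(\xi)=0$ and each attracting direction is a node for exactly one variable unless both $R_j(\xi)$ vanish; for the half corners $d_2\equiv 0$, so the second variable is always a saddle. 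Your closing paragraph concedes that determining these orders and directors is ``where the real work lies'' and leaves it undone, so the numbers $3$, $5$, and the dimension statement in the theorem remain unproved in your proposal.
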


The dynamics above $p_5$ remains more complicated to describe.
We are able to exclude the existence of formal invariant curves that are transverse to the exceptional divisor of the model $X_{\pi_0}$, while we are able to find a formal invariant surface $S$ tangent to $v_5$.

Besides being only formal, the surface $S$ is also singular, and \cite{lopezhernanz-rosas:chardirdim2} cannot be applied to $f|_S$ even if $S$ were convergent.
When working on the model $X_{\pi_0}$, the strict transform $\wt{S}$ of $S$ is smooth and invariant by the lift $\wt{f}$ of $f$. A direct computation shows that $\wt{f}|_{\wt{S}}$ has only two characteristic directions, corresponding to the tangent space of the exceptional divisor $\pi_0^{-1}(0)$. In general $\pi_0^{-1}(0)$ could provide the only separatrices of $\wt{f}$, and constructing parabolic manifolds would require other techniques (similar to \cite{lopezhernanz-rosas:chardirdim2}).

\medskip

The techniques used to prove \refthm{mainnonondeg} are mainly combinatorial.
In particular, we identify three new classes of {\tid} germs, namely \emph{{\degspike}s}, \emph{{\spincorner}s} and \emph{{\halfcorner}s}, and show that all singularities in a suitable model dominating $X_{\pi_0}$ belong to one of these classes (or \emph{simple corners} introduced in \cite{abate-tovena:paraboliccurvesC3}).
Then we show that these classes are invariant by (strongly) adapted regular modifications, and they do not admit non-degenerate non-exceptional characteristic directions.

To prove \refthm{mainparmfld}, we use the combinatorial knowledge achieved in the previous step, and some computations using normal forms, to describe the set of formal invariant curves attached to the classes introduced above.
Moreover, we compute the reduction to Ramis-Sibuya normal form, and apply the results in \cite{lopezhernanz-ribon-sanzsanchez-vivas:stablemanifoldsbiholoCn} to deduce the existence of parabolic manifolds attached to these formal invariant curves.

In both results, the genericity conditions are explicit and easy to check. They are not essential to the results: they are taken to simplify the birational study and the exposition of the dynamical properties of germs of the form \refeqn{example}.

\smallskip

Besides giving an explicit way to find formal invariant curves and parabolic manifolds in a non-trivial example, the identification of classes invariant by (adapted) modifications provide ideal candidates to replace the final reduced forms $\star 1$ and $\star 2$ of \cite{abate:residualindexdynholmaps}.
The reduction to these classes would be a fundamental step towards proving in general the existence of parabolic manifolds in higher dimensions.

\medskip

The paper is organized as follows.
In \refsec{background}, we recall some basics about {\tid} germs, vector fields, birational geometry and construction of formal curves, as well as the theory of Ramis-Sibuya normal forms and the construction of parabolic manifolds in the case of {\tid} germs. 

In \refsec{example} we introduce the family of maps \refeqn{example}, study characteristic directions, and exhibit the resolution $\pi_0:X_{\pi_0} \to (\nC^3,0)$ of the infinitesimal generator.

In \refsec{biratstudyaboveresol} we recall the definition of simple corners, and introduce the three new classes.
We then study their combinatorics in terms of point blow-ups.

In \refsec{patterns} we study the behavior of these classes under regular modifications strongly adapted to the dynamics, and conclude the proof of \refthm{mainnonondeg}.

Finally, in \refsec{invariantcurves} we use the combinatorial picture portrayed in the previous section to construct formal invariant curves, compute Ramis-Sibuya normal forms, and conclude the proof of \refthm{mainparmfld}.
We end this section by some remarks on not strongly adapted modifications, point modifications, and on the dynamical picture above $p_5$.

\bigskip

\noindent\textbf{Acknowledgements.}
This project has been developed during a collaboration with Leandro Arosio. The authors thank Leandro for the insightful discussions. 
The interest for the phenomena described in this paper arose after a fruitful discussion with Filippo Bracci, whom we thank as well.

Part of this work has been developed while the first named author was visiting the University of Paris Diderot, supported by the LIA LYSM AMU CNRS ECM INdAM funding. The second named author has been partially supported by the ANR grant Fatou ANR-17-CE40-0002-01.


\section{Background}\label{sec:background}

\subsection{Modifications}

We start by some terminology about sequences of blow-ups.

\begin{defi}
A modification of $(\nC^d,0)$ is a proper bimeromorphic map $\pi: X_\pi \to (\nC^d,0)$ which is a biholomorphism outside the exceptional divisor $\pi^{-1}(0)$.
A modification is called \emph{smooth} if $X_\pi$ is smooth, \emph{regular} if $X_\pi$ is obtained as a composition of blow-ups of smooth centers.
\end{defi}

If $d=2$, any smooth modification is obtained as a finite composition of point blow-ups.
In general, building blocks of modifications are still given by blow-ups, whose centers have codimension at least $2$.
In particular for $d=3$, we can blow-up both points and curves. The study of the birational geometry of {\tid} germs needs to take into account the geometry of such curves, and not only the combinatorial data of blow-ups.

Moreover, it is not anymore true that smooth modifications are given by composition of blow-ups (see \cite{abramovich-karu-matsuki-wlodarczyk:torificationfactorizationbirationalmaps,bonavero:factorisationfaible}), which gives a further technical difficulty to deal with generic modifications.

Most of the modifications we will consider will be \emph{point modifications}, i.e., composition of point blow-ups, since they are more directly related to characteristic directions.

When doing so, we will perform local computations on suitable charts.

Let $\pi:X_\pi \to (\nC^d,0)$ be the blow-up of the origin, and fix local coordinates $(x_1, \ldots, x_d)$ at $0 \in \nC^d$.
The total space $X_\pi$ of the blow-up is covered by $d$ charts $U_j$, with $j=1, \ldots, d$, corresponding to the complementary in $X_\pi$ of the strict transform of the hyperplane $\{x_j = 0\}$.
With abuse of notation, we denote by $(x_1, \ldots, x_d)$ also the coordinates in $U_j$, for which the map $\pi$ takes the form $$\pi(x_1, \ldots, x_d)=(x_1 x_j, \ldots, x_{j-1}x_j, x_j, x_j x_{j+1}, \ldots, x_j x_d)\text.$$
In this case, we will say that we work \emph{in the $x_j$-chart}.
A point $p$ corresponding to a direction $v=[a_1: \ldots : a_d]$ belongs to $U_j$ if and only if $a_j \neq 0$. If this is the case, $p$ has coordinates $\left(\frac{a_1}{a_j}, \ldots, \frac{a_{j-1}}{a_j}, 0, \frac{a_{j+1}}{a_j}, \ldots, \frac{a_{d}}{a_j}\right)$ in the $x_j$-chart.

\subsection{Characteristic directions}
We introduce here some terminology about characteristic directions for {\tid}  germs in $(\nC^d,0)$.

\begin{defi}\label{def:directions}
Let $f:(\nC^d,0) \to (\nC^d,0)$ be a {\tid} germ.
Denote by $H$ the homogeneous part of smallest degree of $f-\id$, by $\ell$ the greatest common divisor of the $d$ coordinates of $f-\id$ (defined up to units), and let $H_\ell$ be the homogeneous part of smallest degree of $\ell^{-1}(f-\id)$.
A tangent direction $v \in \nP_\nC^{d-1}$ is called
\begin{itemize}
\item \emph{characteristic} if there exists $\lambda \in \nC$ so that $H(v)=\lambda v$;
\item \emph{singular} if there exists $\lambda \in \nC$ so that $H_\ell(v)=\lambda v$.
\end{itemize}
In both cases, $v$ is called \emph{non-degenerate} if $\lambda \neq 0$, and \emph{degenerate} if $\lambda = 0$.

The degree of $H$ is called the \emph{order} of $f$, while the degree of $H_\ell$ is called the \emph{pure order} of $f$.
\end{defi}

\begin{rmk}
The value $\lambda$ in the previous definition is sometimes called \emph{multiplier} of the characteristic direction. Notice that such value is not well defined up to change of coordinates, but its vanishing is.

Notice also that if $v$ is a singular direction, then it is a characteristic direction. In fact, if $L$ is the homogeneous part of $\ell$ of smallest degree, then $H=LH_\ell$ and $H(v)=L(v)H_\ell(v)=\lambda L(v)v$.
We also infer that any characteristic direction that is tangent to $\{L=0\}$ (or equivalently to $\{\ell=0\}$) is automatically degenerate (as a characteristic direction).
\end{rmk}

\begin{rmk}
Borrowing some terminology from algebraic geometry, one can see characteristic and singular directions as the same object.

Let $X$ be a smooth manifold, $p \in X$, $f:(X,p) \to (X,p)$ be a {\tid} germ, and $D=\{\psi=0\}$ be an effective divisor.
Assume that its support is contained in $\on{Fix}(f)$.
Then locally at $p$ we can write
$$
f(\vx) = \vx + \psi(\vx) \cdot (H_\psi(\vx) + \textrm{h.o.t.})\text,
$$
where $\textrm{h.o.t.}$ stands for ``higher other terms''.
In this situation, a \emph{$D$-characteristic direction} (or a \emph{singular direction with respect to $D$}) is an element $v \in \nP(T_p X)$ such that $H_\psi(v)=\lambda v$ for some $\lambda \in \nC$.
Characteristic directions are obtained when $D=0$ (or equivalently $\psi=1$), while singular directions are obtained when $\psi=\ell$ as above (in this case the support of $D=\on{div}(\ell)$ is the pure $(d-1)$-dimensional part of $\on{Fix}(f)$).
\end{rmk}

We need some terminology to describe the interaction between the exceptional divisor of a given modification and characteristic and singular directions.

\begin{defi}
Let $f:(\nC^d,0) \to (\nC^d,0)$ be a {\tid} germ, and $\pi:X_\pi \to (\nC^d,0)$ be a smooth modification. 
Denote by $E$ the exceptional divisor of $\pi$, and let $f_\pi:X_\pi \dashrightarrow X_\pi$ be the lift of $f$ in $X_\pi$. Let $p \in E$ be a point in the exceptional divisor so that the germ of $f_\pi$ at $p$ defines a {\tid} germ.
We say that a characteristic direction of $f_\pi$ is \emph{exceptional} if it belongs to the projectivization of the tangent space of $E$ at $p$.
\end{defi}
In other terms, we can consider the blow-up of $p$, getting another modification $\pi':X_{\pi'} \to (\nC^d,0)$ dominating $\pi$: $\pi'=\pi \circ \eta$ with $\eta$ the blow-up at $p$. Then a characteristic direction $v$ of $f_\pi$ is exceptional if the corresponding point $p_v$ in $\eta^{-1}(0)$ belongs to the strict transform of the exceptional divisor $E$.
If $v$ is a characteristic (resp., singular) direction for $f_\pi$, we will call the corresponding point $p_v$ a \emph{charcteristic} (resp., \emph{singular}) \emph{point}.

Clearly, the set of singular points describe an algebraic subvariety of $\nC\nP^{d-1}$.
If the maximal dimension of the irreducible components of this subvariety is $k$, we say that the germ $f$ is \emph{$k$-dicritical}.
Notice that $0$-dicritical germs have only finitely many singular directions.
When $f$ is $(d-1)$-dicritical, the set of singular points coincide with $\nC\nP^{d-1}$, and we simply say that $f$ is \emph{dicritical}.

\begin{rmk}
Assume that $f:(\nC^d,0)\to (\nC^d,0)$ is a {\tid} germ, with an isolated fixed point.
Let $\pi:X_\pi \to (\nC^d,0)$ be any modification.
Since $\pi$ defines a local isomorphism outside the exceptional divisor, the lift $f_\pi$ of $f$ at $X_\pi$ satisfies $\on{Fix}(f_\pi)=\pi^{-1}(0)$. More generally if $\on{Fix}(f)$ has no divisorial components, then $\on{Fix}(f_\pi)$ has no divisorial components outside the exceptional divisor $E=\pi^{-1}(0)$.

In the families we will study in the next chapters, we will often consider exceptional the directions tangent to the divisor $F$ of fixed points of $f$, since these families arise when studying the lift of the maps of \refeqn{example} with respect to some modification.
\end{rmk}

\subsection{Infinitesimal generators}\label{ssec:infgen}

To any {\tid} germ $f \colon (\nC^d,0) \to (\nC^d,0)$ is associated a unique (formal, possibly non-convergent) vector field $\chi$, that has multiplicity at $0$ at least $2$, and satisfying $f= \exp \chi$ (see e.g. \cite{brochero-cano-lopezhernanz:parabolicrcurvesdiffeodim2} for the construction in dimension $2$). 
We recall that if $\phi$ is a local coordinate (hence defining a germ of smooth hypersurface $\{\phi=0\}$) at $0$, we have
$$
\phi \circ \exp \chi = \sum_{n=0}^\infty \frac{\chi^n(\phi)}{n!},
$$
where $\chi^n$ denotes the derivation $\chi$ applied $n$ times.
The vector field $\chi$ is called the \emph{infinitesimal generator} of $f$, and denoted by $\chi= \log f$.

\begin{rmk}\label{rmk:homocontribution}
Notice that the homogeneous part of degree $k$ of $\chi$ contributes to the factor $\chi^n(\phi)$ only starting from order $n(k-1)+1$.
	
In particular, if $f$ is given by the example \eqref{eqn:example}, then $\phi \circ (f-\id)$ and $\chi(\phi)$ coincide up to order $4$. 
\end{rmk}

Consider now a smooth manifold $X$, a compact (smooth) submanifold $Z \subset X$ of codimension at least $2$, and $\pi:X_\pi \to X$ the blow-up of $X$ along $Z$.
If $\chi$ is a vector field on $X$, then $\chi$ lifts to a vector field $\chi_\pi$ on $X_\pi$, satisfying $\chi_q =(d\pi)_p (\chi_\pi)_p$ for any $q=f(p)$, $p \in X_\pi$, as far as $\chi$ is tangent to $Z$.
When $Z$ is reduced to a point $\{p\}$ this happens exactly when $p$ is a singular point of $\chi$.

Applying this situation to the infinitesimal generator $\chi$ of a {\tid} germ $f$, we get:
\begin{prop}\label{prop:infgenblowup}
Let $X$ be a smooth manifold, $Z \subset X$ be a compact smooth submanifold of codimension at least $2$, and $\pi:X_\pi \to X$ the blow-up of $X$ along $Z$.
Let $f:X \to X$ be a holomorphic map fixing $Z$ pointwise, and such that the germ of $f$ at any point of $Z$ is {\tid}; denote by $\chi$ the infinitesimal generator of $f$.
Finally, denote by $f_\pi:X_\pi \to X_\pi$ the lift of $f$, and by $\chi_\pi$ the lift of $\chi$.

Then $\chi_\pi$ is the infinitesimal generator of $f_\pi$, i.e., $f_\pi = \exp \chi_\pi$.
\end{prop}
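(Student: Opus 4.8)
The plan is to prove that $f_\pi = \exp\chi_\pi$ by showing that both sides have the same infinitesimal generator, and then invoking the uniqueness of the infinitesimal generator associated to a tangent to the identity germ. The key fact to exploit is that the exponential relation $f = \exp\chi$ is an identity between formal power series that holds coordinate-wise via the formula $\phi\circ\exp\chi = \sum_{n\geq 0}\chi^n(\phi)/n!$ recalled above, and that this formula is natural with respect to the pushforward/pullback along $\pi$. The hypotheses guarantee exactly what is needed for naturality: $f$ fixes $Z$ pointwise and is tangent to the identity along $Z$, so $\chi$ has multiplicity $\geq 2$ along $Z$ and in particular $\chi$ is tangent to $Z$; this is precisely the condition (stated in the paragraph preceding the proposition) under which $\chi$ lifts to a well-defined holomorphic vector field $\chi_\pi$ on $X_\pi$.

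\textbf{First I would} reduce to a purely local statement. Since $\pi$ is a biholomorphism away from the exceptional divisor $E = \pi^{-1}(Z)$, the identity $f_\pi = \exp\chi_\pi$ holds automatically on $X_\pi \setminus E$, where both $f_\pi$ and $\chi_\pi$ are simply the transports of $f$ and $\chi$ by the local isomorphism $\pi$. By continuity (or rather, since both $f_\pi$ and $\exp\chi_\pi$ are holomorphic maps defined on all of $X_\pi$, agreeing on the dense open set $X_\pi\setminus E$), the identity extends to all of $X_\pi$, provided I first check that $\exp\chi_\pi$ makes sense and is holomorphic near $E$, and that $f_\pi$ is holomorphic (i.e.\ defined, not just meromorphic) near $E$. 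The latter holds because $f$ fixes $Z$ and is tangent to the identity there, so the lift $f_\pi$ has no points of indeterminacy on $E$.

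\textbf{The remaining point}, and the one requiring genuine verification rather than abstract nonsense, is that $\exp\chi_\pi$ is holomorphic across $E$ and that its germ at each point $p\in E$ is tangent to the identity — only then can I identify it with $f_\pi$ via the density argument and conclude. Here I would work in the blow-up charts described earlier (the $x_j$-charts, in which $\pi(x_1,\dots,x_d) = (x_1 x_j,\dots, x_j,\dots, x_d x_j)$), expressing $\chi_\pi$ explicitly as the pullback of $\chi$ and checking that, because $\chi$ vanishes to order $\geq 2$ along $Z$, the lifted vector field $\chi_\pi$ vanishes along $E$ and its time-one flow $\exp\chi_\pi$ is a well-defined tangent to the identity germ at each point of $E$. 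Concretely, one verifies the coordinate identity $\phi\circ f_\pi = \sum_{n\geq 0}\chi_\pi^n(\phi)/n!$ for each local coordinate $\phi$ on $X_\pi$ by pulling back the corresponding identity for $f$ and $\chi$ downstairs through $\pi$, using that $\pi_*\chi_\pi = \chi$ (the defining relation $\chi_q = (d\pi)_p(\chi_\pi)_p$) commutes with iterated derivation.

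\textbf{The main obstacle} I anticipate is not conceptual but bookkeeping: one must confirm that the formal exponential series behaves well under the lift near $E$, i.e.\ that the order-of-vanishing estimate in \refrmk{homocontribution} (the homogeneous part of degree $k$ of $\chi$ contributes to $\chi^n(\phi)$ only from order $n(k-1)+1$) survives the pullback, so that $\chi_\pi$ still has the multiplicity needed for $\exp\chi_\pi$ to converge as a formal germ and define a tangent to the identity map at each point of $E$. Once the lifted vector field is shown to retain order $\geq 2$ along $E$ — which follows from tangency of $\chi$ to $Z$ together with the chart expression of $\pi$ — the uniqueness of the infinitesimal generator for tangent to the identity germs (the formal series $\chi_\pi$ is determined by $f_\pi$) forces $\chi_\pi = \log f_\pi$, and hence $f_\pi = \exp\chi_\pi$ on all of $X_\pi$.
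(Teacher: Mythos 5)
Your proposal is correct and takes essentially the same route as the paper: both arguments rest on the fact that $\pi$ is a biholomorphism off the exceptional divisor $E$, so that $f_\pi=\exp\chi_\pi$ holds automatically there, then extend the identity across $E$ by holomorphy, and settle the merely formal case using the order-of-vanishing control of \refrmk{homocontribution} (the paper does this by applying the convergent argument to truncations of $\chi$). The only cosmetic difference is that the paper carries out the extension by lifting the whole flow $\Omega(x,t)=\pi^{-1}\circ\Theta(\pi(x),t)$, extending it across $E$ by boundedness, and checking that it is the flow of $\chi_\pi$, whereas you apply the identity theorem directly to the two time-one maps.
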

\begin{proof}
Assume for the moment that $\chi$ is analytic.
Let $\Theta$ be the flow of $\chi$, so that $f(z)=\Theta(z,1)$.
Set $z=\pi(x)$ and $\Omega(x,t) = \pi^{-1} \circ \Theta(z,t)$ for any $x \not \in E$. As $\Omega$ is analytic and bounded in a neighborhood of $E$, it extends holomorphically to $E$.
Now, let us consider $x\in X_\pi\setminus E$. On the one hand, we get
$$
\pi \circ f_\pi(x)=f(z)=\Theta(z,1)= \pi \circ \Omega(x,1)\;.
$$
On the other hand, we get
$$
\chi_{\Theta(z,t)} = \Theta'(z,t) = d\pi_{\pi^{-1}(\Theta(z,t))} \Omega'(x,t) = d\pi_{\Omega(x,t)} (\chi_\pi)_{\Omega(x,t)},
$$
and $\Omega$ is the flow of $\chi_\pi$. As this holds outside $E$ and all the maps involved extend holomorphically to $E$, we obtain the desired result for $\chi$ analytic.
	
The result for $\chi$ formal follows, by applying the previous calculation to truncations, and by \refrmk{homocontribution}.
\end{proof}

\subsection{Vector fields and characteristic directions}

In more abstract terms, \refprop{infgenblowup} says that the operator associating to a {\tid} germ its infinitesimal generator is functiorial (with respect to regular modifications adapted to $f$).

The next proposition explicits the link between characteristic/singular directions, and singularities of the infinitesimal generator.

\begin{prop}\label{prop:singularitiestidinfgen}
Let $f:(\nC^d,0) \to (\nC^d,0)$ be a {\tid} germ, and $v \in \nP_\nC^{d-1}$ be a tangent direction at $0$.
Denote by $\pi:X_\pi \to (\nC^d,0)$ the blow-up of the origin, by $f_\pi$ the lift of $f$ at $X_\pi$, and by $\chi_\pi$ the infinitesimal generator of $f_\pi$.
Let $D$ be an effective divisor whose support is contained in $\on{Fix}(f)$.
Then $v$ is a $D$-characteristic direction for $f$ if and only if the saturation of $\chi_\pi$ with respect to $\pi^{-1}D$ is singular at the corresponding point $p_v \in \pi^{-1}(0)$.
\end{prop}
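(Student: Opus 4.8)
The plan is to prove this by unwinding both sides into explicit local statements about the lowest-order terms and then matching them directly. On the germ side, after blowing up the origin we work in a chart (say the $x_j$-chart) in which $\pi(x_1,\ldots,x_d)=(x_1 x_j,\ldots,x_j,\ldots,x_j x_d)$, and the point $p_v$ corresponding to $v=[a_1:\cdots:a_d]$ (with $a_j\neq 0$) sits at finite coordinates in this chart. The key computational input is \refrmk{homocontribution}: the infinitesimal generator $\chi$ of $f$ agrees with $f-\id$ through order equal to the pure order plus one, so to lowest order the vector field $\chi$ on $(\nC^d,0)$ has leading part $\ell\cdot H_\ell$, where $H_\ell$ is the homogeneous part defining the singular directions and $\ell$ is the gcd cutting out the divisorial part of $\mathrm{Fix}(f)$.

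First I would reduce to the case $D=0$ (characteristic directions, $\psi=1$) and then handle the general effective $D\subseteq\mathrm{Fix}(f)$ by the saturation bookkeeping. For $D=0$: writing $\chi=\sum_i\chi^{(i)}(\vx)\,\partial_{x_i}$ with leading homogeneous part $H=(H_1,\ldots,H_d)$ of degree $m$, I would compute the lift $\chi_\pi$ in the $x_j$-chart via the standard change of variables $x_i\mapsto x_i x_j$ for $i\neq j$ and $x_j\mapsto x_j$. A direct calculation shows $\chi_\pi$ factors as $x_j^{m-1}$ times a vector field whose restriction to the exceptional divisor $\{x_j=0\}$ is, up to the common factor, governed precisely by the matrix action $H(v)-\lambda v$ read in projective coordinates: the components tangent to $E$ vanish at $p_v$ exactly when $H_i(v)=\tfrac{a_i}{a_j}H_j(v)$ for all $i\neq j$, i.e.\ when $H(v)$ is proportional to $v$. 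Thus $p_v$ is a singular point of the saturated lift iff $H(v)=\lambda v$, which is the definition of $v$ being a characteristic direction.

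The passage to general $D$ is where I would spend care, and it is the main obstacle. The saturation with respect to $\pi^{-1}D$ means dividing $\chi_\pi$ by the largest power of the local equation of $\pi^{-1}D$ that the components allow; since $\mathrm{supp}(D)\subseteq\mathrm{Fix}(f)$, the function $\psi$ defining $D$ divides every component of $f-\id$, hence (through the relevant order, by \refrmk{homocontribution}) divides every component of $\chi$. The point is that saturating by $\pi^{-1}D$ is exactly what replaces $H$ by $H_\psi$ in \refprop{singularitiestidinfgen}'s statement: after removing the factor $\psi$, the leading part of the saturated field is driven by $H_\psi$ rather than $H$, so that $p_v$ is singular for the saturated $\chi_\pi$ iff $H_\psi(v)=\lambda v$, i.e.\ iff $v$ is a $D$-characteristic direction. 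The delicate step is checking that the order of vanishing of $\pi^{-1}\psi$ along the exceptional divisor matches the extra power of $x_j$ one must strip off, so that the two saturations (the gcd-type saturation defining singular/$D$-characteristic directions downstairs, and the divisorial saturation of the lifted field upstairs) produce the same leading term. I would verify this by comparing the degree of the homogeneous leading part of $\psi$ with the vanishing order of $\chi_\pi$ along $\{x_j=0\}$, using that both are controlled by the same homogeneous data; once the exponents are matched, the characteristic-direction condition $H_\psi(v)=\lambda v$ falls out of the vanishing of the $E$-tangent components at $p_v$ exactly as in the $D=0$ case.
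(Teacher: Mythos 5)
Your proposal is correct and is essentially the paper's own argument: a local chart computation of the lifted infinitesimal generator, factoring out $x_j^{h-1}\,(\ell\circ\pi)$, and reading off that the saturated field is singular at $p_v$ precisely when $H_\ell(v)$ is proportional to $v$. The only difference is organizational: the paper avoids your two-step reduction (first $D=0$, then the exponent bookkeeping you call delicate) by writing $f=\id+\ell\,(H_\ell+\textrm{h.o.t.})$ from the outset, after which the factorization of $\chi_\pi$ --- justified by \refprop{infgenblowup}, which you use implicitly when you lift the vector field $\chi$ rather than the map $f$ --- treats all $D$ uniformly and the matching of exponents you were worried about comes out automatically.
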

\begin{proof}
Fix coordinates $\vx=(x_1,\ldots, x_d)$ on $(\nC^d,0)$ so that $v=[1:0:\cdots:0]$.
Write $D = \{\ell=0\}$. Then we can write $f$ as:
$$
f(\vx)=\vx + \ell(\vx) \big(H(\vx) + \mf{m}^{h+1}\big),
$$
where $\mf{m}$ is the maximal ideal at $0$, and $H:=H_\ell$ is a non-vanishing homogeneous polynomial of degree $h \geq 0$.
Then $v$ is $D$-characteristic if and only if $H_\ell(v)=\lambda v$ for some $\lambda \in \nC$.
We work in the $x_1$-chart. The lift $f_\pi$ of $f$ satisfies
\begin{align*}
x_1 \circ f_\pi &= x_1 + \ell \circ \pi(\vx) \Big(x_1^h H_1(1, x_2, \ldots, x_d) + \langle x_1^{h+1}\rangle\Big)\text,\\
x_j \circ f_\pi &= \frac{x_j + \ell \circ \pi(\vx) \Big(x_1^{h-1} H_j(1, x_2, \ldots, x_d) + \langle x_1^{h}\rangle\Big)}{1 + \ell \circ \pi(\vx) \Big(x_1^{h-1} H_1(1, x_2, \ldots, x_d) + \langle x_1^{h}\rangle\Big)}\text,
\end{align*}
where $H=(H_1, \ldots, H_d)$ and $j=2, \ldots, d$.
Notice that $\ord_0(\ell)+h \geq 2$, hence $f_\pi$ leaves $\pi^{-1}(0)$ fixed.
If $L$ denotes the homogeneous part of smallest degree of $\ell$, by \refprop{infgenblowup} the infinitesimal generator $\chi_\pi$ has the following form when developed near $p_v$ (corresponding to the origin in the coordinates $(x_1,\ldots, x_d)$):
\begin{equation*}
\chi_\pi=\Big( x_1^{h-1} \ell \circ \pi(\vx)\Big) \Bigg(x_1 H_1(1, x_2, \ldots, x_d) \partial_1 + \sum_{j=2}^d (H_j - x_j H_1)(1,x_2, \ldots, x_d) \partial_j + x_1\xi\Bigg).
\end{equation*}
where $\xi$ is a suitable vector field.
Notice also that the saturation of $\chi_\pi$ with respect to $\pi^{-1}(D)$ is exactly given by $\hat{\chi}_\pi = \Big( x_1^{h-1} \ell \circ \pi(\vx)\Big)^{-1} \hspace{-3mm}\chi_\pi$. 

Then, $v=[1:0:\cdots:0]$ is characteristic if and only if $H_j(1,0, \ldots, 0)=0$ for all $j=2, \ldots, d$.
But this happens if and only if $\hat{\chi}_\pi$ has a singularity at the origin.
\end{proof}

\begin{rmk}
Denote by $F_\pi$ the divisor of fixed points of $f_\pi$. 
On the one hand, when $f$ is not dicritical, then $F_\pi = \pi^{-1}F$, and the saturated infintesimal generator $\hat{\chi}_\pi$ is tangent to the exceptional divisor.
On the other hand, when $f$ is dicritical, then $F_\pi > \pi^{-1}F$ (as divisors). In this case all points of the exceptional divisor are singularities for $\hat{\chi}_\pi$, which is not saturated.

Notice that by \refrmk{homocontribution}, the set of singular points of a {\tid} germ $f$ coincides with the set of singular points of the saturated infinitesimal generator $\hat{\chi}_\pi$.
\end{rmk}

We extend the notion of singular points, using the interpretation in terms of saturated infinitesimal generator, for models not obtained as point modifications.

\begin{defi}
Let $X$ be a complex manifold, $Z \subset X$ a compact submanifold of $X$, and $f:(X,Z) \to (X,Z)$ a germ of holomorphic map fixing $Z$ pointwise, and for which $f$ is a {\tid} germ at any $p \in Z$.
Let $\pi:X_\pi \to (X,Z)$ be a modification over $Z$, adapted to $f$.
Denote by $f_\pi$ the lift of $f$ at $X_\pi$, and by $\hat{\chi}_\pi$ its saturated infinitesimal generator.
Then we say that $f_\pi$ is \emph{singular} at $p \in \pi^{-1}(Z)$ if $p$ is a singularity of $\hat{\chi}_\pi$.
\end{defi}

\subsection{Resolution of singularities of vector fields}

In \cite{panazzolo:ressingrealanalvfdim3}, the author provides an algorithm to resolve singularities for analytic vector fields locally defined at the origin of $\nR^3$.
He shows that, up to a finite sequence of weighted blow-ups, any real analytic vector field can be assumed to have \emph{elementary} singularities.
Up to further blow-ups, one can get even better final normal forms, called \emph{strongly elementary}.

In \cite[Theorem p.281]{mcquillan-panazzolo:almostetaleresfol}, these results have been transported to the complex-analytic case.
In this case the singularities are classified, following the minimal model problem for algebraic varieties, according to positivity properties of the canonical bundle of the associated foliation. Elementary singularities are called here \emph{log-canonical} (see \cite[I.ii.1 Definition]{mcquillan-panazzolo:almostetaleresfol}).
Again, a further improvement can be achieved, obtaining \emph{canonical} singularities.

One of the major difficulties in this setting is that weighted blow-ups don't preserve the class of smooth manifolds: one has to consider some mild singularities, namely, cyclic quotients, which correspond to working with orbifolds.

When studying our example given by \refeqn{example}, we will only need smooth models (see \refprop{exampleresolution}). We recall here the definition of log-canonical singularities in this setting.

\begin{defi}
Let $X$ be a smooth $3$-fold, $D$ a SNC divisor on $X$, and $\chi$ a vector field locally defined at a point $p \in D$.
Then $\chi$ is called \emph{log-canonical} if its $D$-saturation is tangent to $D$, and either regular, or singular at $p$ with a non-nilpotent linear part.
\end{defi}

In general, when working with a cyclic quotient singularity $(X,p)$, we can see it as the quotient of $(\nC^3,0)$ by the action of some finite group $\Gamma$.
Then a log-canonical foliation on $(X,p)$ is induced by a log-canonical $\Gamma$-invariant foliation on $(\nC^3,0)$ (see \cite[I.ii.5 Fact/Definition]{mcquillan-panazzolo:almostetaleresfol}).

We also need to recall the definition of (isolated) canonical singularities, (see \cite[III.i.2 Definition and III.i.3 Fact]{mcquillan-panazzolo:almostetaleresfol}).

\begin{defi}
Let $X$ be a smooth $3$-fold, $D$ a SNC divisor on $X$, and $\chi$ a saturated vector field at $X$ with an isolated singularity at $p \in D$.
Then $\chi$	is called ($D$-)\emph{radial} if it is tangent to $D$ and its linear part has eigenvalues $(\lambda_1, \lambda_2, \lambda_3) \in \lambda (\nN^*)^3$ for some $\lambda \neq 0$.

A vector field $\chi$ as above is called ($D$-)\emph{canonical} if it is ($D$-)log-canonical, but not ($D$-)radial.
\end{defi}

The reduction of singularities for vector fields can be then stated as follows.

\begin{thm}[{\cite[Theorem p.281]{mcquillan-panazzolo:almostetaleresfol}}]\label{thm:mcquillan-panazzolo-resolution}
Let $(X,\mc{F})$ be a holomorphic foliation by curves on a $3$-manifold $X$.
Then there exists a sequence of weighted blow-ups $\pi:(X_\pi, D_\pi, \mc{F}_\pi) \to (X,\mc{F})$ so that $\mc{F}_\pi$ has only log-canonical singularities. 
\end{thm}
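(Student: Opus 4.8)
The plan is to follow the strategy of Panazzolo in the real-analytic category \cite{panazzolo:ressingrealanalvfdim3} and transport it to the complex setting as in \cite{mcquillan-panazzolo:almostetaleresfol}. The argument is an induction on a carefully chosen numerical invariant of the singularity, driven by \emph{weighted} blow-ups rather than ordinary ones. The first point to absorb is that, in contrast with the surface case resolved by Seidenberg \cite{seidenberg:redsing}, ordinary point and curve blow-ups do \emph{not} suffice to reduce a vector field singularity on a $3$-fold to an elementary one: there are singularities (e.g.\ of saddle-node type along a curve) whose resolution genuinely requires blowing up with non-trivial weights. Consequently one is forced to leave the category of smooth manifolds and work with orbifolds, since a weighted blow-up introduces cyclic quotient singularities along the new exceptional components. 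The whole induction must therefore be set up in the orbifold category, with the foliation $\mathcal{F}$ replaced locally by a $\Gamma$-invariant foliation on a smooth cover, exactly as in the definition of log-canonical singularity recalled above.

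Second, I would fix, for a singular point $p$ of $\mathcal{F}$ lying on the SNC divisor $D$, a local \emph{adapted} form of the saturated vector field $\chi$ representing $\mathcal{F}$: coordinates in which $D$ is a union of coordinate hyperplanes, $\chi$ is tangent to $D$, and one can read off the \emph{Newton polyhedron} of $\chi$. From this data Panazzolo builds a lexicographically ordered invariant --- essentially a tuple recording the multiplicity together with a \emph{virtual height} extracted from the shape of the Newton polyhedron and its interaction with $D$ --- taking values in a well-ordered set. The construction is arranged so that the singularity is already elementary, i.e.\ log-canonical, precisely when this invariant sits at its minimal value. This is where the reinterpretation of \cite{mcquillan-panazzolo:almostetaleresfol} enters, identifying the terminal condition of the algorithm with log-canonicity of the foliated pair $(X,\mathcal{F})$ as measured by the positivity of the canonical class $K_{\mathcal{F}}$.

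Third, for the inductive step I would show that if $p$ is not yet log-canonical, then the Newton polyhedron singles out a canonical weighted center, and that the weighted blow-up $\pi$ along this center \emph{strictly decreases} the invariant at every singular point of $\mathcal{F}_\pi$ lying over $p$, while keeping the exceptional divisor $D_\pi$ simple normal crossings. Since the invariant lives in a well-ordered set, iterating this step terminates after finitely many weighted blow-ups, at which stage every singularity of $\mathcal{F}_\pi$ is log-canonical; this produces the desired $\pi:(X_\pi,D_\pi,\mathcal{F}_\pi)\to(X,\mathcal{F})$.

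The hard part --- and the genuinely new difficulty relative to dimension $2$ --- is proving that a single weighted blow-up really does decrease the invariant. The combinatorics of how the Newton polyhedron transforms under a weighted blow-up is delicate: there are configurations where the naive multiplicity fails to drop and where, before the decisive blow-up, one must insert auxiliary \emph{combinatorial} (preparation) blow-ups that reorganise the divisor without visibly improving the singularity. The invariant has to be designed finely enough to register progress even through these steps, and its monotonicity must be verified chart by chart on the orbifold blow-up, including across the cyclic quotient points. Finally, the passage from the real-analytic statement to the complex one requires the dictionary between Panazzolo's final forms and the minimal-model notions of (log-)canonical singularity, which is the technical heart of \cite{mcquillan-panazzolo:almostetaleresfol}.
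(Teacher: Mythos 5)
The first thing to note is that the paper does not prove this statement at all: it is quoted, with attribution, as Theorem p.281 of \cite{mcquillan-panazzolo:almostetaleresfol}, and the surrounding text only records the definitions (log-canonical, radial, canonical singularities) needed to invoke it, plus the remark that ``log-canonical'' can be upgraded to ``canonical'' by a further result of the same paper. So there is no internal proof to compare yours against; the only meaningful comparison is with the cited works themselves.

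Measured against those, your proposal correctly identifies the architecture: Panazzolo's algorithm \cite{panazzolo:ressingrealanalvfdim3} driven by weighted blow-ups and a lexicographically ordered invariant read off a Newton polyhedron adapted to the divisor, the unavoidable passage to orbifolds because weighted blow-ups create cyclic quotient points, and the McQuillan--Panazzolo dictionary identifying the terminal ``elementary'' forms with log-canonicity of the foliated pair measured by $K_{\mathcal{F}}$. But as a \emph{proof} it has a genuine gap---in fact it consists almost entirely of gaps: the invariant is never defined; the central claim that the weighted blow-up along the center singled out by the Newton polyhedron strictly decreases the invariant at every singular point above it is asserted rather than verified (you yourself flag it as ``the hard part''); the auxiliary preparation blow-ups are mentioned but their interaction with monotonicity is not made precise; and the real-to-complex transport is compressed into the phrase ``requires the dictionary,'' which is itself the technical heart of \cite{mcquillan-panazzolo:almostetaleresfol}. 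What you have written is an accurate, well-organized orientation to where the proof lives and how it runs, and in the context of this paper (which treats the theorem as a black box) that is all that is ever used; but if the intent was to supply an independent proof, every load-bearing step remains to be carried out.
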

Moreover, ``log-canonical'' in the previous statement can be replaced with ``canonical'' by \cite[III.ii.2 Resolution]{mcquillan-panazzolo:almostetaleresfol}.
In the present paper, both log-canonical and canonical singularities are considered (without further mention) with respect to the exceptional divisor $D$ whose support is $\pi^{-1}(0)$.

\subsection{Parabolic manifolds}
\begin{defi}
Let $f:(\nC^d,0)\to (\nC^d,0)$ be a {\tid} germ. 
A \emph{parabolic manifold} for $f$ is a complex manifold $\Delta \subseteq \nC^d$ of positive dimension such that
\begin{itemize}
\item $0 \in \partial \Delta$;
\item $\Delta$ is $f$-invariant, and $f^n(z) \to 0$ for all $z \in \Delta$ as $n \to +\infty$, uniformly on compact subsets of $\Delta$.
\end{itemize}
When moreover it has dimension $1$ (resp., dimension $d$), it is called a \emph{parabolic curve} (resp., \emph{parabolic domain}).
\end{defi}

\begin{rmk}
Sometimes parabolic manifolds are also asked to be simply connected, and not simply connected parabolic manifolds are sometimes called stable manifolds.
To avoid confusion with respect to the classical stable/unstable manifolds, we will stick with the terminology of ``parabolic manifolds'', and specify if they are simply connected if necessary.
\end{rmk}

Parabolic manifolds are often attached to complex directions, in the following sense.
\begin{defi}
Let $f:(\nC^d,0)\to (\nC^d,0)$ be a {\tid} germ. 
Denote by $[\cdot]$ the canonical projection from $\nC^d\setminus \{0\}$ to $\nP_{\nC}^{d-1}$, and let $v \in \nP_{\nC}^{d-1}$ be a tangent direction at $0$.
Let $p$ be a point in $\nC^d$.
We say that its orbit \emph{converges to the origin tangent to $v$} if $f^n(p) \to 0$ and $[f^n(p)] \to v$ when $n \to +\infty$.

We say that a parabolic manifold $\Delta$ for $f$ is tangent to $v$ if the orbit of every point $p \in \Delta$ converges to the origin tangent to $v$.
\end{defi}

\begin{prop}[{\cite[Proposition 2.3]{hakim:analytictransformations}}]
Let $f:(\nC^d,0)\to (\nC^d,0)$ be a \tid\ germ.
If the orbit of a point converges to the origin tangent to a direction $v$ then $v$ is a characteristic direction.
\end{prop}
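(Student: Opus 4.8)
The plan is to argue directly in polar-type coordinates and show that a limit direction which is \emph{not} characteristic is incompatible with tangential convergence. Write $f = \id + H + (\text{higher order})$, where $H$ is the homogeneous part of smallest degree of $f-\id$, of degree $k \ge 2$ (the order of $f$; see \refdef{directions}). Fix a unit representative $v$ of the limit direction, and for an orbit $p_{n+1}=f(p_n)$ with $p_n \to 0$ set $\rho_n = \|p_n\| > 0$ and $u_n = p_n/\rho_n$, so that $u_n \to v$ by hypothesis. Expanding one step of the iteration gives
\begin{equation*}
p_{n+1} = \rho_n u_n + \rho_n^{k}\, H(u_n) + O(\rho_n^{k+1}).
\end{equation*}
Decomposing $H(u_n) = b_n u_n + w_n$ with $w_n \perp u_n$ for the Hermitian product, one has $w_n \to w_\infty := H(v) - \langle H(v),v\rangle v$, and $v$ is a characteristic direction precisely when $w_\infty = 0$. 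I would therefore assume $w_\infty \neq 0$ and derive a contradiction.

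The argument rests on two estimates. First, a \emph{radial lower bound}: since $f$ is \tid{}, its attraction is at most polynomial, i.e. $\|f(p)\| \ge \|p\| - C\|p\|^{k}$ for $\|p\|$ small, so $\rho_{n+1} \ge \rho_n(1 - C\rho_n^{k-1})$. Writing $a_n = \rho_n^{k-1}$ this yields $a_{n+1} \ge a_n - C' a_n^2$, whence $1/a_n$ grows at most linearly, $a_n \ge c/n$, and in particular $\sum_n \rho_n^{k-1} = +\infty$. Second, a \emph{transverse estimate}: computing $u_{n+1} = p_{n+1}/\rho_{n+1}$ and projecting onto the fixed hyperplane $v^\perp$ (denote by $P$ the Hermitian projection), the radial factor and the component of the displacement along $u_n$ drop out to leading order, and using $P(u_n)\to P(v)=0$ and $P(w_\infty)=w_\infty$ one gets
\begin{equation*}
P(u_{n+1} - u_n) = \rho_n^{k-1}\, P(w_n) + O(\rho_n^{2(k-1)}) = \rho_n^{k-1}\bigl(w_\infty + o(1)\bigr).
\end{equation*}

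Finally I would combine the two estimates by pairing with the fixed vector $w_\infty$. Taking the real part of the Hermitian product of the transverse estimate with $w_\infty$ gives
\begin{equation*}
\operatorname{Re}\langle P(u_{n+1}-u_n),\, w_\infty\rangle = \rho_n^{k-1}\bigl(\|w_\infty\|^2 + o(1)\bigr) \ge \tfrac{1}{2}\|w_\infty\|^2\, \rho_n^{k-1}
\end{equation*}
for $n$ large, so that $\sum_n \operatorname{Re}\langle P(u_{n+1}-u_n), w_\infty\rangle = +\infty$ by the radial lower bound. On the other hand the left-hand side telescopes: since $u_n \to v$ and $P$ is continuous and linear, $\sum_n P(u_{n+1}-u_n) = P(v-u_0)$ converges, hence so does its pairing with $w_\infty$. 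This contradiction forces $w_\infty = 0$, i.e. $v$ is characteristic. The step I expect to require the most care is ruling out \emph{conditional} convergence of the transverse displacements, since a priori the terms $\rho_n^{k} w_n$ could cancel. This is exactly what the two ingredients prevent: working with the genuine (real, positive) norm $\rho_n$ rather than a complex chart coordinate removes any phase from the radial factor, and pairing against the fixed limit direction $w_\infty$ turns the sum into a series of eventually positive terms, while the parabolic (non-exponential) nature of the attraction guarantees $\sum_n \rho_n^{k-1}=+\infty$, so that the radial decay is too slow to save convergence.
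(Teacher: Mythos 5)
Your argument has a genuine gap, and it sits exactly at the step you flagged as safe. You fix a unit representative $v$ and assert that $u_n = p_n/\|p_n\| \to v$ ``by hypothesis''. But the hypothesis (see the paper's definition of tangent convergence) is that $[f^n(p)] \to v$ in $\nP_{\nC}^{d-1}$: only the \emph{projective} classes converge. This means $u_n = e^{\ui\phi_n}(v+\eta_n)$ with $\eta_n \to 0$ but with phases $\phi_n$ that are a priori uncontrolled; convergence of $u_n$ in the sphere is strictly stronger than what is given. With the phases present, one computes $w_n = H(u_n) - \langle H(u_n),u_n\rangle u_n = e^{\ui k\phi_n}\bigl(w_\infty + o(1)\bigr)$, so $w_n$ does \emph{not} converge to $w_\infty$; your transverse estimate becomes $P(u_{n+1}-u_n) = \rho_n^{k-1}\bigl(e^{\ui k\phi_n}w_\infty + o(1)\bigr)$, and the pairing $\Re\langle P(u_{n+1}-u_n), w_\infty\rangle = \rho_n^{k-1}\bigl(\cos(k\phi_n)\,\|w_\infty\|^2+o(1)\bigr)$ is no longer eventually positive, so the contradiction with the telescoping sum evaporates. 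The remark in your last paragraph that using the real norm ``removes any phase'' is precisely where this hides: the phase is not in the radial factor, it is in the angular variable $u_n$ itself.

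Nor can the rotation be dismissed as negligible: the per-step phase drift comes from the tangential term $\ui\rho_n^{k-1}\bigl(\Im\langle H(u_n),u_n\rangle\bigr)u_n$ (which you also silently dropped when claiming the component along $u_n$ ``drops out''; only its real part does), so it is of size $O(\rho_n^{k-1})$, and your own radial estimate $\sum_n\rho_n^{k-1}=+\infty$ is exactly what permits the cumulative drift to be unbounded. Ruling out the resulting conditional cancellation in $\sum_n \rho_n^{k-1}e^{\ui k\phi_n}\bigl(w_\infty+o(1)\bigr)$ is the actual content of the result --- note the paper itself offers no proof but cites \cite{hakim:analytictransformations}. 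There one works in an affine chart, where the role of your real $\rho_n^{k-1}$ is played by the complex quantity $x_n^{k-1}$ whose argument must be controlled, e.g.\ by separating the case $\langle H(v),v\rangle \neq 0$, where $1/x_n^{k-1}$ grows linearly and its phase stabilizes (after which your positivity argument does apply), from the case $\langle H(v),v\rangle = 0$, where the drift per step is $o(\rho_n^{k-1})$ and a separate cone-versus-rotation argument is needed. As written, your proof establishes the proposition only under the stronger hypothesis that $p_n/\|p_n\|$ converges in the unit sphere, which is not what the statement assumes.
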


The following result is a geometric reformulation of \cite[Proposition 3.1]{abate-tovena:paraboliccurvesC3}.

\begin{prop}\label{prop:checkonlysingularup}
Let $f:(\nC^d,0)\to (\nC^d,0)$ be a {\tid} germ.
Suppose that there exists an effective divisor $D$ with simple normal crossings (SNC) at $0$ and supported in $\on{Fix}{f}$, so that the $D$-saturated infinitesimal generator $\hat{\chi}$ of $f$ is regular at $0$, and tangent to $D$.
Then no inﬁnite orbit for $f$ can stay arbitrarily close to $0$.
\end{prop}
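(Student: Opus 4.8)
The plan is to show that the existence of a regular, tangent-to-$D$ infinitesimal generator $\hat\chi$ forces any orbit approaching $0$ to be eventually trapped by the flow of $\hat\chi$, which moves points away from the origin along $D$. The key conceptual point is the relation established in \refprop{infgenblowup} and \refprop{singularitiestidinfgen}: the saturated infinitesimal generator governs the local dynamics, and having $0$ be a \emph{regular} (non-singular) point of $\hat\chi$ means $\hat\chi(0) \neq 0$, so there is a genuine nonzero flow direction at the origin. Since $\hat\chi$ is tangent to $D$ and $D$ is SNC and supported in $\on{Fix}(f)$, the flow of $\hat\chi$ preserves $D$ and its strata.

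First I would set up suitable local coordinates $(x_1,\ldots,x_d)$ adapted to the SNC divisor, writing $D=\{x_1^{a_1}\cdots x_k^{a_k}=0\}$, so that $f(\vx)=\vx+\psi(\vx)\big(\hat\chi_0+\textrm{h.o.t.}\big)$ where $\psi=x_1^{a_1}\cdots x_k^{a_k}$ is the defining function of $D$ and $\hat\chi_0:=\hat\chi(0)\neq 0$ is the (constant, nonzero) leading value of the saturated generator. Because $\hat\chi$ is tangent to $D$, we may assume after reordering that $\hat\chi$ has a component transverse to all the divisorial coordinates it does not divide, i.e. there is some coordinate $x_{j_0}$ (with $j_0>k$, or tangent direction along $D$) in which $\hat\chi$ has a nonvanishing constant term. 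The essential quantitative estimate is then: for $\vx$ near $0$, one of the coordinate functions, call it $u(\vx)$, satisfies
\[
u\circ f(\vx) = u(\vx) + \psi(\vx)\big(c + o(1)\big), \qquad c\neq 0,
\]
so that $u$ increases (or, choosing the right sign/monomial, the quantity $\psi$ itself increases) along the orbit at a controlled rate dictated by $\psi$.

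Next I would run the escape argument. Suppose toward a contradiction that there is an infinite orbit $\{f^n(p)\}$ staying in a fixed small ball $B$ around $0$. Along such an orbit $\psi(f^n(p))$ is bounded; I would use the estimate above to show that the increments $u\circ f^{n+1}-u\circ f^n$ have a definite sign and are summable only if $\psi(f^n(p))\to 0$, i.e. the orbit must accumulate on $D$. On $D$, however, the tangency of $\hat\chi$ and its regularity let me replace $\psi$ by the restriction of the generator to the appropriate stratum and iterate the argument one dimension down: on each stratum of $D$ the restricted germ again has a nonvanishing saturated generator, so no orbit can stabilize there either. Descending through the finitely many strata of the SNC divisor, I reach the $0$-dimensional stratum, where $\hat\chi_0\neq 0$ directly contradicts the orbit remaining near $0$. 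I expect the cleanest packaging is via a strictly monotone real-valued ``Lyapunov'' function built from $\Re(u)$ or $\log|\psi|$ that is forced to escape every compact neighborhood of $0$, giving the contradiction in one stroke.

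The main obstacle will be the bookkeeping at the divisorial strata: the factor $\psi$ degenerates precisely where the orbit is trying to accumulate, so the naive estimate $u\circ f^n - u\circ f^0 \sim n\cdot c$ fails and one must instead track the interplay between the decay of $\psi(f^n(p))$ and the motion transverse to $D$. Handling this correctly — making sure that tangency of $\hat\chi$ to $D$ does not let an orbit slide into $D$ and there get stuck at a point where $\hat\chi$ only \emph{appears} to vanish because of the saturation — is the technical heart of the argument, and it is exactly where the regularity hypothesis on the \emph{saturated} $\hat\chi$ (rather than on $\chi$ itself) is indispensable.
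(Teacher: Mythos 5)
Your coordinate setup matches the paper's: writing $D=\{x^a=0\}$ and $f(x)=x+x^a g(x)$, tangency of $\hat{\chi}$ to $D$ amounts to $x_k \mid x_k\circ g$ whenever $a_k>0$, and regularity amounts to the existence of some $k$ with $a_k=0$ and $x_k\circ g(0)\neq 0$. But from that point on the paper does not argue dynamically at all: it observes that these two conditions are exactly the hypotheses of \cite[Proposition 3.1]{abate-tovena:paraboliccurvesC3} and concludes by citation (the proposition is introduced in the paper precisely as a geometric reformulation of that result). Your proposal instead tries to reprove that analytic statement, and it has a genuine gap exactly at the step you yourself defer to as ``the technical heart''. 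The monotone-Lyapunov idea does not work as stated: the increments $u\circ f^{n+1}(p)-u\circ f^{n}(p)=\psi(f^n(p))\,(c+o(1))$ are complex numbers whose direction is governed by $\arg\psi(f^n(p))$, so neither $\Re(u)$ nor $\log|\psi|$ is monotone a priori. Worse, for an orbit accumulating on $D$ the quantity $\psi_n:=\psi(f^n(p))$ obeys a parabolic-type recursion $\psi_{n+1}=\psi_n\big(1+O(\psi_n)\big)$, whose typical decay $|\psi_n|\sim C/n$ is \emph{not} summable; deciding whether the transverse coordinate $x_k$ then drifts off (harmonic-series divergence, which is what ultimately expels the orbit) or could instead stabilize requires precisely the Leau--Fatou-type estimates that constitute Abate--Tovena's proof, and your sketch neither reproduces nor cites them.

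There is also a conceptual error in your fallback argument: the induction ``one dimension down the strata of $D$'' is vacuous, because $D$ is supported in $\on{Fix}(f)$, so $f$ restricted to $D$ (and to each of its strata) is the identity. There is no induced dynamics on the strata, and no orbit of $f$ ever lies in $D$; in fact $\chi=\psi\hat{\chi}$ vanishes identically along $D$, so ``the restricted germ with nonvanishing saturated generator'' generates no motion of actual orbits. The difficulty is not orbits moving \emph{inside} $D$ but orbits off $D$ accumulating \emph{onto} it, where the speed factor $\psi$ degenerates --- a transverse, quantitative problem that descent through strata cannot reach. To repair the proposal, either carry out the full estimate (essentially reproving \cite[Proposition 3.1]{abate-tovena:paraboliccurvesC3}), or do what the paper does: verify the two coordinate conditions above and invoke that proposition directly.
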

\begin{proof} In what follows, $x^a = x_1^{a_1} \cdots x_d^{a_d}$.
By our assumptions, we can find local coordinates at $0$ so that $D=\{x^a=0\}$ for some $a \in \nN^d$, and
$$
f(x)=(x + x^a g(x)).
$$
Here $g:(\nC^d,0) \to \nC^d$ is a holomorphic map, with homogeneous part of smallest degree denoted by $G$. The multiplication $x^a g(x)$ is meant as the product of a scalar $x^a$ and a vector $g(x)$.

The saturated infintesimal generator $\hat{\chi}$ is tangent to $D$ if and only if $x_k | x_k \circ g$ for all $k$ satisfying $a_k > 0$.
It is regular if and only if there exists $k$ so that $a_k = 0$ and $x_k \circ g(0) \neq 0$, where $x_k \circ g$ is the $k$-th coordinate of $g$.

The result follows from \cite[Proposition 3.1]{abate-tovena:paraboliccurvesC3}.
\end{proof}

Suppose we have a {\tid} germ $f:(\nC^d,0)\to (\nC^d,0)$. We apply the previous proposition to the lift of $f$ to the blow-up of $0$, obtaining the following.

\begin{cor}\label{cor:singularsuffices}
Let $f:(\nC^d,0)\to (\nC^d,0)$ be a {\tid} germ, and let $D$ be a (possibly trivial) SNC divisor with support contained in $\on{Fix}(f)$.
Suppose that $f$ is not dicritical, and the saturation of the infinitesimal generator $\hat{\chi}$ of $f$ is tangent to $D$.

If an orbit converges to $0$ tangent to a (characteristic) direction $v$, then $v$ is singular (with respect to $D$).
\end{cor}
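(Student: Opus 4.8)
The plan is to reduce the statement to \refprop{checkonlysingularup} applied on the blow-up of the origin. Suppose an orbit $(f^n(p))_n$ converges to $0$ tangent to a direction $v$. By the preceding \refprop{} of Hakim, $v$ is automatically a characteristic direction, so there is nothing to prove on that front; the content is to upgrade ``characteristic'' to ``singular''. First I would pass to the blow-up $\pi:X_\pi \to (\nC^d,0)$ of the origin, let $f_\pi$ be the lift of $f$, and let $p_v \in \pi^{-1}(0)$ be the point corresponding to $v$. Since $\pi$ is a biholomorphism away from the exceptional divisor, the orbit of $p$ lifts to an orbit of $f_\pi$, and the tangency condition $[f^n(p)] \to v$ translates precisely into the statement that the lifted orbit accumulates at $p_v$. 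Thus we obtain an infinite orbit of $f_\pi$ that stays arbitrarily close to $p_v$.

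Next I would argue by contraposition, exactly as the corollary is set up to allow. Suppose $v$ is \emph{not} singular (with respect to $D$). By \refprop{singularitiestidinfgen}, $v$ being a $D$-characteristic direction is equivalent to the $\pi^{-1}D$-saturation $\hat\chi_\pi$ of the lifted infinitesimal generator being singular at $p_v$; since $v$ is characteristic but not singular, I must be careful about which divisor is in play. Here the hypothesis that $f$ is \emph{not dicritical} enters: by the remark following \refprop{singularitiestidinfgen}, non-dicriticality gives $F_\pi = \pi^{-1}F$, so the saturated generator $\hat\chi_\pi$ is tangent to the exceptional divisor $E=\pi^{-1}(0)$, and the relevant divisor at $p_v$ is the SNC divisor $D_\pi := \pi^{-1}D + E$ (with $E$ appearing because $\hat{\chi}_\pi$ is tangent to it and $E \subseteq \on{Fix}(f_\pi)$). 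The assumption that $v$ is not singular then says precisely that $\hat\chi_\pi$ is \emph{regular} at $p_v$, while tangency to $D_\pi$ is guaranteed by non-dicriticality together with the hypothesis that $\hat\chi$ is tangent to $D$ (which lifts to tangency of $\hat\chi_\pi$ to $\pi^{-1}D$).

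With these two facts in hand — $\hat\chi_\pi$ regular at $p_v$ and tangent to the SNC divisor $D_\pi$ — I would apply \refprop{checkonlysingularup} to the germ $f_\pi$ at $p_v$. That proposition yields that no infinite orbit of $f_\pi$ can stay arbitrarily close to $p_v$, contradicting the accumulating lifted orbit produced in the first step. Hence $v$ must be singular, completing the contrapositive.

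The main obstacle I expect is bookkeeping the divisor at $p_v$ correctly: one must verify that $D_\pi = \pi^{-1}D + E$ is genuinely SNC at $p_v$, that $E$ is a legitimate component to include (supported in $\on{Fix}(f_\pi)$, which is where non-dicriticality is essential since otherwise $F_\pi > \pi^{-1}F$ and $\hat\chi_\pi$ is not even saturated along $E$), and that ``$v$ not singular'' matches exactly the regularity hypothesis of \refprop{checkonlysingularup} rather than some weaker condition. Once the translation via \refprop{singularitiestidinfgen} and the non-dicritical remark is set up cleanly, the analytic heart of the argument is entirely delegated to \refprop{checkonlysingularup}, so no new estimates are needed.
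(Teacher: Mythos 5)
Your proof is correct and follows essentially the same route as the paper's own: blow up the origin, combine non-dicriticality with the tangency hypothesis to conclude that $\hat{\chi}_\pi$ is tangent to $\pi^{-1}(D) \cup E$, identify ``$v$ singular with respect to $D$'' with ``$\hat{\chi}_\pi$ singular at $p_v$'' via \refprop{singularitiestidinfgen}, and conclude by \refprop{checkonlysingularup}. The only difference is presentational: you make explicit the contrapositive and the lifting of the converging orbit to an orbit of $f_\pi$ accumulating at $p_v$, which the paper leaves implicit.
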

\begin{proof}
Let $\pi:X_\pi \to (\nC^d,0)$ be the blow-up of the origin, and let $f_\pi$ be the lift of $f$ on $X_\pi$.
The condition on the non-dicriticity of $f$ corresponds to the fact that the saturation $\hat{\chi}_\pi$ of the infinitesimal generator of $f_\pi$ is tangent to the exceptional divisor $E=\pi^{-1}(0)$.
Together with the hypothesis of tangency to $D$, we get that $\hat{\chi}_\pi$ is tangent to $\pi^{-1}(D) \cup E$.

Finally, a direction $v$ is singular with respect to $D$ if and only if $\hat{\chi}_\pi$ is singular at the associated point $p \in E$.

We conclude by \refprop{checkonlysingularup}.
\end{proof}

\refcor{singularsuffices} can be restated in terms of point blow-ups.
Under the same assumptions (and using the same notations as in the proof), if a orbit converges to a point $p \in \pi^{-1}(0)$, then $p$ is a singular point for $\hat{\chi}_\pi$.

In general, \refprop{checkonlysingularup} forces $\hat{\chi}_\pi$ to be either singular at $p$, or regular at $p$ and transverse to the exceptional divisor.
The latter case is excluded thanks to the non-dicriticity hypothesis on $f$.

When working with blow-up of curves, we lack the correspondence between characteristic directions of $f$ and singular points of $f_\pi$, so we apply directly \refprop{checkonlysingularup} in this case.

Verifying these conditions during the proof of \refthm{mainnonondeg} is straightforward and left to the reader. 

\subsection{Invariant curves and point modifications}

Point modifications allow to study (formal) curves.
We first introduce some terminology.
\begin{defi}
An \emph{increasing sequence of infinitely near points} (above the origin) is a sequence $\mf{p}=(p_n)_{n \in \nN}$ of infinitely near points, which starts with $p_0 = 0 \in \nC^d$ and satisfying the following property: for any $n \in \nN$, $p_{n+1}$ is a point in the exceptional divisor of the blow-up $\pi_n:X_{n+1} \to X_n$ of $p_n$ (where $X_0= \nC^d$).
We set $\hat{\pi}_n = \pi_0 \circ \ldots \circ \pi_{n-1}:X_{n} \to X_0$.
\end{defi}

\begin{prop}\label{prop:constructioncurve}
Let $\mf{p}=(p_n)_n$ be an increasing sequence of infinitely near points.
Suppose that for any $n$, $p_n$ is a smooth point of $\hat{\pi}_n^{-1}(0)$, i.e., it belongs to $\pi_{n-1}^{-1}(p_{n-1})$ but not to the strict transform of $\hat{\pi}_{n-1}^{-1}(0)$.

Then there exists a unique (possibly non-convergent) smooth curve $C=C_{\mf{p}}$, with the property that the strict transform $C_n$ of $C$ with respect to $\hat{\pi}_n$ passes through $p_n$.
\end{prop}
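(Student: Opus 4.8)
The plan is to construct $C$ explicitly as a formal parametrized arc, reading off its Taylor coefficients from the combinatorial data encoded by the sequence $\mf{p}$, and then to prove uniqueness by showing that these coefficients are forced. The key is to translate the smoothness hypothesis into a normalization of coordinates at each stage.

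First I would set up adapted coordinates at each $p_n$. The hypothesis that $p_n$ is a smooth point of $\hat{\pi}_n^{-1}(0)$ means exactly that, near $p_n$, the exceptional divisor consists of a single smooth component, namely the last one created, $\pi_{n-1}^{-1}(p_{n-1})$, and that $p_n$ avoids the strict transforms of all older components. Hence I can choose local coordinates $(u_1^{(n)},\dots,u_d^{(n)})$ centred at $p_n$ in which this component is $\{u_1^{(n)}=0\}$ (for $n=0$ there is no divisor, and after a linear change any centred coordinates with $v_1$ in the $u_1$-chart will do). Using the chart formulas recalled above, the freeness of $p_{n+1}$ — that it does not lie on the strict transform of $\{u_1^{(n)}=0\}$ — says precisely that the tangent direction at $p_n$ determined by $p_{n+1}$ is transverse to $\{u_1^{(n)}=0\}$, so $p_{n+1}$ lies in the $u_1^{(n)}$-chart of the blow-up $\pi_n$ and corresponds to a direction $[1:b_2^{(n+1)}:\cdots:b_d^{(n+1)}]$ for some $b_j^{(n+1)}\in\nC$. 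Recentring at $p_{n+1}$, the blow-up map $\pi_n$ reads
\[
u_1^{(n)}=u_1^{(n+1)},\qquad u_j^{(n)}=u_1^{(n+1)}\bigl(b_j^{(n+1)}+u_j^{(n+1)}\bigr)\quad(j\ge 2),
\]
and the new component is again $\{u_1^{(n+1)}=0\}$, so the normalization propagates up the tower.

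Next I would build the curve by looking for a germ $C^{(n)}$ at $p_n$ of the form $u_1^{(n)}=s$, $u_j^{(n)}=\phi_j^{(n)}(s)$ with $\phi_j^{(n)}(0)=0$; such a germ is automatically smooth and transverse to $\{u_1^{(n)}=0\}$. Compatibility (``the strict transform of $C^{(n+1)}$ equals $C^{(n)}$'') translates, via the transition map above, into the peeling relation $\phi_j^{(n)}(s)=s\bigl(b_j^{(n+1)}+\phi_j^{(n+1)}(s)\bigr)$. Iterating gives the telescoping identity $\phi_j^{(0)}(s)=\sum_{n=1}^N b_j^{(n)}s^n + s^N\phi_j^{(N)}(s)$, and since $\phi_j^{(N)}(s)=O(s)$ the remainder has order $>N$. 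Letting $N\to\infty$ I obtain the formal power series
\[
\phi_j(s):=\phi_j^{(0)}(s)=\sum_{n\ge 1}b_j^{(n)}\,s^n,
\]
and I define $C$ to be the formal arc $s\mapsto(s,\phi_2(s),\dots,\phi_d(s))$ in the original coordinates. This is an immersed, hence smooth, formal curve through $0$, and its strict transforms are computed by the same peeling operation, which by construction passes through each $p_n$. Since the $b_j^{(n)}$ may grow arbitrarily fast, $C$ is in general only formal, as the statement allows. For uniqueness, if $C'$ is any smooth curve whose strict transform passes through every $p_n$, then passing through $p_0=0$ with strict transform through $p_1$ forces its tangent direction to be $v_1$; after the same recentring it is parametrized by $s\mapsto(s,\psi_2(s),\dots)$ with $\psi_j(0)=0$, and the condition that the strict transform under $\hat{\pi}_n$ pass through $p_n$ is, by the peeling relation, exactly that the coefficient of $s^n$ in $\psi_j$ equal $b_j^{(n)}$. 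By induction on $n$ all coefficients are determined, so $\psi_j=\phi_j$ and $C'=C$.

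The bookkeeping of coordinate changes through the tower is routine; the one point requiring care — and the crux of the argument — is the translation of the smoothness/freeness hypothesis into the assertion that at every stage a single coordinate hyperplane carries the exceptional divisor and $p_{n+1}$ sits in the transverse chart. It is precisely this that makes the transition maps affine in the displayed form and the resulting arc smooth; were some $p_n$ instead a satellite point lying on two exceptional components, the same construction would produce a singular branch.
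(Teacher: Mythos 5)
Your proof is correct and follows essentially the same route as the paper's: both construct the curve explicitly by reading off its Taylor coefficients from the coordinates of the points $p_n$ in a fixed transverse chart (the paper uses directions $[a^{(n)}:1]$ in the $x_d$-chart, you use $[1:b^{(n)}]$ in the $u_1$-chart), and both prove uniqueness by noting that the strict-transform conditions determine the coefficients inductively. Your "peeling relation" and telescoping identity just make explicit the "simple computation" the paper leaves to the reader.
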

\begin{proof}
This can be done explicitly as follows.
Without losing generality, we may assume that $p_1$ is the point associated to the direction $[a^{(1)}_1: \cdots : a^{(1)}_{d-1}:1]$ for some $a^{(1)}=(a^{(1)}_1, \ldots, a^{(1)}_{d-1}) \in \nC^{d-1}$.
This allows us to make computations in the $x_d$-chart, and write $\pi_1(x_1, \ldots, x_d) = (x_1 x_d, \ldots, x_{d-1} x_d, x_d)$.
We now take the local coordinates $(x_1-a^{(1)}_1, \ldots, x_{d-1}-a^{(1)}_{d-1})$ at $p_1$.
The smoothness hypothesis ensures that $p_2$ is associated to a point of the form $[a^{(2)}:1]$ with $a^{(2)} \in \nC^{d-1}$.
By induction we obtain that $p_n$ is associated to a point of the form $(a^{(n)}:1)$ for some $a^{(n)} \in \nC^{d-1}$, when all computations for $\pi_n$ are made in the $x_d$-chart (after having translated coordinates as shown above).

We consider the curve $C$, parametrized by $\big(x_1(t), \ldots, x_{d-1}(t), t\big)$, where
$$
x_k(t)=\sum_{n=1}^\infty a^{(n)}_k t^n\text.
$$
It is a simple computation to show that $C$ satisfies the statement. Moreover, a curve $C$ tangent to a vector of the form $(a:1)$ is parametrized uniquely as $(x_1(t), \ldots, x_{d-1}(t),t)$ for some formal power series $x_k(t) \in \nC\fps{t}$ for $k=1, \ldots, d-1$, whose linear terms are uniquely determined by $a\in\nC^{d-1}$,  from which we infer the uniqueness of $C$.
\end{proof}

\begin{rmk}
Notice also that if an increasing sequence of infinitely near points does not satisfy the condition of \refprop{constructioncurve}, at least starting from a certain $n_0$, then it does not identify a curve (not even singular).
In fact, any truncation $(p_n)_{n \leq m}$ identifies a set $\mc{C}_m$ of curves tangent to them.
Every time there is a singular point $p_n$ of $\hat{\pi}_n^{-1}(0)$, followed by a smooth point $p_{n+1}$, the minimal multiplicity of the curves in $\mc{C}_m$ strictly increases.
Since curves are desingularized by blowing-up points (for irreducible curves, the intersection of the strict transform of the curve with the exceptional divisor, see e.g. \cite[Section 3.2]{campillo-castellanos:curvesingularities}), and smooth curves are characterized by sequences of smooth infinitely near points, the condition in \refprop{constructioncurve} is also necessary.
\end{rmk}

Given an irreducible curve $C$, we denote by $\mf{p}=\mf{p}(C)$ the increasing sequence of infinitely near points attached to it, starting with $p_0=0\in\nC^d$. 

We want to apply \refprop{constructioncurve} to increasing sequences of infinitely near points which are singular points for the lifts of a {\tid} germ.

\begin{prop}\label{prop:invariantcurve}
Let $f:(\nC^d,0) \to (\nC^d,0)$ be a {\tid} germ, and $\mf{p}=(p_n)_n$ be an increasing sequence of infinitely near points satisfying the hypothesis of \refprop{constructioncurve}.
Let $C=C_{\mf{p}}$ be the formal curve associated to $\mf{p}$.
If $p_n$ are singular points for the lift of $f$ on $X_n$ for all $n\in \nN$, then $C$ is $f$-invariant.
\end{prop}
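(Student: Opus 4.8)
The plan is to deduce invariance of $C$ under $f$ from invariance of $C$ under the (formal) foliation $\mathcal F$ generated by the infinitesimal generator $\chi=\log f$. Since $f=\exp\chi$ is the time-one map of the flow of $\chi$, any formal curve which is a union of leaves of $\mathcal F$ is preserved by $f$; so it suffices to prove that the smooth formal curve $C$ of \refprop{constructioncurve} is $\mathcal F$-invariant, i.e. that $\chi$ is tangent to $C$. (As in the proof of \refprop{infgenblowup}, one passes through truncations of $\chi$ and \refrmk{homocontribution} to justify this when $\chi$ is merely formal.) Thus the whole problem becomes showing that the singular-point hypothesis forces $\chi$ to be tangent to $C$.

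The translation of the hypotheses rests on functoriality of the infinitesimal generator under point blow-ups. By \refprop{infgenblowup}, the lift $\chi_n$ of $\chi$ to $X_n$ is the infinitesimal generator of $f_n$, so its saturation $\hat{\chi}_n$ generates the lifted foliation $\mathcal F_n$, and $\mathcal F$-invariance of $C$ is equivalent to $\mathcal F_n$-invariance of the strict transform $C_n$ for every $n$. Moreover, the assumption that every $p_n$ is a singular point of $f_n$ means by definition that $\hat{\chi}_n$ is singular at $p_n$; by \refprop{singularitiestidinfgen} this is equivalent to the tangent direction of $C_n$ at $p_n$ (the direction determined by $p_{n+1}$, by \refprop{constructioncurve}) being a characteristic direction of $f_n$ at $p_n$.

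I would then argue by contradiction, assuming $C$ is not $\mathcal F$-invariant and measuring the defect of tangency by the contact order $\tau_n$ of $\mathcal F_n$ with $C_n$ at $p_n$ (the order of vanishing along $C_n$ of the component of $\hat{\chi}_n$ transverse to $C_n$). Since $\hat{\chi}_n$ vanishes at the singular point $p_n$, its transverse component vanishes there, so $\tau_n\ge 1$ for all $n$; and $\tau_n<\infty$ precisely because $C$ is assumed non-invariant. The crux is a local descent statement: blowing up the point $p_n\in C_n$ strictly decreases the contact order at the point $p_{n+1}$ through which the strict transform passes, i.e. $\tau_{n+1}\le\tau_n-1$. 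Granting this, $(\tau_n)_n$ would be a strictly decreasing sequence of integers bounded below by $1$, which is impossible; hence $C$ is $\mathcal F$-invariant, and by the first paragraph $f$-invariant.

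The main obstacle is establishing this descent. I would prove it by the explicit chart computation of \refprop{singularitiestidinfgen}, re-centered at $p_n$ at each step: after blowing up the characteristic (hence singular) direction, the saturated field reads $\hat{\chi}_\pi=x_1H_1(1,x')\partial_1+\sum_{j\ge 2}(H_j-x_jH_1)(1,x')\partial_j+x_1\xi$, and the characteristic condition $H_j(1,0)=0$ removes the lowest-order transverse term along the strict transform, dropping its order by at least one. Two points require care: one must check that the exceptional factor dividing $\chi_n$ to produce $\hat{\chi}_n$ does not interfere with the contact order (so that it is legitimate to work with the saturated, foliation-theoretic field throughout), and one must verify that the drop occurs genuinely at the selected point $p_{n+1}$, which is where the smoothness hypothesis of \refprop{constructioncurve} — guaranteeing that $C_n$ meets the exceptional divisor only at the smooth point $p_n$ — is used. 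Once this increment lemma is secured, the descent and the reduction to flow-invariance are immediate.
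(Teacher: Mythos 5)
Your argument is sound in outline, but it takes a genuinely different --- and much heavier --- route than the paper. The paper's own proof is four lines: since each $p_n$ is a singular point of $f_n$, in particular $f_n(p_n)=p_n$; hence $f(C)$ is a smooth irreducible formal curve whose strict transform under $\hat{\pi}_n$ is $f_n(C_n)$, which passes through $p_n$ for every $n$; the uniqueness clause of \refprop{constructioncurve} then forces $f(C)=C_{\mf{p}}=C$. Note that this uses only the fixed-point consequence of the singularity hypothesis (a nontrivial condition from the second blow-up onward, even though the first exceptional divisor is fixed pointwise), and no vector-field machinery at all. Your route instead passes to the infinitesimal generator: you reduce $f$-invariance to tangency of $\chi$ (valid in the formal category, since $\chi(I_C)\subset I_C$ gives $\phi\circ f=\sum_n \chi^n(\phi)/n! \in I_C$ for $\phi\in I_C$, the ideal being closed in the Krull topology), and then run a descent on the contact order $\tau_n$ of $\hat{\chi}_n$ with $C_n$ at $p_n$. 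The descent inequality $\tau_{n+1}\le\tau_n-1$ is true, and its proof is indeed the chart computation you indicate: writing $C_n=\{x_2=\dots=x_d=0\}$, the transverse components of the lifted field along the strict transform are $a_i(x_1,0)/x_1$, so the drop by one comes from the division by the exceptional coordinate, and re-saturation only lowers the order further; the singularity of $\hat{\chi}_{n+1}$ at $p_{n+1}$ is what supplies the lower bound $\tau_{n+1}\ge 1$, not the drop itself --- your sketch slightly conflates these two mechanisms, and the increment lemma would need to be written out to complete the proof. Once that is done your argument is correct, and it even buys something the paper's does not: it shows directly that $C$ is a separatrix of the saturated infinitesimal generator, which is precisely the remark the paper makes after the proposition. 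What it costs is the entire contact-order apparatus, where the paper needs nothing beyond the uniqueness statement it had already proved.
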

\begin{proof}
Denote by $f_n:X_n \dashrightarrow X_n$ the lifts of $f$ with respect to $\hat{\pi}_n$.
Being $p_n$ a singular point for $f_n$, we have in particular that $f_n(p_n)=p_n$.
It follows that $f(C)$ is an irreducible curve whose strict transform with respect to $\hat{\pi}_n$ passes through $p_n$.
By \refprop{constructioncurve}, this is exactly the curve $C_{\mf{p}}$.
\end{proof}

The invariant curves constructed here are sometimes called \emph{(strict) separatrices} for the {\tid} germ $f$ (see \cite{lopezhernanz-rosas:chardirdim2} for the analogous in dimension $2$).
They are in fact the analogous of separatrices for the (reduced) infinitesimal generator (see \cite{brochero-cano-lopezhernanz:parabolicrcurvesdiffeodim2}).

\begin{rmk}
Notice that \refprop{constructioncurve} and \refprop{invariantcurve} do not hold if we replace point modifications with sequences of blow-ups of centers with positive dimension.
The main reason is that curves are not anymore uniquely determined by the sequence of points of intersection of their strict transform with the exceptional divisor.

As an example, consider the blow-up the line $\{x=z=0\}$ in $\nC^3$, and coordinates in the blown-up space so that the projection takes the form $\pi(x,y,z)=(xz,y,z)$. Reiterate the process, so to construct a sequence $\pi_n:X_n \to (\nC^3,0)$, where each element consists in the blow-up of $n$ lines.
In this case, for any curve $C$ parametrized by $(0, y(z),z)$ (with $y \in z \nC\fps{z}$ a formal power series with vanishing constant term), its strict transform $C_n$ would intersect $\pi_n^{-1}(0)$ at the origin $p_n$ of the corresponding chart.
In particular, if the points $p_n$ are singular for the lifts $f$ of a {\tid} germ $f:(\nC^3,0)\to (\nC^3,0)$ we could only infer that $f(C)$ is another curve lying in the plane $\{x=0\}$.
\end{rmk}

\subsection{Parabolic manifolds asymptotic to invariant curves}

We have seen how orbits of points converging to the origin must be tangent to a characteristic direction.
One could be interested in controlling higher orders of tangency. This corresponds to imposing conditions on lifts to other birational models. We need here some terminology to deal with these conditions, which are expressed in terms of asymptoticity to (formal invariant) curves (see \cite{lopezhernanz-raissy-ribon-sanzsanchez:stablemfld2dim, lopezhernanz-ribon-sanzsanchez-vivas:stablemanifoldsbiholoCn}).

\begin{defi}
Let $f:(\nC^d,0)\to(\nC^d,0)$ be a {\tid} germ.
Let $\mf{p}=(p_n)$ be an increasing sequence of infinitely near points above the origin.
Denote by $\hat{\pi}_n:X_n \to (\nC^d,0)$ the composition of the blow-ups of the points $p_0, \ldots, p_{n-1}$, and by $f_n : X_n \dashrightarrow X_n$ the lift of $f$ to $X_n$.
We say that the orbit of a point $p \in \nC^d \setminus \{0\}$ converges to the origin \emph{asymptotic to $\mf{p}$} if for any $n \in \nN$, the limit of the $f_n$-orbit of $\hat{\pi}_n^{-1}(p)$ is exactly $p_n$.

If $\mf{p}=\mf{p}(C)$ for some irreducible curve $C$, we say that the orbit converges to the origin \emph{asymptotic to $C$}.

We say that a stable manifold $\Delta$ is \emph{asymptotic to $\mf{p}$} (resp., \emph{to $C$})if the orbit of $p$ is asymptotic to $\mf{p}$ (resp., to $\mf{p}(C)$) for any $p \in \Delta$.
\end{defi}

We now recall \cite[Theorem 1]{lopezhernanz-ribon-sanzsanchez-vivas:stablemanifoldsbiholoCn}, which allows to construct parabolic manifolds from formal invariant curves.

\begin{thm}[{\cite[Theorem 1]{lopezhernanz-ribon-sanzsanchez-vivas:stablemanifoldsbiholoCn}}]\label{thm:parabolicmanifolds}
Let $f:(\nC^d,0) \to (\nC^d,0)$ be a {\tid} germ, and let $C$ be a formal invariant curve for $f$.
Then either $C$ is contained in $\on{Fix}(f)$, or there exist finitely many parabolic manifolds asymptotic to $C$.
\end{thm}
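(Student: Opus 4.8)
The plan is to reduce $f$ to a sectorial normal form along $C$ and then to build the parabolic manifolds by a Hakim-type fixed-point argument, following the strategy of \cite{lopezhernanz-raissy-ribon-sanzsanchez:stablemfld2dim, lopezhernanz-ribon-sanzsanchez-vivas:stablemanifoldsbiholoCn}. First I would exploit the sequence of infinitely near points $\mathfrak{p}(C)=(p_n)_n$ attached to $C$. Blowing up along $\mathfrak{p}(C)$ desingularizes $C$, so after finitely many point blow-ups the strict transform of $C$ is smooth at the relevant $p_n$, and, being the strict transform of an invariant curve, it is invariant for the lift of $f$ (cf. \refprop{constructioncurve}). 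A (possibly only formal) change of coordinates then straightens $C$ to a coordinate axis, say $C=\{w=0\}$ with $w=(w_2,\dots,w_d)$ the transverse variables and $t$ the coordinate along $C$. Since a parabolic manifold asymptotic to $C$ descends, under the modification, to a parabolic manifold for the original $f$, it suffices to produce them in these coordinates.

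In this chart $f$ takes the shape $f(t,w)=\big(f|_C(t)+O(w),\ \Lambda(t)w+O(w^2)\big)$, with $f|_C(t)=t+a\,t^{\nu+1}+\cdots$ a one-dimensional \tid\ germ. The dichotomy in the statement is exactly $f|_C=\id$ versus $f|_C\neq\id$: the hypothesis $C\not\subset\on{Fix}(f)$ means precisely that $f|_C\neq\id$, so $a\neq 0$ for some finite multiplicity $\nu\geq1$. The classical Leau–Fatou flower theorem applied to $f|_C$ then produces exactly $\nu$ attracting petals $P_1,\dots,P_\nu$ in the $t$-variable along which $f|_C$-orbits converge to $0$. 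These petals form the tangential skeleta of the sought parabolic manifolds and already force the total count to be finite.

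The core step is the transverse construction over each petal. I would put $f$ into a Ramis–Sibuya normal form that decouples the tangential parabolic dynamics from the transverse linear dynamics $w\mapsto\Lambda(t)w$ up to a remainder negligible in suitable sectorial domains; the formal straightening and normalization are rendered holomorphic by sectorial (Borel–Ritt type) summation, which is exactly where the Ramis–Sibuya theory enters. Writing $\Lambda(t)=\id+t^{m}B+\cdots$, the real parts of the eigenvalues of $B$, weighted against the attracting direction of the petal, decide, eigendirection by eigendirection, whether the transverse coordinate contracts along the parabolic orbit. Realizing the candidate manifold as the graph fixed by an integral/contraction operator in a sectorial domain (as in \cite{hakim:analytictransformations} and \cite{lopezhernanz-ribon-sanzsanchez-vivas:stablemanifoldsbiholoCn}) then yields, over each petal, a parabolic manifold whose dimension is $1$ plus the number of transverse eigendirections that are attracting there. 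Ranging over the finitely many petals and the finitely many admissible transverse configurations gives finitely many parabolic manifolds asymptotic to $C$.

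The main obstacle is the transverse analysis together with the attendant sectorial estimates: one must control the interaction between the slow, polynomial-rate parabolic convergence along $C$ (of order $|t|^{-1/\nu}$) and the comparatively fast transverse contraction, and establish uniform convergence of $f^n$ on compacta of the constructed manifold. Equally delicate is the passage from the purely formal straightening and normal form to genuine holomorphic objects: this is precisely handled by sectorial summation, whose estimates and matching on overlapping sectors — the heart of Ramis–Sibuya — guarantee that the formally constructed manifolds are in fact holomorphic and well defined, completing the count.
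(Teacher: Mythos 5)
This statement is not proved in the paper at all: it is imported verbatim as \cite[Theorem 1]{lopezhernanz-ribon-sanzsanchez-vivas:stablemanifoldsbiholoCn}, and the paper merely records, as further quoted results, the two ingredients of the cited proof, namely the reduction to Ramis--Sibuya normal form (\refthm{reductiontoRS}) and the construction of parabolic manifolds from that normal form (\refthm{RSparabolicmanifolds}). Your sketch can therefore only be measured against the strategy of the cited work, which the paper summarizes right after \refthm{reductiontoRS}. At that level you have the right skeleton: desingularize $C$ by point blow-ups, reach a normal form decoupling the tangential parabolic dynamics from the transverse part, classify each transverse variable as node or saddle relative to each attracting direction, and build the manifolds by a Hakim-type fixed-point argument on sectorial domains, obtaining dimension $1$ plus the number of node directions --- exactly the content of \refthm{RSparabolicmanifolds}.

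Two points in your write-up are genuine gaps relative to that proof. First, you skip the middle step of the reduction, which the paper states explicitly: after the embedded resolution of $C$ one must resolve the singularities of the infinitesimal generator of $f$ via \refthm{mcquillan-panazzolo-resolution} (McQuillan--Panazzolo), possibly using weighted blow-ups, and only then can further blow-ups produce the Ramis--Sibuya normal form. Without this step there is no reason the transverse dynamics can be decoupled in the way you assert; your ``sectorial summation renders the formal normalization holomorphic'' does not substitute for it. Second, your handling of formality is wrong as stated: a divergent formal curve cannot be straightened to a coordinate axis by a holomorphic change of coordinates (it would then be convergent), and Leau--Fatou cannot literally be applied to $f|_C$, which is only a formal power series. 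What exists are the attracting directions (the $r$-th roots of unity) attached to the formal class of $f|_C$; the ``petals'' appear only a posteriori as tangential shadows of manifolds constructed directly in $\mathbb{C}^d$ on sectorial domains. Relatedly, the node/saddle dichotomy is not decided by the real parts of the eigenvalues of a single leading matrix $B$, but by the lexicographic sign of the whole vector $\big(\Re(d_k^{(j)}\xi^k)\big)_k$ as in \refeqn{RSdefRj}, so lower-order coefficients can matter when the leading one is purely imaginary. These are fixable in a sketch, but as written they are missing ideas, not details.
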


\subsection{Ramis-Sibuya normal forms}

To describe precisely the number and dimension of the parabolic manifolds produced by \refthm{parabolicmanifolds}, we need to introduce some terminology.

We first introduce Ramis-Sibuya normal forms, for {\tid} germs in dimension $3$.

\begin{defi}
Let $f:(\nC^3,0) \to (\nC^3,0)$ be a tangent to the identity germ, and let $C$ be a smooth $f$-invariant formal curve.
We say that the couple $(f,C)$ is in \emph{Ramis-Sibuya normal form} with respect to local coordinates $(x,y,z)$ at the origin, if $C$ is transverse to $\{z=0\}$, and $f$ takes the form
\begin{equation}\label{eqn:RamisSibuya}
f(x,y,z) =
\begin{pmatrix}
\exp(d_1(z)) \big(x (1+ c_{11} z^r) + c_{12} y z^r\big)  + \langle z^{r+1}\rangle\\[3mm]
\exp(d_2(z)) \big(y (1+ c_{22} z^r) + c_{21} x z^r\big)  + \langle z^{r+1}\rangle\\[3mm]
z - z^{r+1} + bz^{2r+1} + \langle z^{2r+2}\rangle
\end{pmatrix}
\text,
\end{equation}
where $d_1$ and $d_2$ are polynomials of degree at most $r-1$ vanishing at the origin, and $c_{12}=c_{21}=0$ unless $d_1 \equiv d_2$.
\end{defi}

Notice that on the $z$-coordinate, assuming that $C$ has sufficiently high tangency with $\{x=y=0\}$, we find the formal normal form of the action of $f|_C$.
In particular the existence of such a normal form implies that $f|_C$ defines a parabolic $1$-dimensional germ of multiplicity $r+1$.

\begin{thm}[{\cite[Theorem 5.11]{lopezhernanz-ribon-sanzsanchez-vivas:stablemanifoldsbiholoCn}}]\label{thm:reductiontoRS}
Let $f:(\nC^3,0)$ be a {\tid} germ, admitting an (irreducible) $f$-invariant formal curve $C$.
Suppose that $f|_C \neq \id$.
Then there exists a sequence of weighted blow-ups $\pi:X_\pi \to (\nC^3,0)$ so that the strict transform $C_\pi$ of $C$ is smooth, and, if $f_\pi :(X_\pi,p) \to (X_\pi,p)$ denotes the lift of $f$ at $p=C_\pi \cap \pi^{-1}(0)$, then $(f_\pi, C_\pi)$ is in Ramis-Sibuya normal form.
\end{thm}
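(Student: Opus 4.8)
The plan is to reduce $(f,C)$ to the form \refeqn{RamisSibuya} in three stages: first desingularize and straighten $C$, then normalize the one-dimensional restriction $f|_C$, and finally reduce the transverse linear cocycle along $C$ by a Ramis--Sibuya type argument; this last stage is where the invariants $d_1,d_2,b$ and the resonant coefficients $c_{ij}$ appear, and it is the crux. To begin, since $C$ is an irreducible formal curve, a finite sequence of blow-ups resolves it: point blow-ups already suffice for the embedded resolution of a curve in a smooth $3$-fold (as recalled after \refprop{constructioncurve}), and weighted blow-ups are permitted by hypothesis. Call this sequence $\pi:X_\pi \to (\nC^3,0)$ and set $p = C_\pi \cap \pi^{-1}(0)$. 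After resolution $C_\pi$ is smooth and meets the exceptional divisor transversally, so I can choose formal coordinates $(x,y,z)$ centered at $p$ with $C_\pi = \{x=y=0\}$ and $\pi^{-1}(0)=\{z=0\}$; in particular $C_\pi$ is transverse to $\{z=0\}$ as required. As $C$ is $f$-invariant, the lift $f_\pi$ fixes $C_\pi$ pointwise and is tangent to the identity at $p$.

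Next I would normalize the restriction $\sigma := f_\pi|_{C_\pi}$, viewed as a one-dimensional germ $z\mapsto\sigma(z)$. Since $f|_C\neq\id$ we have $\sigma\neq\id$, so $\sigma$ is parabolic of some multiplicity $r+1\geq 2$. The classical formal classification of parabolic germs of $(\nC,0)$ then produces a coordinate change in the single variable $z$ (hence preserving $C_\pi=\{x=y=0\}$) bringing $\sigma$ to $z - z^{r+1} + b\,z^{2r+1} + \langle z^{2r+2}\rangle$, where $r$ and the iterative residue $b$ are the two formal invariants of $\sigma$. This fixes the third component of \refeqn{RamisSibuya}.

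It remains to reduce the transverse directions, which is the heart of the argument. Writing $w=(x,y)$, we have $f_\pi(w,z)=(M(z)\,w + O(\|w\|^2),\,\sigma(z)+O(\|w\|))$ with $M(z)\in GL_2(\nC\fps{z})$ and $M(0)=\mathrm{Id}$. The coordinate changes preserving $C_\pi$ and the chosen $z$-coordinate are $w\mapsto P(z)\,w$ with $P(z)\in GL_2(\nC\fps{z})$, and they act on the transverse cocycle by a gauge rule of the form $M(z)\mapsto P(\sigma(z))^{-1}M(z)P(z)$. Because $\sigma$ is tangent to the identity with contact $r+1$, the difference operator $Q\mapsto Q\circ\sigma - Q$ raises the $z$-order by $r$ and annihilates constants, so its image is exactly the series of order $\geq r+1$, leaving a cokernel in orders $\leq r$. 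Solving the resulting cohomological equations order by order therefore removes every transverse multiplier term of order $\geq r+1$, while the orders $1,\dots,r-1$ (assembled into the exponents $\exp d_1(z),\exp d_2(z)$ with $\deg d_i\leq r-1$) and the order-$r$ term (the resonant coefficients $c_{11},c_{22}$) survive as genuine invariants. The off-diagonal entries obey a twisted operator built from the ratio of the diagonal exponents: when $d_1\not\equiv d_2$ this operator is invertible and the off-diagonal terms can be eliminated, giving $c_{12}=c_{21}=0$, whereas when $d_1\equiv d_2$ it degenerates to the scalar operator above and the order-$r$ obstruction persists as $c_{12},c_{21}$. Absorbing all remaining terms of transverse order $\geq 2$ and $z$-order $\geq r+1$ into the $\langle z^{r+1}\rangle$ tails yields exactly \refeqn{RamisSibuya}.

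I expect the main obstacle to be this transverse reduction: the bookkeeping of the resonances produced by the tangent-to-identity operator $Q\mapsto Q\circ\sigma-Q$, pinning down the precise orders ($z^r$ for the transverse multipliers, $z^{2r+1}$ for the residue) at which invariants must be retained, and treating the resonant case $d_1\equiv d_2$ in which the off-diagonal terms cannot be removed. This is precisely the formal content of the Ramis--Sibuya normal form theory extracted from \cite{lopezhernanz-ribon-sanzsanchez-vivas:stablemanifoldsbiholoCn}.
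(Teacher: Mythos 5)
The paper does not actually prove this statement: it is quoted from \cite{lopezhernanz-ribon-sanzsanchez-vivas:stablemanifoldsbiholoCn}, and the paper only records the strategy, namely (i) embedded resolution of $C$, (ii) resolution of the singularities of the infinitesimal generator of $f$ via \refthm{mcquillan-panazzolo-resolution}, and (iii) further (possibly weighted) blow-ups. Your stages correspond to (i) plus a purely formal normalization meant to replace (ii) and (iii), and that replacement has a genuine gap, in fact two. The first is that your transverse reduction is linear in $(x,y)$, so it cannot touch terms that are nonlinear in the transverse variables but of low $z$-order (an $x^2$ term with no factor of $z$ in the first component, say). Such terms are generically present after the embedded resolution of $C$, they violate the form \refeqn{RamisSibuya}, and they are not absorbed by the $\langle z^{r+1}\rangle$ tails; your closing sentence silently assumes that every term of transverse order $\geq 2$ already has $z$-order $\geq r+1$, which is false. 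In the actual proof, and in the paper's own instances of it (\refprop{RSreddegspike} and \refprop{RSredhalfcorner}), these terms are removed geometrically: one keeps blowing up the point where the strict transform of $C$ meets the exceptional divisor, and in the $z$-chart each such blow-up multiplies a term of transverse degree $d$ by $z^{d-1}$ (up to units), so after finitely many further blow-ups (the paper's ``take $n > c+2e$'') all nonlinear terms fall into the tails. Your proof stops blowing up too early.

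The second, more serious, gap is that the linear gauge reduction itself is impossible in general. Since $\sigma(z)=z-z^{r+1}+\cdots$, one has $P(\sigma(z))\equiv P(z)\pmod{z^{r+1}}$, so modulo $z^{r+1}$ your gauge action is ordinary conjugation $M\mapsto P^{-1}MP$ over $\nC\fps{z}$; but $\det M$, $\operatorname{tr}M$ and the ideal generated by the entries of $M-\mathrm{Id}$ are conjugation invariants, and they show that $M(z)=\mathrm{Id}+\bigl(\begin{smallmatrix}0&z\\0&0\end{smallmatrix}\bigr)$ is conjugate to no matrix of the form $\exp(D(z))(\mathrm{Id}+Cz^r)$ when $r\geq 2$. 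Concretely, for $f(x,y,z)=(x+yz,\,y,\,z-z^{r+1})$ with $C=\{x=y=0\}$, your scheme cannot produce the normal form, and point blow-ups along $C$ do not help, since they only rescale $M(z)$ by the scalar unit $z/\sigma(z)$. What obstructs you here is precisely that the saturated infinitesimal generator has nilpotent linear part ($y\partial_x$ plus higher order); removing such nilpotency is the content of the step you omitted, the McQuillan--Panazzolo resolution \refthm{mcquillan-panazzolo-resolution}, and it is the reason the statement must allow weighted blow-ups rather than formal changes of coordinates. (A lesser issue: you work throughout in formal coordinates, whereas the normal form must hold in honest local coordinates at $p\in X_\pi$ in order to feed the analytic statement \refthm{RSparabolicmanifolds}; since only finitely many $z$-jets are constrained, this can be repaired by polynomial truncations of the parametrization of $C$, but it needs to be addressed.)
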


In \cite{lopezhernanz-ribon-sanzsanchez-vivas:stablemanifoldsbiholoCn}, the authors show the reduction (up to taking iterates) to Ramis-Sibuya normal form in the more general setting of automorphisms admitting an $f$-invariant formal curve where $f|_C$ has multiplier $1$ (in particular, the linear part of $f$ does not need to be the identity).

The reduction process consists in three steps.
The first consists in an embedded resolution of $C$.
In the second step, one applies \refthm{mcquillan-panazzolo-resolution} to solve the singularities of the infinitesimal generator of $f$.
The third step reduces the pair $(f_\pi,C_\pi)$ to the desired normal form, by performing further blow-ups. 
Notice that both the second and third steps may require weighted blow-ups.

Once the couple $(f,C)$ is reduced in Ramis-Sibuya normal form, one can describe explicitly the number and dimension of the parabolic manifolds provided by 
\refthm{parabolicmanifolds}.

With the notations of \refeqn{RamisSibuya}, write $d_j$ for $j=1,2$ as
$$
d_j(z)=\sum_{k=1}^{r-1} d_{k}^{(j)} z^k\text.
$$
Given an attracting direction $\xi$ for $f|_C$ (i.e., any complex $r$-th root of $1$), we set
\begin{equation}\label{eqn:RSdefRj}
R_j(\xi)=\big(\Re(d_1^{(j)}\xi), \ldots, \Re(d_{r-1}^{(j)}\xi^{r-1})\big)\text.
\end{equation}
\begin{defi}
We say that $\xi$ is a \emph{node direction} for the variable $x$ (resp., $y$) if
$R_1(\xi) < 0$ (resp., $R_2(\xi < 0)$), and a \emph{saddle direction} otherwise, where $<$ denotes the lexicographic order.
\end{defi}

\begin{thm}[{\cite[Theorem 6.1]{lopezhernanz-ribon-sanzsanchez-vivas:stablemanifoldsbiholoCn}}]\label{thm:RSparabolicmanifolds}
Let $f:(\nC^3,0)\to (\nC^3,0)$ be a {\tid} germ, and let $C$ be an $f$-invariant formal curve.
Suppose that $(f,C)$ is in Ramis-Sibuya normal form.
For any attracting direction $\xi$ for $f|_\Gamma$, let $s=s(\xi) \in \{0, 1, 2 \}$ be the number of variables for which $\xi$ is a node direction.
Then there exists a parabolic manifold $\Delta(\xi)$ asymptotic to $C$, of dimension $s(\xi)+1$, which is connected, simply connected, and which is a fundamental domain for the set of points whose orbit converges to $0$ asymptotic to $C$ and tangent to $\xi$.	
\end{thm}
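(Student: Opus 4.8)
The plan is to reduce the statement to the study of a perturbed linear cocycle over the one-dimensional parabolic flow carried by the $z$-variable, and then to run a parabolic analogue of the stable-manifold theorem, using the coordinates of \eqref{eqn:RamisSibuya} throughout.

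\textbf{Step 1 (the one-dimensional base).} I would first analyse the third coordinate $z \mapsto z - z^{r+1} + \langle z^{r+2}\rangle$. By the Leau-Fatou flower theorem each attracting direction $\xi$ (an $r$-th root of unity) carries an attracting petal $P_\xi$ on which $z_n := (f|_C)^n(z) \to 0$ and on which a Fatou coordinate $w = \varphi(z)$ conjugates the dynamics to the translation $w \mapsto w+1$. The quantitative input I will need is the asymptotic $z_n \sim \xi\,(rn)^{-1/r}$, uniform on $P_\xi$; equivalently $z_n^m \sim \xi^m (rn)^{-m/r}$. This is obtained by the change of variable $u_n = z_n^{-r}$, which satisfies $u_{n+1} = u_n + r + \langle z_n^r\rangle$, so that $u_n \sim rn$.

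\textbf{Step 2 (the transverse cocycle and the node/saddle dichotomy).} Over a petal the transverse $(x,y)$-dynamics is, to leading order, the linear cocycle with multipliers $a_j(z) = \exp(d_j(z))(1 + c_{jj}z^r)$, and I would treat the diagonal case $c_{12}=c_{21}=0$ first. The product $\prod_{k=0}^{n-1} a_j(z_k)$ is governed by $\exp\bigl(\sum_k d_j(z_k)\bigr)$, so everything rests on $\Re\sum_k d_j(z_k) = \sum_{m=1}^{r-1} \Re\bigl(d_m^{(j)}\sum_k z_k^m\bigr)$. Using Step 1, for each $m<r$ one has $\sum_{k=1}^{n} z_k^m \sim c_m\,\xi^m\,n^{1-m/r}$ with $c_m>0$, so the index $m$ contributes a term of size $n^{1-m/r}$ and the \emph{smallest} $m$ dominates. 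Hence the sign of $\Re\sum_k d_j(z_k)$ is decided by the first $m$ with $\Re(d_m^{(j)}\xi^m)\neq 0$, which is exactly the sign of $R_j(\xi)$ in the lexicographic order of \eqref{eqn:RSdefRj}. Therefore $\xi$ is a node direction for the $j$-th variable precisely when the corresponding multiplier product tends to $0$ (contraction) and a saddle direction when it tends to $\infty$ (expansion). In the coupled case one has $d_1\equiv d_2$, so $R_1=R_2$ and both variables share their status; the off-diagonal block $I + z^r M$ contributes only a factor $\exp\bigl(M\sum_k z_k^r\bigr)\sim n^{M/r}$, polynomial and hence subdominant, so the analysis carries over with $s(\xi)\in\{0,2\}$.

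\textbf{Step 3 (construction of $\Delta(\xi)$).} Write $s=s(\xi)$ and order the transverse variables so that the first $s$ are nodes and the remaining $2-s$ are saddles. For the node variables I would run the standard attracting-basin argument (a sup-norm contraction over the petal in the spirit of Hakim and Ueda): a full polydisk in the node variables, fibered over $P_\xi$, is forward invariant and all its orbits converge to $0$ asymptotic to $C$ and tangent to $\xi$, giving a parabolic manifold of dimension $s+1$ when no saddle is present. When saddle variables occur, a full transverse neighbourhood cannot converge, and $\Delta(\xi)$ must be carved out as the stable set in the saddle directions. I would build it by a graph transform in the Fatou coordinate: since the saddle coordinates satisfy a forward-expanding (hence backward-contracting) recursion, the initial data with bounded forward orbit form the graph of a bounded holomorphic map expressing the saddle coordinates as functions of $w$ and of the node coordinates, realised as the unique fixed point of a contraction on a Banach space of bounded holomorphic sections, with contraction factor furnished by the expansion estimate of Step 2. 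Fibered over $P_\xi$ and over the node polydisk, this graph is $\Delta(\xi)$, of dimension $s+1$. Since $P_\xi$ is simply connected and $\Delta(\xi)$ is a holomorphic graph over $P_\xi$ times a polydisk, it is connected and simply connected; and in the translation coordinate $w\mapsto w+1$ it is a fundamental domain for $f$ on the set of points whose orbit converges to $0$ asymptotic to $C$ and tangent to $\xi$, because every such orbit eventually enters the region where Steps 1--2 apply while no two points of $\Delta(\xi)$ share an orbit. This refines Theorem~\ref{thm:parabolicmanifolds} by pinning down that there is exactly one such manifold per attracting direction and that its dimension is $s(\xi)+1$.

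The step I expect to be the main obstacle is the mixed (saddle) case of Step 3. Making the graph transform converge requires quantitative control of the competing contraction and expansion rates along the parabolic petal, where the multipliers approach $1$ only polynomially and no hyperbolic spectral gap is available; the lexicographic estimate of Step 2 is precisely what plays the role of that gap. The delicate point is to keep all estimates uniform up to the boundary point $0 \in \partial P_\xi$, so that $\Delta(\xi)$ genuinely has $0$ on its boundary and is a fundamental domain rather than merely a local stable graph.
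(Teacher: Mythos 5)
First, a point of comparison that matters for this review: the paper does not prove this statement at all — it is imported verbatim from \cite[Theorem 6.1]{lopezhernanz-ribon-sanzsanchez-vivas:stablemanifoldsbiholoCn}, so there is no internal proof to measure your attempt against. Judged against the strategy of the cited source, your outline follows the same broad route (Leau--Fatou petals and Fatou coordinates in the $z$-variable, power-sum asymptotics $\sum_{k\leq n} z_k^m \sim c_m\xi^m n^{1-m/r}$ for $m<r$, lexicographic dominance of the first index with $\Re(d_m^{(j)}\xi^m)\neq 0$, and a fixed-point construction of a stable graph over the petal), so the skeleton is the right one.

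There is, however, a genuine gap, and it sits exactly where this paper uses the theorem. You identify ``node'' with superpolynomial contraction and ``saddle'' with superpolynomial expansion (``a saddle direction when it tends to $\infty$''), and your Step 3 graph transform draws its contraction factor from that expansion. But a saddle direction is by definition merely the negation of $R_j(\xi)<0$ in \refeqn{RSdefRj}, so it includes the neutral case $R_j(\xi)=0$, in which $\Re\sum_k d_j(z_k)$ stays bounded and the transverse multiplier product $\prod_k \exp(d_j(z_k))\big(1+c_{jj}z_k^r\big)$ behaves like $n^{\Re(c_{jj})/r}$ --- polynomially growing, polynomially decaying, or bounded. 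In that case the ``lexicographic estimate of Step 2'' which you say ``plays the role of the spectral gap'' gives nothing, and the proposed contraction-mapping argument has no contraction to run on. This case is not a pathology one may set aside: in \refprop{RSredhalfcorner} the normal form has $d_2\equiv 0$, so $R_2(\xi)=0$ and \emph{every} attracting direction is a neutral saddle for the $y$-variable (this is how \refcor{parmfldhalfcorner} is applied), and in \refcor{parmflddegspike} the case $R_1(\xi)=R_2(\xi)=0$ (when $\lambda\xi^{c}\in\ui\nR$) is precisely the one producing the one-dimensional parabolic manifolds. Handling the neutral saddle requires using the full strength of ``asymptotic to $C$'' --- which forces the transverse coordinates to decay faster than every power of $z_n$, hence faster than any polynomial in $1/n$ --- and pinning down the stable set by a variation-of-constants argument: the recursion in a neutral variable is affine, $y_{k+1}=a_k y_k + b_k$ with $\prod a_k$ only polynomially controlled and $b_k$ coming from the remainder $\langle z^{r+1}\rangle$ (these remainders contain terms independent of $(x,y)$, since $C$ is only a formal, generally divergent, curve), and the unique admissible initial condition is a series built from the $b_k$, whose convergence against the polynomially-behaved cocycle is the delicate point. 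None of this is addressed by a hyperbolic-type backward contraction. A secondary weakness: when some entries $\Re(d_m^{(j)}\xi^m)$ vanish but later ones do not, the leading-order asymptotic $z_n\sim\xi(rn)^{-1/r}$ stated in your Step 1 is not sufficient to conclude that those indices contribute only bounded terms; one needs the log-refined expansion $z_n^{-r}=rn+O(\log n)$ to make the lexicographic criterion legitimate. As written, your proposal proves the theorem only when every saddle direction satisfies $R_j(\xi)>0$ strictly, which excludes the very instances invoked in this paper.
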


\section{The example}\label{sec:example}

\subsection{Rational maps with no holomorphic fixed points}

We want to start with a {\tid}  germ $f$ which has a finite number of characteristic directions, all degenerate.

Recall that if $f:(\nC^3,0) \to (\nC^3,0)$ is a {\tid} germ, its characteristic directions can be reinterpreted in terms of the action induced by the homogeneous part of smallest degree of $f-\id$ on $\nP^2$:
non-degenerate characteristic directions correspond to holomorphic fixed points, while degenerate characteristic directions correspond to indeterminacy points.

By abuse of notation, we denote by $H$ both the homogeneous part of smallest degree of $f-\id$, and the action on $\nP^2$ induced by it.
We start with a rational map in $\nP^2$ which has no holomorphic fixed points (see \cite[Example 2.1]{ivashkovich:fixedpointsratmaps}):
$$
H([x:y:z])=\big[yz(y-z) : x(x^2-z^2) : z+xz(y-z)\big].
$$
Hence we focus on germs of the form:
\begin{equation*}\tag{\ref{eqn:example}}
f(x,y,z)=
\begin{pmatrix}
\displaystyle x+yz(y-z)+P\\[2mm]
\displaystyle y+x(x^2-z^2)+Q\\[2mm]
\displaystyle z+xz(y-z)+R
\end{pmatrix}
\text,
\end{equation*}
with $P,Q,R$ of order at least $4$.

When we need to develop $P, Q, R$ in formal power series, we will use the following notations:
$$
P=\sum_{i,j,k} P_{ijk} x^i y^j z^k, \quad
Q=\sum_{i,j,k} Q_{ijk} x^i y^j z^k, \quad
R=\sum_{i,j,k} R_{ijk} x^i y^j z^k,
$$
where the indices $i,j,k$ vary in $\nN$ with $i+j+k \geq 4$.
We will also denote by $P^{(h)}$ (resp., $Q^{(h)}$, $R^{(h)}$) the homogeneous part of degree $h$ of $P$ (resp., $Q$, $R$).

\subsection{Characteristic directions}

One can easily check that $f$ has $5$ characteristic directions, given by $v_1=[0:0:1], v_2=[0:1:1], v_3=[1:1:1], v_4=[-1:1:1], v_5=[0:1:0]$.
All these directions are degenerate, of multiplicities $1,1,3,3,5$ respectively.
We denote by $p_1, p_2, p_3, p_4, p_5$ the corresponding characteristic points.

For a definition the multiplicity $\mu_f(v)$ of a characteristic direction $v$, see \cite[p. 278]{abate-tovena:paraboliccurvesC3}.
For the reader's convenience, we show here how to compute the multiplicity of $v_5$. The multiplicity $\mu_f([0:1:0])$ is the local intersection multiplicity at $[0:1:0]$ of $y (x \circ f^{(3)})-x(y \circ f^{(3)})$ and $y (z \circ f^{(3)})-z(y \circ f^{(3)})$ in $\nP^2$.
We denote by $\locint{\phi}{\psi}{p}$ the local intersection multiplicity at $p$ of $\{\phi=0\}$ and $\{\psi=0\}$.
In this case, computing the intersection in the chart $\{y=1\}$, we obtain
\begin{align*}
\mu_f([0:1:0]) 
& = \locint{z(1-z)-x^2(x^2-z^2)}{xz(1-z) - xz(x^2-z^2)}{0} \\
& = \locint{z-z^2-x^4 + x^2z^2}{xz(1-z-x^2+z^2)}{0} \\
& = \locint{z-z^2-x^4 + x^2z^2}{x}{0} + \locint{z-z^2-x^4 + x^2z^2}{z}{0} 
\\
& = 1+4
=5.
\end{align*}

Computations for the other multiplicities are similar and left to the reader.

\begin{rmk}\label{rmk:symmetry}
Consider the local diffeomorphism $\sigma(x,y,z)=(-\ui x,\ui y,\ui z)$.
Then we get
$$
\sigma^{-1} \circ f \circ \sigma(x,y,z)=
\begin{pmatrix}
\displaystyle x+yz(y-z)+\ui P \circ \sigma \\[2mm]
\displaystyle y+x(x^2-z^2) -\ui Q \circ \sigma\\[2mm]
\displaystyle z+xz(y-z)- \ui R \circ \sigma
\end{pmatrix}
\text.
$$
In particular, the $3$-jet $f^{(\leq 3)}$ is invariant by this conjugacy.
The action of $\sigma$ on the characteristic directions $v_1, \ldots, v_5$ is a bijection that fixes $v_1$, $v_2$, $v_5$ while it exchanges $v_3$ and $v_4$.

It follows that one can recover the birational study of the lifts of $f$ above the point $p_4$ associated to $v_4$ from the behaviour of the lifts of $f$ above $p_3$.
\end{rmk}

\subsection{Resolution of singularities of the infinitesimal generator}\label{ssec:resolutionexample}

In this section, we provide a resolution of the infinitesimal generator $\chi$ of a {\tid} germ $f:(\nC^3,0)\to (\nC^3,0)$ of the form \refeqn{example}, in the sense of \cite{mcquillan-panazzolo:almostetaleresfol} (see \refprop{exampleresolution}).

In \refsec{biratstudyaboveresol}, we will show that after further blow-up (see \refprop{exampleresolutionforms}), all the singularities will be isolated and belonging to one of three classes (simple corners, {\degspike}s and {\spincorner}s).
We will then study the behavior of these families under point modifications (introducing a fourth family, {\halfcorner}s).

We will finally study the behavior of these families under general admissible modifications (strongly adapted to the dynamics) in \refsec{patterns}.

\medskip

By \refrmk{homocontribution}, the infinitesimal generator $\chi$ of $f$ takes the form
$$
\chi = \big(yz(y-z)+ P^{(4)}\big) \partial_x + \big(x(x^2-z^2)+ Q^{(4)}\big) \partial_y + \big(xz(y-z)+ R^{(4)}\big) \partial_z + \xi,
$$
where $\xi$ is a (possibly formal) vector field of multiplicity at least $5$.

To study the resolution of $\chi$, we will perform computations from the point of view of maps instead of vector fields, relying on \refprop{infgenblowup}.

\medskip

\subsubsection{First blow-up.}

To resolve the singularities of $\chi$, we first blow-up the origin. We write the blow-up $\pi_1:X_1 \to (\nC^3,0)$, and compute the lift $f_1$ of $f$ with respect to $\pi_1$.
By \refprop{singularitiestidinfgen}, the singularities of the saturated infinitesimal generator $\hat{\chi}_1$ of $f_1$ are isolated, given by the points $p_1, p_2, p_3, p_4, p_5$.

We first work in the $z$-chart, for which the map $\pi_1$ is written as $\pi_1(x,y,z)=(xz,yz,z)$.
We obtain:
\begin{equation}\label{eqn:blowupIz}
f_1(x,y,z) =
\begin{pmatrix}
\displaystyle \frac{x- z^2 y (1-y) + z^{-1} P\circ \pi_1}{1-z^2x(1-y) + z^{-1} R\circ \pi_1}\\[6mm]
\displaystyle \frac{y-z^2 x (1-x^2) + z^{-1} Q\circ \pi_1}{1-z^2x(1-y) + z^{-1} R\circ \pi_1}\\[6mm]
\displaystyle z\big(1-z^2x(1-y) + z^{-1} R\circ \pi_1\big)
\end{pmatrix}
\text.
\end{equation}

We rewrite $f_1$ developing around a point in $\{z=0\}$, obtaining
\begin{equation}\label{eqn:taylorIz}
f_1(x,y,z) =
\begin{pmatrix}
x + z^2 (-y+x^2+y^2-x^2y) + z^3 (P^{(4)}-xR^{(4)})(x,y,1) + \langle z^4\rangle\\[3mm]
y + z^2 x (-1+y+x^2-y^2) + z^3 (Q^{(4)}-yR^{(4)})(x,y,1) + \langle z^4\rangle\\[3mm]
z + z^3x(-1+y) + z^4 R^{(4)}(x,y,1) + \langle z^5\rangle
\end{pmatrix}
\text.
\end{equation}

We study $f_1$ around the characteristic points $p_1, \ldots, p_4$.
The point $p_1$ corresponds to the origin in this chart. In this case the linear part of the reduced infinitesimal generator is:
$$
(-y+P_{004}z) \partial_x + (-x+Q_{004}z) \partial_y\text.
$$
Hence $\hat{\chi}_1$ has a canonical singularity at $p_1$, with eigenvalues of the linear part given by $1$, $-1$ and $0$.

Similarly, the point $p_2$ corresponds to $(0,1,0)$ in this chart. By setting $y=1+v$, we get
\begin{equation}\label{eqn:taylorIzp2}
f_1(x,v,z) =
\begin{pmatrix}
x + z^2 v(1+v-x^2) + z^3 (P^{(4)}-xR^{(4)})(x,1+v,1) + \langle z^4\rangle\\[3mm]
v + z^2 x(-1-v+x^2-v^2) + z^3 (Q^{(4)}-(1+v)R^{(4)})(x,1+v,1) + \langle z^4\rangle\\[3mm]
z + z^3xv + z^4 R^{(4)}(x,1+v,1) + \langle z^5\rangle
\end{pmatrix}
\text.
\end{equation}
We get again a canonical singularity, with eigenvalues of the linear part $\ui$, $-\ui$ and $0$.

The points $p_3$ and $p_4$ have coordinates $(1,1,0)$ and $(-1,1,0)$ respectively in the $z$-chart.
We treat $p_3$, the case of $p_4$ being completely analogous by \refrmk{symmetry}.
By setting $x=1+u$, we get
\begin{equation}\label{eqn:taylorIzp3}
f_1(u,v,z) =
\begin{pmatrix}
u + z^2 v(-2u+v-u^2) + z^3 (P^{(4)}-(1+u)R^{(4)})(1+u,1+v,1) + \langle z^4\rangle\\[3mm]
v + z^2 (1+u)(2u-v+u^2-v^2) + z^3 (Q^{(4)}-(1+v)R^{(4)})(1+u,1+v,1) + \langle z^4\rangle\\[3mm]
z + z^3(1+u)v + z^4 R^{(4)}(1+u,1+v,1) + \langle z^5\rangle
\end{pmatrix}
\text.
\end{equation}
In this case the linear part of $\hat{\chi}_1$ is
$$
z\big(P^{(4)} - R^{(4)}\big)(1,1,1) \partial_u + \Big(2u-v+z\big(Q^{(4)}-R^{(4)}\big)(1,1,1)\Big)\partial_v\text,
$$
which gives an isolated canonical singularity with eigenvalues $-1$, and $0$ (of multiplicity $2$).

It remains to study the characteristic direction $v_5=[0:1:0]$.
In this case we work in the $y$-chart, and write $\pi_1(x,y,z)=(xy,y,yz)$.
The lift $f_1$ of $f$ takes the form

\begin{equation}\label{eqn:blowupIy}
f_1(x,y,z) =
\begin{pmatrix}
\displaystyle \frac{x +y^2 z (1-z) + y^{-1} P\circ \pi}{1+y^2x(x^2-z^2) + y^{-1} Q\circ \pi}\\[6mm]
\displaystyle y\big(1+y^2x(x^2-z^2) + y^{-1} Q\circ \pi\big)\\[2mm]
\displaystyle \frac{z+y^2 xz (1-z) + y^{-1} R\circ \pi}{1+y^2x(x^2-z^2) + y^{-1} Q\circ \pi}
\end{pmatrix}
\text.
\end{equation}
The taylor expansion at the origin gives the following expression for $f_1(x,y,z)$:
\begin{equation}\label{eqn:taylorIy}
\begin{pmatrix}
x + y^2 \Big(z-z^2 -x^4+x^2z^2 + P_{040}y +(P_{130}-Q_{040})xy + P_{031}yz+P_{050}y^2 +y\mf{m}^2\Big)\\[3mm]
y + y^3 \Big(x^3-xz^2+ Q_{040}y + y\mf{m}\Big)\\[3mm]
z + y^2\Big(xz-xz^2 -x^3z+xz^3 + R_{040}y +R_{130}xy + (R_{031}-Q_{040})yz+R_{050}y^2  +y \mf{m}^2\Big)
\end{pmatrix}
\text.
\end{equation}

The linear part of the reduced infinitesimal generator is:
$$
(P_{040}y+z) \partial_x + R_{040}y \partial_z\text.
$$
We get a nilpotent linear part (of rank $1$ if $R_{040}=0$, and of rank $2$ otherwise).
In this case the singularity is not log-canonical, and we need to keep blowing-up.

\medskip

\subsubsection{Second blow-up.}

For simplicity, we will assume $R_{040} \neq 0$.
In this case, $f_1$ has only one singular direction $v_{5,1}=[1:0:0]$.
Consider the blow-up $\pi_2:X_2 \to X_1$ of the point $p_5$. In the $x$-chart we have $\pi_2(x,y,z)=(x,xy,xz)$.
Set $\hat{\pi}_{2}(x,y,z)=\pi_1 \circ \pi_2(x,y,z)=(x^2y,xy,x^2yz)$.
The lift $f_2$ of $f$ in $X_2$ is given by
\begin{equation}\label{eqn:blowupIyIIx}
f_2(x,y,z) =
\begin{pmatrix}
\displaystyle x \frac{1 + x^2y^2 z (1-xz) + x^{-2}y^{-1} P\circ \hat{\pi}_{2}}{1+x^5y^2(1-z^2) + x^{-2}y^{-1} Q\circ \hat{\pi}_{2}}\\[6mm]
\displaystyle y \frac{\big(1+x^5y^2(1-z^2) + x^{-2}y^{-1} Q\circ \hat{\pi}_{2}\big)^2}{1 + x^2y^2 z (1-xz) + x^{-2}y^{-1} P\circ \hat{\pi}_{2}}\\[6mm]
\displaystyle \frac{z+x^3y^2z (1-xz) + x^{-2}y^{-1} R\circ \hat{\pi}_{2}}{1 + x^2y^2 z (1-xz) + x^{-2}y^{-1} P\circ \hat{\pi}_{2}}
\end{pmatrix}
\text.
\end{equation}
We rewrite $f_2$ developing around the origin, obtaining
\begin{equation}\label{eqn:taylorIyIIx}
\hspace{-1cm}
f_2(x,y,z) =
\begin{pmatrix}
x + x^3y^2\Big(P_{040}y +z + \big(P_{130}-Q_{040}\big)xy+ \mf{m}^3\Big)\\[3mm]
y + x^2 y^3 \Big(-P_{040}y -z  + \big(2Q_{040}-P_{130}\big)xy+\mf{m}^3\Big)\\[3mm]
z + x^2y^2\Big(R_{040}y + R_{130}xy +xz-z^2 - P_{040} yz + \mf{m}^3\Big)
\end{pmatrix}
\text.
\end{equation}
In this case the linear part is again nilpotent, of rank $2$, and the singular directions are given by the line $[p:0:r]$, where $[p:r] \in \nP_\nC^1$.

\medskip

\subsubsection{Third blow-up.}

Let $\pi_3:X_3 \to X_2$ be the blow-up of the point $p_{5,1}$ corresponding to the origin in the last coordinate chart we considered. To study the singular points associated to $f_2$, we will need to consider two different charts.

First, in the $x$-chart we get
\begin{equation}\label{eqn:taylorIyIIxIIIx}
f_3(x,y,z) =
\begin{pmatrix}
x + x^6y^2\Big(P_{040}y+z -x^2 + \big(P_{130}-Q_{130}\big)xy -x^2z^2 + \langle x^2y\rangle\Big)\\[3mm]
y + x^5 y^3 \Big(-2P_{040}y -2z + 3x^2+\big(3Q_{040}-2P_{130}\big)xy  +2x^2z^2 +\langle x^2y\rangle\Big)\\[3mm]
z + x^4y^2\Big(R_{040}y + R_{130}xy+xz-2P_{040}xyz-2xz^2 + x^2\langle x,y\rangle\Big)
\end{pmatrix}
\text.
\end{equation}
For any $z_0 \in \nC$, the saturated infinitesimal generator $\hat{\chi}_3$ of $f_3$ has a singularity at $(0,0,z_0)$, with nilpotent linear part of rank $2$.
In this case the singular directions of $f_3$ form the line $[pR_{040}:pz_0(2z_0-1):r]$ with $[p:r]$ varying in $\nP_\nC^1$.

We now work in the $z$-chart, so that $\pi_3(x,y,z)=(xz,yz,z)$, and get
\begin{equation}\label{eqn:taylorIyIIxIIIz}
f_3(x,y,z) =
\begin{pmatrix}
x + x^3y^2z^4\Big(-R_{040}y+2z-xz+2P_{040}yz -R_{130}xyz + z^2\langle y,z\rangle\Big)\\[3mm]
y + x^2y^3z^4 \Big(-R_{040}y-xz -R_{130}xyz +z^2\langle y,z\rangle\Big)\\[3mm]
z + x^2y^2z^5\Big(R_{040}y -z +xz -P_{040}yz +R_{130}xyz + 	z^2\langle y,z\rangle\Big)
\end{pmatrix}
\text.
\end{equation}
In this case $\hat{\chi}_3$ has a singularity of order $2$ at the origin (and of order $1$ with nilpotent linear part at $(x_0,0,0)$, with $x_0 \neq 0$, that we already know about from the previous computation).
\medskip

\subsubsection{Fourth blow-up.}
Finally, we consider the blow-up $\pi_4:X_4 \to X_3$ along the line $L$ of singular points of $\hat{\chi}_3$.

The line $L$ is covered by two charts in $X_3$, the one where the exceptional divisor is $\{x=0\}$ and the line is given by $L=\{x=y=0\}$, and the one where the exceptional divisor is $\{z=0\}$ and the line is given by $L=\{y=z=0\}$.
This gives a total of four charts to be considered on $X_4$, to cover the exceptional divisor $\pi_4^{-1}(L)$.

We first consider the chart in $X_3$ that gives \refeqn{taylorIyIIxIIIx}, so that $L=\{x=y=0\}$.

We put ourselves in the chart of $X_4$ not intersecting the strict transform of the exceptional divisor $E_3=\{x=0\}$ of $\pi_3$, obtaining $\pi_4(x,y,z)=(x,xy,z)$. Computing the lift of $f_3$, we get
\begin{equation}\label{eqn:taylorIyIIxIIIxIVx}
f_4(x,y,z) =
\begin{pmatrix}
x + x^8y^2\Big(z+P_{040}xy+\langle x^2\rangle\Big)\\[3mm]
y + x^7y^3\Big(-3z -3P_{040}xy+\langle x^2\rangle\Big)\\[3mm]
z + x^7y^2\Big(R_{040}y +z -2z^2 +R_{130}xy -2P_{040}xyz + \langle x^2\rangle\Big)
\end{pmatrix}
\text.
\end{equation}
The saturation $\hat{\chi}_4^x$ of the infinitesimal generator of $f_4$ with respect to $\{x=0\}$ takes the form
$$
\hat{\chi}^x_4= xy^2z\partial_x +y^3\big(-3z -3P_{040}xy\big)\partial_y + y^2 \big(R_{040}y +z -2z^2 +R_{130}xy -2P_{040}xyz \big) \partial_z + x^2\xi\text,
$$
where $\xi$ is a suitable vector field.
We study this vector field on the point $(0,y_0,z_0)$.

If $y_0 \neq 0$, we have that $(0,y_0,z_0)$ is singular if and only if
$$
\begin{cases}
-3y_0z_0=0\text,\\
R_{040}y_0+z_0-2z_0^2=0\text.
\end{cases}
$$
Since $R_{040} \neq 0$, this system does not have solutions.

Suppose now $y_0 = 0$. Then the saturation $\hat{\chi}_4$ with respect to the exceptional divisor, locally given by $\{xy=0\}$, gives
$$
\hat{\chi}_4= xz\partial_x -3yz\partial_y + \big(R_{040}y +z -2z^2\big) \partial_z + \xi'\text,
$$ 
where $\xi'$ is a vector field whose coefficients belong to $x\langle x,y\rangle$.
First notice that $\hat{\chi}_4$ is regular unless $z_0(1-2z_0) = 0$.

At the point $q_1$ corresponding to the value $z_0=0$, $\hat{\chi}_4$ has a linear part with a non-vanishing eigenvalue (of eigenspace generated by $\partial_z$): hence we get an isolated canonical singularity.
Similarly, at the point $q_2$ corresponding to $z_0=\frac{1}{2}$, $\hat{\chi}_4$ has an isolated canonical singularity, with linear part with eigenvalues $\frac{1}{2}(1, -3, -2)$. 

With respect to suitable coordinates in a chart intersecting $E_3$, we get the form $\pi_4(x,y,z)=(xy,y,z)$. For the lift of $f_3$, we get
\begin{equation}\label{eqn:taylorIyIIxIIIxIVy}
f_4(x,y,z) =
\begin{pmatrix}
x + x^6y^7\Big(3P_{040}y+3z+\langle y^2\rangle\Big)\\[3mm]
y + x^5y^8\Big(-2P_{040}y-2z+\langle y^2\rangle\Big)\\[3mm]
z + x^4y^7\Big(R_{040}+x(z-2z^2) + \langle y\rangle\Big)
\end{pmatrix}
\text.
\end{equation}

The saturation $\hat{\chi}_4^y$ of the infinitesimal generator of $f_4$ with respect to $\{y=0\}$ takes the form
$$
\hat{\chi}^y_4= 3x^6z\partial_x + x^4 \big(R_{040} +x(z-2z^2)\big) \partial_z + y\xi\text,
$$
where $\xi$ is a suitable vector field.

We study $\hat{\chi}_4$ at points $(x_0,0,z_0)$. The case $x_0 \neq 0$ corresponds to previous computations, and we have no singularities here.

When $x_0=0$, again we get regular points, hence no singularities arise in this chart.

\medskip

We finally consider the chart in $X_3$ giving \refeqn{taylorIyIIxIIIz}, so that $L=\{y=z=0\}$.

We pick the coordinate chart of $X_4$ not intersecting the strict transform of the exceptional divisor $E_3=\{z=0\}$ of $\pi_3$, obtaining $\pi_4(x,y,z)=(x,yz,z)$.
For the lift of $f_3$, we get
\begin{equation}\label{eqn:taylorIyIIxIIIzIVz}
f_4(x,y,z) =
\begin{pmatrix}
x + x^3y^2z^7\Big(2-x-R_{040}y+\langle z\rangle\Big)\\[3mm]
y + x^2y^3z^7\Big(1-2x-2R_{040}y+\langle z\rangle\Big)\\[3mm]
z + x^2y^2z^8\Big(-1 + x + R_{040}y +\langle z\rangle\Big)
\end{pmatrix}
\text.
\end{equation}
As usual, we denote by $\hat{\chi}_4$ the saturated infinitesimal generator of $f_4$, and study its germ at points $(x_0,y_0,0)$.
At the point $q_3$ corresponding to the origin, we get an isolated canonical singularity, whose linear part has eigenvalues $(2,1,-1)$.

When $x_0=0$ and $y_0 \neq 0$, we have a singularity if and only if $1-2R_{040}y_0=0$, i.e., $y_0=\frac{1}{2R_{040}}$.
At the corresponding point $q_4$, consider local coordinates $(x,v,z)$ with $y=y_0+v$.
In these coordinates, the linear part of $\hat{\chi}_4$ takes the form (up to renormalization of a factor $y_0^2$):
\begin{equation}\label{eqn:linearpartq4}
\frac{3}{2}x \partial_x + (-2y_0x -y)\partial_y -\frac{1}{2}z\partial_z\text,
\end{equation}
hence we get another isolated canonical singularity.

The case $x_0 \neq 0$ corresponds to the study carried on above: we get again a singularity when $x_0=2$ and $y_0=0$, which corresponds to $q_2$.

To finish our study, we consider a chart of $X_4$ intersecting the strict transform of $E_3$, getting $\pi_4(x,y,z)=(x,y,yz)$.
For the lift of $f_3$, we get
\begin{equation}\label{eqn:taylorIyIIxIIIzIVy}
f_4(x,y,z) =
\begin{pmatrix}
x + x^3y^7z^4\Big(-R_{040}+2z-xz+\langle y\rangle\Big)\\[3mm]
y + x^2y^8z^4\Big(-R_{040}-xz+\langle y\rangle\Big)\\[3mm]
z + x^2y^7z^5\Big(R_{040}-z+xz+\langle y\rangle\Big)
\end{pmatrix}
\text.
\end{equation}
The only point $q_5$ that remains to be studied corresponds to the origin in this chart, and $\hat{\chi}_4$ has an isolated canonical singularity there, with linear part having eigenvalues $R_{040}(-1,-1,1)$.

To sum up, we proved the following result.

\begin{prop}\label{prop:exampleresolution}
Let $f:(\nC^3,0) \to (\nC^3,0)$ be a germ of the form \refeqn{example} with $R_{040} \neq 0$.
Let $\pi_0:X_{\pi_0} \to (\nC^3,0)$ be the regular modification obtained as the composition $\pi_0=\pi_1 \circ \pi_2 \circ \pi_3 \circ \pi_4$ descrived above (hence $X_{\pi_0}=X_4$).

Then the reduced infinitesimal generator $\hat{\chi}_{\pi_0}$ of the lift $f_{\pi_0}$ of $f$ at $X_{\pi_0}$ has only isolated canonical singularities, namely $p_1, \ldots, p_4, q_1, \ldots, q_5\in X_{\pi_0}$.
\end{prop}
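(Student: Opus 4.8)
The plan is to read the statement as the synthesis of the chart-by-chart computations of this subsection: the proposition merely collects the outcome of resolving $\hat\chi$ along the tower $\pi_0=\pi_1\circ\pi_2\circ\pi_3\circ\pi_4$. The guiding principle is that, by \refprop{infgenblowup}, taking the infinitesimal generator commutes with (adapted) blow-ups, so instead of lifting a formal vector field I would lift the germ $f$ itself, compute the Taylor expansion of each successive lift $f_i$ in the relevant charts, and extract the linear part of the saturated generator $\hat\chi_i$ from the lowest-order terms. By \refrmk{homocontribution} only finitely many jets of $f$ contribute to each computation (through the order prescribed by the multiplicities of the directions), so every step reduces to manipulating polynomials. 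The classification I would then apply at each candidate point is the one fixed earlier: a singularity is canonical precisely when the saturated linear part is non-nilpotent (log-canonical at a singular point) and not radial, i.e.\ its eigenvalues are not all positive multiples of a common nonzero scalar.

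First I would dispose of the four low-multiplicity directions. By \refprop{singularitiestidinfgen}, after $\pi_1$ the singularities of $\hat\chi_1$ are exactly the five characteristic points $p_1,\dots,p_5$. Working in the $z$-chart and reading off \refeqn{taylorIz}, \refeqn{taylorIzp2} and \refeqn{taylorIzp3}, the linear parts of $\hat\chi_1$ at $p_1,p_2,p_3$ have eigenvalues $(1,-1,0)$, $(\ui,-\ui,0)$ and $(-1,0,0)$, with $p_4$ identical to $p_3$ by the conjugacy of \refrmk{symmetry}. In each case the linear part is non-nilpotent, and the presence of a negative (or imaginary, or zero) eigenvalue prevents the eigenvalues from lying in a common $\lambda(\nN^*)^3$; hence all four are isolated canonical singularities and need no further blow-up.

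The heart of the matter is the behaviour over $p_5$, whose multiplicity $5$ forces a nilpotent linear part that persists. Here I would invoke the genericity hypothesis $R_{040}\neq 0$ to pin down the unique singular direction at each stage, and carry out $\pi_2$ (blow-up of the point $p_5$), $\pi_3$ (blow-up of the resulting $p_{5,1}$) and finally $\pi_4$. The decisive feature, visible in \refeqn{taylorIyIIxIIIx} and \refeqn{taylorIyIIxIIIz}, is that after $\pi_3$ the singular locus of $\hat\chi_3$ is no longer a point but an entire line $L$, so the fourth modification must blow up this \emph{curve}. Tracking the lift across the four charts covering $\pi_4^{-1}(L)$ — the expansions \refeqn{taylorIyIIxIIIxIVx}, \refeqn{taylorIyIIxIIIxIVy}, \refeqn{taylorIyIIxIIIzIVz} and \refeqn{taylorIyIIxIIIzIVy} — is where the bookkeeping is heaviest. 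In each chart I would saturate with respect to the exceptional divisor, solve the vanishing system for the distinguished coordinates, and use $R_{040}\neq 0$ to check both that the spurious one-parameter families of candidate singularities are eliminated (as in the inconsistent system on $E_3$ with $y_0\neq 0$) and that exactly the isolated points $q_1,\dots,q_5$ survive, whose recorded eigenvalue triples are in every case non-nilpotent and of mixed sign, hence canonical.

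The main obstacle I anticipate is precisely this final step: verifying that blowing up $L$ produces \emph{only} the five listed singularities and no others. This requires checking, in each of the four charts, not merely the distinguished points but every point of the divisor over $L$, and showing that away from $q_1,\dots,q_5$ the saturated generator is regular and transverse to the divisor; the condition $R_{040}\neq 0$ is exactly what makes the governing systems inconsistent off those points. One must also reconcile the chart overlaps — for instance confirming that the singularity seen at $x_0=2$ from one chart is genuinely the point $q_2$ found in another — so that the final tally is $p_1,\dots,p_4,q_1,\dots,q_5$ with no duplication or omission.
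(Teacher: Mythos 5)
Your proposal is correct and follows essentially the same route as the paper: the paper's proof of \refprop{exampleresolution} is exactly the chart-by-chart computation along the tower $\pi_1\circ\pi_2\circ\pi_3\circ\pi_4$ that you outline, lifting $f$ rather than $\chi$ via \refprop{infgenblowup} and \refrmk{homocontribution}, disposing of $p_1,\ldots,p_4$ with the eigenvalue triples $(1,-1,0)$, $(\ui,-\ui,0)$, $(-1,0,0)$ (and \refrmk{symmetry} for $p_4$), and then using $R_{040}\neq 0$ both to fix the unique singular direction over $p_5$ and to rule out spurious singularities in the four charts over the blown-up line $L$, leaving only the canonical points $q_1,\ldots,q_5$. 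The ``obstacles'' you flag (completeness of the singular locus over $L$ and chart-overlap bookkeeping, e.g.\ the point at $x_0=2$ being $q_2$) are precisely the checks the paper carries out.
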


Notice the abuse of notation, where we denote by $p_1, \ldots, p_4$ both the points in $X_1$, and their unique preimages through $\pi_2 \circ \pi_3 \circ \pi_4$ in $X_4$. 

\begin{figure}[h]
\centering
\begin{minipage}[htbp]{\columnwidth}
\def\svgwidth{0.49\columnwidth}
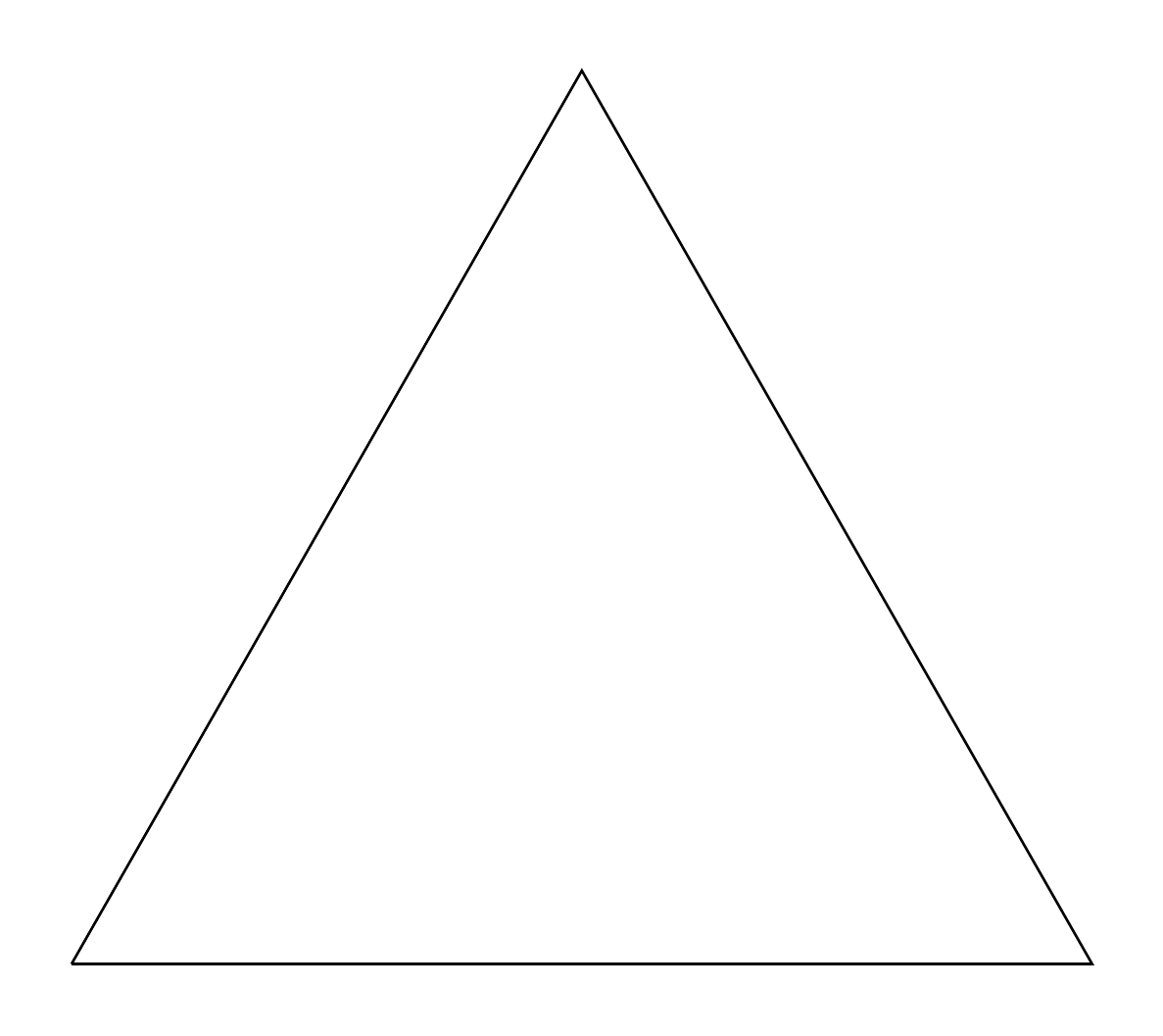
\def\svgwidth{0.49\columnwidth}
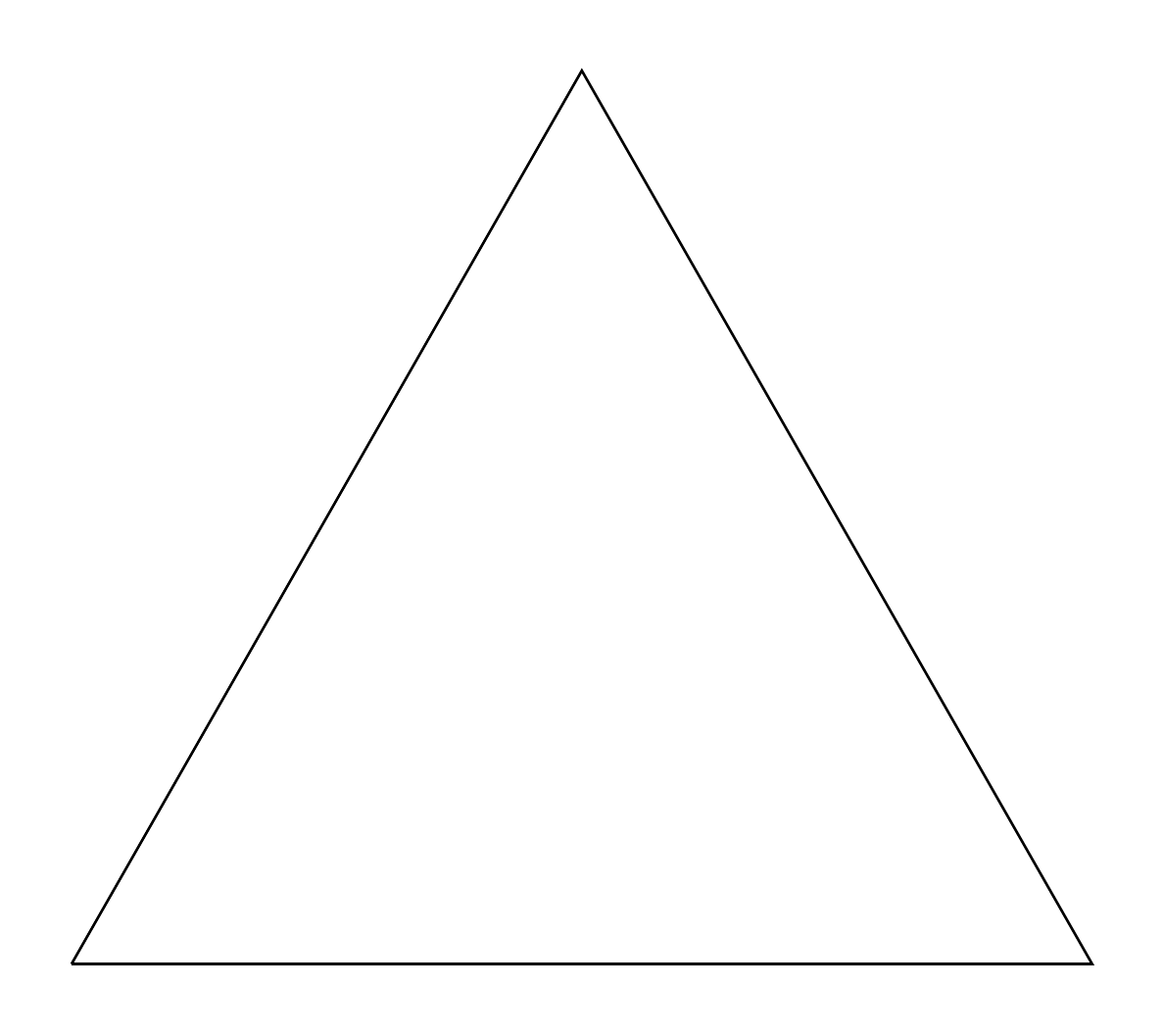
\end{minipage}
\caption{Singular points of the saturated infintesimal generator at $X_1$ (on the left) and $X_4=X_{\pi_0}$ (on the right).}
\label{fig:resolution}
\end{figure}

\begin{rmk}
One can check that Panazzolo's algorithm \cite{panazzolo:ressingvectorfieldsdim3} would perform two weigthed blow-ups to solve $\chi$: the first is the blow-up of the origin, and the second is the blow-up of the point $p_5$, with respect to the weight $\omega=(1,3,2)$, which correspond to performing the blow-ups $\pi_2 \circ \pi_3 \circ \pi_4$, and then contracting the divisors $E_2$ and $E_3$.

To compute $\omega$, notice that the Newton polyhedron (see \cite[Section 3]{panazzolo:ressingvectorfieldsdim3} for a definition) associated to the saturated infinitesimal generator of $f_1$ given by \refeqn{taylorIy} is generated by
$$
(-1,0,1), (-1,1,0), (1,0,0), (0,1,-1)\text.
$$
The first three vertices form a face, whose normal vector is exactly $\omega$.
\end{rmk}

\section{Birational study above the resolution}\label{sec:biratstudyaboveresol}

In this section, we study the dynamics of $f$ and its behavior under point modifications, starting from the model $\pi_0$ given by \refprop{exampleresolution}.

\subsection{Special families}\label{ssec:specialfamilies1}

First, we introduce some special families of {\tid} germs that will appear in the birational models.
We describe here a few notations that we will use all long the rest of the paper.
Any family $f$ of germs will be introduced by giving a name and a code. For example \emph{simple corners} [$\ZeroC$]. The code will be used in all the diagrams below.
Any family is described in some special coordinates, and there will be some formal power series $P,Q,R$, belonging to suitable ideals (that will be explicited according to cases).
We will always develop, without further mention, $P,Q,R$ in formal power series, as:
$$
P=\sum_{i,j,k} a_{ijk} x^i y^j z^k, \quad
Q=\sum_{i,j,k} b_{ijk} x^i y^j z^k, \quad
R=\sum_{i,j,k} c_{ijk} x^i y^j z^k.
$$
Unless otherwise specified, we will also replace $a_{100}$ with $a_x$, $a_{010}$ with $a_y$ and $a_{001}$ with $a_z$, and analogously for $Q$ and $R$.
Finally, we will often replace $a_{000}$, $b_{000}$ and $c_{000}$ with $a_0$, $b_0$, $c_0$, or with $\alpha, \beta, \gamma$, according to the situation. 

Recall also that $P,Q,R$ denote also the parts of degree $4$ of higher or the maps $f$ of the form \refeqn{example} that we are studying.
In this case we will keep developing them with coefficients $P_{ijk}$, $Q_{ijk}$, $R_{ijk}$, to avoid confusion.

\subsubsection{Simple corners}

We start from \emph{simple corners}, introduced for vector fields in \cite{gomezmont-luengo:vectorfieldsnoseparatrix} and adapted to {\tid} germs in  \cite{abate-tovena:paraboliccurvesC3}.

\begin{defi}[{\cite[p. 288]{abate-tovena:paraboliccurvesC3}}]
A {\tid} germ $f:(\nC^3,0)\to(\nC^3,0)$ is a \emph{simple corner} [$\ZeroC$] if there are $a,b \in \nN^*$, $c \in \nN$, $\lambda \in \nC^*$, $\mu \in \nC \setminus (\lambda \nQ_{>0})$, and local coordinates $(x,y,z)$ so that
\begin{equation}\label{eqn:simplecorner}
f(x,y,z)=
\begin{pmatrix}
x+(x^a y^b z^c) x\big(\lambda + P\big)\\
y+(x^a y^b z^c) y\big(\mu + Q\big)\\
z+(x^a y^b z^c) R
\end{pmatrix},
\end{equation}
with $P,Q,R \in \mf{m}$, and $z|R$ if $c > 0$.
\end{defi}

\begin{rmk}
We will discuss singular and exceptional directions with respect to the divisor $D=\{x^ay^bz^c=0\}$, whose support is the union or two or three coordinates planes, depending on the vanishing of $c$.

The saturated infinitesimal generator $\hat{\chi}$ of $f$ has the following properties:
\begin{enumerate}[label=(\alph*)]
\item $\hat{\chi}$ is tangent to $D$;
\item $\hat{\chi}$ is a canonical singularity.
\end{enumerate}
These properties completely characterize simple corners when $c \geq 1$.
When $c=0$, we need an additional property of the foliation on the normal bundle of the curve $\{x=y=0\}$ obtained intersecting the two irreducible components of $D$.
This property can be stated in terms of the induced foliation on $D$: the foliation induced by the saturation of $\hat{\chi}$ restricted to either $\{x=0\}$ or $\{y=0\}$ is log-canonical.
\end{rmk}

\subsubsection{{\Degspike}s}

\begin{defi}
A {\tid} germ $f:(\nC^3,0)\to(\nC^3,0)$ is a \emph{\degspike} [$\DS$] (of \emph{Siegel} type) if there are $c \in \nN^*$, $\alpha, \beta \in \nC^*$, and local coordinates $(x,y,z)$ so that
\begin{equation}\label{eqn:degspike}
f(x,y,z)=
\begin{pmatrix}
x+z^c\big(\lambda x + P\big)\\
y+z^c\big(\mu y + Q\big)\\
z+z^{c+1} R
\end{pmatrix}\text,
\end{equation}
where $\lambda \in \nC^*$, $\mu\in \lambda \nR_{<0}$, while $P,Q \in \mf{m}^2$ and $R \in \mf{m}$.
\end{defi}

\begin{rmk}\label{rmk:degspikeoften}
Notice that any germ of the form
\begin{equation}\label{eqn:degspikeoften}
\begin{pmatrix}
x+z^c\big(a_y y + a_z z + P\big)\\
y+z^c\big(b_x x + b_x z + Q\big)\\
z+z^{c+1} R
\end{pmatrix},
\end{equation}
with $a_y b_x \neq 0$, $a_z, b_z \in \nC$, $P,Q \in \mf{m}^2$, and $R \in \mf{m}$ is a degenerate spike, associated to $\lambda = -\mu = \sqrt{a_y b_x}$.
\end{rmk}
\begin{rmk}\label{rmk:degspikeinfgen}
{\Degspike}s will appear at points belonging to a unique irreducible component of the exceptional divisor $D=\{z=0\}$ on blown-up models.

In terms of the saturated infinitesimal generator $\hat{\chi}$ of $f$, degenerate spikes are characterized by the following properties:
\begin{enumerate}[label=(\alph*)]
\item $\hat{\chi}$ is tangent to $D$;
\item the induced foliation on $D$ has a Siegel singularity;
\item the linear part of $\hat{\chi}$ has exactly one vanishing eigenvalue.
\end{enumerate}
	
In particular, $\hat{\chi}|_D$ has no invariant curves passing through $p$ but for the two complex separatrices, given by $\{x=0\}$ and $\{y=0\}$ when $f$ is given by \refeqn{degspike}.
\end{rmk}

Clearly the Siegel type refers to condition (b) above. In general we could ask only for the non-resonance condition $\mu/\lambda \not \in \nQ_{\geq 0}$, i.e., the induced foliation on $D$ is canonical (and with invertible linear part).
In this paper, without further mention, all {\degspike}s are of Siegel type.

\subsubsection{{\Spincorner}s}

\begin{defi}\label{def:spincorner}
A {\tid} germ $f:(\nC^3,0)\to(\nC^3,0)$ is a \emph{\spincorner} [$\SpinC$] if there are local coordinates $(x,y,z)$ such that $f$ can be written as
\begin{equation}\label{eqn:spincorner}
f(x,y,z)=\begin{pmatrix}
x+y^bz^c(x+P)\\
y+y^{b+1}z^cQ\\
z+y^bz^{c+1}R
\end{pmatrix}\end{equation}
where $b,\ c\in\N^*$, $Q,\ R\in\mathfrak{m}$ and $P\in\mathfrak{m}^2$.
\end{defi}

\begin{rmk}\label{rmk:spincornergen}
A germ which can be written, in local coordinates $(x,y,z)$, as
\begin{equation}\label{eqn:spincornergen}
f(x,y,z)=\begin{pmatrix}
x+y^bz^c(a_x x+a_y y+a_z z+P)\\
y+y^{b+1}z^cQ\\
z+y^bz^{c+1}R
\end{pmatrix}\end{equation}
with $a_x\in\C^*$, $a_y,\ a_z\in\C$, and $b,\ c,\ P,\ Q,\ R$ as above, is in fact a {\spincorner}.
Indeed, one can assume $a_x = 1$ by a linear change of coordinates $(x,y,z) \mapsto (x, \mu y, \nu z)$ with $\mu, \nu$ satisfying $\mu^b \nu^c = \alpha$.
Moreover, in new coordinates $u=a_x x+a_y y+ a_z z$, $y$ and $z$, we get
\begin{equation*}
f(u,y,z)=\begin{pmatrix}
u +y^bz^c(a_x u+\wt{P}\circ \phi^{-1})\\
y+y^{b+1}z^cQ\circ \phi^{-1}\\
z+y^bz^{c+1}R\circ \phi^{-1}
\end{pmatrix}\;,
\end{equation*}
with $\wt{P}=a_x P+a_y y Q + a_z zR$ and $\phi^{-1}(u,y,z)=\big(a_{x}^{-1}(u-a_y y -a_z z),y,z\big)$.
\end{rmk}

\begin{rmk}\label{rmk:spincornerinfgen}
{\Spincorner}s will appear in the intersection of two irreducible components $D_1$ and $D_2$ of the exceptional divisor $D$, given in local coordinates by $\{yz=0\}$.

In terms of the reduced infinitesimal generator $\hat{\chi}$ of $f$, degenerate spikes are characterized by the following properties:
\begin{enumerate}[label=(\alph*)]
\item $\hat{\chi}$ is tangent to $D$;
\item the linear part of $\hat{\chi}$ has rank $1$, with the eigenspace of non-zero eigenvalue tangent to $D_1 \cap D_2$. 
\end{enumerate}

In fact, if $D=\{yz=0\}$ then we have $\phi \circ (f-\id) = y^{b_\phi}z^{c_\phi}A_\phi$, with $b_\phi, c_\phi \in \nN$ and $A_\phi$ a holomorphic germ that is not a multiple of $y$ or $z$, with $\phi\in \{x,y,z\}$.
The tangency condition on $\{y=0\}$ says that $b_y > b_x \wedge b_z$, while the one on $\{z=0\}$ gives $c_z > c_x \wedge x_y$.
The existence of an eigenvalue tangent to $D_1 \cap D_2$ says that $x \circ (f-\id) =y^{b_x}z^{c_x}(\alpha x + \beta y + \gamma z + P)$ with $\alpha \neq 0$ and $P \in \mf{m}^2$, and $b_x \leq b_z$ and $c_x \leq c_y$.
By setting $b=b_x$, $c=c_x$, $Q=y^{b_y-b-1}z^{c_y-c}A_y$, $y^{b_z-b}z^{c_z-c-1}A_z$, and checking the linear part of $\hat{\chi}$ in extreme cases for the parameters (i.e., if $b_y=b+1$ and $c_y=c$, or $b_z=b$ and $c_z=c+1$), we get a germ of the form \refeqn{spincornergen}.
\end{rmk}

\subsubsection{{\Halfcorner}s}

\begin{defi}\label{def:halfcorner}
A {\tid} germ $f:(\nC^3,0)\to(\nC^3,0)$ is a \emph{\halfcorner} [$\HalfC$] if there are local coordinates $(x,y,z)$ such that $f$ can be written as
\begin{equation}\label{eqn:halfcorner}
f(x,y,z)=\begin{pmatrix}
x+z^c(x+ P)\\
y+z^{c+1}(\beta + Q)\\
z+z^{c+2}R
\end{pmatrix}\end{equation}
where $c\in\N^*$, $\beta \in \nC$, $P\in\mf{m}^2$, $Q \in\mf{m}$ and $R=\gamma+\mf{m}$ with $\gamma \in \nC$.
\end{defi}

\begin{rmk}\label{rmk:halfcornergen}
As for the case of {\spincorner}s, one can show that any germ of the form 
\begin{equation}\label{eqn:halfcornergen}
f(x,y,z)=\begin{pmatrix}
x+z^c(a_x x + a_y y + a_z z + P)\\
y+z^{c+1}(\beta + Q)\\
z+z^{c+2}R
\end{pmatrix}\end{equation}
with $a_x \neq 0$, $a_y, a_z \in \nC$ and all other entries as above is indeed a {\halfcorner}.
In fact, we may assume $a_x = 1$ by a linear change of coordinates $(x,y,z)\mapsto(x, y, \nu z)$ with $\nu^c = a_x$.
Then, one can assume $a_y=0$ by performing the change of coordinates $u=x+a_y y$ (which changes the value of $a_z$ to $a'_z=a_z+\beta a_y$), and finally we can set $u'=x+a'_z z$ and get a germ of the form \refeqn{halfcorner}.
	
Notice also that when $\beta \neq 0$, we may assume it equals $1$, by performing the change of coordinates $(x,y,z) \mapsto (x, \beta y, z)$.
	
The value of $\beta$ (its vanishing) will be important in the sequel. We will say that a {\halfcorner} is \emph{\hcsimple} if $\beta \neq 0$, \emph{\hcnonsimple} otherwise.

In fact, we can independently normalize (by conjugating by linear diagonal maps) both the second and third coordinates, for example by assuming that $\beta\in\{0,1\}$ and $\gamma:=R(0,0,0) \in\{0,1\}$.
\end{rmk}
\begin{rmk}\label{rmk:halfcornernonsimpleresonance}
Once in form \refeqn{halfcorner} we still have some freedom up to linear change of coordinates.
Assume $\beta=0$. In this case we can conjugate by a map of the form $(x,y,z) \mapsto (\lambda x,\mu y,\nu z)$ with $\nu^c=1$.
In this case we get $\wt{b}_y = \nu b_y$, $\wt{\gamma}= \nu \gamma$.
In particular their ratio is well defined up to homotheties (and it is in fact an invariant of conjugacy for {\halfcorner}s in form \refeqn{halfcorner}).
\end{rmk}

\begin{rmk}\label{rmk:halfcornerinfgen}
{\Halfcorner}s will appear in points contained in a unique irreducible component $D$ of the exceptional divisor, which we will assume having local equation $\{z=0\}$.

One can characterize {\halfcorner}s in terms of their infinitesimal generator also in this case, but the description is more intricated. We just remark that again the saturated infinitesimal generator is tangent to $D$.
Moreover, its linear part has a non-zero eigenvalue (whose eigenspace is tangent to $D$), and:
\begin{itemize}
\item  either a Jordan block associated to the zero eigenvalue in the {\hcsimple} case, with the kernel being tangent to $D$; or
\item a kernel of dimension $2$ in the {\hcnonsimple} case.
\end{itemize}
\end{rmk}

\subsection{From the resolution to special families}

We show here how, possibly up to further blow-up, the singularities appearing in the model $\pi_0:X_{\pi_0} \to (\nC^3,0)$ given by \refprop{exampleresolution}, belong to one of the families described in \refssec{specialfamilies1}

In fact, from the study done in \refssec{resolutionexample}, the lift $f_{\pi_0}:X_{\pi_0} \to X_{\pi_0}$ satisfies the following properties.
\begin{itemize}
\item At the singularity $p_1$, $f_{\pi_0}$ takes the form \refeqn{taylorIz}, which is a {\degspike} of the form \refeqn{degspikeoften} with respect to the coordinates $(x,y,z)$, with parameters $c=2$, $\alpha = \beta = -1$.
\item At the singularity $p_2$, $f_{\pi_0}$ takes the form \refeqn{taylorIzp2}, which is a {\degspike} of the form \refeqn{degspikeoften} with respect to the coordinates $(x,y,z)$, with parameters $c=2$, $\alpha = 1$, $\beta = -1$.
\item At the singularity $q_1$, $f_{\pi_0}$ takes the form \refeqn{taylorIyIIxIIIxIVx}, which is a {\spincorner} of the form \refeqn{spincornergen} with respect to coordinates $(z,y,x)$, with parameters $\alpha = 1$, $\beta = R_{040}$, $\gamma=0$, $b=2$, $c=7$.
\item At the singularity $q_2$, $f_{\pi_0}$ is a simple corner of the form \refeqn{simplecorner} with respect to coordinates $(x,y,w)$ with $w=z-\frac{1}{2}$ (notations of  \refeqn{taylorIyIIxIIIxIVx}), with parameters $a=7$, $b=2$, $c=0$, $\lambda = \frac{1}{2}$ and $\mu=-\frac{3}{2}$.
\item At the singularity $q_3$, $f_{\pi_0}$ takes the form \refeqn{taylorIyIIxIIIzIVz}, which is a simple corner of the form \refeqn{simplecorner} with respect to coordinates $(z,y,x)$, with parameters $a=7$, $b=2$, $c=2$, $\lambda = -1$ and $\mu=1$.
\item At the singularity $q_4$, $f_{\pi_0}$ is a simple corner of the form \refeqn{simplecorner} with respect to coordinates $(x,z,v)$ with $v=y-\frac{1}{2R_{040}}$ (notations of  \refeqn{taylorIyIIxIIIzIVz}), with parameters $a=2$, $b=7$, $c=0$, $\lambda = \frac{3}{2}$ and $\mu=-\frac{1}{2}$ (up to common factors, see \refeqn{linearpartq4}).
\item At the singularity $q_5$, $f_{\pi_0}$ takes the form \refeqn{taylorIyIIxIIIzIVy}, which is a simple corner of the form \refeqn{simplecorner} with respect to coordinates $(z,y,x)$, with parameters $a=4$, $b=7$, $c=2$, $\lambda = 1$ and $\mu=-1$ (up to a common factor $R_{040}$).
\end{itemize}
The only singularities not falling in one of the families described in \refssec{specialfamilies1} are $p_3$ and $p_4$.
By symmetry (see \refrmk{symmetry}), we will only deal with $p_3$, the case of $p_4$ being completely analogous.

On suitable coordinates $(u,v,z)$ centered at $p_3$, the germ $f_{\pi_0}$ takes the form:

\begin{equation*}\tag{\ref{eqn:taylorIzp3}}
f_{\pi_0}(u,v,z) =
\begin{pmatrix}
u + z^2 v(-2u+v-u^2) + z^3 (P^{(4)}-(1+u)R^{(4)})(1+u,1+v,1) + \langle z^4\rangle\\[3mm]
v + z^2 (1+u)(2u-v+u^2-v^2) + z^3 (Q^{(4)}-(1+v)R^{(4)})(1+u,1+v,1) + \langle z^4\rangle\\[3mm]
z + z^3(1+u)v + z^4 R^{(4)}(1+u,1+v,1) + \langle z^5\rangle
\end{pmatrix}
\text.
\end{equation*}
In this case, the homogeneous part of smallest degree of $z^{-2}(f_{\pi_0} - \id)$ is linear, with associated matrix
$$
\begin{pmatrix}
0 & 0 & \alpha\\
2 & -1 & \beta\\
0 & 0 & 0
\end{pmatrix}
\text,
$$
where $\alpha = (P^{(4)} - R^{(4)})(1,1,1)$ and $\beta= (Q^{(4)} - R^{(4)})(1,1,1)$.

The computation of singular directions depend on weather $\alpha$ vanishes or not.
In both cases, $v_{3,1}=[0:1:0]$ is a singular direction (associated to the eigenvalue $1$), as is $v_{3,2}=[1:2:0]$ (with multiplier $0$).
If $\alpha \neq 0$, the generalized eigenspace associated to the eigenvalue $0$ is associated to a Jordan block of size $2$. It follows that $v_{3,1}$ and $v_{3,2}$ are the only singular directions (which are both exceptional).
If $\alpha=0$, the kernel has rank $2$, which gives a line of degenerate directions, generated by $[1:2:0]$ and $[0: \beta: 1]$.

For simplicity, we will assume that $\alpha = P^{(4)}(1,1,1)-R^{(4)}(1,1,1) \neq 0$.
\medskip

\textbf{Blow-up of $p_3$.}

We consider $\wt{\pi}_1:X_{\wt{\pi}_1} \to X_{\pi_0}$ the blow-up of $p_3$ in $X_{\pi_0}$.
We consider the chart in $X_{\wt{\pi}_1}$ so that $\wt{\pi}_1(x,y,z)=(xy,y,yz)$.
The lift $\wt{f}_1$ of $f_{\pi_0}$ is given by:
$$
\wt{f}_1(x,y,z) =
\begin{pmatrix}
x + y^2z^2\Big(x-2x^2 +y(3x^2-3x) + z(\alpha-\beta x) + y \langle y,z \rangle\Big)\\[3mm]
y + y^3z^2 \Big(-1+2x+y(3x^2-x-1) +\beta z +y\langle y,z\rangle\Big)\\[3mm]
z + y^2z^3\Big(1-2x+y(2+x-3x^2) -\beta z +y \langle y,z\rangle\Big)
\end{pmatrix}
\text.
$$
This is clearly a simple corner at $p_{3,1}$ (which corresponds to the origin in this chart).
It is with respect to coordinates $(z,y,x)$, with $a=b=2$ and $c=0$, $\lambda = 1$ and $\mu = -1$.

The point $p_{3,2}$ corresponds in this chart to $(\tfrac{1}{2},0,0)$. By setting $x=\tfrac{1}{2}+u$, we get
\begin{equation}\label{eqn:taylorp32}
\wt{f}_1(u,y,z) =
\begin{pmatrix}
u + y^2z^2\Big(-u -\tfrac{3}{4}y + (\alpha-\tfrac{1}{2}\beta) z + \mf{m}^2\Big)\\[3mm] 
y + y^3z^2 \Big(2u-\tfrac{3}{4}y + \beta z + \mf{m}^2\Big)\\[3mm]
z + y^2z^3\Big(-2u+\tfrac{7}{4}y - \beta z + \mf{m}^2\Big)
\end{pmatrix}
\text.
\end{equation}
This is a {\spincorner} of the form \refeqn{spincornergen} with respect to the coordinates $(u,y,z)$, with parameters $b=2$, $c=2$.

\begin{rmk}
When we change coordinates so that the linear part of the saturated vector field has $a_x x$ as first coordinate, then we get the parameters $b_y=-\frac{9}{4}$, $c_y=\frac{13}{4}$, $b_z=2\alpha$ and $c_z = -2\alpha$.
Notice that in general one needs to replace $b_y$ with $b_y-b_x\frac{a_z}{a_x}$ and similarly for $b_z$, $c_y$, $c_z$.
\end{rmk}
We proved the following:

\begin{prop}\label{prop:exampleresolutionforms}
Let $f:(\nC^3,0) \to (\nC^3),0)$ be a germ of the form \refeqn{example} with $R_{040} \neq 0$ and $P^{(4)}(\pm 1,1,1) \neq \pm R^{(4)}(\pm 1, 1, 1)$.
Let $\wt{\pi}_0:X_{\wt{\pi}_0} \to (\nC^3,0)$ be the regular modification obtained as the composition $\wt{\pi}_0=\pi_0 \circ \wt{\pi}_1 \circ \wt{\pi}_2$, where $\wt{\pi}_1$ is the blow-up of $p_3$ and $\wt{\pi}_2$ is the blow-up of $p_4$.
	
Then the lift $f_{\wt{\pi}_0}$ of $f$ to $X_{\wt{\pi}_0}$ has finitely many singular points, where it is either a simple corner, a {\degspike}, or a {\spincorner}.
\end{prop}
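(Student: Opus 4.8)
The statement is, in effect, a bookkeeping of the local computations already carried out in \refssec{resolutionexample} and in the discussion preceding it: the plan is to run through the nine singular points $p_1,\ldots,p_4,q_1,\ldots,q_5$ of $\hat{\chi}_{\pi_0}$ furnished by \refprop{exampleresolution}, match each to one of the normal forms of \refssec{specialfamilies1}, and then dispose of the two points $p_3$ and $p_4$ that are not yet in normal form by performing one additional blow-up each (this is precisely the passage from $\pi_0$ to $\wt{\pi}_0=\pi_0\circ\wt{\pi}_1\circ\wt{\pi}_2$). For the seven points already in normal form I would simply read off the germ from the relevant Taylor expansion: at $p_1$ (resp.\ $p_2$) the expansion \refeqn{taylorIz} (resp.\ \refeqn{taylorIzp2}) has the shape \refeqn{degspikeoften} with nonvanishing off-diagonal linear coefficients, so by \refrmk{degspikeoften} these are {\degspike}s; at $q_1$ the expansion \refeqn{taylorIyIIxIIIxIVx} has the shape \refeqn{spincornergen} with leading coefficient $1\neq 0$ and $\beta=R_{040}\neq 0$, hence by \refrmk{spincornergen} a {\spincorner}; and at $q_2,\ldots,q_5$ the germs reduce, after the indicated linear changes and recentering, to \refeqn{simplecorner} with the listed exponents and non-resonant multipliers $\lambda,\mu$, hence are simple corners. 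Throughout, the hypothesis $R_{040}\neq 0$ is exactly what guarantees that the relevant linear coefficients survive.

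It remains to treat $p_3$, the point $p_4$ being handled identically through the symmetry $\sigma$ of \refrmk{symmetry} (this is why the genericity condition in the statement is phrased symmetrically in $\pm$). Here $f_{\pi_0}$ is given by \refeqn{taylorIzp3}, and the homogeneous part of smallest degree of $z^{-2}(f_{\pi_0}-\id)$ is the linear map whose matrix is displayed above, with $\alpha=(P^{(4)}-R^{(4)})(1,1,1)$. Under the hypothesis $\alpha\neq 0$ the eigenvalue $0$ carries a single Jordan block, so the only singular directions are $v_{3,1}=[0:1:0]$ and $v_{3,2}=[1:2:0]$. I would then blow up $p_3$, obtaining $\wt{\pi}_1$, and invoke \refprop{singularitiestidinfgen}: the singular points of the lift on the exceptional divisor $\wt{\pi}_1^{-1}(p_3)$ are exactly the points $p_{3,1},p_{3,2}$ associated to $v_{3,1},v_{3,2}$. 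A direct expansion in the chart $\wt{\pi}_1(x,y,z)=(xy,y,yz)$ shows that $p_{3,1}$ is a simple corner, and that after setting $x=\tfrac{1}{2}+u$ the germ at $p_{3,2}$ takes the form \refeqn{taylorp32}, which is of the shape \refeqn{spincornergen}, hence a {\spincorner}.

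The one genuinely new ingredient is the finiteness assertion, and it is concentrated entirely at $p_3$ and $p_4$. For the seven points $p_1,p_2,q_1,\ldots,q_5$, which lie away from the centers of $\wt{\pi}_1$ and $\wt{\pi}_2$, the lift $f_{\wt{\pi}_0}$ agrees with $f_{\pi_0}$, so they contribute only finitely many isolated singular points by \refprop{exampleresolution}. At $p_3$ the danger is that a whole line of singular directions appears after blow-up, producing a one-dimensional singular locus: this is precisely the excluded case $\alpha=0$, in which the kernel of the linear part has rank $2$ and the degenerate directions fill the line through $[1:2:0]$ and $[0:\beta:1]$. The assumption $\alpha\neq 0$ rules this out, leaving only the two directions above; moreover both $v_{3,1}$ and $v_{3,2}$ are exceptional, so the singular points on $\wt{\pi}_1^{-1}(p_3)$ already account for the intersection of the new divisor with the strict transform of $\{z=0\}$, and no further singularities are created.

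Thus the main obstacle is really the single verification that $\alpha\neq 0$ forces exactly two singular directions at $p_3$; once this is in place, the proof is a matter of comparing the explicit expansions \refeqn{taylorIz}, \refeqn{taylorIzp2}, \refeqn{taylorIyIIxIIIxIVx}, \refeqn{taylorIyIIxIIIzIVz}, \refeqn{taylorIyIIxIIIzIVy} and \refeqn{taylorp32} with the defining forms \refeqn{degspikeoften}, \refeqn{spincornergen} and \refeqn{simplecorner} of \refssec{specialfamilies1}, together with the normalizations recorded in \refrmk{degspikeoften} and \refrmk{spincornergen}.
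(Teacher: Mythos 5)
Your proposal is correct and follows essentially the same route as the paper's own proof: read off the forms \refeqn{taylorIz}, \refeqn{taylorIzp2}, \refeqn{taylorIyIIxIIIxIVx}, \refeqn{taylorIyIIxIIIzIVz}, \refeqn{taylorIyIIxIIIzIVy} at the seven points already in normal form, then use the matrix of $z^{-2}(f_{\pi_0}-\id)$ at $p_3$ (with $\alpha\neq 0$ giving a size-$2$ Jordan block, hence only the two singular directions $[0:1:0]$ and $[1:2:0]$), blow up to find a simple corner at $p_{3,1}$ and the {\spincorner} \refeqn{taylorp32} at $p_{3,2}$, and dispatch $p_4$ by the symmetry of \refrmk{symmetry}. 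Your explicit treatment of the finiteness of the singular locus (via \refprop{singularitiestidinfgen} and the exclusion of the $\alpha=0$ case) is a point the paper leaves implicit, but it is consistent with, not divergent from, the paper's argument.
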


\begin{figure}[h]
\centering
\begin{minipage}[htbp]{0.75\columnwidth}
\def\svgwidth{\columnwidth}
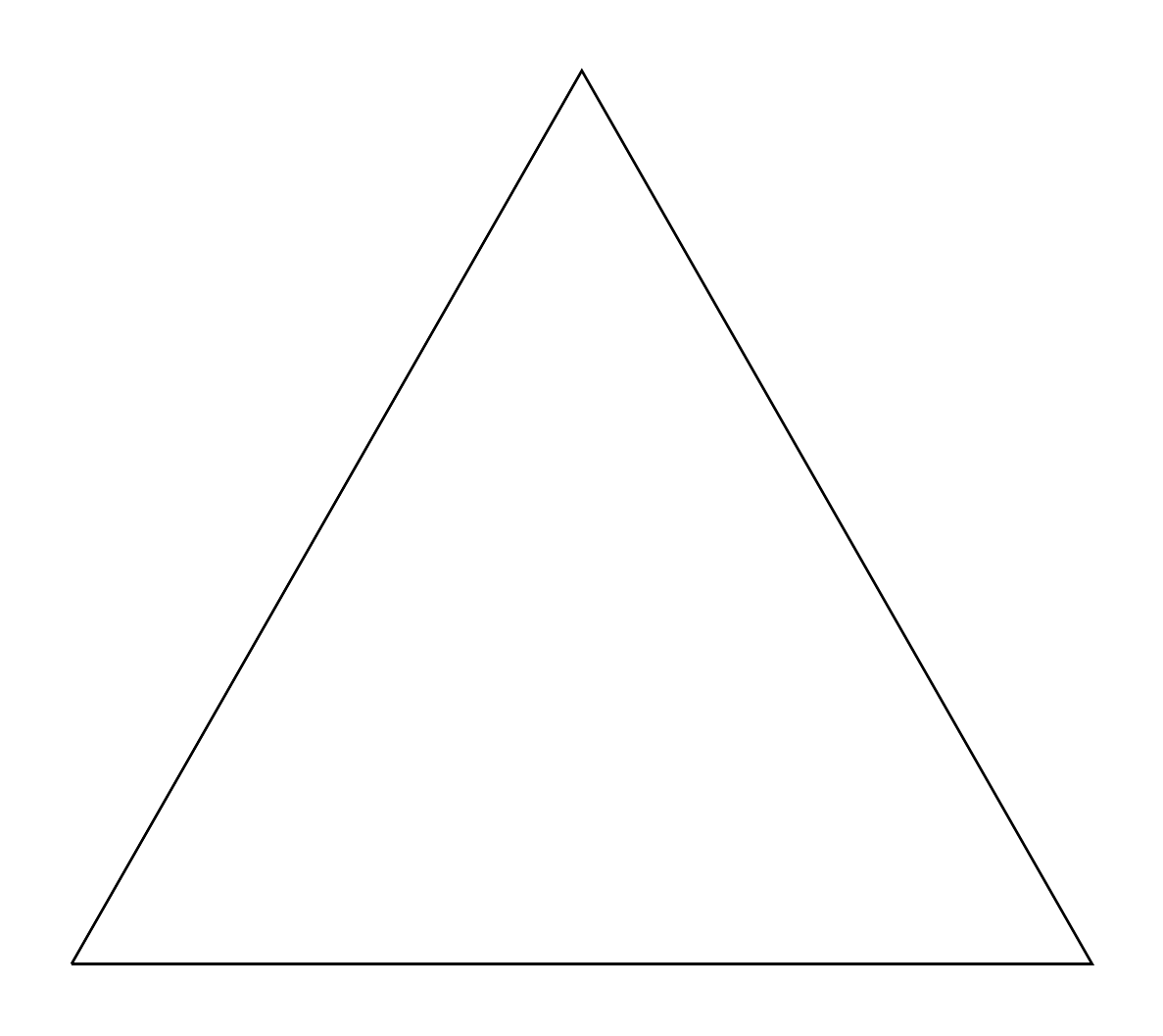
\end{minipage}
\caption{Singular points of the saturated infintesimal generator at $X_{\wt{\pi}_0}$. We have {\degspike}s at $p_1$ and $p_2$, {\spincorner}s at $p_{3,2}$, $p_{4,2}$ and $q_1$, and simple corners at the other marked points.}
\label{fig:resolution2}
\end{figure}

\subsection{Birational study}\label{ssec:resolbiratstudy}

Here we describe the behavior of the families introduced in \refssec{specialfamilies1} under point blow-up.

\subsubsection{Simple corners}

The situation for simple corners is already known, we summarize here their behavior under point blow-up.

\begin{prop}[{\cite[Proposition 4.1]{abate-tovena:paraboliccurvesC3}}]\label{prop:simplecornerblowup}
Let $f:(\nC^3,0)\to (\nC^3,0)$ be a simple corner, and denote by $\wt{f}$ the blow-up of $f$ at $0$.
Then
\begin{enumerate}[label=(\roman*),leftmargin=0pt, itemindent=40pt]
\item $0$ is never $2$-dicritical;
\item the singular directions of $f$ are always simple corners of $\wt{f}$.
\end{enumerate}
\end{prop}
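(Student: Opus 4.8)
My plan is to reduce everything to the leading linear data of the corner. Writing the three components of $f-\id$ from \refeqn{simplecorner}, namely $x^{a+1}y^bz^c(\lambda+P)$, $x^ay^{b+1}z^c(\mu+Q)$ and $x^ay^bz^cR$, the units $\lambda+P,\mu+Q$ make $x(\lambda+P)$ and $y(\mu+Q)$ coprime, so the common factor is exactly $\ell=x^ay^bz^c$ and $\ell^{-1}(f-\id)=\big(x(\lambda+P),\,y(\mu+Q),\,R\big)$. Since the first two entries have order one and $R\in\mf{m}$, the germ has pure order one and $H_\ell$ is the \emph{linear} map with matrix
\[ M=\begin{pmatrix}\lambda&0&0\\0&\mu&0\\r_1&r_2&\rho\end{pmatrix}, \]
the last row being the linear part of $R$ (and when $c\ge 1$ the condition $z\mid R$ forces $r_1=r_2=0$, so $M$ is diagonal). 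Part (i) is then immediate: $f$ is $2$-dicritical precisely when every direction is singular, i.e. when $H_\ell(v)\parallel v$ for all $v$, i.e. when $M$ is scalar; but $\mu\in\nC\setminus\lambda\nQ_{>0}$ gives $\mu\neq\lambda$, so $M$ is never scalar and the fixed locus of $[M]$ in $\nP^2$ is a proper subvariety.

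For (ii), by \refprop{singularitiestidinfgen} the singular points of $\wt f$ on the exceptional divisor $E\cong\nP^2$ are exactly the singular directions of $f$, that is the eigendirections of $M$. I would blow up the corner and compute $\wt f$ in the three standard charts $\pi(x,y,z)=(x,xy,xz),\ (xy,y,yz),\ (xz,yz,z)$. In each chart the monomial $x^ay^bz^c$ acquires the exceptional factor (in the first chart it becomes $x^{a+b+c}y^bz^c$), one divides the last two coordinates by the pivot variable, and $\wt f$ emerges again in the shape \refeqn{simplecorner}. Geometrically the eigendirections are the vertices of the triangle cut on $E$ by the strict transforms of the components of $D$ — triple points where three divisors meet, giving $c'\ge1$ corners — together with, in the $c=0$ case (or when two eigenvalues of $M$ coincide), points lying on a single edge, which are double points giving $c'=0$ corners.

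The cleanest way to organize the verification is through the infinitesimal generator. By \refprop{infgenblowup} the generator of $\wt f$ is the lift $\hat\chi_\pi$ of $\hat\chi$; since the blown-up corner is $\hat\chi$-invariant and the total transform $\pi^{-1}(D)+E$ is SNC, $\hat\chi_\pi$ stays tangent to it, so only the singularity type must be controlled. A direct reading of the charts shows that the multipliers transform by subtracting the pivot eigenvalue: at the vertices coming from the $x$- and $y$-directions one gets the pairs $(\lambda,\mu-\lambda)$ and $(\mu,\lambda-\mu)$, while at the vertex corresponding to the $z$-direction the triple becomes $(\lambda-\rho,\mu-\rho,\rho)$. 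In every case the linear part of $\hat\chi_\pi$ is non-nilpotent (a leading multiplier such as $\lambda$ never vanishes) and non-radial: radiality would place all three eigenvalues in a common $\kappa\nN^*$, forcing $\mu/\lambda\in\nQ_{>0}$ and contradicting $\mu\in\nC\setminus\lambda\nQ_{>0}$. Hence $\hat\chi_\pi$ is canonical at each singular point, and counting the number of components of the total transform through the point decides whether it is a three- or two-component corner. For the two-component ($c'=0$) case one must also exhibit the extra log-canonicity of the foliations induced on the two components, but this is automatic here, since those restrictions again carry a nonzero eigenvalue and so have non-nilpotent linear part.

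The step I expect to be the genuine obstacle is the vertex in the $z$-direction, where the designated multipliers are the \emph{shifted} values $(\lambda-\rho,\mu-\rho)$: the naive non-resonance $\mu-\rho\notin(\lambda-\rho)\nQ_{>0}$ does \emph{not} follow from $\mu\notin\lambda\nQ_{>0}$ and can indeed fail. The resolution is exactly the canonicity viewpoint above: among the three eigenvalues $\lambda-\rho,\ \mu-\rho,\ \rho$ one is always free to take a nonzero one as $\lambda'$ and a second one with $\mu'/\lambda'\notin\nQ_{>0}$ as $\mu'$ — such a pair exists precisely because the triple is non-radial — the remaining variable, whose $\wt f$-component is automatically divisible by it, then serving as the $R$-slot. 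Making this choice explicit, and checking in the residual degenerate-eigenvalue cases that an entire edge of singular points again consists of $(c'=0)$ simple corners, is where the bookkeeping concentrates; the conceptual content is simply that the non-resonance defining a simple corner is encoded by non-radiality of the saturated generator, which is manifestly preserved by blowing up an invariant corner.
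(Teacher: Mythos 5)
Your proof is correct, and it is worth pointing out that the paper does not actually prove this proposition: it is imported verbatim from \cite[Proposition 4.1]{abate-tovena:paraboliccurvesC3}, and the closest thing to a proof in the paper is \refprop{simplecornerblowupexplicit}, which redoes the same chart computations you describe and only verifies some of the cases. Your reduction of part (i) to non-scalarity of the linear map $M$, and your identification of the singular points of $\wt{f}$ with the eigendirections of $M$ via \refprop{singularitiestidinfgen}, agree with what the paper does there. The genuinely valuable part of your write-up is the treatment of the vertex $[0:0:1]$: you are right that the shifted pair $(\lambda-\gamma,\mu-\gamma)$ can be resonant even though $\mu\notin\lambda\nQ_{>0}$ (take $\lambda=1$, $\mu=-1$, $\gamma=-3$, which gives the pair $(4,2)$), and your fix --- promote the exceptional coordinate, with multiplier $\gamma$, to one of the two unit slots, which is possible exactly because the triple $(\lambda-\gamma,\mu-\gamma,\gamma)$ is non-nilpotent and non-radial, radiality forcing $\mu/\lambda=(n_2+n_3)/(n_1+n_3)\in\nQ_{>0}$ --- is the correct one; the remaining coordinate's component of $\wt{f}-\id$ is divisible by that coordinate, so it can legitimately occupy the $R$-slot. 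This is precisely where the paper's own sketch is imprecise: the proof of \refprop{simplecornerblowupexplicit} asserts that $[0:0:1]$ is a simple corner ``with respect to either $(x,y,z)$ or $(y,x,z)$, depending on whether $\lambda\neq\gamma$ or $\mu\neq\gamma$'', which fails in the resonant example above, where only a permutation placing $z$ in a unit slot works. So your argument is self-contained where the paper defers to a citation, and on the resonance point it is more careful than what the paper writes down; what remains to be filled in (the explicit chart formulas and the edge cases with a line of singular directions, where the pair $(\mu-\lambda,\lambda)$ is automatically non-resonant) is routine bookkeeping of exactly the kind the paper itself leaves to the reader.
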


We will need the behavior of simple corners with respect to any admissible blow-up, and to do so we need to be more explicit on the geometry of the singular directions of a simple corner.

\begin{prop}\label{prop:simplecornerblowupexplicit}
Let $f:(\nC^3,0)\to (\nC^3,0)$ be a simple corner of the form \refeqn{simplecorner}, write $R=\alpha x + \beta y + \gamma z + \mf{m}^2$, with $\alpha = \beta = 0$ if $c > 0$. Then we get the following singular directions:
\begin{itemize}
\item $[\lambda-\gamma:0:\alpha]$ if $\alpha$ and $\lambda -\gamma$ are not both vanishing; 
\item $[p:0:r]$ for all $[p:r] \in \nP_\nC^1$, if $\alpha=\lambda -\gamma=0$;
\item $[0:\mu-\gamma:\beta]$ if $\beta$ and $\mu -\gamma$ are not both vanishing; 
\item $[0:q:r]$ for all $[q:r] \in \nP_\nC^1$, if $\beta=\mu -\gamma=0$;
\item $[0:0:1]$.
\end{itemize}
All directions are exceptional, and simple corners.
\end{prop}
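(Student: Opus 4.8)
The plan is to reduce the determination of the singular directions (with respect to $D=\{x^ay^bz^c=0\}$) to an eigenvector problem for the linear part of the reduced germ, to read off exceptionality from the vanishing of coordinates, and to obtain the ``simple corner'' conclusion from \refprop{simplecornerblowup}. First I would check that $\ell=x^ay^bz^c$ is, up to a unit, the greatest common divisor of the three coordinates of $f-\id$: since $\lambda\neq0$ the first coordinate is $x^ay^bz^c\cdot x(\lambda+P)$ with $\lambda+P$ a unit, and any common factor of the three reduced coordinates would divide both $x(\lambda+P)$ and $y(\mu+Q)$, hence both $x$ and $y$, and is therefore a unit. Dividing $f-\id$ by $\ell$ and taking the homogeneous part of smallest degree (which is $1$, because the first coordinate already contributes $\lambda x$) gives
$$
H_D(x,y,z)=\big(\lambda x,\ \mu y,\ \alpha x+\beta y+\gamma z\big),
$$
the linear map with matrix
$$
M=\begin{pmatrix}\lambda&0&0\\0&\mu&0\\\alpha&\beta&\gamma\end{pmatrix}
$$
(this remains correct when $R\in\mf{m}^2$, the third row of $M$ then being zero). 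By \refdef{directions}, $v$ is a singular direction exactly when $H_D(v)=\sigma v$ for some $\sigma\in\nC$, that is, precisely when $v$ is a projectivized eigenvector of $M$.

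Next I would compute these eigenvectors. The eigenvalues of the triangular matrix $M$ are $\lambda,\mu,\gamma$, and the hypotheses give $\lambda\neq0$ and $\lambda\neq\mu$ (the latter since $\mu\notin\lambda\nQ_{>0}$). Solving $(M-\lambda I)v=0$, the middle row forces $y=0$ and the bottom row becomes $\alpha x=(\lambda-\gamma)z$, producing the single direction $[\lambda-\gamma:0:\alpha]$ when $(\alpha,\lambda-\gamma)\neq(0,0)$ and the whole line $[p:0:r]$ otherwise. Symmetrically, $(M-\mu I)v=0$ forces $x=0$ and $\beta y=(\mu-\gamma)z$, giving $[0:\mu-\gamma:\beta]$ or the line $[0:q:r]$. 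Finally $M(0,0,1)^{\mathsf T}=(0,0,\gamma)^{\mathsf T}$, so $[0:0:1]$ is always an eigenvector. This reproduces the five listed items, the coincidences among $\lambda,\mu,\gamma$ being absorbed into the ``both vanishing'' alternatives.

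It then remains to verify the two final assertions. Every listed direction has a vanishing first or second coordinate: the two directions coming from the eigenvalue $\lambda$ lie in $\{y=0\}$, the two coming from $\mu$ lie in $\{x=0\}$, and $[0:0:1]$ lies in both. Since $a,b\geq1$, the planes $\{x=0\}$ and $\{y=0\}$ are components of the support of $D\subseteq\on{Fix}(f)$; hence each singular direction is tangent to $D$ and therefore exceptional. That each of these singular directions is moreover a simple corner of the blow-up $\wt{f}$ of $f$ at the origin is exactly the content of \refprop{simplecornerblowup}(ii), which I would simply cite.

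The only genuine subtlety is the bookkeeping of the degenerate cases, where two of $\lambda,\mu,\gamma$ coincide so that an eigenspace becomes two-dimensional; this is what separates the isolated-direction bullets from the line bullets, and I expect it to be the main---though still elementary---point to handle with care. The non-resonance assumption $\mu\notin\lambda\nQ_{>0}$ enters the argument only through $\lambda\neq\mu$, while the condition $\alpha=\beta=0$ when $c>0$ is automatically compatible, being nothing other than the requirement $z\mid R$ built into the definition of a simple corner.
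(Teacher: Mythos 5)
Your proposal is correct and takes essentially the same approach as the paper: the singular directions are identified with the projectivized eigenvectors of the same triangular matrix, and the simple-corner property is obtained from \refprop{simplecornerblowup}, which the paper itself acknowledges already gives this fact. The only divergence is that the paper nevertheless carries out the explicit blow-up computation in the $z$-chart (yielding \refeqn{simplecornerblowup}), not out of logical necessity but because that explicit normal form is reused later (e.g.\ in the remark following the proposition and in \reflem{R0patterngenform}); your additional checks (that $x^ay^bz^c$ is indeed the gcd of the coordinates of $f-\id$, and that every listed direction lies in $\{x=0\}$ or $\{y=0\}$ and is therefore exceptional) fill in details the paper leaves implicit.
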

\begin{proof}
The computation of singular directions is strightforward, since it corresponds on determining the eigenspaces of the linear map represented
$$
\begin{pmatrix}
\lambda & 0 & 0\\
0 & \mu & 0 \\
\alpha & \beta & \gamma
\end{pmatrix}\text.
$$
Since we will need this computation later, we verify that the singularities arising are again simple corners (property we already know from \refprop{simplecornerblowup}), at least for the case of non-isolated singular directions.

By working on the $z$-chart, and developing in formal power series, the lift $\wt{f}$ of $f$ takes the form
\begin{equation}\label{eqn:simplecornerblowup}
\wt{f}(x,y,z)
=
\begin{pmatrix}
x\Big(1+x^ay^bz^s\big(\lambda -\gamma -\alpha x - \beta y  + \langle z \rangle\big)\Big)\\[2mm]
y\Big(1+x^ay^bz^s\big(\mu -\gamma -\alpha x - \beta y  + \langle z \rangle\big)\Big)\\[2mm]
z\Big(1+x^ay^bz^s\big(\gamma +\alpha x + \beta y  + \langle z \rangle\big)\Big)
\end{pmatrix}
\text,
\end{equation}
where $s=a+b+c$.
At the origin, corrisponding to the direction $[0:0:1]$, we get a simple corner with respect to either $(x,y,z)$ or $(y,x,z)$, depending on whether $\lambda \neq \gamma$ or $\mu \neq \gamma$ (at least one of the two holds).

If $\lambda=\gamma$ and $\alpha =0$, we get singularities at all points $(x_0,0,0)$.
By replacing $x=x_0+u$, we get simple corners of the form \refeqn{simplecorner} with respect to coordinates $(y,z,u)$.
The other cases are analogous and left to the reader.
\end{proof}

We depict the situation in the next diagram.
Exceptional directions are depicted in red, while non-exceptional (degenerate) directions will be depicted in blue (there are none for simple corners).
We also indicate the type of {\tid} germ we get at each characteristic point (in this case, all simple corners).
Finally, we indicate the geometry of singular points in case they come in a family (in this case, with a parameter $z_0 \in \nC$).
$$
\begin{xy} ;<5mm,0mm>:
\POS(0,0)
{$\overtype{\ZeroC}{0}$}
\ar +(2,4)*!L{\overtype{\ZeroC}{{\red [\lambda-\gamma:0:\alpha]}}\text{ if } (\lambda-\gamma, \alpha) \neq (0,0)}
\ar +(2,2)*!L{\overtype{\ZeroC}{{\red [1:0:z_0]}}\text{ if } \lambda-\gamma = \alpha = 0}|{\nC}
\ar +(2,0)*!L{\overtype{\ZeroC}{{\red [0:\mu-\gamma:\beta]}}\text{ if } (\mu-\gamma, \beta) \neq (0,0)}
\ar +(2,-2)*!L{\overtype{\ZeroC}{{\red [0:1:z_0]}}\text{ if } \mu-\gamma = \beta = 0}|{\nC}
\ar +(2,-4)*!L{\overtype{\ZeroC}{{\red [0:0:1]}}}
\end{xy}
$$

\begin{rmk}
Notice that the \emph{resonances} given by $\lambda-\gamma=\alpha=0$ and $\mu-\gamma = \beta = 0$ cannot happen both at the same time, since we would have $\lambda = \mu$, which is not allowed.

Notice also that \refeqn{simplecornerblowup}, with respect to coordinates $(z,y,u)$ with $x=x_0+u$, takes the form of \refeqn{simplecorner}, with $R\in \langle x,y \rangle$ (here we are using the notations of \refeqn{simplecorner}).
In particular $\gamma = 0$ in this case, and all these points are not resonant.
\end{rmk}

\subsubsection{{\Degspike}s}

\begin{lem}\label{lem:degeneratedirection}
Let $f:(\nC^3,0) \to (\nC^3,0)$ be a {\degspike} of the form \eqref{eqn:degspike}.
Then $f$ has three singular directions, given by:
\begin{itemize}
\item $\vect{v}=[0:0:1]$, which is non-exceptional and degenerate;
\item $\vect{w_1}=[1:0:0]$, and $\vect{w_2}=[0:1:0]$, which are exceptional, with multipliers $\lambda$ and $\mu$ (seen as singular directions).
\end{itemize}
\end{lem}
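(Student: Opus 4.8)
The plan is to read off the singular directions directly from \refdef{directions}, applied to a \degspike\ written in the form \refeqn{degspike}. First I would identify the divisor $\ell$. Since $P,Q\in\mf{m}^2$ and $R\in\mf{m}$, the three coordinates of $f-\id$ are $z^c(\lambda x+P)$, $z^c(\mu y+Q)$ and $z^{c+1}R=z^c\cdot zR$, so $z^c$ divides $f-\id$. Moreover $\lambda x+P$ and $\mu y+Q$ are smooth germs with linearly independent differentials $\lambda\,dx$ and $\mu\,dy$ at the origin (here I use $\lambda,\mu\neq 0$), hence they are non-associate irreducibles and their greatest common divisor is a unit. Therefore $\ell=z^c$ up to units, and $\ell^{-1}(f-\id)=(\lambda x+P,\ \mu y+Q,\ zR)$.

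Next I would compute $H_\ell$, the homogeneous part of smallest degree of $\ell^{-1}(f-\id)$. The first two components have nonzero linear parts $\lambda x$ and $\mu y$, while the third component $zR$ has order at least $2$ because $R\in\mf{m}$. Hence the smallest degree is $1$ and
\[
H_\ell(x,y,z)=(\lambda x,\ \mu y,\ 0),
\]
a linear map with diagonal matrix $M=\on{diag}(\lambda,\mu,0)$. By \refdef{directions} the singular directions are exactly the eigendirections of $M$, and such a direction is degenerate precisely when its eigenvalue (its multiplier) vanishes.

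The eigenvalues $\lambda$, $\mu$ and $0$ are pairwise distinct: $\lambda\neq 0$ and $\mu\neq 0$ by hypothesis, and $\lambda\neq\mu$ because $\mu\in\lambda\nR_{<0}$ forces $\mu/\lambda<0$. Thus $M$ has exactly three one-dimensional eigenspaces, spanned by the coordinate axes, with eigenvalues $\lambda$, $\mu$, $0$; these give the three singular directions $\vect{w_1}=[1:0:0]$, $\vect{w_2}=[0:1:0]$ and $\vect{v}=[0:0:1]$, with multipliers $\lambda$, $\mu$ and $0$ respectively. In particular $\vect{v}$ is the only degenerate one.

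Finally I would classify them as exceptional or not. For a \degspike\ the divisor $D=\{z=0\}$ plays the role of the exceptional divisor (see \refrmk{degspikeinfgen}), so a direction is exceptional exactly when it is tangent to $D$, i.e.\ when its $z$-coordinate vanishes. This holds for $\vect{w_1}$ and $\vect{w_2}$ but not for $\vect{v}=[0:0:1]$, giving the stated classification. The computation is essentially routine; the only point requiring care—and the place I would be most careful—is checking that $\ell=z^c$ exactly (so that the third coordinate contributes $zR$, of order $\geq2$, rather than a linear term) and that the three eigenvalues are genuinely distinct, since these are the two facts that make $H_\ell$ linear and diagonal with exactly three simple eigendirections.
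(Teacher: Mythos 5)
Your proof is correct and is precisely the direct computation that the paper leaves to the reader: you identify $\ell = z^c$ (using that $\lambda x + P$ and $\mu y + Q$ are non-associate smooth irreducibles), compute $H_\ell(x,y,z) = (\lambda x, \mu y, 0)$, read off the three eigendirections with multipliers $\lambda$, $\mu$, $0$, and classify them as exceptional or not via tangency to $D=\{z=0\}$, consistently with \refrmk{degspikeinfgen}. The two points you flag as delicate --- that $\ell$ is exactly $z^c$, so the third component of $\ell^{-1}(f-\id)$ has order at least $2$, and that $\lambda$, $\mu$, $0$ are pairwise distinct (which follows from $\mu \in \lambda\nR_{<0}$ and guarantees exactly three singular directions rather than a positive-dimensional family) --- are indeed the only nontrivial checks, and you handle both correctly.
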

\begin{proof}
The proof is a direct computation, left to the reader.
\end{proof}
\begin{rmk}
For maps of the form \refeqn{degspikeoften}, we have $\vect{v}=\left[-\frac{b_z}{b_x}:-\frac{a_z}{a_y}:1\right]$, and $\vect{w_j}=\left[\sqrt{a_y}:(-1)^{j} \sqrt{b_x}:0\right]$ for $j=1, 2$ (for some determinations of the square roots of $a_y$ and $b_x$).
\end{rmk}

\begin{prop}\label{prop:degspikeblowuppoint}
Let $f:(\nC^3,0) \to (\nC^3,0)$ be a {\degspike} of the form \eqref{eqn:degspike}.
Let $\pi:X \to (\nC^3,0)$ be the blow-up at the origin in $\nC^3$. For the lift $\wt{f}$ of $f$ to $X$, we have that:
\begin{itemize}
\item $\vect{v}=[0:0:1]$ is a {\degspike};
\item $\vect{w_1\!}=[1:0:0]$ and $\vect{w_2\!}=[0:1:0]$ are simple corners.
\end{itemize}
\end{prop}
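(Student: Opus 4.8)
The plan is to compute the lift $\wt{f}$ of $f$ in the three affine charts of the blow-up containing the singular points of \reflem{degeneratedirection}, and in each case to match the resulting germ with one of the normal forms of \refssec{specialfamilies1}. Throughout I use that $P,Q\in\mf{m}^2$ and $R\in\mf{m}$ to keep track of divisibility by the relevant chart coordinate.

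\emph{The degenerate direction $\vect{v}=[0:0:1]$.} I work in the $z$-chart $\pi(x,y,z)=(xz,yz,z)$, in which $v$ is the origin. Here $P(xz,yz,z)$ and $Q(xz,yz,z)$ are divisible by $z^2$, while $R(xz,yz,z)=zS$ with $S(0,0,0)=c_z$; hence the third coordinate is $z'=z+z^{c+2}S$. Dividing the first two numerators by $z'$ and expanding, I expect to obtain
\begin{equation*}
\wt{f}(x,y,z)=\begin{pmatrix}
x+z^c\big(\lambda x+a_{002}z+\cdots\big)\\
y+z^c\big(\mu y+b_{002}z+\cdots\big)\\
z+z^{c+2}\big(c_z+\cdots\big)
\end{pmatrix},
\end{equation*}
so that the greatest common divisor of the entries of $\wt{f}-\id$ is exactly $z^c$ (the first bracket is a non-unit not divisible by $z$, thanks to the term $\lambda x$). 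The key observation is that the spike parameter is unchanged: rewriting the third entry as $z+z^{c+1}\big(z(c_z+\cdots)\big)$ puts the bracket $z(c_z+\cdots)$ back into $\mf{m}$.

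\emph{Finishing at $v$.} The linear part of $z^{-c}(\wt{f}-\id)$ is $\left(\begin{smallmatrix}\lambda&0&a_{002}\\0&\mu&b_{002}\\0&0&0\end{smallmatrix}\right)$, with eigenvalues $\lambda,\mu,0$. I would apply the diagonalizing shear $u=x+(a_{002}/\lambda)z$, $v=y+(b_{002}/\mu)z$, fixing $z$: this turns the first two brackets into $\lambda u+\mf{m}^2$ and $\mu v+\mf{m}^2$ while leaving the third coordinate untouched, so $\wt{f}$ is exactly of the form \refeqn{degspike} with the \emph{same} parameters $c,\lambda,\mu$. Since $\mu\in\lambda\nR_{<0}$ is preserved, $v$ is a {\degspike}. (Alternatively, one checks directly that the three invariant conditions of \refrmk{degspikeinfgen} hold: tangency to the exceptional divisor, a single vanishing eigenvalue, and a Siegel singularity on $D=\{z=0\}$.)

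\emph{The exceptional directions $\vect{w_1},\vect{w_2}$.} For $w_1=[1:0:0]$ I use the $x$-chart $\pi(x,y,z)=(x,xy,xz)$, where $w_1$ is the origin; now $P,Q$ are divisible by $x^2$ and $R$ by $x$, and after dividing by $x'$ I expect
\begin{equation*}
\wt{f}(x,y,z)=\begin{pmatrix}
x+x^cz^c\,x\big(\lambda+\cdots\big)\\
y+x^cz^c\,y\big(\mu-\lambda+\cdots\big)\\
z+x^cz^c\,z\big(-\lambda+\cdots\big)
\end{pmatrix}.
\end{equation*}
All three coordinate hyperplanes are fixed and the common factor is $x^cz^c$. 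Reading this as \refeqn{simplecorner} in the reordered coordinates $(x,z,y)$ — with exponents $a=b=c\geq 1$, third exponent $0$, and Siegel pair $\lambda,-\lambda$ — exhibits $w_1$ as a simple corner; the non-resonance holds because $-\lambda/\lambda=-1\notin\nQ_{>0}$, and the condition $z\mid R$ is vacuous since the third exponent vanishes. The case $w_2=[0:1:0]$ is symmetric under $(x,\lambda)\leftrightarrow(y,\mu)$, giving a simple corner with Siegel pair $\mu,-\mu$.

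\emph{Main obstacle.} The only delicate point is the analysis at $v$: one must track the $z$-divisibility carefully to see that, although the third coordinate reads $z+z^{c+2}(\text{unit})$, the correct factorization keeps the spike parameter equal to $c$ (so the third bracket lands in $\mf{m}$), and that the diagonalizing shear preserves both $P',Q'\in\mf{m}^2$ and the Siegel pair $(\lambda,\mu)$. The two exceptional charts are then routine once the common factor $x^cz^c$ (resp.\ $y^cz^c$) and the resulting eigenvalues are identified.
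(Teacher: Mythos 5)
Your proposal takes essentially the same route as the paper's proof: compute the lift chart by chart and match the result against the normal forms of \refssec{specialfamilies1}. At the degenerate direction $\vect{v}$ your argument is correct and in fact more explicit than the paper's, which develops the lift as $x+z^c(\lambda x+a_{002}z+\mf{m})$, $y+z^c(\mu y+b_{002}z+\mf{m})$, $z+z^{c+1}\mf{m}$ and then simply asserts this is again a {\degspike}; your shear $u=x+(a_{002}/\lambda)z$, $v=y+(b_{002}/\mu)z$ supplies exactly the normalization implicit in that assertion, and your rewriting $z+z^{c+2}S=z+z^{c+1}(zS)$ with $zS\in\mf{m}$ is the right way to see that the spike parameter stays equal to $c$.

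At the exceptional directions your conclusion and all parameter identifications ($a=b=c\geq 1$, third exponent $0$, pair $(\lambda,-\lambda)$, non-resonance $-1\notin\nQ_{>0}$) are correct, but one intermediate claim is wrong: you cannot write $y\circ\wt{f}=y+x^cz^c\,y(\mu-\lambda+\cdots)$, nor assert that all three coordinate hyperplanes are fixed. Indeed $y\circ\wt{f}=y+x^cz^c\big((\mu-\lambda)y+x^{-1}Q\circ\pi\big)$, and $x^{-1}Q\circ\pi$ need not lie in $\langle y\rangle$ since $Q\in\mf{m}^2$ is arbitrary (take $Q=x^2$, so that $x^{-1}Q\circ\pi=x$); hence $\{y=0\}$ is in general not even invariant, and the local fixed divisor is only $\{x^cz^c=0\}$. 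The slip is harmless for the statement being proved: in the reordered coordinates $(x,z,y)$ the $y$-coordinate occupies the third slot of \refeqn{simplecorner}, whose entry is only required to lie in $\mf{m}$ (the condition $z\mid R$ being vacuous because the third exponent vanishes), and $(\mu-\lambda)y+\langle x\rangle\in\mf{m}$ indeed; this is precisely how the paper reads off the simple corner from $\wt{f}=\big(x+x^{c+1}z^c(\lambda+\langle x\rangle),\ y+x^cz^c((\mu-\lambda)y+\langle x\rangle),\ z+x^cz^{c+1}(-\lambda+\langle x\rangle)\big)$.
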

The following diagram portrays the situation for {\degspike}s.
We recall that exceptional directions are depicted in red and non-exceptional degenerate directions are depicted in blue.
To help the reader, we also indicate with a subscript the chart in which we make the computations, i.e., the equation of the exceptional divisor obtained with the last blow-up.
$$
\begin{xy} ;<5mm,0mm>:
\POS(0,0)
{$\overtype{\DS}{0}$}
\ar +(2,1)*!L{\overtype{\DS}{{\blue \vect{v_z\!}}}}
\ar +(2,-1)*!L{\overtype{\ZeroC}{{\red\ \vect{w_x\!}}}}|{2}
\end{xy}
$$
\begin{proof}
\mbox{}
	
\begin{trivlist}
\itemchdir{[0:0:1]}

We make computations in the $z$-chart, so that $\pi(x,y,z)=(xz,yz,z)$.
For the lift $\wt{f}$ of $f$, we obtain
\begin{equation}\label{eqn:degspikeblow0z}
\wt{f}(x,y,z)
=
\begin{pmatrix}
\displaystyle \frac{x+z^c \big(\lambda x + z^{-1}P\circ \pi\big)}{1+z^{c} R\circ \pi}\\[6mm]
\displaystyle \frac{y+z^c \big(\mu y + z^{-1}Q \circ \pi\big)}{1+z^{c} R \circ \pi}\\[6mm]
\displaystyle z\big(1+z^{c} R \circ \pi\big)
\end{pmatrix}
\text.
\end{equation}
By developing in formal power series, we get
$$
\wt{f}(x,y,z)
=
\begin{pmatrix}
x+z^c \big(\lambda x + a_{002} z + \mf{m}\big)\\[2mm]
y+z^c \big(\mu y + b_{002} z + \mf{m}\big)\\[2mm]
z+z^{c+1} \mf{m}
\end{pmatrix}
\text,
$$
which is again a {\degspike}.

\itemchdir{[1:0:0], [0:1:0]}

We study $[1:0:0]$, the case $[0:1:0]$ being obtained by exchanging the role of $x$ and $y$.
We make computations in the $x$-chart, so that $\pi(x,y,z)=(xz,yz,z)$, and get
$$
\wt{f}(x,y,z)
=
\begin{pmatrix}
\displaystyle x\Big(1+x^cz^c\big(\lambda + x^{-1}P \circ \pi\big)\Big)\\[4mm]
\displaystyle \frac{y+x^cz^c \big(\mu y + x^{-1}Q \circ \pi\big)}{1+x^cz^c\big(\lambda + x^{-1}P \circ \pi\big)}\\[6mm]
\displaystyle z\frac{1+x^{c}z^{c} R \circ \pi}{1+x^cz^c\big(\lambda + x^{-1}P\circ \pi\big)}
\end{pmatrix}
\text.
$$
By developing in formal power series, we get
$$
\wt{f}(x,y,z)
=
\begin{pmatrix}
x+x^{c+1}z^c\big(\lambda + \langle x \rangle\big)\Big)\\[2mm]
y+x^cz^c \big((\mu-\lambda) y + \langle x \rangle \big)\\[2mm]
z+x^{c}z^{c+1} \big(-\lambda + \langle x \rangle \big)
\end{pmatrix}
\text,
$$
which is a simple corner with respect to coordinates $(x,z,y)$.
\end{trivlist}
\end{proof}

\subsubsection{{\Spincorner}s}

\begin{prop}\label{prop:spincornerblowup}
Let $f:(\nC^3,0)\to(\nC^3,0)$ be a {\spincorner} of the form \refeqn{spincorner}.
The singular directions of $f$ are $[1:0:0]$ (non-degenerate) and the points of the line $[0:p:q]$, with $[p:q]\in \nP_\nC^1$ (all degenerate).

Let $\pi:X \to (\nC^3,0)$ be the blow-up at the origin. For the lift $\wt{f}$ of $f$ to $X$, we have that:
\begin{enumerate}\setcounter{enumi}{1}
\item $[1:0:0]$ is a simple corner;
\item $[0:1:0]$ and $[1:0:0]$ are {\spincorner}s;
\item $[0:p:q]$ are {\halfcorner}s for any $p,q$ with $pq\neq 0$.
\end{enumerate}
\end{prop}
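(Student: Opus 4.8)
The plan is to split the statement into two parts: the determination of the singular directions, and the chart-by-chart analysis of the lift $\wt{f}=\pi^{-1}\circ f\circ\pi$ under the blow-up $\pi$ of the origin, identifying the germ produced at each relevant point with one of the normal forms introduced in \refssec{specialfamilies1}.

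First I would read off the singular directions directly from \refeqn{spincorner}. Since $P\in\mathfrak{m}^2$, the entry $x+P$ has order one and is coprime to $y$ and $z$, so the greatest common divisor of the three components of $f-\id$ is $\ell=y^bz^c$, and $\ell^{-1}(f-\id)=(x+P,\,yQ,\,zR)$. Its homogeneous part of smallest degree is the linear endomorphism $(x,y,z)\mapsto(x,0,0)$; hence $[1:0:0]$ is the only eigendirection with nonzero multiplier (namely $1$, so non-degenerate), while the whole plane $\{x=0\}$ is the kernel and therefore supplies the line of degenerate singular directions $[0:p:q]$. This settles the first assertion.

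For the points on the exceptional divisor I would compute $\wt{f}$ in the chart adapted to each, substitute the monomial expression of $\pi$, divide out the common factor and expand. At $[1:0:0]$, in the $x$-chart $\pi(x,y,z)=(x,xy,xz)$, the common factor of $\wt{f}-\id$ becomes $x^{b+c}y^bz^c$ and the three entries take the shape $x(1+\cdots)$, $y(-1+\cdots)$, $z(-1+\cdots)$; since the ratio of multipliers is $-1\notin\nQ_{>0}$ and $z$ divides the last entry, this is a simple corner [\ZeroC] of the form \refeqn{simplecorner} with $a=b+c$, $\lambda=1$, $\mu=-1$. At the two axis points $[0:1:0]$ and $[0:0:1]$ of the degenerate line, computing in the $y$- and $z$-charts respectively, the pullback raises exactly one exponent to $b+c$ while preserving the spinning structure: the $x$-entry is divisible by precisely $y^{b+c}z^{c}$ (resp.\ $y^{b}z^{b+c}$) with nonvanishing coefficient of $x$, and the remaining entries carry the extra factor of $y$ (resp.\ $z$) required by \refeqn{spincornergen}, so by \refrmk{spincornergen} these are again spinning corners.

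The substantive case is a generic point $[0:p:q]$ with $pq\neq0$, and I expect it to be the main obstacle, because it is where a spinning corner genuinely \emph{degenerates} into a half corner instead of reproducing itself. Geometrically, only one component of the exceptional divisor passes through such a point, so after recentering at $y=y_0\neq0$ in the $z$-chart the factor $y^b$ becomes a unit and the monomial $y^bz^c$ collapses to $z^{\,b+c}$. I would then check that the $x$-entry keeps a nonzero coefficient of $x$ (so that the coefficient $a_x$ in \refeqn{halfcornergen} does not vanish), that the second entry is divisible by exactly $z^{\,b+c+1}$ with a well-defined constant term $\beta$, and that the third is divisible by $z^{\,b+c+2}$; \refrmk{halfcornergen} then identifies the germ as a half corner [\HalfC] with exponent $b+c$, with no constraint forced on $\beta$, so that both simple and non-simple half corners may occur. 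Throughout, the only genuinely delicate bookkeeping is to track, after each division by the normalizing factor, which powers of the variables survive and that the leading coefficients (normalized to $1$, or to $a_x$) do not vanish — both guaranteed by $P\in\mathfrak{m}^2$ and $Q,R\in\mathfrak{m}$.
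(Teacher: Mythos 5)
Your proposal is correct and follows essentially the same route as the paper's proof: read the singular directions off the linear part $(x,0,0)$ of $\ell^{-1}(f-\id)$ with $\ell=y^bz^c$, then compute the lift chart by chart ($x$-chart at $[1:0:0]$, $y$-/$z$-charts at the two axis points, recentering $y=y_0+v$ in the $z$-chart at generic points of the degenerate line) and match the outcome against \refeqn{simplecorner}, \refeqn{spincornergen} and \refeqn{halfcornergen}. All your quantitative data agree with the paper's computations: the simple corner at $[1:0:0]$ with $a=b+c$, $\lambda=1$, $\mu=-1$; spinning corners at the axis points with one exponent raised to $b+c$; and half corners of exponent $b+c$, simple or non-simple according to the value of $\beta$, at the points $[0:y_0:1]$, $y_0\neq 0$.
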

We sum up the situation for {\spincorner}s.
$$
\begin{xy} ;<5mm,0mm>:
\POS(0,0)
{$\overtype{\SpinC}{0}$}
\ar +(2,3)*!L{\overtype{\ZeroC}{{\red [1:0:0]_x}}}
\ar +(2,1)*!L{\overtype{\SpinC}{{\red [0:1:0]_y}}}
\ar +(2,-1)*!L{\overtype{\SpinC}{{\red [0:0:1]_z}}}
\ar +(2,-3)*!L{\overtype{\HalfC}{{\blue [0:y_0:1]_z}}}|{\nC^*}
\end{xy}
$$

\begin{proof}
The list of singular directions is easily obtained by the fact that the homogeneous part of smallest degree of $f-\id$ is given by $y^bz^c\vV{x,0,0}$.

\begin{trivlist}
\itemchdir{[1:0:0]}
We make computations in the $x$-chart, and we obtain
$$
\wt{f}(x,y,z)=
\begin{pmatrix}
x\Big(1+x^sy^bz^c\big(1+x^{-1}P \circ \pi\big)\Big)\\[4mm]
\displaystyle y \frac{1+x^{s}y^{b}z^{c}Q \circ \pi}{1+x^{s}y^bz^c\big(1+x^{-1}P\circ \pi\big)}\\[6mm]
\displaystyle z\frac{1+x^{s}y^{b}z^{c}R\circ \pi}{1+x^{s}y^bz^c\big(1+x^{-1}P\circ \pi\big)}
\end{pmatrix}\text,
$$
where $s=b+c$.
This gives a simple corner.
	
\itemchdir{[0:0:1], [0:1:0]}
We study $[0:0:1]$, the case $[0:1:0]$ being obtained by exchanging the role of $y$ and $z$.
Making computations in the $z$-chart, we get
$$
\wt{f}(x,y,z)=\begin{pmatrix}
\displaystyle \frac{x+y^bz^{s}(x+z^{-1}P\circ \pi)}{1+y^bz^{s} R \circ \pi}\\[6mm]
\displaystyle y\frac{1+y^{b}z^{s}Q \circ \pi}{1+y^bz^{s}R \circ \pi}\\[6mm]
z\big(1+y^bz^{s}R\circ \pi\big)
\end{pmatrix}\text.
$$
We develop in formal power series, obtaining
\begin{equation}\label{eqn:spincornerz}
\wt{f}(x,y,z)=\begin{pmatrix}
x+y^bz^{s}\Big(x+z\big(P^{(2)}-xR^{(1)}\big)(x,y,1) + \langle z^2\rangle\Big)\\
y+y^{b+1}z^{s+1}\Big(\big(Q^{(1)}-R^{(1)}\big)(x,y,1) + z\big(Q^{(2)}-R^{(2)}\big)(x,y,1)+ \langle z^2 \rangle\Big) \\
z+y^bz^{s+2}\Big(R^{(1)}(x,y,1) +zR^{(2)}(x,y,1) + \langle z^2 \rangle\Big)
\end{pmatrix},
\end{equation}
where for any $k \in \nN^*$, $P^{(k)}$ denotes the homogeneous part of degree $k$ of $P$ (and analogously for $Q$ and $R$).

In particular, $\wt{f}$ is a {\spincorner} of the form \refeqn{spincornergen} with respect to coordinates $(x,y,z)$.

\itemchdir{[0:y_0:1]}
It remains to study the germ of $\wt{f}$ at points of the form $[0:y_0:1]$, with $y_0 \in \nC^*$.
We write $Q=b_x x +b_y y +b_z z + \mf{m}^2$, and $R=c_x x +c_y y +c_z z + \mf{m}^2$. We center coordinates at $[0:y_0:1]$ by setting $y=y_0+v$, and from \refeqn{spincornerz} we get:
\begin{equation}\label{eqn:spincornerztrasl}
\hspace{-2mm}
\begin{pmatrix}
x+y_0^bz^{s}\Big(x+z P^{(2)}(0,y_0,1) + \langle x,z\rangle \mf{m}\Big)\\[2mm]
v+y_0^{b+1}z^{s+1}\Big(\wt{\beta} + x (b_x-c_x) + v \big((b+1)y_0^{-1}\wt{\beta}+(b_y-c_y)\big) + z \big(Q-R\big)^{(2)}(0,y_0,1) + \mf{m}^2\Big) \\[2mm]
z+y_0^bz^{s+2}\Big(\wt{\gamma} + x c_x + v \big(by_0^{-1}\wt{\gamma}+c_y\big) + z R^{(2)}(0,y_0,1) + \mf{m}^2\Big)
\end{pmatrix},
\end{equation}
where $\wt{\beta}=\wt{\beta}(y_0)=b_z-c_z+y_0(b_y-c_y)$ and $\wt{\gamma}=\wt{\gamma}(y_0)=c_z+y_0c_y$.
This is a {\halfcorner}, {\hcnonsimple} or {\hcsimple} depending on the vanishing of $\wt{\beta}(y_0)$.
\end{trivlist}
\end{proof}

\begin{rmk}\label{rmk:beta}
In what follows, we will be interested in the existence of {\hcnonsimple} {\halfcorner}s, hence in the vanishing of the coefficient $\wt{\beta}(y_0)$.
Three situations can occur:
\begin{itemize}
\item if $b_y=c_y$ and $b_z=c_z$, then all {\halfcorner}s are {\hcnonsimple};
\item if exactly one of the two equalities above hold, then all {\halfcorner}s are {\hcsimple};
\item if none of the two equalities above hold, then there exists a unique $y_0$ at which $\wt{f}$ is {\hcnonsimple}, and all the others produce {\hcsimple} {\halfcorner}s. 
\end{itemize}

Notice that the value of $\wt{\beta}(y_0)$ has the same formula for {\spincorner}s of the form \refeqn{spincornergen} with $a_y=a_z=0$ (i.e., where we allow $a_x$ to be different from $1$).
\end{rmk}

\subsubsection{{\Halfcorner}s}

\begin{prop}\label{prop:halfcornerblowup}
Let $f:(\nC^3,0)\to(\nC^3,0)$ be a {\halfcorner} of the form \refeqn{halfcorner}.
Its singular directions are given by:
\begin{itemize}
\item $[1:0:0]$, exceptional; 
\item $[0:1:0]$, exceptional; 
\item $[0:y_0:1]$ for all $y_0 \in \nC$, non-exceptional degenerate (when $f$ is {\hcnonsimple}).
\end{itemize}
Let $\wt{f}$ be the lift of $f$ to the blow-up of the origin in $\nC^3$. Then
\begin{itemize}
\item $\wt{f}$ is a simple corner at $[1:0:0]$,
\item $\wt{f}$ is a {\spincorner} at $[0:1:0]$,
\item if $f$ is \hcnonsimple, then $\wt{f}$ is a {\halfcorner} at $[0:y_0:1]$ for all $y_0\in \nC$.
\end{itemize}
\end{prop}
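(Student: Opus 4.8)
The plan is to mirror the chart-by-chart argument used for \refprop{spincornerblowup}, organizing the proof as a trivlist indexed by the three (families of) singular directions. First I would compute the singular directions directly. Writing $f$ in the form \refeqn{halfcorner} and taking $D=\{z=0\}$, one has $\ell=z^c$ (up to a unit), so that $z^{-c}(f-\id)=(x+P,\,z(\beta+Q),\,z^2R)$ has as homogeneous part of smallest degree the linear map $H_\ell(x,y,z)=(x,\beta z,0)$. Solving $H_\ell(v)=\lambda v$ for $v=[x_0:y_0:z_0]$ splits into two cases: if $\lambda\neq0$, the third coordinate forces $z_0=0$, then the second forces $y_0=0$, and the first gives $\lambda=1$, yielding $[1:0:0]$; if $\lambda=0$, we need $x_0=0$ and $\beta z_0=0$, which produces only $[0:1:0]$ when $\beta\neq0$ (the \hcsimple\ case) and the whole line $[0:y_0:z_0]$ when $\beta=0$ (the \hcnonsimple\ case), in particular all $[0:y_0:1]$. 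Exceptionality is read off the last coordinate: $[1:0:0]$ and $[0:1:0]$ are tangent to $D=\{z=0\}$, hence exceptional, while each $[0:y_0:1]$ is non-exceptional and degenerate (multiplier $0$). This establishes the stated list.

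Next I would blow up the origin and expand in the relevant charts, exactly as in \refprop{spincornerblowup}. In the $x$-chart $\pi(x,y,z)=(x,xy,xz)$ one has $P\circ\pi\in\langle x^2\rangle$, $Q\circ\pi\in\langle x\rangle$ and $R\circ\pi\in\gamma+\langle x\rangle$; dividing out the common factor and expanding yields
\begin{equation*}
\wt f(x,y,z)=\begin{pmatrix}
x+x^cz^c\,x\big(1+\langle x\rangle\big)\\
y+x^cz^c\big(-y+\langle x,z\rangle\big)\\
z+x^cz^c\,z\big(-1+\langle x\rangle\big)
\end{pmatrix},
\end{equation*}
which is the simple corner \refeqn{simplecorner} in the coordinates $(x,z,y)$, with monomial $x^cz^c$ and multipliers $\lambda=1$, $\mu=-1$ (so $\mu\notin\lambda\nQ_{>0}$). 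In the $y$-chart $\pi(x,y,z)=(xy,y,yz)$ the analogous bookkeeping ($P\circ\pi\in\langle y^2\rangle$, $Q\circ\pi,\,R\circ\pi-\gamma\in\langle y\rangle$) produces a germ whose first coordinate is $x+y^cz^c(x+\mf m^2)$ and whose second and third coordinates carry the factors $y^{c+1}z^c$ and $y^cz^{c+1}$; by \refrmk{spincornergen} this is a \spincorner\ with $b=c$ and $z$-exponent $c$.

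The real work, and the step I expect to be the main obstacle, is the family $[0:y_0:1]$ in the \hcnonsimple\ case. Here I would use the $z$-chart $\pi(x,y,z)=(xz,yz,z)$ and recenter by $y=y_0+v$. With $\beta=0$ one has $P\circ\pi\in\langle z^2\rangle$, $Q\circ\pi=zQ^{(1)}(x,y,1)+\langle z^2\rangle$ and $R\circ\pi\in\gamma+\langle z\rangle$; after dividing by the common factor and expanding, the lift takes the shape
\begin{equation*}
\wt f(x,v,z)=\begin{pmatrix}
x+z^c\big(x+a_yv+a_zz+\mf m^2\big)\\
v+z^{c+1}\big(\wt\beta+\mf m\big)\\
z+z^{c+2}\big(\gamma+\mf m\big)
\end{pmatrix},
\end{equation*}
with $a_y,a_z$ determined by $P^{(2)}(0,y_0,1)$ and $\partial_yP^{(2)}(0,y_0,1)$, and $\wt\beta=\wt\beta(y_0)$ the constant coming from $Q^{(1)}$ together with the correction from $R$. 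The two delicate points are (i) checking that the $z$-exponent $c$ is genuinely preserved and that the coefficient of $x$ in the first bracket remains a unit, and (ii) recognizing that the linear terms $a_yv+a_zz$ in the first coordinate do no harm: by \refrmk{halfcornergen} any germ of the generalized form \refeqn{halfcornergen} with nonvanishing $x$-coefficient is already a \halfcorner, so these terms are absorbed by a linear change of coordinates. Hence $\wt f$ is again a \halfcorner\ at every $[0:y_0:1]$, which completes the proof. The intermediate power-series expansions are routine and would be left to the reader, as in the earlier propositions.
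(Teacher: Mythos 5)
Your proposal is correct and follows essentially the same route as the paper's proof: reading the singular directions off the homogeneous part $z^c(x,\beta z,0)$ of $f-\id$ divided by $\ell=z^c$, then performing the same chart-by-chart blow-up computations, identifying a simple corner in the $x$-chart (with $\lambda=1$, $\mu=-1$), a {\spincorner} via \refrmk{spincornergen} in the $y$-chart, and, in the {\hcnonsimple} case, a {\halfcorner} via \refrmk{halfcornergen} after recentering at $[0:y_0:1]$ in the $z$-chart. Your only imprecisions are in the bookkeeping of linear terms---in the $y$-chart the first coordinate actually acquires a term $a_{020}y$ inside the bracket, and in the $z$-chart the coefficient $\partial_y P^{(2)}(0,y_0,1)$ multiplies $zv$ (a quadratic term) rather than $v$, so in fact $a_y=0$ there---but since you conclude through the generalized forms of \refrmk{spincornergen} and \refrmk{halfcornergen}, which absorb such linear terms, these slips do not affect the validity of the argument.
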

Here is a depiction of the situation for {\halfcorner}s.
$$
\begin{xy} ;<5mm,0mm>:
\POS(0,0)
{$\overtype{\HalfC}{0}$}
\ar +(2,2)*!L{\overtype{\ZeroC}{{\red [1:0:0]_x}}}
\ar +(2,0)*!L{\overtype{\SpinC}{{\red [0:1:0]_y}}}
\ar +(2,-2)*!L{\overtype{\HalfC}{{\blue [0:y_0:1]_z}} \text{ if } \beta=0}|{\nC}
\end{xy}
$$

\begin{proof}
The list of singular directions is easily obtained by the fact that the homogeneous part of smallest degree of $f-\id$ is given by $z^c\vV{x,\beta z,0}$.

\begin{trivlist}
\itemchdir{[1:0:0]}
We make computations in the $x$-chart, and get
$$
\wt{f}(x,y,z)=
\begin{pmatrix}
x\big(1+x^cz^c(1+x^{-1}P \circ \pi)\big)\\[4mm]
\displaystyle \frac{y+x^cz^{c+1}(\beta + Q\circ \pi)}{1+x^cz^c(1+x^{-1}P\circ \pi)}\\[6mm]
\displaystyle z\frac{1+x^{c+1}z^{c+1}R\circ \pi}{1+x^cz^c(1+x^{-1}P\circ \pi)}
\end{pmatrix}\text.
$$
Since $x^2\mid P\circ \pi$, by direct computation we get
$$
\wt{f}(x,y,z)=\begin{pmatrix}
x+x^{c+1}z^c(1+\mf{m}))\\
y+x^cz^c\mf{m}\\
z+x^cz^{c+1}(-1+\mf{m})
\end{pmatrix},
$$
which is a simple corner with respect to coordinates $(x,z,y)$.

\itemchdir{[0:1:0]}
In the $y$-chart, we get
$$
\wt{f}(x,y,z)=\begin{pmatrix}
\displaystyle \frac{x+y^cz^c(x+y^{-1}P\circ \pi)}{1+y^{c}z^{c+1}(\beta+Q \circ \pi)}\\[6mm]
y\big(1+y^{c}z^{c+1}(\beta+Q\circ \pi)\big)\\[4mm]
\displaystyle z \frac{1+y^{c+1}z^{c+1}R\circ \pi}{1+y^{c}z^{c+1}(\beta+Q \circ \pi)}
\end{pmatrix}\text.
$$
By developing in formal power series, we get
\begin{equation}\label{eqn:halfcornerytaylor}
\wt{f}(x,y,z)=
\begin{pmatrix}
x+y^cz^c\big(x+a_{020}y + \mf{m}^2\big)\\[2mm]
y+y^{c+1}z^c \big(z\beta + \langle yz\rangle \big)\\[2mm]
z+y^cz^{c+1} \big(-\beta z + \langle yz \rangle\big)
\end{pmatrix}\text,
\end{equation}
and $\wt{f}$ is a {\spincorner} at $[0:1:0]$.

\itemchdir{[0:y_0:1]}

Finally, suppose $\beta=0$. By doing computation in the $z$-chart we get
$$
\wt{f}(x,y,z)=
\begin{pmatrix}
\displaystyle \frac{x+z^c(x+z^{-1}P\circ \pi)}{1+z^{c+1}R\circ \pi}\\[6mm]
\displaystyle \frac{y+z^c Q\circ \pi}{1+z^{c+1}R\circ \pi}\\[6mm]
z\big(1+z^{c+1}R\circ \pi\big)
\end{pmatrix}\text.
$$
Write $Q=b_x x + b_y y + b_z z + \mf{m}^2$ and $R=\gamma + c_x x + c_y y + c_z z + \mf{m}^2$, and expand $\wt{f}$ in formal power series:
$$
\wt{f}(x,y,z)=
\begin{pmatrix}
x+z^c\big(x+ zP^{(2)}(0,y,1)+ z \langle x,z\rangle\big)\\[2mm]
y+z^{c+1}\Big(b_z + (b_y-\gamma) y + b_x x + z \big(Q^{(2)}(0,y,1)-c_z y-c_y y^2\big) + z\langle x,z\rangle\Big)\\[2mm]
z+z^{c+2}\Big(\gamma + z(c_z+c_y y) + z\langle x,z\rangle \Big)
\end{pmatrix}\text.
$$
We develop at the direction $[0:y_0:1]$ for some $y_0 \in \nC$, by setting $y=y_0+v$, and we get
\begin{equation}\label{eqn:halfcornerR3fam}
\begin{pmatrix}
x+z^c\big(x+ zP^{(2)}(0,y_0,1) + z \mf{m}\big)\\[2mm]
v+z^{c+1}\Big(b_z + (b_y-\gamma) y_0 + b_x x + (b_y-\gamma) v + z \big(Q^{(2)}(0,y_0,1)-c_z y_0-c_y y_0^2\big) + z\mf{m}\Big)\\[2mm]
z+z^{c+2}\Big(\gamma + z(c_z+c_y y_0) + z\mf{m} \Big)
\end{pmatrix}\text.
\end{equation}
By \refrmk{halfcornergen}, $\wt{f}$ is again a {\halfcorner}, {\hcnonsimple} or {\hcsimple} according to the vanishing of $\wt{\beta}(y_0)=b_z + (b_y-\gamma) y_0$.
\end{trivlist}	
\end{proof}

\section{Blow-up of singular curves}\label{sec:patterns}

We study here the behavior of the families introduced in the previous two sections when blowing-up curves contained in the singular locus $S_\pi$ of $f_\pi$ the lift of $f$ at a model $X_\pi$ (i.e., the singular locus of its saturated infinitesimal generator).

\subsection{Patterns}

We start by describing the structure of $S_\pi$ when $\pi$ is a point modification (adapted to $f$) dominating $X_{\pi_0}$.
To do so we will use the following terminology.
\begin{defi}
A (rational) \emph{pattern} is a triple $(X,C,f)$, where $X$ is a smooth $3$-fold, $C$ is a smooth compact rational curve inside $X$, and $f:(X,C) \to (X,C)$ is a holomorphic germ at $C$, fixing $C$ pointwise, and defining {\tid} germs at $p$ for any $p \in C$.
Moreover, if $\hat{\chi}$ is the saturated infinitesimal generator of $f$, we impose that its singular set $S$ contains $C$.
The curve $C$ is called the \emph{core} of the pattern. 

If $\mc{G}$ is a family of {\tid} germs, we say that a pattern $(X,C,f)$ is of \emph{type $\mc{G}$} (or a \emph{$\mc{G}$-pattern}) if the germ of $f$ at $p$ belongs to $\mc{G}$ for all but finitely many $p \in C$.
Any such point $p$ is called a \emph{generic point} of the pattern, while any point at which the germ of $f$ does not belong to $\mc{G}$ is called a \emph{special point}.
The \emph{generic locus} of the pattern is the set of generic points of $C$, while the \emph{special locus} is its complement.
\end{defi}

If we need to express the fact that special points of a $\mc{G}$-pattern belong to some classes $\mc{S}$, we will talk about $\mc{S}$-$\mc{G}$-patterns.
A $\mc{G}$-$\mc{G}$-pattern is a $\mc{G}$-pattern without special points.

\begin{rmk}
One should think of patterns as germs of dynamical systems on germs of $3$-dimensional manifolds around the core. These could be also described in more algebraic geometrical terms (by using formal schemes for example).
\end{rmk}

\begin{prop}\label{prop:patterns}
Let $f:(\nC^3,0) \to (\nC^3,0)$ be a germ of the form \refeqn{example} satisfying the conditions of \refprop{exampleresolutionforms}.
Let $\pi:X_\pi \to (\nC^3,0)$ be any point modification adapted to $f$ and dominating $X_{\pi_0}$.
Let $S_\pi$ be the singular set of the saturated infinitesimal generator $\hat{\chi}_\pi$ of the lift $f_\pi$ of $f$ at $X_\pi$.
Then any positive-dimensional irreducible component $C_\pi$ of $S_\pi$ is a rational curve, and $(X_\pi, C_\pi, f_\pi)$ is either a $\ZeroC$-$\ZeroC$-pattern or a $\SpinC$-$\HalfC$-pattern.
\end{prop}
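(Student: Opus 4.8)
The plan is to \emph{localise} the statement over the finitely many singular points of $X_{\pi_0}$ and then to argue by induction on the number of point blow-ups performed above $X_{\pi_0}$, using the local computations of \refssec{resolbiratstudy} as the inductive engine. Write $\pi = \pi_0 \circ \rho$ with $\rho$ a composition of point blow-ups; since $\pi$ is adapted to $f$, every blown-up centre is a point invariant by the relevant saturated infinitesimal generator, hence one of its singular points. By functoriality of the infinitesimal generator (\refprop{infgenblowup}) singular points are sent to singular points, so $\rho(S_\pi) \subseteq S_{\pi_0}$, and by \refprop{exampleresolution} the latter is the finite set $\{p_1,\dots,p_4,q_1,\dots,q_5\}$. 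Hence every positive-dimensional component $C_\pi$ of $S_\pi$ is contracted by $\rho$ and lives in the fibre over a single one of these nine points, so it suffices to analyse the singular locus above each of them.

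I would then carry the following invariant up the tower: (i) every isolated singular point carries a germ which is a simple corner, a {\degspike}, a {\spincorner} or a {\halfcorner} (or, before it is blown up, one of the two unclassified isolated germs at $p_3,p_4$); (ii) every positive-dimensional component is a smooth compact rational curve which is a $\ZeroC$-$\ZeroC$-pattern or a $\SpinC$-$\HalfC$-pattern; and, crucially, (iii) in the local normal-form coordinates at each of its points the core of a $\SpinC$-$\HalfC$-pattern is the coordinate line $\{x=z=0\}$, tangent to $[0:1:0]$. The base case $\rho=\id$ is clear: $S_{\pi_0}$ is finite by \refprop{exampleresolution}, so (ii)--(iii) are vacuous while (i) records the classification of \refprop{exampleresolutionforms}. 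For the inductive step I blow up one further singular point $p$. If $p$ is isolated I invoke the matching statement among \refprop{simplecornerblowupexplicit}, \refprop{degspikeblowuppoint}, \refprop{spincornerblowup}, \refprop{halfcornerblowup} (or, for $p\in\{p_3,p_4\}$, the computation preceding \refprop{exampleresolutionforms}): the new isolated directions again lie in the four families, and the only positive-dimensional components created are the line of simple corners of a resonant simple corner (a $\ZeroC$-$\ZeroC$-pattern) and the line of {\halfcorner}s produced by a {\spincorner} or a non-simple {\halfcorner} (a $\SpinC$-$\HalfC$-pattern), whose cores are read off as $\{x=z=0\}$ from \refeqn{spincornerztrasl} and \refeqn{halfcornerR3fam}, so (iii) persists.

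If instead $p$ lies on a component $C$, then the strict transform $\widetilde{C}$ is again a smooth rational curve, and away from the fresh exceptional divisor $E_{\mathrm{new}}$ the germ along $\widetilde{C}$ is unchanged, so the generic and special types are preserved there; the only new feature is the point $v=\widetilde{C}\cap E_{\mathrm{new}}$, which sits in the direction $T_pC$. For a $\ZeroC$-$\ZeroC$-pattern nothing can go wrong, because by \refprop{simplecornerblowupexplicit} \emph{every} singular direction of a simple corner is again a simple corner. For a $\SpinC$-$\HalfC$-pattern, invariant (iii) gives $T_pC=[0:1:0]$, which blows up to a {\spincorner} by \refprop{spincornerblowup} and \refprop{halfcornerblowup}; thus $\widetilde{C}$ stays a $\SpinC$-$\HalfC$-pattern, acquiring one extra special point. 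Since each blow-up adds only finitely many {\spincorner} points, the cores keep finitely many special points and remain genuine patterns.

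The main obstacle -- and the reason (iii) is built into the invariant rather than derived afterwards -- is to guarantee that the core never becomes tangent to the simple-corner direction $[1:0:0]$, which would turn $v$ into a simple corner and destroy the $\SpinC$-$\HalfC$ type. This needs the explicit normal forms: in \refeqn{spincornerztrasl} and \refeqn{halfcornerR3fam} the first coordinate carries no pure $y$-term, i.e. the $\mf{m}^2$-part of the first component lies in the ideal $\langle x,z\rangle$, so that the linear reductions of \refrmk{halfcornergen} and \refrmk{spincornergen} fix the line $\{x=z=0\}$, and the strict transform of $\{x=z=0\}$ under the blow-up of one of its points is again $\{x=z=0\}$ in the new chart. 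The heart of the argument is to check that this structural feature is reproduced at the {\spincorner} and {\halfcorner} germs appearing on $E_{\mathrm{new}}$, so that (iii) genuinely propagates; the remaining steps are the routine coordinate computations already assembled in \refssec{resolbiratstudy}.
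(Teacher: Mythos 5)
Your proof is correct and takes essentially the same route as the paper's: an induction on the point blow-ups performed above $X_{\pi_0}$, driven by the blow-up classification of the four families in \refssec{resolbiratstudy}, where the only real point is to track the local equation $\{x=z=0\}$ of the core of a $\SpinC$-$\HalfC$-pattern so that its strict transform always meets the new exceptional divisor at a {\spincorner}, never at the simple-corner direction $[1:0:0]$. Your invariant (iii) is exactly the content the paper later isolates as \reflem{R3patterngenform}, and your explicit localization over the finitely many singular points of $X_{\pi_0}$ and separate treatment of the unclassified germs at $p_3$, $p_4$ merely spell out what the paper's terser argument leaves implicit.
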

\begin{proof}
By \refprop{exampleresolutionforms}, the model $X_{\pi_0}$ has finitely many singularities, which are either simple corners, {\degspike}s or {\spincorner}s.

By blowing-up points over such families, we either stay in such families, or we obtain {\halfcorner}s.
Non-isolated singularities may arise only when blowing-up simple corners (and in this case we get $\ZeroC$-$\ZeroC$-patterns), or {\spincorner}s and {\halfcorner}s (and in both cases we get $\SpinC$-$\HalfC$-patterns).
To conclude, we need to control the strict transform of the cores $C$ of such patterns, when blowing-up points $p$ in the core.

Since the singularities above simple corners are theirselves simple corners, when we blow-up points in the core of $\ZeroC$-$\ZeroC$-patterns we still get $\ZeroC$-$\ZeroC$-patterns.

For the case of $\HalfC$-patterns, we need to determine the equations of the core $C$ at any point $p \in C$ with respect to the local coordinates at $p$ used to describe {\spincorner}s and {\halfcorner}s.

It is easy to check that for $\SpinC$-$\HalfC$-patterns coming from the blow-up of either a {\spincorner} or a {\halfcorner}, the core is given by $C=\{x=z=0\}$ (both at the special points where we have {\spincorner}s, or at generic points where we have {\halfcorner}s), see \refprop{spincornerblowup} and \refprop{halfcornerblowup}.

If we blow-up any point $p \in C$, the strict transform $\wt{C}$ of $C$ intersects the exceptional divisor necessarily at the {\spincorner} at $p=[0:1:0]$, and it is locally given by $\wt{C}=\{x=y=0\}$.

This situation is stable by further blow-ups, and we are done.
\end{proof} 

We now study the behavior of these patterns under blow-up of their cores.

\subsection{Blow-up of $\ZeroC$-$\ZeroC$-patterns}

\begin{lem}\label{lem:R0patterngenform}
Let $(X,C,f)$ be a $\ZeroC$-$\ZeroC$-pattern given by \refprop{patterns}.
Then for point $p \in C$ there exists local coordinates $(x,y,z)$ at $p$ so that $C=\{x=y=0\}$ and $f$ is of the form \refeqn{simplecorner}, with $R \in \langle x,y\rangle$.
\end{lem}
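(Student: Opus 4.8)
The plan is to exploit the defining feature of a $\ZeroC$-$\ZeroC$-pattern: since such a pattern has no special points, the germ of $f$ at \emph{every} point of the core is a simple corner. So I fix an arbitrary $p \in C$ and begin from local coordinates $(x,y,z)$ at $p$ in which $f$ already has the simple corner form \refeqn{simplecorner}, with exponents $a,b \in \nN^*$, $c \in \nN$, multipliers $\lambda \in \nC^*$, $\mu \in \nC\setminus(\lambda\nQ_{>0})$, and $P,Q,R \in \mf{m}$. It then remains only to show that, in these coordinates, one automatically has $C = \{x=y=0\}$, and that the pattern hypothesis $C \subseteq S$ (where $S$ denotes the singular set of the saturated generator $\hat\chi$) forces $R \in \langle x,y\rangle$.

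The key step is to locate $S$ near $p$. By the characterization of simple corners (the Remark following their definition), $\hat\chi$ is tangent to $D=\{x^ay^bz^c=0\}$, so its first two components are divisible by $x$ and $y$ respectively; writing $\hat\chi = x\,u_1\,\partial_x + y\,u_2\,\partial_y + \rho\,\partial_z$, the values $u_1(p)=\lambda$ and $u_2(p)=\mu$ are nonzero, hence $u_1,u_2$ are units near $p$. Consequently $\{x\,u_1=0\}=\{x=0\}$ and $\{y\,u_2=0\}=\{y=0\}$ as germs at $p$, and therefore
$$S = \{x=0\}\cap\{y=0\}\cap\{\rho=0\} \subseteq \{x=y=0\}.$$
Moreover, along the $z$-axis the third component reduces to $\rho(0,0,z)=R(0,0,z)$: the $\partial_z$-coefficient of $\hat\chi$ agrees with $R$ to leading order by \refprop{infgenblowup} and \refrmk{homocontribution}, and the higher-order corrections coming from $\log f$ each carry a factor $x^ay^b$ with $a,b\geq 1$, so they vanish identically on $\{x=y=0\}$.

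I then conclude as follows. By the definition of a pattern, $C$ is a positive-dimensional irreducible curve through $p$ contained in $S$. Since $S \subseteq \{x=y=0\}$, the germ of $C$ at $p$ must coincide with the germ of the smooth $z$-axis, so $C=\{x=y=0\}$ in these coordinates. Because $C\subseteq S=\{x=y=0\}\cap\{\rho=0\}$, the function $\rho$ vanishes identically on $\{x=y=0\}$, that is $R(0,0,z)\equiv 0$, which is exactly the condition $R\in\langle x,y\rangle$.

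The argument is essentially routine; I expect the only point requiring genuine care to be the identification $\rho|_{\{x=y=0\}}=R(0,0,z)$, i.e. checking that the $\log$-corrections to the saturated generator really drop out along the axis. This is a short verification: each correction term is a product involving the components of $f-\id$, all of which are divisible by $x^ay^b$ with $a,b\geq 1$, hence vanish on $\{x=y=0\}$. I should also note that when $c>0$ the simple corner form already imposes $z\mid R$, which is consistent with, and merely refined by, the stronger conclusion $R\in\langle x,y\rangle$.
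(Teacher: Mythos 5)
Your proof is correct, but it takes a genuinely different route from the paper's. The paper argues via the birational provenance of the pattern: by \refprop{patterns}, $\ZeroC$-$\ZeroC$-patterns arise from blowing up simple corners, and the explicit blow-up computation (as in \refprop{simplecornerblowupexplicit}, \refeqn{simplecornerblowup}) directly exhibits coordinates in which $C=\{x=y=0\}$ and $f$ has the form \refeqn{simplecorner}; the condition $C\subseteq S$ then forces $R\in\langle x,y\rangle$, exactly as in your last step. You instead work intrinsically: since a $\ZeroC$-$\ZeroC$-pattern has no special points, every $p\in C$ is a simple corner, and you recover $C=\{x=y=0\}$ from the structure of the singular locus of $\hat\chi$ (the multipliers $\lambda,\mu\neq 0$ make the first two components of $\hat\chi$ equal to $x$, resp.\ $y$, times units, so $S\subseteq\{x=y=0\}$, and a curve germ inside a smooth curve germ coincides with it). Your approach buys generality and self-containedness — it proves the normal form for \emph{any} $\ZeroC$-$\ZeroC$-pattern, with no reference to how it arose under blow-up, whereas the paper's hypothesis ``given by \refprop{patterns}'' is actually used in its proof; the paper's route is shorter given that the blow-up computations were already carried out. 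The one step in your argument requiring care is the identification $\rho|_{\{x=y=0\}}=R(0,0,z)$, and your justification is sound: writing $\chi_x=x^ay^bz^c\,xu_1$, $\chi_y=x^ay^bz^c\,yu_2$, $\chi_z=x^ay^bz^c\rho$, each term $\chi^n(z)$ with $n\geq 2$ acquires an extra factor of $x^ay^bz^c$, so $R-\rho$ is divisible by $x^ay^bz^c$ and vanishes on the axis since $a,b\geq 1$. Only your closing side remark is slightly imprecise: $R\in\langle x,y\rangle$ does not refine $z\mid R$ (neither condition implies the other); they are independent constraints that both hold when $c>0$.
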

\begin{proof}
From \refprop{patterns} $\ZeroC$-patterns arise when blowing up simple corners, and a direct computation shows that locally $f$ can be written as in \refeqn{simplecorner} with $C=\{x=y=0\}$.
Imposing that points in $C$ are singular for $f$ imply that $R$ vanishes at all points in $C$, which is equivalent to asking $R \in \langle x,y\rangle$. 
\end{proof}
	
\begin{prop}\label{prop:R0pattern}
Let $(X,C,f)$ be a $\ZeroC$-$\ZeroC$-pattern given by \refprop{patterns}, and let $\pi:\wt{X} \to (X,C)$ be the blow-up of $C$.
Denote by $E=\pi^{-1}(C)$ the exceptional divisor,
and by $\wt{S}$ the set of singularities of the lift $\wt{f}$ of $f$ at $\wt{X}$.
Then $E \cap \wt{S}$ consists of exactly two sections $\wt{C}_0$ and $\wt{C}_\infty$ of $\pi|_E:E \to C$, not intersecting each-other.
Finally, for $t=0$ and $t=\infty$, $(\wt{X}, \wt{C}_t, \wt{f})$ defines a $\ZeroC$-$\ZeroC$-pattern, satisfying the same conditions as in \reflem{R0patterngenform}.
\end{prop}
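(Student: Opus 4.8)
The plan is to reduce everything to the explicit local model furnished by \reflem{R0patterngenform} together with the point-blow-up computation for simple corners, and then to track how the two distinguished (eigen)directions of the transverse linear part of $\hat\chi$ survive the blow-up of the whole core. First I would fix a point $p \in C$ and the coordinates $(x,y,z)$ of \reflem{R0patterngenform}, in which $C=\{x=y=0\}$ and $f$ is a simple corner of the form \refeqn{simplecorner} with $R\in\langle x,y\rangle$; set $s=a+b$. Near $\pi^{-1}(p)$ the blow-up of $C$ is covered by the two charts $(x,t,z)\mapsto(x,xt,z)$, with $E=\{x=0\}$ and $t=y/x$, and $(u,y,z)\mapsto(uy,y,z)$, with $E=\{y=0\}$ and $u=x/y$. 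A direct substitution, entirely analogous to \refprop{simplecornerblowupexplicit}, then expresses the lift $\wt{f}$.

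In the first chart one finds that $\wt{f}$ is again a simple corner of the form \refeqn{simplecorner} in the coordinates $(x,t,z)$, with monomial $x^{s}t^{b}z^{c}$, multipliers $\lambda'=\lambda$ and $\mu'=\mu-\lambda$, and third component a multiple of $x$; indeed $R\in\langle x,y\rangle$ gives $R\circ\pi\in\langle x\rangle$, so the new $R$ lies in $\langle x,t\rangle$ (and is divisible by $z$ when $c>0$), exactly as \reflem{R0patterngenform} requires. Symmetrically, in the second chart $\wt{f}$ is a simple corner in the coordinates $(y,u,z)$ with multipliers $\mu$ and $\lambda-\mu$ and new $R\in\langle y,u\rangle$. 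Here one must only check that non-resonance persists: if $\mu-\lambda=\lambda q$ with $q\in\nQ_{>0}$ then $\mu=\lambda(1+q)\in\lambda\nQ_{>0}$, a contradiction, so $\mu'\notin\lambda'\nQ_{>0}$; the second chart is handled by the symmetry of the condition $\mu\notin\lambda\nQ_{>0}$ in $\lambda,\mu$, and when $\mu=0$ one simply lets the nonzero multiplier $\lambda$ play the role of the first coordinate. This already shows that along the two candidate sections $\wt{C}_0=\{x=t=0\}$ and $\wt{C}_\infty=\{u=y=0\}$ the germ $\wt{f}$ is a simple corner whose core is the intersection of the new divisor components.

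It remains to pin down $E\cap\wt{S}$ exactly. Saturating $\wt{f}$ in the first chart, the $t$-component of $\hat\chi$ restricted to $E=\{x=0\}$ equals $t\big((\mu-\lambda)+(Q-P)(0,0,z)\big)$ up to a unit, and the bracket is, up to sign, the difference of the two eigenvalues $\lambda+P(0,0,z)$ and $\mu+Q(0,0,z)$ of the transverse linear part of $\hat\chi$ along $C$. Because $(X,C,f)$ is a $\ZeroC$-$\ZeroC$-pattern, the germ is a simple corner at \emph{every} point of $C$, so these two eigenvalues are distinct everywhere; hence the bracket never vanishes and the only zero on $E$ is $t=0$, giving $E\cap\wt{S}=\wt{C}_0$ in this chart, and symmetrically $\wt{C}_\infty$ in the other. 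Thus fiberwise $\wt{S}$ meets each fiber of $\pi|_E\colon E\to C$ in exactly the two distinct eigendirections of the transverse linear part.

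The main point, and the only genuinely global step, is to promote this fiberwise picture to two disjoint sections. Since $\hat\chi$ vanishes on $C$, its transverse linear part is a well-defined holomorphic endomorphism $A$ of the normal bundle $N_{C/X}$, with two eigenvalues distinct at every point of $C$ by the previous paragraph. Over the rational, hence simply connected, curve $C$ this forces a holomorphic splitting $N_{C/X}=L_\lambda\oplus L_\mu$ into eigen-line-subbundles, with no monodromy exchanging the two factors; their projectivizations $\mathbb{P}(L_\lambda)$ and $\mathbb{P}(L_\mu)$ are two disjoint sections of $E=\mathbb{P}(N_{C/X})\to C$, coinciding locally with $\wt{C}_0$ and $\wt{C}_\infty$. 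Each is a section because it meets every fiber once, and they are disjoint because distinct eigenspaces never coincide. Finally the local form of the first two paragraphs shows that $(\wt{X},\wt{C}_t,\wt{f})$ is a $\ZeroC$-$\ZeroC$-pattern satisfying \reflem{R0patterngenform} for $t\in\{0,\infty\}$, completing the argument. The delicate point to keep honest throughout is that the pattern hypothesis is used precisely to guarantee the everywhere-distinctness of the transverse eigenvalues, without which the two sections could merge or spurious singularities could appear on $E$.
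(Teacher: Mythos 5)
Your proof is correct and follows essentially the same route as the paper's: the same reduction to the normal form of \reflem{R0patterngenform} and the same two-chart computation of the lift under the blow-up of the core, producing the same new multipliers $(\lambda,\mu-\lambda)$ and $(\mu,\lambda-\mu)$ and the same identification of the singularities on $E$ as the two coordinate directions in each fiber. The only difference is that you spell out two points the paper leaves implicit — the verification that the new multiplier pairs are again non-resonant, and the global gluing of the fiberwise eigendirections into two disjoint sections (via the everywhere-distinct transverse eigenvalues and simple connectedness of the rational core) — which is added care rather than a different argument.
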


\begin{proof}
To study the fiber above $p$, we have to consider two charts of $X$, where in local coordinates $\pi$ acts respectively as $\pi(x,y,z)=(x,xy,z)$, and $\pi(x,y,z)=(xy,y,z)$.
	
In the first case, $\wt{f}$ takes the form
\begin{equation}\label{eqn:simplecornersingcurveAx}
\wt{f}(x,y,z)=
\begin{pmatrix}
x+(x^{a+b} y^b z^c) x\big(\lambda + \langle x,z\rangle \big)\\
y+(x^{a+b} y^b z^c) y\big(\mu-\lambda + \langle x,z \rangle \big)\\
z+(x^{a+b} y^b z^c) \langle x \rangle
\end{pmatrix},
\end{equation}
where the rest in the latter coordinate belongs to $\langle xz \rangle$ whenever $c > 0$.
We study \refeqn{simplecornersingcurveAx} at points $(0,y_0,0)$ with $y_0 \in \nC$.
	
At $y_0=0$, we have a singular point and we clearly get a simple corner with the wanted properties.
When $y_0 \neq 0$, we get a regular point, since $\mu-\lambda \neq 0$.
	
The computations on the second chart are completely analogous, and left to the reader. We get another simple corner at the point associated to the direction $[0:1]$.
\end{proof}

\subsection{Blow-up of $\SpinC$-$\HalfC$-patterns}

\begin{lem}\label{lem:R3patterngenform}
Let $(X,C,f)$ be a $\SpinC$-$\HalfC$-pattern given by \refprop{patterns}.
For any point $p \in C$, there are coordinates $(x,y,z)$ so that $C=\{x=z=0\}$, $p=(0,y_0,0)$ and $f$ has the form:
\begin{equation}\label{eqn:R3Pattern}
f(x,y,z)=
\begin{pmatrix}
x+y^bz^c(x+P)\\
y+y^{B}z^{c+1}Q\\
z+y^bz^{c+2}R
\end{pmatrix},
\end{equation}
with $c \geq 1$ and $P \in \langle z \rangle$.
Moreover either $B-1=b\geq 1$, or $b=B=0$.
\end{lem}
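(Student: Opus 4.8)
The plan is to verify the normal form \refeqn{R3Pattern} pointwise along the core $C$, exploiting the fact that the two admissible local types on $C$ are already classified by \refprop{patterns}: $f$ is a {\halfcorner} at every generic point and a {\spincorner} at every special point, and the proof of \refprop{patterns} lets us fix coordinates with $C=\{x=z=0\}$, so that $p=(0,y_0,0)$. Under this identification the dichotomy ``$B-1=b\ge 1$ or $b=B=0$'' is nothing more than the distinction between the two types: a {\spincorner} contributes the $y$-exponents $(b,b+1,b)$ with $b\ge 1$, hence $B=b+1$, while a {\halfcorner} contributes $b=B=0$. Thus the real content of the lemma is the uniform shape of the $z$-exponents $(c,c+1,c+2)$, the bound $c\ge1$, and the divisibility $P\in\langle z\rangle$, all of which must hold simultaneously at every point of $C$.

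First I would set up an induction on the number of point blow-ups used to create the pattern out of $X_{\wt{\pi}_0}$. The base case is clean: by \refprop{exampleresolutionforms} the model $X_{\wt{\pi}_0}$ carries no {\halfcorner}s, so the first $\SpinC$-$\HalfC$-pattern can only appear by blowing up one of the isolated {\spincorner}s. Here I would read off \refprop{spincornerblowup}: the special points are given by \refeqn{spincornerz}, with exponents $(y^bz^{s},\,y^{b+1}z^{s+1},\,y^bz^{s+2})$ and $s=b+c\ge2$, which is exactly \refeqn{R3Pattern} with $B=b+1\ge2$; the generic points are given by \refeqn{spincornerztrasl}, where the nonzero constant $y_0^b$ is absorbed and the exponents collapse to $(z^{s},z^{s+1},z^{s+2})$, i.e.\ \refeqn{R3Pattern} with $b=B=0$. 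In both cases the first component already reads $x+z(\cdots)$, so $P\in\langle z\rangle$, and $c=s\ge1$.

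For the inductive step I would blow up a point $p$ of the core of a pattern already in the form \refeqn{R3Pattern}. When $p$ is generic, $f$ is a {\halfcorner} with $b=B=0$ and $P\in\langle z\rangle$; applying \refprop{halfcornerblowup} in the core-adapted charts gives the new generic points via \refeqn{halfcornerR3fam} (again of shape \refeqn{R3Pattern} with $b=B=0$ and $P\in\langle z\rangle$), while the strict transform of $C$ meets the new exceptional divisor at the {\spincorner} $[0:1:0]$, where a direct computation in the chart $\pi(x,y,z)=(xy,y,yz)$ produces \refeqn{R3Pattern} with $b=c\ge1$ and $B=b+1$. The case where $p$ is a special point is handled identically with \refprop{spincornerblowup} in place of \refprop{halfcornerblowup}. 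In every instance the $z$-exponents increase by one from one component to the next, so the pattern $(c,c+1,c+2)$ is reproduced with $c\ge1$.

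The step I expect to be the main obstacle, and the one deserving the most care, is the persistence of $P\in\langle z\rangle$ at the {\spincorner} special points. This is exactly the place where a \emph{general} {\halfcorner} would fail: if one starts from \refeqn{halfcorner} with $P$ containing a pure $y^2$-term, the lift at $[0:1:0]$ acquires a genuine $y$-contribution in its first component and the $z$-exponents no longer line up as $(c,c+1,c+2)$. The point to isolate is that the inductive hypothesis $P\in\langle z\rangle$ forces $P\circ\pi$ to be divisible by $yz$ in the chart $\pi(x,y,z)=(xy,y,yz)$; after pulling back and saturating, the linear term survives as $x$ while everything else keeps an extra factor of $z$, so the first component becomes $x+y^{c}z^{c}\bigl(x+z(\cdots)\bigr)$ and $P\in\langle z\rangle$ is restored. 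Making this divisibility bookkeeping precise, and checking that the same mechanism operates in the remaining charts covering the core, is the heart of the argument; the exponent and dichotomy claims then follow by inspection.
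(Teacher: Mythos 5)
Your overall architecture --- an induction on the number of point blow-ups, propagating the form \refeqn{R3Pattern} through \refprop{spincornerblowup} and \refprop{halfcornerblowup} --- is essentially the paper's own proof made explicit, and your final paragraph correctly isolates the place where the inductive hypothesis genuinely matters: if the {\halfcorner} being blown up had pure $y$-terms in $P$, the strict transform of the core would lose the normal form at the new {\spincorner} $[0:1:0]$, whereas $P\in\langle z\rangle$ (together with $P\in\mf{m}^2$) forces $P\circ\pi\in\langle yz\rangle$ in the chart $\pi(x,y,z)=(xy,y,yz)$, so the first component of the lift is again $x+y^cz^c\bigl(x+z(\cdots)\bigr)$. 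That computation is right. (A minor inaccuracy: for a general {\halfcorner} what fails at $[0:1:0]$ is only $P\in\langle z\rangle$, not the alignment of the $z$-exponents; indeed every monomial of $P\in\mf{m}^2$ pulls back to a multiple of $y^2$ in that chart, so in \refeqn{halfcornerytaylor} the exponents $(c,c+1,c+2)$ do line up.)

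The genuine gap is in your base case, and it recurs in the inductive step whenever the blown-up point is a special point: the claim that at generic points ``the first component already reads $x+z(\cdots)$'' misreads \refeqn{spincornerztrasl}. That formula carries the remainder $\langle x,z\rangle\,\mf{m}$, and this is not notational slack: recentring $y=y_0+v$ in \refeqn{spincornerz} turns the prefactor $y^b$ into $y_0^b\,(1+v/y_0)^b$, and the \emph{non-constant} unit $(1+v/y_0)^b$ multiplies the whole bracket, producing terms $x\cdot v(\cdots)$ that do not lie in $\langle z\rangle$. Absorbing ``the nonzero constant $y_0^b$'' does not remove them, and since $b\geq 1$ for a {\spincorner} they are really present: at a generic point the germ reads $x+z^{s}\bigl(x\,u(v)+\langle z\rangle\bigr)$ with $u$ a unit, $u\not\equiv 1$, which is not \refeqn{R3Pattern}. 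The conclusion is still true, but it requires a further change of coordinates --- for instance $\tilde z=z\,\bigl((y_0+v)/y_0\bigr)^{b/s}$, a well-defined root of a unit near $y_0\neq 0$ --- and one must then check that rescaling $z$ by a unit in $v$ preserves the shape of the other two components (it does, because $v\circ f-v$ and $z\circ f-z$ are both $O(z^{s+1})$, so $\tilde z\circ f=\tilde z+O(\tilde z^{s+2})$). This is precisely the ``rewriting'' of \refeqn{spincornerztrasl} that the paper's proof invokes; your proposal asserts that no rewriting is needed, which is false, and without this step the property $P\in\langle z\rangle$ --- the very property your induction is designed to propagate --- is never established at the generic points of the pattern.
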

\begin{proof}
From \refprop{patterns}, $\HalfC$-patterns arise when blowing up {\spincorner}s and (\hcnonsimple) {\halfcorner}s.
A direct computation shows that there one can find coordinates $(x,y,z)$ at $p$ so that $f$ is of the form \refeqn{spincorner} or \refeqn{halfcorner}, and $C=\{x=z=0\}$, or $C=\{x=y=z\}$ for {\spincorner}s. Being \refeqn{spincorner} symmetric on $y,z$, we may assume we are in the first case.
The statement follows from rewriting \refeqn{spincornerztrasl} of \refprop{spincornerblowup} under the form \refeqn{R3Pattern}, and from \refeqn{halfcornerR3fam} of \refprop{halfcornerblowup}.
\end{proof}

\begin{prop}\label{prop:R3pattern}
Let $(X,C,f)$ be a $\SpinC$-$\HalfC$-pattern given by \refprop{patterns}, and let $\pi:\wt{X} \to (X,C)$ be the blow-up of $C$.
Denote by $E=\pi^{-1}(C)$ the exceptional divisor, and by $\wt{S}$ the set of singularities of the lift $\wt{f}$ of $f$ at $\wt{X}$.
Then $E \cap \wt{S}$ consists of exactly two sections $\wt{C}_0$ and $\wt{C}_\infty$ of $\pi|_E:E\to C$, not intersecting eachother.
Finally,
\begin{itemize}
\item $(\wt{X}, \wt{C}_\infty, \wt{f})$ defines a $\ZeroC$-$\ZeroC$-pattern, satisfying the same conditions as in \reflem{R0patterngenform};
\item $(\wt{X}, \wt{C}_0, \wt{f})$ defines a $\SpinC$-$\HalfC$-pattern, admitting local coordinates of the form \refeqn{R3Pattern}.
\end{itemize}
\end{prop}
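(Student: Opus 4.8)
The plan is to reduce to the local normal form of \reflem{R3patterngenform} and then blow up the core $C=\{x=z=0\}$ explicitly in its two natural charts, reading off the germ of $\wt f$ along the resulting sections of $\pi|_E$. Before computing, it is useful to record the saturated infinitesimal generator of the form \refeqn{R3Pattern}: dividing $f-\id$ by the common factor $y^bz^c$ (using $B\geq b$) gives
\[
\hat{\chi}=(x+P)\,\partial_x+y^{B-b}z\,Q\,\partial_y+z^2R\,\partial_z,
\]
so that the part of $\hat{\chi}$ transverse to $C$ has, in the normal coordinates $(x,z)$, linear matrix $\left(\begin{smallmatrix}1 & p_1(y)\\ 0 & 0\end{smallmatrix}\right)$, with $p_1(y)=\partial_zP(0,y,0)$ (well defined since $P\in\langle z\rangle$). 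Its eigen-directions $[1:0]$ (eigenvalue $1$) and $[-p_1(y):1]$ (eigenvalue $0$) predict exactly where the two singular sections $\wt C_\infty$ and $\wt C_0$ of $E$ will sit; the chart computations will confirm that $E\cap\wt S$ consists precisely of these two disjoint sections.

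I would cover the blow-up of $C$ by the charts $\pi(x,y,z)=(x,y,xz)$ and $\pi(x,y,z)=(xz,y,z)$. In the first chart, substituting and using $P\in\langle z\rangle$ (so that $P\circ\pi$ is divisible by $xz$) yields a germ of the shape
\[
\wt f(x,y,z)=\begin{pmatrix} x+x^cy^bz^c\,x\big(1+\langle x,z\rangle\big)\\ y+x^cy^bz^c\,\langle xz\rangle\\ z+x^cy^bz^c\,z\big(-1+\langle x,z\rangle\big)\end{pmatrix},
\]
which is a simple corner in the coordinates $(x,z,y)$ with exceptional monomial $x^cy^bz^c$, multipliers $\lambda=1$, $\mu=-1$, and third-coordinate rest in $\langle x,z\rangle$. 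Since the $\partial_x$- and $\partial_z$-components of the saturated generator of $\wt f$ are units times $x$ and times $z$, the only singularities on $E=\{x=0\}$ lie on $\wt C_\infty=\{x=z=0\}$, i.e.\ the direction $[1:0]$. Together with the persistence of simple corners under blow-up (\refprop{simplecornerblowup}) this exhibits $(\wt X,\wt C_\infty,\wt f)$ as a $\ZeroC$-$\ZeroC$-pattern in the normal form of \reflem{R0patterngenform}.

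In the second chart the analogous substitution produces a germ again of the shape \refeqn{R3Pattern} with the same monomial $y^bz^c$, but whose $\partial_x$-component is $x+\wt P$ with $\wt P(x,y,0)=p_1(y)$; hence the singular section $\wt C_0$ sits at $\{x=-p_1(y),\,z=0\}$ rather than on the coordinate axis. A $y$-dependent translation $x\mapsto x-p_1(y)$ (polynomial in $y$, hence admissible) moves $\wt C_0$ to $\{x=z=0\}$ and restores the form \refeqn{R3Pattern} with $\wt P\in\langle z\rangle$. I would then check that the exponents $b,c$ and the dichotomy $B=b+1\geq 2$ or $b=B=0$ are preserved, so that at generic points ($y_0\neq 0$, where $y^b$ is a unit) one reads off a {\halfcorner} and at the finitely many special points (namely $y_0=0$ and $y=\infty$) a {\spincorner}, exactly as in \refprop{spincornerblowup} and \refprop{halfcornerblowup}; this identifies $(\wt X,\wt C_0,\wt f)$ as a $\SpinC$-$\HalfC$-pattern. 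As in the first chart, the saturated generator forces the singular locus on $E$ to be the single section $\wt C_0$.

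Finally I would assemble the two charts: $\wt C_\infty=\{[1:0]\}$ and $\wt C_0=\{[-p_1(y):1]\}$ are each sections of the $\nP^1$-bundle $\pi|_E\colon E\to C$, they are disjoint because the two normal directions differ in every fiber, and together they exhaust $E\cap\wt S$; both are visibly rational, being images of $C$. The main obstacle is the bookkeeping in the second chart: correctly locating $\wt C_0$ off the coordinate axis at $x=-p_1(y)$ and verifying that the translation restores the precise normal form \refeqn{R3Pattern} (in particular $\wt P\in\langle z\rangle$ and the $B$-versus-$b$ relation), together with pinning down the finitely many special points to confirm the pattern is genuinely of type $\SpinC$-$\HalfC$ rather than half-corner everywhere.
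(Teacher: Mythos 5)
Your strategy coincides with the paper's: reduce to the normal form of \reflem{R3patterngenform}, write $P=z\big(p_1(y)+\langle x,z\rangle\big)$ (your $p_1$ is the paper's $\alpha$), blow up $C=\{x=z=0\}$ in the two charts $\pi(x,y,z)=(x,y,xz)$ and $\pi(x,y,z)=(xz,y,z)$, locate the two singular sections, and restore the form \refeqn{R3Pattern} along $\wt C_0$ by the translation $x\mapsto x+p_1(y)$. Your second-chart computation, the identification of {\halfcorner}s at generic points and {\spincorner}s at the special ones, and the disjointness of the two sections all agree with the paper's proof.

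There is, however, one step that fails as stated: the first-chart claim that ``the only singularities on $E=\{x=0\}$ lie on $\wt C_\infty=\{x=z=0\}$''. After dividing $\wt f-\id$ by the common factor $x^cy^bz^c$, the $\partial_z$-component is $z\big(-1-z\,p_1(y)+\langle xz\rangle\big)$, and the cofactor $-1-z\,p_1(y)+\cdots$ is a unit only as a germ along $\{z=0\}$; it vanishes on the hypersurface $\{1+z\,p_1(y)=0\}$. Hence the singular locus on $E$ in this chart is $\{x=0,\ z(1+z\,p_1(y))=0\}$, i.e.\ $\wt C_\infty$ together with the portion of $\wt C_0$ visible in this chart --- this is exactly what the paper records in its first-chart computation. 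As written, your first-chart statement contradicts your own second-chart finding whenever $p_1\not\equiv 0$, and it is the claim your final ``exhaustion'' of $E\cap\wt S$ leans on. The conclusion is recoverable with a one-line repair: the second chart covers $E\setminus\wt C_\infty$, and your (correct) analysis there shows its singular locus is exactly $\wt C_0$, while every point of $\wt C_\infty$ is singular by the first-chart form; together this gives $E\cap\wt S=\wt C_0\cup\wt C_\infty$. Two minor points: $p_1(y)$ is a holomorphic (or formal) germ, not necessarily a polynomial, though the translation $x\mapsto x+p_1(y)$ is admissible all the same; and the appeal to \refprop{simplecornerblowup} is out of place (it concerns point blow-ups of simple corners), the direct computation being what shows each point of $\wt C_\infty$ is a simple corner satisfying the conditions of \reflem{R0patterngenform}.
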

\begin{proof}
Let $p \in C$ be any point in the core, and pick $(x,y,z)$ local coordinates so that $f$ is written as in \refeqn{R3Pattern}.
We write $P=z \big(\alpha(y) + \langle x,z\rangle\big)$.
To study the fiber above $p$, we have to consider two charts of $X$, where in local coordinates $\pi$ acts respectively as $\pi(x,y,z)=(x,y,xz)$, and $\pi(x,y,z)=(xz,y,z)$.

In the first case, $\wt{f}$ takes the form
\begin{equation}\label{eqn:R3Patternsingcurvex}
\wt{f}(x,y,z)=
\begin{pmatrix}
x\Big(1+x^cy^bz^c\big(1+z\alpha(y) + \langle xz \rangle\big)\Big)\\
y+x^{c+1}y^{B}z^{c+1}Q\circ \pi\\
z\Big(1+x^cy^bz^c\big(-1-z\alpha(y)+\langle xz \rangle\big)\Big) 
\end{pmatrix}.
\end{equation}
The singular points if $\wt{f}$ in the exceptional divisor $E=\{x=0\}$ are of the form $(0,y_0,z_0)$ with $z_0(1+z_0\alpha(y_0))=0$.

When $y_0$ varies, the closure of points $z_0=0$ define a rational curve $C_\infty$.
From \refeqn{R3Patternsingcurvex} we deduce that $(\wt{X},C_\infty,\wt{f})$ is a $\ZeroC$-$\ZeroC$-pattern satisfying the conditions of \reflem{R0patterngenform}.

To study the points satisfying $z_0\alpha(y_0)=-1$, we work on the second chart. We get
\begin{equation}\label{eqn:R3Patternsingcurvez}
\wt{f}(x,y,z)=
\begin{pmatrix}
x+y^bz^c\big(x+\alpha(y) + \langle z \rangle\big)\Big)\\
y+y^{B}z^{c+1}Q\circ \pi\\
z+y^bz^{c+2}R\circ \pi
\end{pmatrix}.
\end{equation}

In this chart, the singularities in $E=\{z=0\}$ have the form $q_0=(x_0,y_0,0)$ with $x_0=-\alpha(y_0)$.
These points form a rational curve $C_0$ not intersecting $C_\infty$, for which $(\wt{X},C_0,\wt{f})$ is a $\SpinC$-$\HalfC$-pattern.
More precisely, $\wt{f}$ is a {\spincorner} at $q_0$ exactly when $y_0=0$ and $b\geq 1$, i.e., if and only if $f$ is a {\spincorner} at $p$.

By the change of coordinates $(x,y,z) \mapsto (x+\alpha(y),y,z)$, we get an expression of the form \refeqn{R3Pattern}.
\end{proof}

We sum up the study of blow ups of singular points and patterns in \reffig{resumefamilies}.

\def\frat{0.32}
\def\fver{5mm}
\begin{figure}
	\centering
	\begin{minipage}[t]{\columnwidth}
		\def\svgwidth{\frat\columnwidth}
		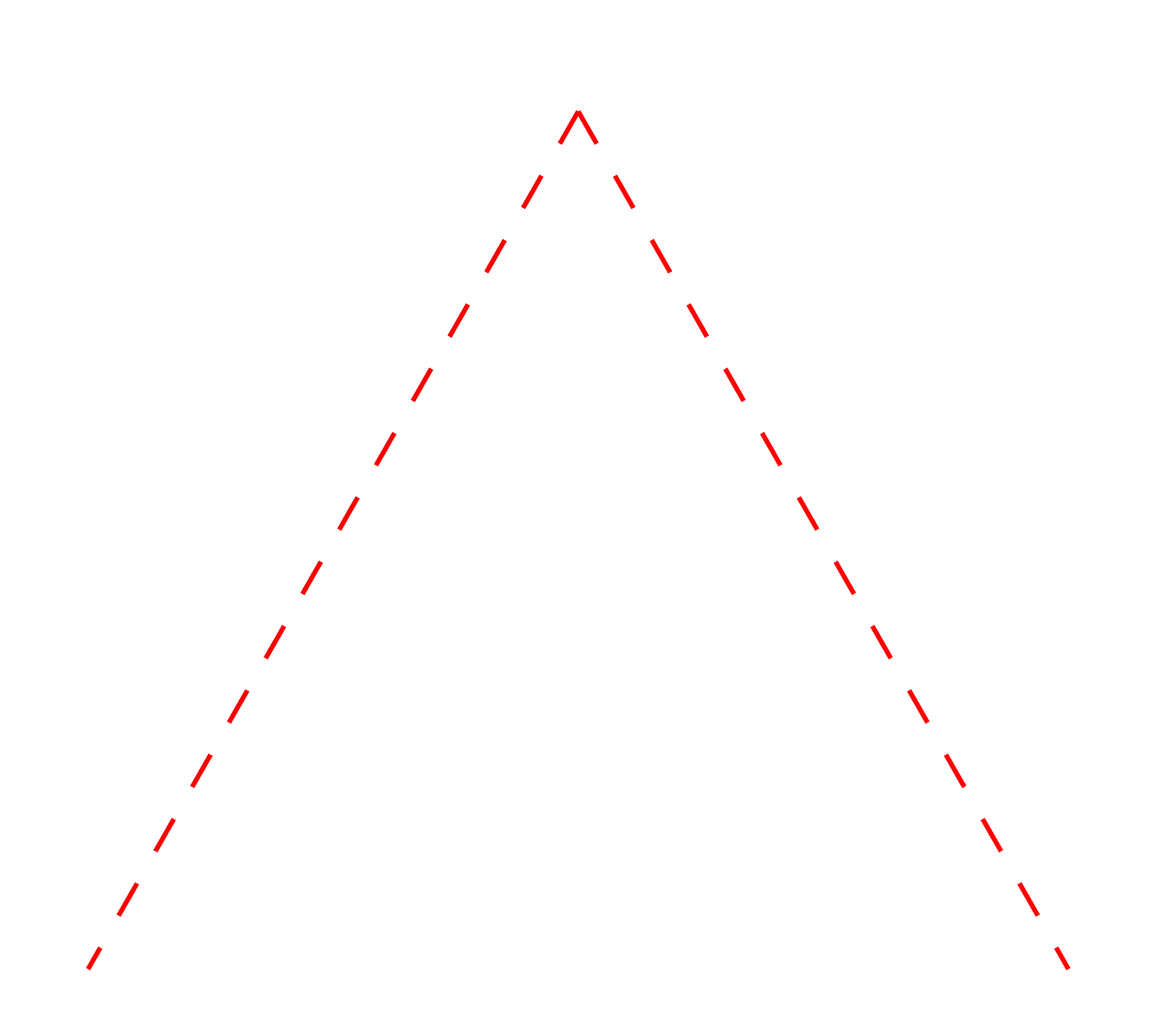
		\ 
		\def\svgwidth{\frat\columnwidth}
		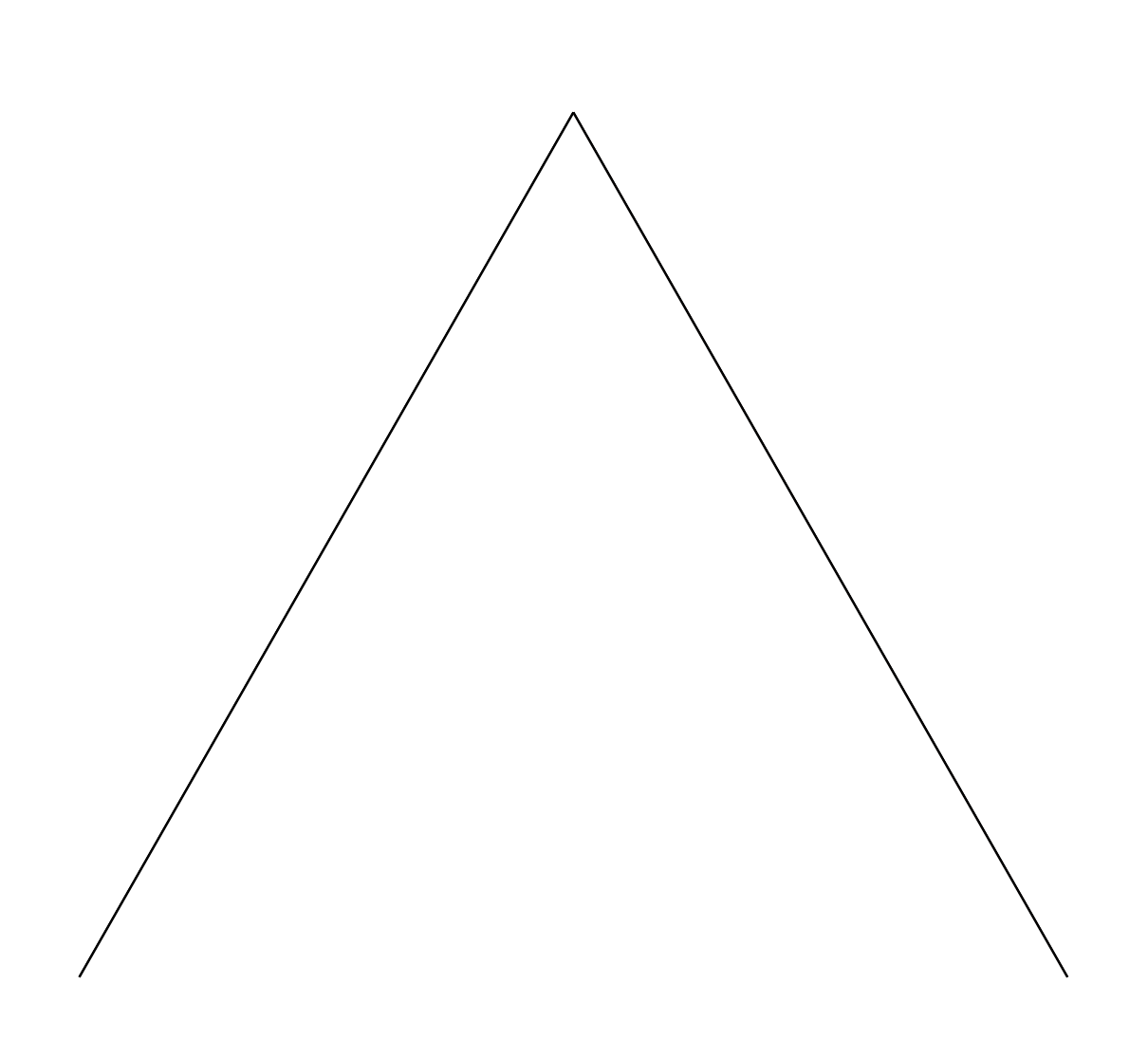
		\
		\def\svgwidth{\frat\columnwidth}
		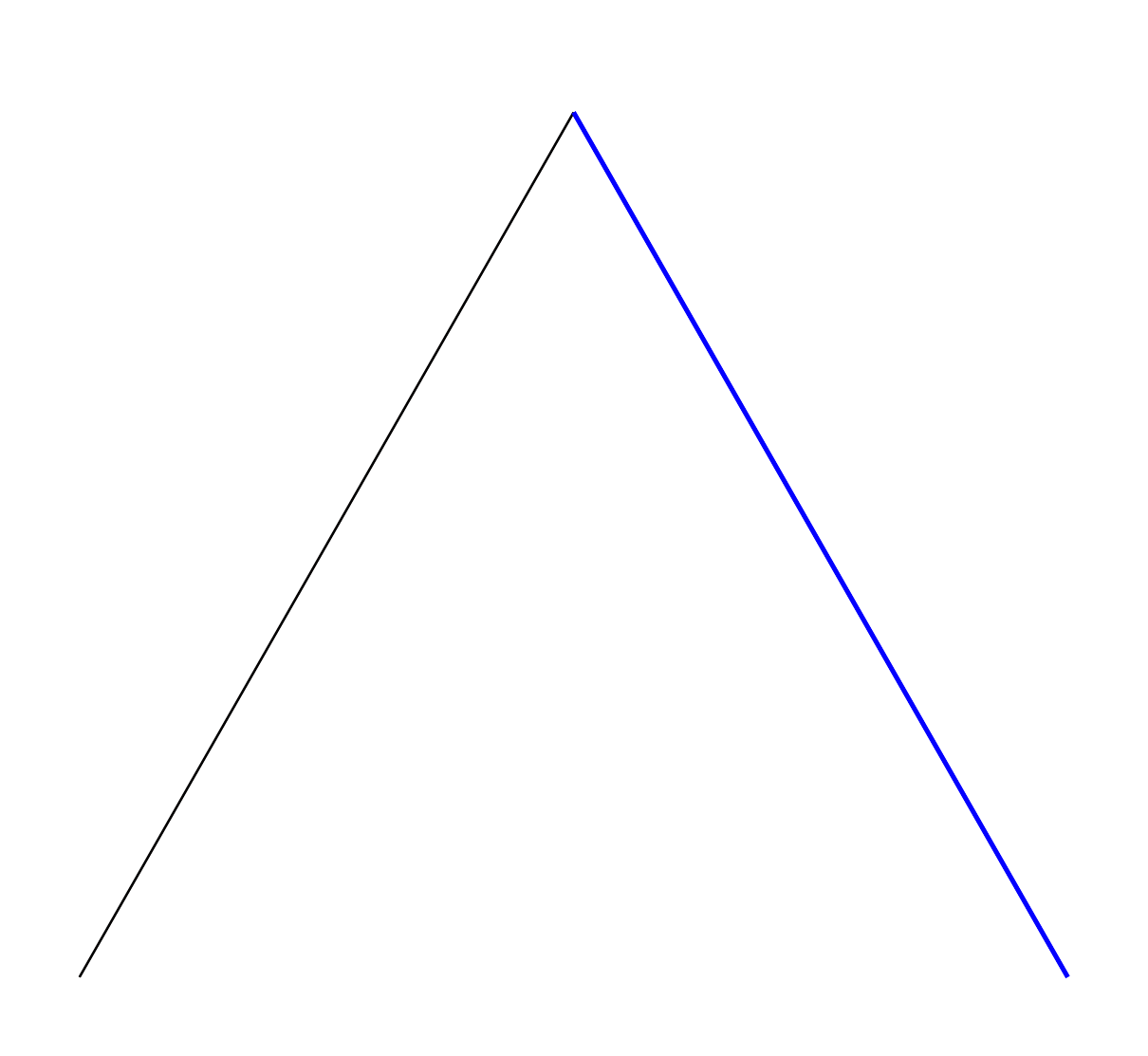
		\vspace{\fver}
		
		\def\svgwidth{\frat\columnwidth}
		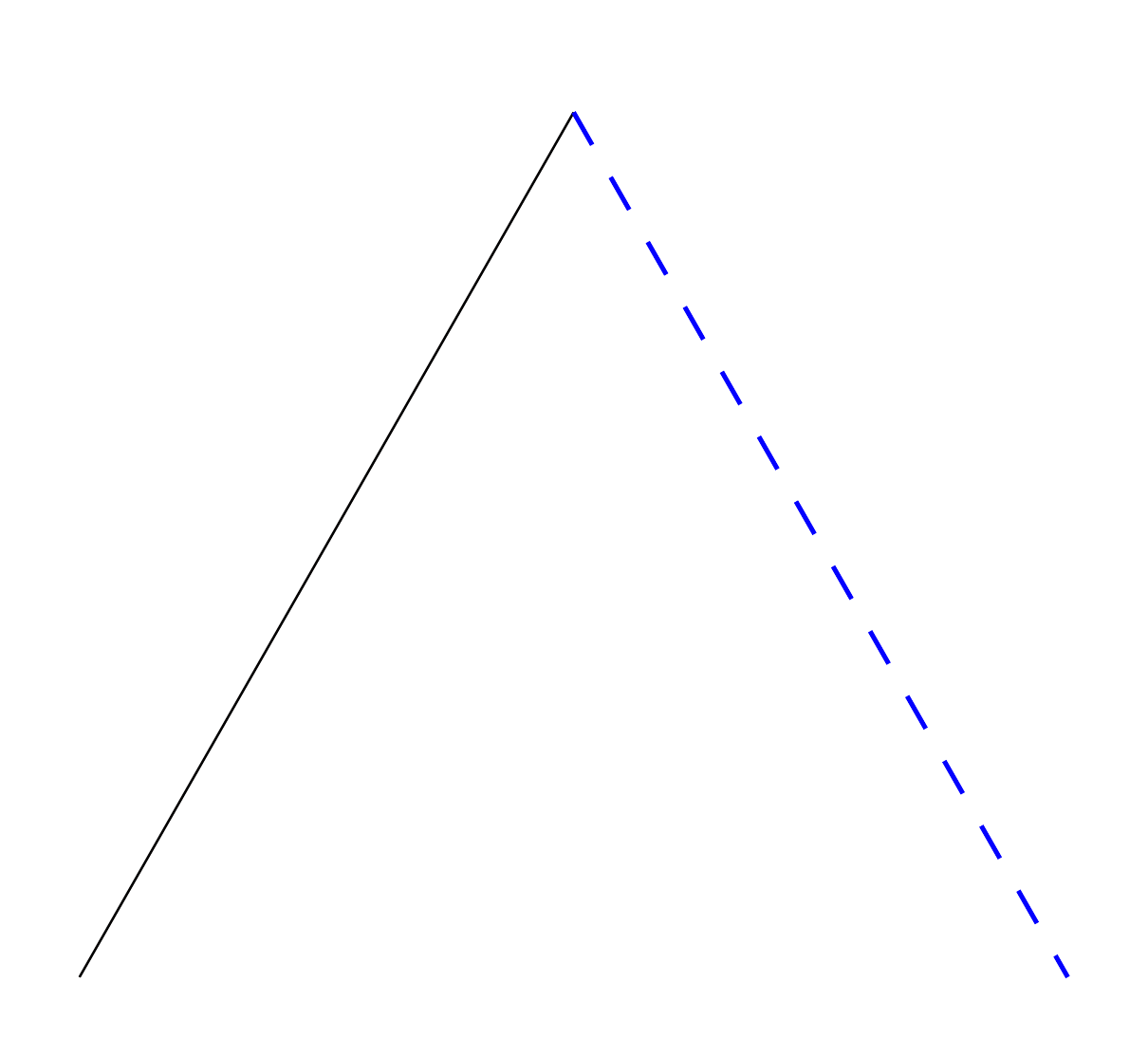
		\ 
		\def\svgwidth{\frat\columnwidth}
		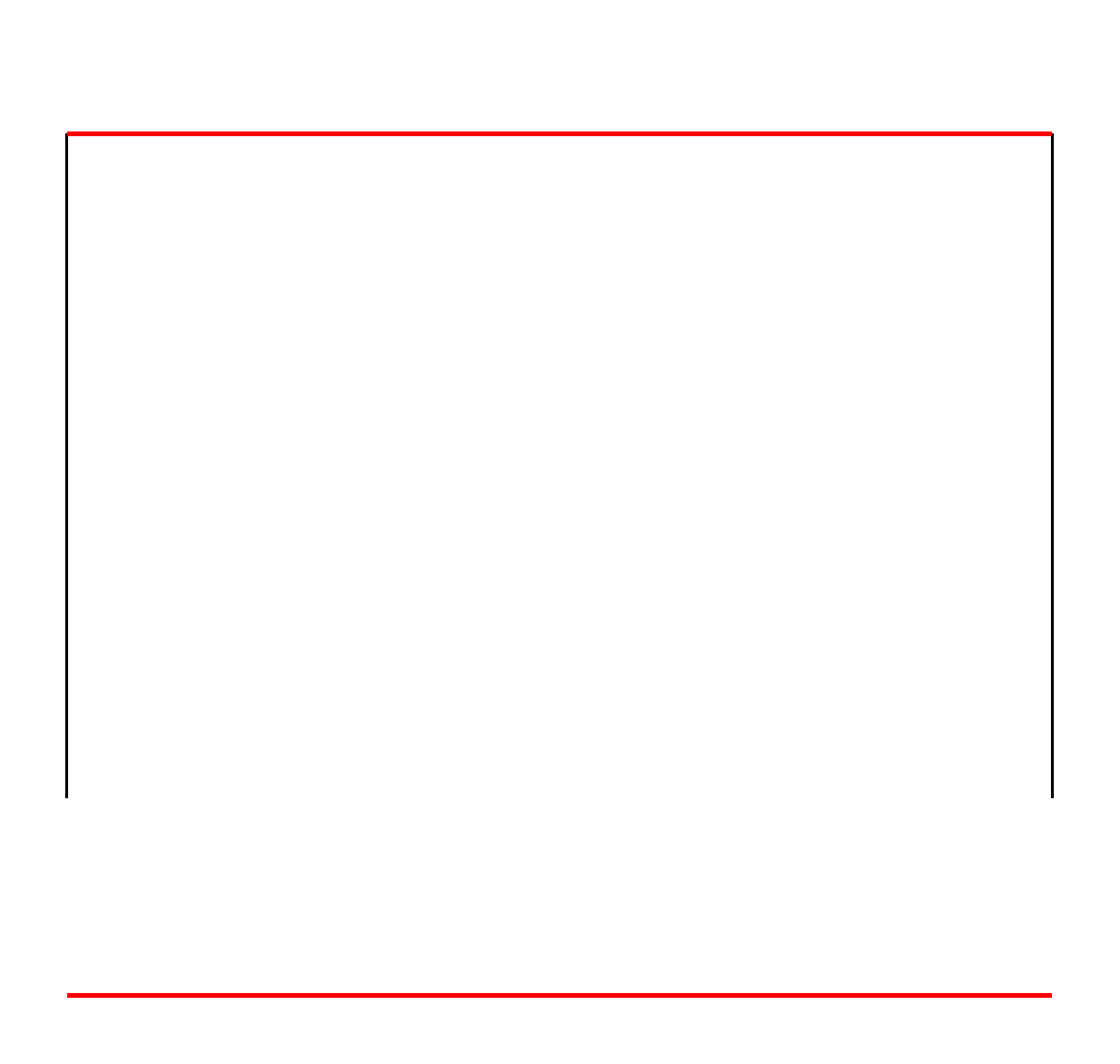
		\ 
		\def\svgwidth{\frat\columnwidth}
		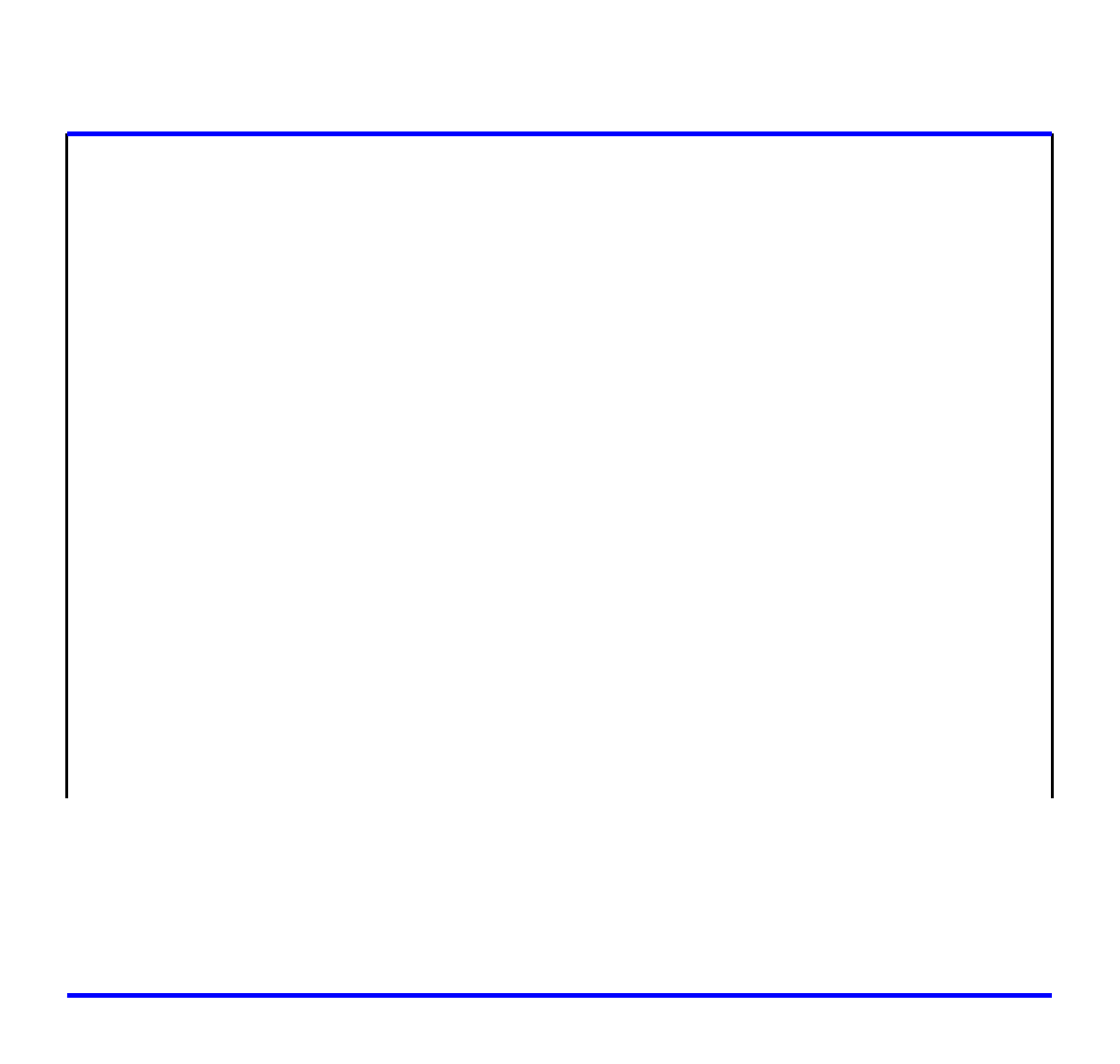
		\
		\vspace{\fver}
		\hfill
		\text{}
		\caption{Blow-up of special families and patterns.}
		\label{fig:resumefamilies}
	\end{minipage}
\end{figure}

\subsection{Proof of \refthm{mainnonondeg}}

Let $f:(\nC^3,0)\to (\nC^3,0)$ be a generic germ of the form \refeqn{example} (i.e., with parameters $P,Q,R$ satisfying the conditions of \refprop{exampleresolutionforms}).

Any regular modification $\pi:X_\pi \to (\nC^3,0)$ adapted to $f$ and dominating $\pi_0$ is either a point modification, or it dominates $\wt{\pi}_0$ given by \refprop{exampleresolutionforms}.

By \refprop{patterns}, in the first case the only patterns that appear are $\ZeroC$-$\ZeroC$-partterns or $\SpinC$-$\HalfC$-patterns.
In the second case, patterns may appear from regular modifications adapted to the dynamics above simple corners or {\spincorner}s, which are again $\ZeroC$-$\ZeroC$-partterns or $\SpinC$-$\HalfC$-patterns.
By \refprop{R0pattern} and \refprop{R3pattern}, no new patterns arise when blowing-up cores these two type of patters, and similarly the blow-up of points doesn't provide new type of special points in a pattern.
Hence for any such modification $\pi$, we have only simple corners, {\degspike}s, {\spincorner}s and {\halfcorner}s, which admit no non-exceptional non-degenerate singular directions.

\section{Invariant curves and parabolic manifolds}\label{sec:invariantcurves}

\subsection{Invariant curves}\label{ssec:inariantcurves}

\subsubsection{{\Degspike}s}

\begin{prop}\label{prop:degspikeinvcurve}
Let $f:(\nC^3,0) \to (\nC^3,0)$ be a {\degspike} of the form \refeqn{degspike}. Then there exists a unique $f$-invariant formal curve $C$ not contained in $E:=\{z=0\}$.
Moreover, $C$ is smooth and transverse to $E$.
\end{prop}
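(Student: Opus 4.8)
The plan is to produce the curve explicitly as a graph over the $z$-axis and then to argue that no other invariant curve can escape $E$. By \reflem{degeneratedirection} the only singular directions of $f$ are $\vect{v}=[0:0:1]$, $\vect{w_1}=[1:0:0]$ and $\vect{w_2}=[0:1:0]$, of which only $\vect{v}$ is non-exceptional, i.e. transverse to $E=\{z=0\}$. Since the tangent direction of any $f$-invariant formal curve is a singular direction, a curve transverse to $E$ is necessarily tangent to $\vect{v}$, and I would look for it in the form $C=\big(\phi(z),\psi(z),z\big)$ with $\phi,\psi\in z\nC\fps{z}$.

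First I would set up the invariance equations. Writing $w=z+z^{c+1}R(\phi,\psi,z)=z\big(1+z^cR(\phi,\psi,z)\big)$ for the new third coordinate (a formal reparametrisation, since $R\in\mf{m}$), the condition $f(C)\subseteq C$ reads $\phi(w)=\phi+z^c(\lambda\phi+P)$ and $\psi(w)=\psi+z^c(\mu\psi+Q)$. Expanding both sides in powers of $z$ and comparing coefficients, the crucial point is that, because $\lambda,\mu\in\nC^*$, the coefficient of $z^n$ produces relations of the shape $\lambda\phi_n=(\text{polynomial in }\phi_m,\psi_m,\ m<n)$ and $\mu\psi_n=(\text{polynomial in }\phi_m,\psi_m,\ m<n)$; here one uses $P,Q\in\mf{m}^2$ and $R\in\mf{m}$ to check that every contribution to order $n$ involves only strictly lower-order coefficients. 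Hence $\phi_n,\psi_n$ are uniquely determined at each order, which gives existence and uniqueness of such a graph. The order-one relations force $\phi_1=\psi_1=0$, so $C$ is smooth and tangent to $\vect{v}$, hence transverse to $E$; this settles existence, smoothness and transversality, as well as uniqueness among curves on which $z$ is a uniformiser. Equivalently, one may build $C$ through the self-similar structure of \refprop{degspikeblowuppoint}: the infinitely near points along $\vect{v}$ are always smooth points of the exceptional divisor at which the lift is again a {\degspike}, and \refprop{constructioncurve} together with \refprop{invariantcurve} then produces the same $C$.

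It remains to prove that no $f$-invariant formal curve $C'\not\subseteq E$ is tangent to one of the exceptional directions $\vect{w_1},\vect{w_2}$. For this I would blow up the origin and follow the strict transform of $C'$. By \refprop{degspikeblowuppoint}, $\vect{w_1}$ and $\vect{w_2}$ lift to simple corners, and by \refprop{simplecornerblowup} every further point blow-up over a simple corner is again a simple corner; by \refprop{simplecornerblowupexplicit} all singular directions of a simple corner are exceptional. Thus if $C'$ were tangent to $\vect{w_1}$ or $\vect{w_2}$, its successive strict transforms would pass through simple corners and, being invariant, would be tangent to the exceptional divisor $D$ at every stage, while $C'\not\subseteq E$ forces these strict transforms to lie in no component of $D$ (a strict transform is never contained in a freshly created exceptional component). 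This is impossible, because the contact order of an irreducible curve with a divisor not containing it drops strictly under the blow-up of a point of tangency, so after finitely many steps the curve meets $D$ transversally — contradicting that every singular direction of a simple corner is exceptional (this is the desingularisation mechanism recalled in the remark following \refprop{constructioncurve}).

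Finally, a curve tangent to $\vect{v}$ but not transverse to $E$ is carried by \refprop{degspikeblowuppoint} back to $\vect{v}$ at each blow-up while its excess order $\on{ord}_t x-\on{ord}_t z$ strictly decreases; since its tangent direction must remain singular, it cannot stay at $\vect{v}$ forever and must eventually become tangent to $\vect{w_1}$ or $\vect{w_2}$ (excluded above) or transverse (hence equal to $C$). Therefore $C'=C$. The main obstacle is precisely this last part, the uniqueness against curves tangent to the exceptional directions, which leans on the simple-corner analysis of \refprop{simplecornerblowup} and \refprop{simplecornerblowupexplicit} together with the contact-drop argument; the construction of $C$ itself is a routine, if careful, order-by-order recursion.
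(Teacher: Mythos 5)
Your proposal is correct in outline, but it follows a genuinely different route from the paper's, and it has one load-bearing step that you assert rather than prove. On existence, your order-by-order recursion for a graph $(\phi(z),\psi(z),z)$ is valid: since $P,Q\in\mf{m}^2$ and $R\in\mf{m}$, the coefficient $\phi_n$ enters the invariance equation only through $\lambda z^c\phi$ at order $n+c$, all other contributions (from $z^cP$ and from $\phi(w)-\phi(z)$) arriving at order at least $n+c+1$, so $\lambda,\mu\neq 0$ determine $\phi_n,\psi_n$ inductively. This is more self-contained than the paper's construction, which instead obtains $C$ from the unique admissible sequence of infinitely near singular points via \refprop{constructioncurve} and \refprop{invariantcurve} (the route you mention as an alternative), and it yields uniqueness among smooth curves transverse to $E$ for free. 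On uniqueness, the paper resolves an arbitrary invariant curve and shows its intersection point with the exceptional divisor must be \emph{singular} by a dynamical argument — \refthm{parabolicmanifolds} produces parabolic manifolds asymptotic to the curve, and \refcor{singularsuffices} then forces singularity — after which it \emph{asserts} that simple corners admit no invariant curves off the divisor. You replace the dynamical input by a formal induction on tangent directions, and you actually \emph{prove} the simple-corner exclusion (via \refprop{simplecornerblowupexplicit} and the desingularization mechanism recalled after \refprop{constructioncurve}), which is a point the paper leaves unargued. So your route is more elementary where the paper is heavier, and vice versa.

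The genuine weakness is the statement, used at the original spike, at every simple corner in your second step, and at every spike in your third step, that the tangent direction of any $f$-invariant formal curve not contained in the fixed divisor is a singular direction. This is true, but it is exactly the point the paper does \emph{not} take for granted: it is what \refthm{parabolicmanifolds} plus \refcor{singularsuffices} are invoked to supply. If you want your purely formal argument to stand, you must prove it, e.g.\ as follows: write $f-\id=\ell\cdot g$ with $\ell$ the monomial defining the fixed divisor, take a primitive parametrization $\gamma$ of the curve, check first that $f\circ\gamma=\gamma\circ h$ with $h$ tangent to the identity (this uses $f-\id\in\mf{m}^2$ and primitivity of $\gamma$), and then compare leading terms in $\ell(\gamma)\,g(\gamma)=\gamma\circ h-\gamma$ to conclude that $H_\ell(v')$ is proportional (possibly zero) to the tangent direction $v'$; alternatively, use that the curve is invariant under the saturated infinitesimal generator, which at a regular point tangent to the divisor admits no invariant curve through that point other than its integral curve inside the divisor. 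Finally, a terminological slip in your last paragraph: ``tangent to $\vect{v}$ but not transverse to $E$'' should read ``tangent to $\vect{v}$ but singular (as a curve)'', since tangency to $\vect{v}$ already implies transversality of tangent cones; the argument itself — excess orders drop under blow-up at the spike point, so the strict transform eventually acquires either a non-singular or an exceptional tangent direction (both excluded) or coincides with the graph curve of that stage — is fine once the asserted fact above is in place.
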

\begin{proof}
By \refprop{degspikeblowuppoint} (see also \reffig{resumefamilies}), there exists a unique sequence of infinitely near points $\mf{p}$ consisting of singular points for the lifts of $f$. By \refprop{invariantcurve}, these points induce a formal invariant curve $C_\mf{p}$ which is $f$-invariant, smooth and transverse to $E$.

Let now $C$ be a formal $f$-invariant curve.
Since curves are resolved by point blow-ups, there exists a point modification $\pi:X_\pi \to (\nC^3,0)$ so that the curve $C$ lifts to $C_\pi$ which is smooth and transverse to the exceptional divisor $E_\pi$ of $\pi$.
Denote by $f_\pi$ the lift of $f$ at $X_\pi$.

Then $C_\pi$ must intersect $E_\pi$ transversely at a point $p$, and $f_\pi$ must be a {\degspike} at $p$.
In fact, by \refthm{parabolicmanifolds} $f_\pi$ admits a parabolic manifold tangent to $C_\pi$, and by \refcor{singularsuffices} we deduce that $p$ must be a singular point for $f_\pi$. Since simple corners don't admit formal invariant curves (not lying in the exceptional divisor), we must have that $p$ is a {\degspike}.

Since there is a unique sequence $\mf{q}$ of infinitely near points consisting of singular points and satisfying the conditions of \refprop{constructioncurve} above a {\degspike}, we deduce that $C_\pi \equiv C_\mf{q}$, and by projecting down, we get $C \equiv C_\mf{p}$.
\end{proof}

\subsubsection{{\Halfcorner}s}

\begin{prop}\label{prop:halfcornerinvcurve}
Let $f:(\nC^3,0) \to (\nC^3,0)$ be a {\halfcorner} of the form \refeqn{halfcorner}.
Write $Q=\beta + b_x x + b_y y + b_z z + \mf{m}^2$, and $R=\gamma+\mf{m}$.
Set $E=\{z=0\}$.
\begin{itemize}
\item If $\beta \neq 0$ (i.e., the {\halfcorner} is {\hcsimple}), then $f$ does not admit any formal $f$-invariant curve not contained in $E$.
\item If $\beta = 0$ (i.e., the {\halfcorner} is {\hcnonsimple}), and
\begin{equation}\label{eqn:halfcornernonres}
b_y \not \in \gamma \nN^*\text,
\end{equation}
then $f$ admits a unique smooth $f$-invariant formal curve $C$ transverse to $E$.
\end{itemize}
\end{prop}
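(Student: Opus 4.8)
The plan is to handle the two bullets by quite different means: the \hcsimple\ case by a direct order-of-contact computation along the curve, and the \hcnonsimple\ case by the blow-up recursion of \refprop{halfcornerblowup} together with the first bullet used as a black box. Throughout, orders are computed in a uniformizer on the normalization of the invariant curve.

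For the \hcsimple\ case ($\beta\neq 0$), suppose toward a contradiction that $C$ is an $f$-invariant formal curve with $C\not\subset E=\{z=0\}$. Since $\beta\neq 0$, the factor $\beta+Q$ in the second component of \refeqn{halfcorner} is a unit, so $y\circ f-y=z^{c+1}(\beta+Q)$ vanishes only on $\{z=0\}$; hence $\on{Fix}(f)=E$ near $0$, and $C\not\subset\on{Fix}(f)$. Thus $g:=f|_C$ is a nontrivial \tid\ germ on the normalization $(\nC_t,0)$ of $C$, say $g(t)=t+a_mt^m+\cdots$ with $a_m\neq 0$ and $m\geq 2$. Pulling back coordinates gives $\eta=y|_C,\ \zeta=z|_C\in\nC\fps{t}$ with $\mu:=\ord\eta\geq 1$ and $\nu:=\ord\zeta\geq 1$ (finite, as $C\ni 0$, $C\not\subset E$). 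The relations $\zeta\circ g-\zeta=\zeta^{c+2}(R|_C)$ and $\eta\circ g-\eta=\zeta^{c+1}(\beta+Q|_C)$ hold, and the elementary identity $\ord(\phi\circ g-\phi)=\ord\phi+m-1$ (exact for nonconstant $\phi$, since $m\geq 2$) lets me read off orders on both sides. The $\zeta$-relation gives $m=(c+1)\nu+r+1$ with $r:=\ord(R|_C)\geq 0$, while the $\eta$-relation, using that $\beta+Q|_C$ is a unit, gives $m=(c+1)\nu-\mu+1$. Equating the two yields $r+\mu=0$, impossible since $r\geq 0$ and $\mu\geq 1$. This argument is self-contained, needing neither the list of singular directions nor the parabolic-manifold machinery of \refthm{parabolicmanifolds}.

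For the \hcnonsimple\ case ($\beta=0$) under \refeqn{halfcornernonres}, I build $C$ as an increasing sequence of infinitely near singular points and invoke \refprop{constructioncurve} and \refprop{invariantcurve}. By \refprop{halfcornerblowup} the transverse singular directions of a \hcnonsimple\ \halfcorner\ are exactly the $[0:y_0:1]$, and blowing up at such a point produces again a \halfcorner, which is \hcnonsimple\ precisely when $\wt{\beta}(y_0)=b_z+(b_y-\gamma)y_0=0$ (see \refeqn{halfcornerR3fam} and \refrmk{beta}). The crucial bookkeeping is that under this blow-up the two relevant parameters transform as $b_y\mapsto b_y-\gamma$ and $\gamma\mapsto\gamma$: the normalizing change $u'=x+a_zz$ of \refrmk{halfcornergen} alters $b_z$ but leaves the pair $(b_y,\gamma)$ untouched, so after $k$ steps one has $b_y^{(k)}=b_y-k\gamma$. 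Solving $\wt{\beta}(y_0)=0$ for the $(k{+}1)$-st direction has a unique solution exactly when $b_y^{(k)}-\gamma=b_y-(k+1)\gamma\neq 0$, and ranging over $k\geq 0$ this is precisely the hypothesis $b_y\notin\gamma\nN^*$. Choosing at each step the unique $y_0^{(k)}$ killing $\wt{\beta}$ produces an infinite sequence of smooth singular infinitely near points; by \refprop{constructioncurve} this defines a smooth formal curve $C$, transverse to $E$ since its first direction is transverse, and by \refprop{invariantcurve} it is $f$-invariant.

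For uniqueness, let $C$ be any smooth $f$-invariant curve transverse to $E$. A short order computation (equating orders in the first-component relation at lowest degree) forces its tangent to be of the form $[0:y_0:1]$, so after blowing up the origin the strict transform $\wt{C}$ is again transverse and $\wt{f}$-invariant at the resulting \halfcorner. If that corner were \hcsimple, the first bullet applied to $\wt{f}$ would forbid a transverse invariant curve; hence it must be \hcnonsimple, forcing $\wt{\beta}(y_0)=0$ and, by non-resonance, a unique $y_0$. Recursing, the entire sequence of infinitely near points is uniquely determined, so $C$ coincides with the curve just constructed. I expect the main obstacle to be exactly this parameter tracking: one must verify carefully that the coordinate normalizations of \refrmk{halfcornergen} never disturb the pair $(b_y,\gamma)$ governing \refeqn{halfcornernonres}, so that the single non-resonance hypothesis guarantees a unique admissible direction at \emph{every} level of the infinite blow-up tower.
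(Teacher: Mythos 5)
Your proof is correct, but it takes a genuinely different route from the paper's. The paper proves both bullets through a single reduction: a preliminary formal conjugation (its Step~1, absorbing the terms $z^k a_k(y)$ of $P$ via the cores of the $\SpinC$-$\HalfC$-patterns) makes the surface $S=\{x=0\}$ invariant, and then everything is read off the two-dimensional restriction $f|_S$: its saturated infinitesimal generator is regular and tangent to $E$ when $\beta\neq 0$ (so no separatrix besides $E\cap S$), and has a canonical singularity with linear part $(b_y y+b_z z)\partial_y+\gamma z\partial_z$ when $\beta=0$ and $b_y\notin\gamma\nN^*$ (so exactly two separatrices, one transverse to $E$). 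You instead treat the \hcsimple\ case by a purely order-theoretic count on the normalization (equating $m=(c+1)\nu+r+1$ with $m=(c+1)\nu-\mu+1$ to force $r+\mu=0$), which is more elementary and directly excludes \emph{every} curve not contained in $E$, not only those lying on the invariant surface; and you treat the \hcnonsimple\ case by the blow-up recursion with the bookkeeping $\wt{b}_y=b_y-\gamma$, $\wt{\gamma}=\gamma$, which is exactly the alternative route the paper itself records in \refrmk{halfcornersinvcurvetangent}, supplemented by a uniqueness argument that bootstraps your first bullet through the recursion and concludes with \refprop{constructioncurve} in place of the uniqueness of separatrices of a canonical two-dimensional singularity. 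Two details are worth making explicit, though both are immediately fillable: in the \hcsimple\ case, $\eta\not\equiv 0$ and $R|_C\not\equiv 0$ are forced (otherwise the corresponding relation equates zero with a series of finite order $\nu+m-1$ or $(c+1)\nu$), so $\mu$ and $r$ are indeed finite; and in the uniqueness recursion, the lowest-order computation forcing the tangent $[0:y_0:1]$ must be run in the normalized coordinates of \refeqn{halfcorner}, since in the raw form \refeqn{halfcornerR3fam} the forced tangent is $[-a_z:y_0:1]$ --- which is precisely where your observation that the normalization $u'=x+a_z z$ preserves the pair $(b_y,\gamma)$ is needed, so the argument stands.
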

\begin{proof}
Let us write $f$ under the following form:
\begin{equation}\label{eqn:halfcornerinvcurve1}
f(x,y,z)=\begin{pmatrix}
x+z^c(x+ P)\\
y+z^{c+1}Q\\
z+z^{c+2}R
\end{pmatrix}
\end{equation}
with $P=\sum_{k \geq 1} z^k a_k(y) + \langle xz\rangle$.

\thmstep{Step $1$}
We want to show that up to formal conjugacy, we can suppose that $a_k \equiv 0$ for all $k$, hence $P \in \langle xz \rangle$.

When $k=1$, the condition $a_1 \equiv 0$ corresponds to having that the $\SpinC$-$\HalfC$-pattern obtained after blowing-up $\{x=z=0\}$ has core which corresponds to the intersection of the strict transform of $\{x=0\}$ and the exceptional divisor.

Using \refeqn{R3Patternsingcurvez} and arguing by induction, having $a_1 \equiv \ldots \equiv a_k \equiv 0$ corresponds to the analogous statement for the iterated blow-up $h$-times, $h=1, \ldots, j$, of the cores of the $\SpinC-\HalfC$-patterns we meet at each step.
We set $X_0=\nC^3$ (as a germ at the origin), and $X_k$ to be the blow-up of $X_{k-1}$ along $\{x=z=0\}$.
Since a change of coordinates of the form $x'=x+\alpha(y)$ in $X_k$ corresponds to a change of coordinates of the form $x'=x+z^k\alpha(y)$, in $X_0$, these change of coordinates converge to a formal change of coordinates $x'=x+A(y,z)$.

\thmstep{Step $2$}
By Step $1$, we may assume $P \in \langle xz \rangle$.
This corresponds to having the surface $S=\{x=0\}$ invariant by $f$.
Set $g=f|_S:S \to S$, and let $\hat{\xi}$ be the saturated infinitesimal generator of $g$.

If $\beta \neq 0$, we get that $\hat{\xi}$ is regular at the origin, and tangent to the exceptional divisor $E=\{z=0\}$. In particular there are no complex separatrices for $\hat{\xi}$ besides $E \cap S$.

If $\beta = 0$, we get a singularity at the origin, whose linear part is $(b_y y + b_z z) \partial_y + \gamma z \partial_z$.
As long as $b_y$ and $\gamma$ do not both vanish, we get a log-canonical singularity.
The condition $b_y \not \in \gamma \nN^*$ ensures that the singularity is in fact canonical, and we have exactly two complex separatrices: one given by $E \cap S$, and the other transverse to $E$ in $S$. 
\end{proof}

\begin{rmk}\label{rmk:halfcornersinvcurvetangent}
The existence of formal invariant curves for {\hcnonsimple} {\halfcorner}s can be deduced directly from \refprop{halfcornerblowup}.
In fact, the computations made in the proof, show that when blowing-up such a germ, we obtain {\halfcorner}s with parameters
$$
\wt{\beta}(y_0)=b_z+(b_y-\gamma)y_0\text,\qquad
\wt{b}_y=b_y-\gamma\text,\qquad
\wt{\gamma}=\gamma\text,
$$
where $y_0 \in \nC$.
In particular, as long as
$b_y \not \in \gamma \nN^*\text,$
we may construct an increasin sequence of infinitely near points which are {\hcnonsimple} {\halfcorner}s, which identify a formal invariant curve by \refprop{invariantcurve}. 

One can also replace Step $2$ of \refprop{halfcornerinvcurve} by a direct computation, following the techniques developed in  \cite{ruggiero:rigidification, ruggiero:rigidgerms, ruggiero:superattrdim1charp}.
This would correspond to parametrize a curve $C$ transverse to $E$ inside $S$ as $(0,\hat{y}(t),t^e)$ for some $e \geq 1$ and formal power series $\hat{y}=\sum_{n \geq 1} y_n t^n \in \nC\fps{t}$. We then impose the invariance condition
\begin{equation}\label{eqn:halfcorner_invariancecond}
y \circ f(0,y(t),t^e) = \hat{y}\Big(\big(z \circ f(0,y(t),t^e)\big)^{\frac{1}{e}}\Big)\text,
\end{equation}
and solve this equation by expanding everything in formal power series on $t$.

When $\beta \neq 0$, the contradiction to the existence is obtained by checking \refeqn{halfcorner_invariancecond} at order $e(c+1)$.
When $\beta = 0$, for $e=1$ and for any $n > c+1$, \refeqn{halfcorner_invariancecond} contains a term of the form
$$
\big(b_y+(n-c-1)\gamma\big)y_{n-c-1} = \on{l.o.t.}
\text,
$$ 
where $\on{l.o.t.}$ is a polynomial expression depending on $y_h$ for $h < n-c-1$.
We deduce from this the existence and uniquenes of $\hat{y}$ solution of \refeqn{halfcorner_invariancecond}.
\end{rmk}

\subsubsection{{\Spincorner}s}

In the following result, we say that a curve $C$ is transverse to $E=\{yz=0\}$ if it is transverse to any irreducible component of $E$.
In other terms, if the strict transforms of $E$ and $C$ do not intersect on the exceptional divisor of the blow-up of the origin.

\begin{cor}\label{cor:spincornerinvcurve}
Let $f:(\nC^3,0)\to(\nC^3,0)$ be a {\spincorner} of the form \refeqn{spincorner}.
Write $Q=b_x x + b_y y + b_z z + \mf{m}^2$, and $R=c_x x + c_y y + c_z z + \mf{m}^2$.
Set $E=\{yz=0\}$.
\begin{itemize}
\item If $b_y=c_y$ and $b_z = c_z$, and they are not all vanishing, then there exists infinitely many $f$-invariant formal smooth curves transverse to $E$. 
\item If $b_y \neq c_y$ and $b_z \neq c_z$, and 
\begin{equation}\label{eqn:spincornernonres}
(c_z-b_z)(b_y-c_y) \not \in (b_yc_z-b_zc_y) \nN^*,
\end{equation}
then there exists a unique formal $f$-invariant curve smooth and transverse to $E$.
\item If exactly one of the two equalities $b_y=c_y$ and $b_z=c_z$ is satisfied, then there are no formal $f$-invariant curves transverse to $E$.
\end{itemize}
\end{cor}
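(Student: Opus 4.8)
The plan is to reduce everything to the analysis of \halfcorner{}s in \refprop{halfcornerinvcurve} by performing the single blow-up of the origin. A formal $f$-invariant curve transverse to $E=\{yz=0\}$ is tangent to a singular direction whose point on the exceptional $\nP^2$ lies off both lines $\{y=0\}$ and $\{z=0\}$; by \refprop{spincornerblowup} the only such directions are the $[0:y_0:1]$ with $y_0\in\nC^*$, where $\wt f$ is a \halfcorner{} whose exceptional divisor is locally $\{z=0\}$. Blowing down then sets up a bijection between $f$-invariant curves transverse to $E$ and pairs $(y_0,\wt C)$, where $y_0\in\nC^*$ and $\wt C$ is a $\wt f$-invariant curve of the \halfcorner{} at $[0:y_0:1]$ not contained in $\{z=0\}$; smoothness and transversality are preserved because such a $\wt C$ meets the first exceptional divisor transversally. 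So I would first record this correspondence, and observe via \refprop{halfcornerinvcurve} that a \hcsimple{} \halfcorner{} carries no invariant curve off its exceptional divisor, while a \hcnonsimple{} one carries a unique such (smooth, transverse) curve as soon as the non-resonance hypothesis holds.

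Next I would use \refrmk{beta}, which says the \halfcorner{} at $[0:y_0:1]$ is \hcnonsimple{} exactly when $\wt\beta(y_0)=b_z-c_z+y_0(b_y-c_y)=0$. This disposes of two of the three cases. If $b_y=c_y$ and $b_z=c_z$ (not all vanishing), then $\wt\beta\equiv0$, so every $[0:y_0:1]$ is \hcnonsimple{}; here the eigenvalue $\lambda_y=y_0^{b+1}(b_y-c_y)$ vanishes, so the relevant non-resonance reduces to $\lambda_z=y_0^b\,\wt\gamma(y_0)\neq0$ with $\wt\gamma(y_0)=c_z+y_0c_y$, which fails for at most one $y_0\in\nC^*$ (the hypothesis "not all vanishing" guarantees $\wt\gamma\not\equiv0$). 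Thus all but at most one direction produce an invariant curve, giving infinitely many. If instead exactly one of $b_y=c_y$, $b_z=c_z$ holds, then $\wt\beta(y_0)$ either never vanishes or vanishes only at $y_0=0$, so every \halfcorner{} with $y_0\neq0$ is \hcsimple{} and there are no transverse invariant curves.

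The remaining case $b_y\neq c_y$, $b_z\neq c_z$ is where the actual computation lies: $\wt\beta$ vanishes at the single point $y_0^{*}=(c_z-b_z)/(b_y-c_y)$, which is nonzero precisely because $b_z\neq c_z$, so there is exactly one candidate \hcnonsimple{} \halfcorner{}. To pin down the non-resonance condition I would compute, directly from \refeqn{spincornerztrasl}, the two eigenvalues of the saturated infinitesimal generator of $\wt f$ restricted to the invariant surface $\{x=0\}$ at $y_0^{*}$; being coordinate-invariant, they can be read off after dividing by the common factor $z^{s+1}$ as $\lambda_y=y_0^{b+1}(b_y-c_y)$ and $\lambda_z=y_0^{b}\,\wt\gamma(y_0)$. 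Evaluating at $y_0^{*}$ and simplifying, their ratio collapses to $(c_z-b_z)(b_y-c_y)/(b_yc_z-b_zc_y)$, so the hypothesis $\lambda_y\notin\lambda_z\nN^*$ of \refprop{halfcornerinvcurve} becomes exactly \refeqn{spincornernonres}, and the unique transverse curve follows. The main obstacle is precisely this bookkeeping—matching the \halfcorner{}'s normal-form parameters while carrying along the spurious powers of $y_0$ introduced by the blow-up, so that the eigenvalue ratio reduces to the stated resonance expression; once that is done, everything else is an immediate consequence of \refprop{spincornerblowup}, \refrmk{beta}, and \refprop{halfcornerinvcurve}.
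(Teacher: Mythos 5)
Your proposal is correct and follows essentially the same route as the paper: blow up the origin, use \refprop{spincornerblowup} and \refrmk{beta} to locate the \hcnonsimple{} {\halfcorner}s among the points $[0:y_0:1]$, and translate the non-resonance condition \refeqn{halfcornernonres} of \refprop{halfcornerinvcurve} into \refeqn{spincornernonres} via the eigenvalue ratio $y_0(b_y-c_y)/\wt\gamma(y_0)=(c_z-b_z)(b_y-c_y)/(b_yc_z-b_zc_y)$, treating the three cases exactly as the paper does (including the degenerate subcase $\delta=0$ and the at-most-one bad $y_0$ in the first case). Your explicit blow-down correspondence in the first paragraph is left implicit in the paper's proof, but the substance is identical.
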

\begin{proof}

Consider the blow-up of the origin. From \refprop{spincornerblowup}, the points $p(y_0)$ corresponding to the directions $[0:y_0:1]$ for $y_0 \in \nC^*$ have a {\hcnonsimple} {\halfcorner} when $y_0$ satisfies $b_z-c_z + y_0(b_y-c_y)=0$.
The parameters of the {\halfcorner} are given (up to a factor $y_0^b$) by:
$$
\wt{b}_y=y_0(b_y-c_y)=c_z-b_z\text,\qquad
\wt{\gamma}=c_z+y_0c_y\text,
$$
see \refeqn{spincornerztrasl}.

\begin{itemize}
\item If $b_y=c_y$ and $b_z=c_z$, then $p_0$ is a {\hcnonsimple} {\halfcorner} for all values of $y_0 \in \nC^*$, with parameters $\wt{b}_y=0$ and $\wt{\gamma}=c_z + y_0 c_y$.
As long as we do not have $c_y=c_z=0$, then for all $y_0$ but at most one special value, the corresponding {\hcnonsimple} {\halfcorner} at $p(y_0)$ satisfies the non-resonance condition \refeqn{halfcornernonres}, and there exists a unique invariant curve at $p(y_0)$ and transverse to the exceptional divisor.

\item If $b_y \neq c_y$ and $b_z \neq c_z$, the only {\hcnonsimple} {\halfcorner}  is obtained at $p(y_0)$ with $\displaystyle y_0=\frac{c_z-b_z}{b_y-c_y}$.
In this case, we have $\displaystyle \wt{\gamma}=\frac{\delta}{b_y-c_y}$, where $\delta=b_yc_z-c_yb_z$.

If $\delta=0$, being $\wt{b}_y \neq 0$, the condition \refeqn{halfcornernonres} is satisfied.
If $\delta \neq 0$, then the condition \refeqn{halfcornernonres} gives exactly \refeqn{spincornernonres}.
\item If exactly one of the two equalities $b_y=c_y$ and $b_z=c_z$ is satisfied, then $p(y_0)$ is a {\hcsimple} {\halfcorner} for all $y_0 \in \nC^*$.
By \refcor{spincornerinvcurve}, we have no invariant formal curves transverse to the exceptional divisor, and hence no $f$-invariant formal curves transverse to $E$.
\end{itemize}
\end{proof}

\begin{rmk}\label{rmk:spincornerinvcurve}
\refcor{spincornerinvcurve} does not deal with the existence of formal invariant curves that may be tangent to the exceptional divisor.

Given a {\spincorner} in the form \refeqn{spincorner}, and using the notations of \refcor{spincornerinvcurve}, we set $A=\begin{pmatrix}b_y&b_z\\c_y&c_z\end{pmatrix}$.

Without further mention, germs or patterns that we blow-up will be considered in the special coordinates used to obtain \reffig{resumefamilies}.

{\Spincorner}s may arise either blowing-up other {\spincorner}s, at the point associated to $[0:1:0]$ and $[0:0:1]$; or by blowing-up a {\halfcorner}, at the point associated to $[0:1:0]$. Finally they are also obtained by blowing-up the core of a $\SpinC$-$\HalfC$-pattern.

\begin{trivlist}
\itemcase{{\Spincorner}s}
Assume $f$ is a {\spincorner}, and consider the lift $\wt{f}$ with respect to the blow-up of the origin, at the point associated to $[0:0:1]$.
Then matrix associated to $\wt{f}$ is  $\wt{A}=\begin{pmatrix}0&b_z-c_z\\0&c_z\end{pmatrix}$.
We deduce that if $b_z \neq 2c_z$, there are no invariant curves transverse to the exceptional divisor for $\wt{f}$, while if $b_z=2c_z \neq 0$, then there exists infinitely many invariant curves.

By repeating this argument, we get infinitely many invariant curves as long as $b_z/c_z \in \nN^*$, or $c_y / b_y \in \nN^*$ (this last condition is obtained by exchanging the role of $y$ and $z$ and studying the direction $[0:1:0]$).
		
\itemcase{{\Halfcorner}s} We need to study the direction $[0:1:0]$. In this case, we get $\wt{A}=\begin{pmatrix}0&\beta \\0 &-\beta\end{pmatrix}$.
Hence, for {\hcsimple} {\halfcorner}s, we have $\beta \neq 0$, and no invariant curve transverse to the exceptional divisor exists.
For {\hcnonsimple} {\halfcorner}s, we have $\beta=0$, and the existence of invariant curves depend on the terms of higher degrees of $f$.

\itemcase{$\SpinC$-$\HalfC$-patterns}
In this case, it is easy to check from \refeqn{R3Patternsingcurvez} that the {\spincorner} $\wt{f}$ above a {\spincorner} of the core of the pattern satisfies $\wt{A}=A$, and we can apply directly \refcor{spincornerinvcurve}.
\end{trivlist}

One can also use formal computation techniques (see \ref{rmk:halfcornersinvcurvetangent}), which show again how the existence of invariant curves may depend on the higher order terms of $P,Q,R$.
\end{rmk}

\subsection{Parabolic manifolds}

\subsubsection{{\Degspike}s}

\begin{prop}\label{prop:RSreddegspike}
Let $f:(\nC^3,0) \to (\nC^3,0)$ be a {\degspike} of the form \refeqn{degspike}, and let $C$ be the unique $f$-invariant formal curve given by \refprop{degspikeinvcurve}.
Suppose that $C$ is not pointwise fixed by $f$.

For any $n \in \nN^*$, consider $\pi_n: X_n \to (\nC^3,0)$ the point modification, obtained recursively starting by $\pi_1$ the blow-up of $p_0=0$, and $\pi_n$ obtained from $\pi_{n-1}$ by blowing-up the point $p_{n-1}:=\pi_{n-1}^{-1}(0) \cap C_{n-1}$, where $C_{n-1}$ is the strict transform of $C$ by $\pi_{n-1}$.

Denote by $f_n$ the lift of $f$ at $X_n$, as a germ at $p_n$. 
Then for $n \gg 0$, the pair $(f_n,C_n)$ is in Ramis-Sibuya normal form.
\end{prop}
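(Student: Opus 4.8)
The plan is to follow the invariant curve $C$ through the tower of point blow-ups and to show that, after finitely many of them, a single linear-in-$(x,y)$ change of coordinates together with the formal normalization of $f|_C$ produces the shape \refeqn{RamisSibuya}. First I would straighten $C$: by \refprop{degspikeinvcurve} it is smooth and transverse to $E=\{z=0\}$, so a formal change of coordinates fixing $E$ brings it to $C=\{x=y=0\}$; the germ stays a {\degspike} of the form \refeqn{degspike}, and the invariance of $\{x=y=0\}$ forces $P,Q\in\langle x,y\rangle$. Since $R\in\mf{m}$ we have $R(0,0,0)=0$, so $\rho(z):=R(0,0,z)\in\langle z\rangle$ and $f|_C\colon z\mapsto z+z^{c+1}\rho(z)$. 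By hypothesis $f|_C\neq\id$, hence $\rho\not\equiv0$ and $f|_C$ is a one-dimensional parabolic germ of multiplicity $r+1$, where $r:=c+\on{ord}_z\rho$.

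Next I would record the effect of one blow-up along $C$. By \refprop{degspikeblowuppoint} (see also \reffig{resumefamilies}), blowing up $p_{n-1}\in C$ and working in the $z$-chart keeps a {\degspike} with the same $c,\lambda,\mu$, with core $C_n=\{x=y=0\}$, and \refeqn{degspikeblow0z} shows that $f_n|_{C_n}$ is unchanged. The computational heart is three bookkeeping facts; write $T_n(z)$ for the Jacobian in $(x,y)$ of the first two components of $f_n$ at $(0,0,z)$. (i) Because $C$ is invariant, $(x\circ f_n)(0,0,z)\equiv(y\circ f_n)(0,0,z)\equiv0$, so the cross terms cancel in the blow-up formula and one obtains the purely \emph{scalar} transformation $T_n=u^n\,T_0$ with $u(z)=(1+z^c\rho(z))^{-1}=1+O(z^r)$. (ii) Each blow-up multiplies the $(x,y)$-dependent part of the last component by an extra factor of $z$, so $z\circ f_n=z+z^{c+1}\rho(z)$ up to $(x,y)$-dependent terms of $z$-order $\ge c+1+n$. (iii) Likewise the part of the first two components that is nonlinear in $(x,y)$ gains a factor $z$ at each step, hence has $z$-order $\ge c+n$.

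Taking $n\gg0$, facts (ii) and (iii) drive all offending terms into the remainder ideals of \refeqn{RamisSibuya}: the $(x,y)$-dependence of $z\circ f_n$ falls in $\langle z^{2r+2}\rangle$ and the nonlinear part of the first two components in $\langle z^{r+1}\rangle$. It then remains to adjust the linear-in-$(x,y)$ part and the one-variable part by coordinate changes at $p_n$, which is legitimate since the normal form is stated relative to a choice of local coordinates. By (i) and the initial shape of $T_0$, the matrix $z^{-c}(T_n(z)-I)$ takes the value $\operatorname{diag}(\lambda,\mu)$ at $z=0$; as $\lambda\neq\mu$, it is formally diagonalizable by a conjugation $\binom{x}{y}\mapsto P(z)\binom{x}{y}$ with $P(0)=I$. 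This change fixes $C_n$, leaves the last component untouched, and forces $c_{12}=c_{21}=0$, exactly as required because $d_1\not\equiv d_2$. Setting $d_j$ equal to the degree-$(r-1)$ truncation of $\log T_{n,jj}$ and $c_{jj}$ to the resulting coefficient of $z^r$ yields the diagonal block of \refeqn{RamisSibuya}, and a final reparametrization $z\mapsto\psi(z)$ normalizing the parabolic germ $f_n|_{C_n}$ turns the last component into $z-z^{r+1}+bz^{2r+1}+\langle z^{2r+2}\rangle$; since this only modifies the unconstrained $d_j$ and $c_{jj}$, the pair $(f_n,C_n)$ is in Ramis-Sibuya normal form.

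The main obstacle is the order bookkeeping (i)--(iii): verifying the scalar law $T_n=u^n\,T_0$ (which hinges on the invariance of $C$, killing the cross terms) and the estimate $u=1+O(z^r)$, so that the transverse exponents encoded by $d_1,d_2$ stabilize while every non-normalizable term is pushed into the prescribed remainders. The remaining ingredients — the straightening, the diagonalization using $\lambda\neq\mu$, and the one-variable normalization of $f_n|_{C_n}$ — are standard, and the outcome is precisely the explicit point-blow-up realization, for {\degspike}s, of the general reduction \refthm{reductiontoRS}.
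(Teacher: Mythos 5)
Your proposal is correct, and its overall route coincides with the paper's: straighten $C$ to $\{x=y=0\}$, compute the tower of lifts in the $z$-chart along the curve, observe that each blow-up pushes the non-normalizable terms one order of $z$ higher, and finish with coordinate changes at level $n$. Your bookkeeping facts (i)--(iii) are exactly what the paper encodes in its closed-form expression \refeqn{degspikentimes}, and your order thresholds agree with the paper's choice $n>c+2e$, $r=c+e$.

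There is, however, one genuine and worthwhile difference in how the linear part is treated. The paper's formula \refeqn{degspikentimes} asserts, starting only from $P,Q\in\langle x,y\rangle\mf{m}$, that the numerators are $x+z^c(\lambda x+z^n\langle x,y\rangle)$; this is not literally correct, because $z$-dependent linear terms in $P,Q$ survive the blow-ups at \emph{fixed} order. For instance, if $P$ contains $yz$, then the lift $f_n$ still contains the off-diagonal term $z^{c+1}y$ for every $n$ (your own law $T_n=u^n\,T_0$ makes this transparent: $u^n$ is scalar, so the off-diagonal entries of $T_0$ are never improved). Such terms are harmless on the diagonal (they are absorbed into $d_1,d_2$), but the off-diagonal ones are exactly what must vanish in \refeqn{RamisSibuya}, since $d_1\not\equiv d_2$ forces $c_{12}=c_{21}=0$; the paper's accompanying remark only diagonalizes the linear part \emph{at} $z=0$, which does not remove them. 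Your proof supplies the missing step: the exact relation $T_n=u^n\,T_0$ together with the formal diagonalization of the full family $z^{-c}(T_n(z)-I)$ (possible since its value $\operatorname{diag}(\lambda,\mu)$ at $z=0$ has distinct eigenvalues). So your argument is not merely an alternative presentation; it repairs an imprecision in the paper's computation. One small point you should make explicit: conjugating by $(x,y,z)\mapsto(P(z)\binom{x}{y}\!,z)$ turns the linear part into $P(\phi(z))^{-1}T_n(z)P(z)$ with $\phi=f_n|_{C_n}$, not into the pointwise conjugate $P(z)^{-1}T_n(z)P(z)$; since $\phi(z)-z\in\langle z^{r+1}\rangle$ and $P(0)=I$, the two agree modulo $\langle z^{r+1}\rangle$, which is all the normal form requires, and the same observation justifies performing the final reparametrization $z\mapsto\psi(z)$ without disturbing the already-normalized transverse block.
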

\begin{proof}
Up to a formal change of coordinates, we may assume that $C=\{x=y=0\}$, and $f$ is of the form \refeqn{degspike} with $P,Q \in \langle x,y \rangle\mf{m}$.
We can write the third coordinate of $f$ as
$$
z \circ f = z + z^{c+1} R = z+z^{c+1}\big(h(z) + \langle x,y \rangle\big)\text.
$$
Notice that $f|_C(z)=z+z^{c+1}h(z)$: up to a polynomial change of coordinates in the variable $z$, we may assume that
\begin{equation}\label{eqn:RSformalzdegspike}
h(z)=-z^e+\beta z^{c+2e} + \langle z^{c+2e+1} \rangle\text,
\end{equation}
with $e=\ord_0(h)$.
Notice that performing this change of coordinates changes the values of $\lambda$ and $\mu$, but their ratio stays invariant (see \refrmk{RSreddegspike}).

The blow-ups $\pi_n$ can be computed with respect to the $z$-chart, and the point $p_n$ corresponds to the origin in this chart.
By direct computation we get
\begin{equation}\label{eqn:degspikentimes}
f_n(x,y,z)
=
\begin{pmatrix}
\displaystyle \frac{x+z^c \big(\lambda x + z^n \langle x,y \rangle\big)}{\Big(1+z^{c} \big(h(z) + z^n \langle x,y \rangle\big) \Big)^n}\\[6mm]
\displaystyle \frac{y+z^c \big(\mu y + z^n \langle x,y \rangle\big)}{\Big(1+z^{c} \big(h(z) + z^n \langle x,y \rangle\big) \Big)^n}\\[6mm]
\displaystyle z\Big(1+z^{c} \big(h(z) + z^n \langle x,y \rangle\big) \Big)
\end{pmatrix}
\text.
\end{equation}

Set $r=c+e \geq c+1$, and take $n > c+2e$. Then \refeqn{degspikentimes} can be rewritten as
\begin{equation}\label{eqn:degspikentimesfps}
f_n(x,y,z)
=
\begin{pmatrix}
x(1+z^c \lambda + nz^r) + \langle z^{r+1} \rangle\\[4mm]
y(1+z^c \mu + nz^r) + \langle z^{r+1} \rangle\\[4mm]
z-z^{r+1} + \beta z^{2r+1} + \langle z^{2r+2} \rangle
\end{pmatrix}
\text,
\end{equation}
which is on the form \refeqn{RamisSibuya} with $\lambda^{-1} d_1(z) \equiv \mu^{-1} d_2(z) = z^c + \langle z^{2c}\rangle$. 
\end{proof}

\begin{rmk}\label{rmk:RSreddegspike}
When we change coordinates to obtain \refeqn{RSformalzdegspike}, the values of $\lambda$ and $\mu$ are replaced by $\lambda h_e^{-c/r}$, where $h(z)=h_e z^e + \langle z^{e+1} \rangle$.

Suppose we have a {\degspike} of the form
\begin{equation}\label{eqn:degspikeverygen}
f(x,y,z)
=
\begin{pmatrix}
x+z^c a(x,y,z)\\[2mm]
y+z^c b(x,y,z)\\[2mm]
z+z^{c+1} R(x,y,z) 
\end{pmatrix}
\text,
\end{equation}
and we want to put it under the form used in the computations of \refprop{RSreddegspike}.
This boils down to first put the linear part of $(a,b)$ (evaluated in $z=0$) in diagonal form, and then perform a change of coordinates $x \mapsto x + \alpha(z)$ and $y \mapsto y + \beta(z)$ for suitable formal power series $\alpha, \beta \in z\nC\fps{z}$.
In particular, if we need to know the action of $f|_C$ (where $C$ is the unique formal $f$-invariant curve transverse to $\{z=0\}$) up to order $c+1+e$, we only need to know the values of $\alpha$ and $\beta$ up to order $e$.
\end{rmk}

\begin{cor}\label{cor:parmflddegspike}
Let $f:(\nC^3,0) \to (\nC^3,0)$ be a {\degspike} (of Siegel type) of the form \refeqn{degspike}, and let $C$ be the unique $f$-invariant formal curve given by \refprop{degspikeinvcurve}.
Suppose that $C$ is not pointwise fixed by $f$, and let $r+1$ be the multiplicity of $(f-\id)|_{C}$ at the origin.

Then $f$ admits $r$ parabolic domains $\Delta_k$, 
which are of dimension $1$ or $2$.
\end{cor}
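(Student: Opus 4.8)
The plan is to reduce $(f,C)$ to Ramis--Sibuya normal form, invoke \refthm{RSparabolicmanifolds} to produce one parabolic manifold per attracting direction of $f|_C$, and then bound their dimension using the Siegel condition. First I would note that, since $C$ is not pointwise fixed, we have $f|_C \neq \id$, so \refprop{RSreddegspike} applies: for $n \gg 0$ the pair $(f_n,C_n)$ is in the normal form \refeqn{RamisSibuya}, with $d_1(z)=\lambda z^c+\langle z^{2c}\rangle$, $d_2(z)=\mu z^c+\langle z^{2c}\rangle$, and third coordinate $z-z^{r+1}+\langle z^{r+2}\rangle$. In particular $f_n|_{C_n}$ is a one-dimensional parabolic germ of multiplicity $r+1$, whose $r$ attracting directions $\xi_1,\dots,\xi_r$ are exactly the complex $r$-th roots of $1$. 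For each $\xi_k$, \refthm{RSparabolicmanifolds} yields a parabolic manifold $\Delta(\xi_k)$ for $f_n$, asymptotic to $C_n$ and tangent to $\xi_k$, of dimension $s(\xi_k)+1$ with $s(\xi_k)\in\{0,1,2\}$.

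The heart of the argument is to show that $s(\xi_k)\le 1$ for every $k$, which is precisely where the (Siegel) hypothesis $\mu\in\lambda\nR_{<0}$ enters. From $\lambda^{-1}d_1\equiv\mu^{-1}d_2$ we obtain $d_2=t\,d_1$ with $t:=\mu/\lambda\in\nR_{<0}$. Since $t$ is real, the definition \refeqn{RSdefRj} gives $R_2(\xi_k)=t\,R_1(\xi_k)$ componentwise. Comparing first nonzero entries in the lexicographic order: if $R_1(\xi_k)\neq 0$ then $R_1(\xi_k)$ and $R_2(\xi_k)=t\,R_1(\xi_k)$ have opposite leading signs (as $t<0$), so exactly one of them is $<0$; and if $R_1(\xi_k)=0$ then $R_2(\xi_k)=0$ as well, so neither is $<0$. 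Hence $\xi_k$ is a node direction for at most one of the variables $x,y$, i.e. $s(\xi_k)\le 1$, forcing $\dim\Delta(\xi_k)\in\{1,2\}$.

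Finally I would descend the manifolds from $X_n$ back to $(\nC^3,0)$. Since $f$ has an isolated fixed point, $\on{Fix}(f_n)=\pi_n^{-1}(0)$, so each point of $\Delta(\xi_k)$, having a nonconstant orbit converging to $p_n$, lies off the exceptional divisor; thus $\pi_n$ restricts to a biholomorphism on $\Delta(\xi_k)$. Using $f\circ\pi_n=\pi_n\circ f_n$ one checks that $\Delta_k:=\pi_n\big(\Delta(\xi_k)\big)$ is an $f$-invariant complex manifold of the same dimension $s(\xi_k)+1\in\{1,2\}$, with orbits converging to $0$ asymptotically to $C$. As the $\xi_k$ are the $r$ distinct attracting directions of $f|_C$, the manifolds $\Delta_1,\dots,\Delta_r$ are distinct, giving the claimed $r$ parabolic manifolds of dimension $1$ or $2$.

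I expect the routine parts to be the root-counting for $f|_C$ and the descent through $\pi_n$; the main point to get right is the clean identity $d_2=(\mu/\lambda)\,d_1$ together with the lexicographic sign comparison, since it is the reality and negativity of $\mu/\lambda$ (the Siegel condition, preserved by the coordinate changes of \refprop{RSreddegspike} by \refrmk{RSreddegspike}) that rules out $s(\xi_k)=2$ and hence the existence of a parabolic domain of full dimension $3$.
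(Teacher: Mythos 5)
Your proof is correct and follows essentially the same route as the paper's: reduction to Ramis--Sibuya normal form via \refprop{RSreddegspike}, then using the exact proportionality $d_2=(\mu/\lambda)d_1$ with $\mu/\lambda\in\nR_{<0}$ (your componentwise lexicographic sign comparison is in fact a cleaner rendering of the paper's claim that $R_1(\xi)+R_2(\xi)=0$, which literally holds only when $\mu=-\lambda$) to rule out $s(\xi)=2$, and concluding with \refthm{RSparabolicmanifolds}. The only blemish is in your descent step (which the paper leaves implicit): a {\degspike} does \emph{not} have an isolated fixed point, since $\{z=0\}\subseteq\on{Fix}(f)$, but the descent still goes through because points of $\Delta(\xi_k)$ have nonconstant orbits and therefore avoid the pointwise-fixed exceptional divisor, off which $\pi_n$ is a biholomorphism.
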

\begin{proof}
By \refprop{RSreddegspike}, we may assume up to point blow-ups that $f$ is in the Ramis-Sibuya normal form \refeqn{degspikentimesfps}.
Let $\xi=e^{2\pi \ui k/r}$ be a $r$-th root of unity.
Denote by $R_1$ and $R_2$ the invariants associated to the Ramis-Sibuya normal form given by \refeqn{RSdefRj}.
Since $d_1(z)$ and $d_2(z)$ are proportional, and their ratio is $\lambda/\mu \in \nR_{<0}$ if follows that $R_1(\xi) + R_2(\xi)=0$.
Hence either $R_j(\xi)=0$ for $j=1,2$, and in this case $\xi$ is a saddle direction for both coordinates, or $R_j(\xi) \neq 0$, and in this case $\xi$ is a node direction for exactly one of the two coordinates.
We conclude by \refthm{RSparabolicmanifolds}.
\end{proof}

\begin{rmk}
Notice that if $\lambda \xi^{c} \in \ui \nR^*$, then its square is a non-vanishing real number. It follows that $R_j(\xi) \neq 0$ whenever $2c < r$, and the only case where we can have parabolic domains of dimension $1$ is when $2c \geq r$ (i.e., $e \leq c$), and there exists a $r$-th root of unity so that $\lambda \xi^r$ belongs to $\ui \nR$.
\end{rmk}

\subsubsection{{\Halfcorner}s}\label{sssec:parmfldhalfcorner}

\begin{prop}\label{prop:RSredhalfcorner}
Let $f:(\nC^3,0) \to (\nC^3,0)$ be a {\hcnonsimple} {\halfcorner} of the form \refeqn{halfcorner}.
Suppose that $f$ admits a $f$-invariant smooth formal curve $C$ transverse to $\{z=0\}$.
Suppose that $C$ is not pointwise fixed by $f$.
	
For any $n \in \nN^*$, consider $\pi_n: X_n \to (\nC^3,0)$ the point modification, obtained recursively starting by $\pi_1$ the blow-up of $p_0=0$, and $\pi_n$ obtained from $\pi_{n-1}$ by blowing-up the point $p_{n-1}:=\pi_{n-1}^{-1}(0) \cap C_{n-1}$, where $C_{n-1}$ is the strict transform of $C$ by $\pi_{n-1}$.
	
Denote by $f_n$ the lift of $f$ at $X_n$, as a germ at $p_n$. 
Then for $n \gg 0$, the pair $(f_n,C_n)$ is in Ramis-Sibuya normal form.
\end{prop}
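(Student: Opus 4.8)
The plan is to follow the strategy of \refprop{RSreddegspike} almost verbatim, the one genuine difference being that the two diagonal directions $d_1$ and $d_2$ will no longer be proportional, so that the vanishing of the off-diagonal coefficients $c_{12},c_{21}$ in \refeqn{RamisSibuya} must be arranged by hand rather than being automatic. First I would put $f$ into a convenient preliminary form by formal changes of coordinates that do not affect the conclusion. Using the smoothness and transversality of $C$, together with the normalization of Step~$1$ of \refprop{halfcornerinvcurve}, I would arrange simultaneously that $C=\{x=y=0\}$ and that the surface $S=\{x=0\}$ is $f$-invariant, i.e. $P\in\langle xz\rangle$; the latter kills the $y$-linear part of the first coordinate. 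I would then normalize the one-dimensional germ $f|_C$, which acts only on $z$ as $z\mapsto z+z^{c+2}R(0,0,z)$: since $C$ is not pointwise fixed, $R(0,0,z)\not\equiv 0$, and by a change of coordinate in $z$ alone I may reduce $f|_C$ to the standard parabolic form $z-z^{r+1}+bz^{2r+1}+\langle z^{2r+2}\rangle$, where $r+1$ is the multiplicity of $(f-\id)|_C$. The crucial extra step is to remove the \emph{linear}-in-$(x,y)$ cross terms (most notably the coefficient $b_x$ of $x$ in the second coordinate $y+z^{c+1}Q$): because the multiplier in the $x$-direction has order $c$ while that in the $y$-direction has order at least $c+1$, the relevant cohomological equations are non-resonant, so these terms can be eliminated order by order up to the finite $z$-order needed below.

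Next I would carry out the explicit blow-up computation. Since $C$ corresponds to the direction $[0:0:1]$, each $p_n$ is the origin in the $z$-chart and $\hat\pi_n(x,y,z)=(xz^n,yz^n,z)$, so that computing $f_n=\hat\pi_n^{-1}\circ f\circ\hat\pi_n$ yields, in parallel with \refeqn{degspikentimes}, schematically (keeping only the terms relevant below)
\begin{equation*}
f_n(x,y,z)=
\begin{pmatrix}
\dfrac{x\bigl(1+z^c\bigr)+z^{c+1+n}\langle x,y\rangle}{\bigl(1+z^{c+1}R\circ\hat\pi_n\bigr)^n}\\[4mm]
\dfrac{y\bigl(1+z^{c+1}b_y\bigr)+z^{c+1+n}\langle x,y\rangle}{\bigl(1+z^{c+1}R\circ\hat\pi_n\bigr)^n}\\[4mm]
z\bigl(1+z^{c+1}R\circ\hat\pi_n\bigr)
\end{pmatrix}.
\end{equation*}
The key point is that, after the preliminary normalization, the surviving cross terms are at least quadratic in $(x,y)$ and hence pick up an extra factor $z^{n}$ under $\hat\pi_n$; this is what produces the boosted exponent $c+1+n$ in the numerators.

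Finally I would expand and match \refeqn{RamisSibuya} for $n\gg 0$, say $n>2r$. The third coordinate restricts on $C_n=\{x=y=0\}$ to the already-normalized parabolic germ, giving the $z$-line of \refeqn{RamisSibuya}. For the diagonal multipliers I would take logarithms and split each into its part of degree $\le r-1$, which defines $d_1$ and $d_2$, and its part of degree exactly $r$, which defines $c_{11}$ and $c_{22}$; the factor $-n\log(1+z^{c+1}R\circ\hat\pi_n)$ coming from the denominator contributes only from order $r=c+1+\ord_0 R(0,0,z)$ onwards, so the whole $n$-dependence is absorbed into $c_{11},c_{22}$, while $d_1$ has order $c$ and $d_2$ has order $\ge c+1$. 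In particular $d_1\not\equiv d_2$, so I must verify $c_{12}=c_{21}=0$: this is guaranteed precisely by the boosted exponent $c+1+n>r$, which forces the off-diagonal entries into $\langle z^{r+1}\rangle$.

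The main obstacle is the preliminary elimination of the linear cross terms, in particular $b_x$: unlike the quadratic cross terms, $b_x x$ is \emph{not} improved by the factor $z^n$ and survives at order $c+1\le r$ in $A_{21}$, so it would produce a nonzero $c_{21}$ incompatible with $d_1\not\equiv d_2$. Establishing that it can be removed — via the non-resonant cohomological equations afforded by the distinct orders of the $x$- and $y$-multipliers, which is exactly where the non-simple hypothesis $\beta=0$ and the asymmetric shape \refeqn{halfcorner} genuinely enter — is the technical heart of the argument; everything downstream is the routine bookkeeping already carried out in \refprop{RSreddegspike}.
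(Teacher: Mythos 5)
Your proposal is correct, and structurally it is the same proof as the paper's: straighten $C$ to $\{x=y=0\}$, normalize $f|_C$ by a change of the variable $z$ alone, compute the $n$-fold blow-up in the $z$-chart, and use the fact that every term of $(x,y)$-degree at least $2$ gains a factor $z^n$ to push all unwanted terms into $\langle z^{r+1}\rangle$. Where you diverge is exactly the step you call the technical heart, and there your version is the more careful one: the paper's proof hides it in the single assertion that, up to formal conjugacy, one may take $P\in\langle x,y\rangle\mf{m}$ and $Q\in\langle x,y\rangle$, which as literally stated does not justify its displayed formula \refeqn{halfcornerntimes} --- a monomial $b_x x$ of $Q$ survives the blow-ups as $b_x x z^{c+1}$, not as an element of $z^{c+n}\langle x,y\rangle$, and would produce precisely the forbidden coefficient $c_{21}$ you point out, since $d_1\not\equiv d_2$. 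So the elimination of the linear cross terms, via a conjugation $y\mapsto y+b(z)x$ whose cohomological equation has leading term $b(z)z^c$ and is solvable order by order thanks to the gap between the order $c$ of the $x$-multiplier and the order $\geq c+1$ of the $y$-multiplier, is genuinely needed, and yours is the only one of the two arguments that actually supplies it. A second point where your statement is the accurate one: the paper concludes $d_2\equiv 0$, which is automatic only when $e=\ord_0 R(0,0,z)=0$ (then $r=c+1$, so the degree $\leq r-1$ truncation of the logarithm of the $y$-multiplier is forced to vanish); for $e\geq 1$ the diagonal terms $b_y y z^{c+1},\dots$ of $z^{c+1}Q$ cannot in general be removed and survive in $d_2$, and your weaker conclusion --- $d_2$ of order $\geq c+1$, hence $d_1\not\equiv d_2$, hence $c_{12}=c_{21}=0$ must be and is arranged --- is what actually holds and is all the proposition needs. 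Two small details to make explicit when writing this up: arranging $C=\{x=y=0\}$ and $P\in\langle xz\rangle$ simultaneously requires $C\subset S$, which follows from \refprop{halfcornerblowup} (every singular direction transverse to $\{z=0\}$ is of the form $[0:y_0:1]$, so all infinitely near points of $C$ lie on the successive strict transforms of $S$, forcing $C\subset S$); and the conjugation $y\mapsto y+b(z)x$ should be checked to preserve $P\in\langle xz\rangle$ and the invariance of $C$, which it does.
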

\begin{proof}
Up to a formal change of coordinates, we may assume that $C=\{x=y=0\}$, and $f$ is of the form \refeqn{halfcorner} with $P \in \langle x,y \rangle \mf{m}$ and $Q \in \langle x,y \rangle$.
We can write the third coordinate of $f$ as
$$
z \circ f = z + z^{c+2} R = z+z^{c+2}\big(h(z) + \langle x,y \rangle\big)\text,
$$
with $h(z)=h_e z^e + \langle z^{e+1}\rangle$ for some $e \geq 0$.
Notice that $f|_C(z)=z+z^{c+2}h(z)$: up to a polynomial change of coordinates in the variable $z$, we may assume that
\begin{equation}\label{eqn:RSformalz}
h(z)=-z^e+\beta z^{c+2e+1} + \langle z^{c+2e+2} \rangle\text.
\end{equation}
In this case, the first coordinate of $f$ becomes $x+z^c(\alpha^c x + \langle x,y\rangle \mf{m})$, where $\alpha^r=1/h_e$.

The blow-ups $\pi_n$ can be computed with respect to the $z$-chart, and the point $p_n$ corresponds to the origin in this chart.
By direct computation we get
\begin{equation}\label{eqn:halfcornerntimes}
f_n(x,y,z)
=
\begin{pmatrix}
\displaystyle \frac{x+z^c \big(\alpha^c x + z^{n} \langle x,y \rangle\big)}{\Big(1+z^{c+1} \big(h(z) + z^n \langle x,y \rangle\big) \Big)^n}\\[6mm]
\displaystyle \frac{y+z^{c+n} \langle x,y \rangle\big)}{\Big(1+z^{c+1} \big(h(z) + z^n \langle x,y \rangle\big) \Big)^n}\\[6mm]
\displaystyle z\Big(1+z^{c+1} \big(h(z) + z^n \langle x,y \rangle\big) \Big)
\end{pmatrix}
\text.
\end{equation}

Set $r=c+1+e \geq c+1$, and take $n > c+2e+1$. Then \refeqn{halfcornerntimes} can be rewritten as
\begin{equation}\label{eqn:halfcornerntimesfps}
f_n(x,y,z)
=
\begin{pmatrix}
x(1+(\alpha z)^c+ nz^r) + \langle z^{r+1} \rangle\\[4mm]
y(1+ nz^r) + \langle z^{r+1} \rangle\\[4mm]
z-z^{r+1} + \beta z^{2r+1} + \langle z^{2r+2} \rangle
\end{pmatrix}
\text,
\end{equation}
which is on the form \refeqn{RamisSibuya} with $d_1(z) = (\alpha z)^c+\langle z^{c+1} \rangle$ and $d_2(z) \equiv 0$. 
\end{proof}
Notice that \refprop{RSredhalfcorner} applies in particular when a {\hcnonsimple} {\halfcorner} satisfies \refeqn{halfcornernonres} with $\gamma \neq 0$, and in this case $e=0$ and $r=c+1$.

\begin{cor}\label{cor:parmfldhalfcorner}
Let $f:(\nC^3,0) \to (\nC^3,0)$ be a {\hcnonsimple} {\halfcorner} satisfying the same hypotheses of \refprop{RSredhalfcorner}.
Suppose that $C$ is not pointwise fixed by $f$, and let $r+1 \geq c+2$ be the multiplicity of $(f-\id)|_{C}$ at the origin.

Then $f$ admits $r$ parabolic manifolds $\Delta_k$, which are of dimension $1$ or $2$.
\end{cor}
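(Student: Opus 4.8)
The plan is to mimic the proof of \refcor{parmflddegspike}: first reduce $(f,C)$ to Ramis-Sibuya normal form, then read off the dimensions of the parabolic manifolds produced by \refthm{RSparabolicmanifolds}. Concretely, I would start by invoking \refprop{RSredhalfcorner} to produce a point modification $\pi_n$ after which the lift $f_n$ of $f$, together with the strict transform $C_n$ of $C$, is in Ramis-Sibuya normal form \refeqn{halfcornerntimesfps}, with $r=c+1+e$ and with $d_1(z)=(\alpha z)^c+\langle z^{c+1}\rangle$, $d_2\equiv 0$. Since $\pi_n$ is a biholomorphism away from the exceptional divisor and the manifolds constructed below are asymptotic to $C_n$, hence transverse to the exceptional divisor, each parabolic manifold obtained for $f_n$ descends to a parabolic manifold for $f$ of the same dimension. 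Thus it suffices to argue with the normal form.

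Next I would count the attracting directions and compute the relevant invariants. Because $(f-\id)|_C$ has multiplicity $r+1$, the one-dimensional parabolic germ $f|_C$ has exactly $r$ attracting directions, namely the $r$-th roots of unity $\xi=e^{2\pi\ui k/r}$ for $k=0,\ldots,r-1$. For each such $\xi$ I form $R_1(\xi)$ and $R_2(\xi)$ as in \refeqn{RSdefRj}. The key simplification with respect to the {\degspike} case is that here $d_2\equiv 0$, so $R_2(\xi)=(0,\ldots,0)$ for every $\xi$; in the lexicographic order this is not strictly negative, so $\xi$ is never a node direction for the variable $y$. Consequently the number $s(\xi)$ of node variables lies in $\{0,1\}$ (rather than $\{0,1,2\}$), the value being $1$ or $0$ according to whether $R_1(\xi)<0$ or not.

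Finally I would apply \refthm{RSparabolicmanifolds}, which attaches to each of the $r$ attracting directions $\xi$ a connected, simply connected parabolic manifold $\Delta(\xi)$ asymptotic to $C$ of dimension $s(\xi)+1\in\{1,2\}$. This yields the desired $r$ parabolic manifolds $\Delta_k$, all of dimension $1$ or $2$, completing the proof.

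I expect the argument to be essentially routine once \refprop{RSredhalfcorner} is in hand. The only point that requires genuine care is the descent of the $\Delta(\xi)$ through the modification $\pi_n$, which relies on their transversality to the exceptional divisor (they are asymptotic to $C_n$). In contrast to the {\degspike} case, where the relation $R_1(\xi)+R_2(\xi)=0$ coming from $d_1,d_2$ being proportional with ratio in $\nR_{<0}$ was needed to bound $s(\xi)$, here the vanishing $d_2\equiv0$ already decouples the $y$-direction and immediately forces $s(\xi)\le 1$, so no finer analysis of the sign of $R_1(\xi)$ is needed to obtain the stated dimension bound.
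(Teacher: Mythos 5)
Your proposal is correct and follows essentially the same route as the paper: reduce to the Ramis--Sibuya normal form \refeqn{halfcornerntimesfps} via \refprop{RSredhalfcorner}, note that $d_2\equiv 0$ forces $R_2(\xi)=0$ so every attracting direction is a saddle for the second variable (hence $s(\xi)\in\{0,1\}$), and conclude by \refthm{RSparabolicmanifolds}. Your added remarks on descending the parabolic manifolds through $\pi_n$ and on the contrast with the {\degspike} case (where the relation $R_1(\xi)+R_2(\xi)=0$ replaced the vanishing of $d_2$) are consistent with, and slightly more explicit than, the paper's argument.
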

\begin{proof}
By \refprop{RSreddegspike}, we may assume up to point blow-ups that $f$ is in the Ramis-Sibuya normal form \refeqn{halfcornerntimesfps}.
Denote by $R_1$ and $R_2$ the invariants associated to the Ramis-Sibuya normal form given by \refeqn{RSdefRj}.
Let $\xi=e^{2\pi \ui k/r}$ be a $r$-th root of unity.
Since $d_2=0$, then $R_2=0$, and all directions are a saddle in the second coordinate.
For the first coordinate, we get a node or a saddle depending on the sign of the real part of $\alpha^c \xi^c$.
We conclude by \refthm{RSparabolicmanifolds}.
\end{proof}

\subsection{Proof of \refthm{mainparmfld}}

Here we apply the results of the previous sections to our example \refeqn{example}.
From \refprop{exampleresolutionforms}, we get a model with $11$ singularities. Among those, we get two {\degspike}s at $p_1$ and $p_2$, and three {\spincorner}s at $p_{3,2}$, $p_{4,2}$ and $q_1$.
The others are simple corners and do not give rise to parabolic manifolds, see \cite{abate-tovena:paraboliccurvesC3}.

\begin{trivlist}
\itemcase{$p_1$}
At $p_1$ the lift of $f$ takes the form \refeqn{taylorIz}, which is a {\degspike}.
To compute the parameters appearing in \refprop{RSreddegspike} and \refcor{parmflddegspike}, we need some further change of coordinates (see \refrmk{RSreddegspike}).

After performing the change of coordinates $x'=x+y$, $y'=x-y$, we get
\begin{equation}\label{eqn:taylorIzprime}
f_1(x',y',z) =
\begin{pmatrix}
x' + z^2 \big(-x'+ (P_{004}+Q_{004})z + \mf{m}^2\big)\\[2mm]
y' + z^2 \big(y'+ (P_{004}-Q_{004})z  + \mf{m}^2\big)\\[2mm]
z + z^3\big(-\tfrac{1}{2}x'-\tfrac{1}{2}y'+ z R_{004} + \mf{m}^2\big)\end{pmatrix}
\text.
\end{equation}
After the change of coordinates $x''=x'-(P_{004}+Q_{004})z$, $y''=y'+(P_{004}-Q_{004})z$ we finally get
\begin{equation}\label{eqn:taylorIzsecond}
f_1(x'',y'',z) =
\begin{pmatrix}
x'' + z^2 \big(-x'' + \mf{m}^2\big)\\[2mm]
y'' + z^2 \big(y''+ \mf{m}^2\big)\\[2mm]
z + z^3\big(-\tfrac{1}{2}x''-\tfrac{1}{2}y''+ z (R_{004} -Q_{004})+ \mf{m}^2\big)\end{pmatrix}
\text.
\end{equation}
We deduce that if $R_{004} \neq Q_{004}$, the parameters of \refprop{RSreddegspike} are $c=2$, $e=1$, and we get $r=c+e=3$ parabolic manifolds attached to $p_1$.

Here $\lambda = -1$, $\mu=1$, $h_e=R_{004}-Q_{004}$, and by a direct check we get that all parabolic manifolds have dimension $2$, unless $h_e^2 \in \ui \nR$, in which case one of the three parabolic manifolds has dimension $1$, while the others have dimension $2$.

\itemcase{$p_2$}
For the {\degspike} at $p_2$, computations are similar and left to the reader.
In this case we get again $c=2$, $e=1$, and $r=3$ parabolic manifolds whenever $h_e:=R^{(4)}(0,1,1) \neq 0$. They are all of dimension $2$, unless $h_e^2 \in \nR$, where one of the three parabolic manifolds has dimension $1$.

\itemcase{$p_{3,2}$}
At the point $p_{3,2}$ we have a {\spincorner} of the form \refeqn{taylorp32}. After the change of coordinates $x=-u-\tfrac{3}{4}y + (\alpha-\tfrac{1}{2}\beta)z$, we get the form
$$
\wt{f}_1(x,y,z) =
\begin{pmatrix}
x + y^2z^2\Big(-x + \mf{m}^2\Big)\\[3mm] 
y + y^3z^2 \Big(2x-\tfrac{9}{4}y + 2\alpha z + \mf{m}^2\Big)\\[3mm]
z + y^2z^3\Big(-2x+\tfrac{13}{4}y - 2\alpha z + \mf{m}^2\Big)
\end{pmatrix}
\text.
$$ 
In particular, the parameters of \refcor{spincornerinvcurve} are given by $b_y-c_y = -\frac{11}{2}$, $c_z-b_z=-4\alpha$, and $\delta=b_yc_z-c_y b_z = -2\alpha$.

The ratio $(b_y-c_y)(c_z-b_z)/\delta$ of \refeqn{spincornernonres} equals $-11$, which is not a positive integer, hence the conditions of \refcor{spincornerinvcurve} are satisfied, and there exists a unique formal $f$-invariant curve smooth and transverse to the exceptional divisor.

If we blow-up the origin via the map $\pi(x,y,z)=(xz,yz,z)$, we get the lift
$$
\wt{f}_2=
\begin{pmatrix}
x + y^2z^4\Big(-x + \langle z \rangle\Big)\\[3mm] 
y + y^3z^5 \Big(4x-\tfrac{11}{2}y + 4\alpha + \langle z \rangle\Big)\\[3mm]
z + y^2z^6\Big(-2x+\tfrac{13}{4}y - 2\alpha + \langle z \rangle\Big)
\end{pmatrix}\text.
$$
At the point $y_0=\frac{8}{11}\alpha$ we get a {\hcnonsimple} {\halfcorner}, with parameters $c=4$, $\gamma=\tfrac{4}{11}\alpha$. Being $\alpha \neq 0$, we get $e=0$.
By \refcor{parmfldhalfcorner}, we get $r=c+1+e=5$ parabolic manifolds, which are all of dimension $2$, unless $\alpha^4 \in \ui \nR$, in which case one of the five parabolic manifolds has dimension $1$.

\itemcase{$p_{4,2}$} Since the germ $f_1$ at $p_3$ is conjugated to the one at $p_4$ (see \refrmk{symmetry}), a similar situation arises above $p_{4,2}$.

\itemcase{$q_1$}
Finally, at the point $q_1$ we have a {\spincorner} of the form \refeqn{taylorIyIIxIIIxIVx}.
By conjugating by the map $\phi(x,y,z)=(z+R_{040}y, x, y)$, we get
\begin{equation}\label{eqn:taylorq1}
f_4(x,y,z) =
\begin{pmatrix}
x+ y^7z^2\Big(x+\mf{m}^2\Big)\\[3mm]
y+ y^8z^2\Big(x +R_{040}z + \mf{m}^2\Big)\\[3mm]
z+ y^7z^3\Big(-3x -3R_{040}z + \mf{m}^2\Big)
\end{pmatrix}
\text.
\end{equation}   

In this case we have $b_y=c_y=0$ and $c_z=-3 b_z \neq 0$. In particular, there are no formal $f_4$-invariant curves transverse to $E$ by \refcor{spincornerinvcurve} (see also \refrmk{spincornerinvcurve}), but we cannot exclude $f_4$-invariant curves tangent to $E$ (see \refsssec{overq1}).

\end{trivlist}

\subsection{Further remarks}

\subsubsection{Curve blow-ups over {\degspike}s}

When studying resolution of singularities for vector fields, it is often natural to consider (possibly weighted) blow-ups of centers that are invariant by the dynamics (and not necessarily contained in the singular locus).

In our setting, this would correspond to allowing the blow-up of curves that are invariant by the saturated infinitesimal generator $\hat{\chi}$ of $f$ (in a given model), and contained in the exceptional divisor (obtained from previous blow-ups).

In the case of {\degspike}s (of Siegel type), the study can be easily done, since we can determine explicitly such curves.
In fact, if $f$ is a {\degspike} of the form \refeqn{degspike}, then the restriction of the saturated infintesimal generator $\hat{\chi}$ on $E=\{z=0\}$ gives a canonical singularity of siegel type, which admits exactly two (strong) complex separatrices.
Up to a (possibly formal, since the coordinates of $\hat{\chi}$ do not converge in general) change of coordinates, we may assume that these curves are $x=0$ and $y=0$.
Hence we may assume that the conditions $x|P(x,y,0)$ and $y|Q(x,y,0)$ are satisfied.
The next proposition gives the description of the lift of a {\degspike} when we blow-up one of the two complex separatrices (the other is completely analogous, we just need to interchange the role of $x$ and $y$).

\begin{prop}\label{prop:degspikecurve}
Let $f:(\nC^3,0) \to (\nC^3,0)$ be a {\degspike} of the form
\begin{equation}\label{eqn:degspikecurve}
f(x,y,z)=
\begin{pmatrix}
x+z^c\big(\lambda x (1+a(x,y)) + zP\big)\\
y+z^c\big(\mu y(1+b(x,y)) + zQ\big)\\
z+z^{c+1} R
\end{pmatrix},
\end{equation}
with $a,b \in \mf{m}_2$, $P,Q,R \in \mf{m}$. 
Let $\pi:X \to (\nC^3,0)$ be the blow-up of the line $\{x=z=0\}$ in $\nC^3$, and denote by $\wt{f}$ the lift of $f$ in $X$.
	
Then the saturated infinitesimal generator of $\wt{f}$ has two singularities on the fiber above the origin, namely $[1:0]$ and $[0:1]$
Moreover for $\wt{f}$ we have that:
\begin{itemize}
\item $[0:1]$ is a {\degspike}.
\item $[1:0]$ is a simple corner.
\end{itemize}
\end{prop}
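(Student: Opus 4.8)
The plan is to blow up the line $\{x=z=0\}$, compute the lift $\wt f$ in the two affine charts covering the fibre $\pi^{-1}(0)$, read off the normal form at each of the two candidate points, and finally check that no other point of the fibre is singular for the saturated infinitesimal generator $\hat{\wt\chi}$. I set up the charts as in the discussion of modifications: in the chart adapted to $[1:0]$ the projection is $\pi(x,y,w)=(x,y,xw)$, with exceptional divisor $\{x=0\}$ and with $\{w=0\}$ the strict transform of $\{z=0\}$; in the chart adapted to $[0:1]$ it is $\pi(u,y,z)=(uz,y,z)$, with exceptional divisor $\{z=0\}$. The fibre $\pi^{-1}(0)$ is covered by these two charts, the point $[0:1]$ being $u=0$ in the second and $[1:0]$ being $w=0$ in the first, the two charts overlapping over the rest of the fibre.

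First I would treat $[0:1]$. Substituting $x=uz$ in \refeqn{degspikecurve} and dividing the first coordinate by the third, the crucial observation is that the factor $z$ in front of $P$ in \refeqn{degspikecurve} combines with $x=uz$, so that after cancelling one power of $z$ the whole first coordinate still carries $z^c$; expanding and using $a,b\in\mf m^2$ and $P,Q,R\in\mf m$ gives
\[
\wt f(u,y,z)=
\begin{pmatrix}
u+z^c\big(\lambda u+(\text{linear in }y,z)+\mf m^2\big)\\
y+z^c\big(\mu y+\mf m^2\big)\\
z+z^{c+1}R
\end{pmatrix}.
\]
Since $\lambda\neq\mu$ and $\lambda\neq0$, a linear change $u\mapsto u+\alpha y+\beta z$ (with $\alpha,\beta$ determined by $\lambda,\mu$ and the linear coefficients of $P$) removes the two cross terms without touching the other coordinates, putting $\wt f$ in the form \refeqn{degspike} with the same $c,\lambda,\mu$; in particular it is again a {\degspike} of Siegel type. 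Equivalently one checks the characterization of \refrmk{degspikeinfgen}: $\hat{\wt\chi}$ is tangent to $\{z=0\}$, its linear part has a single vanishing eigenvalue (the $z$-direction), and the foliation it induces on $\{z=0\}$ has eigenvalue ratio $\mu/\lambda\in\nR_{<0}$, hence is Siegel.

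Next, for $[1:0]$ I substitute $z=xw$ and divide the third coordinate by the first, obtaining
\[
\wt f(x,y,w)=
\begin{pmatrix}
x\big(1+x^cw^c(\lambda(1+a)+wP)\big)\\
y+x^cw^c\big(\mu y(1+b)+xwQ\big)\\
w\,\dfrac{1+x^cw^cR}{1+x^cw^c(\lambda(1+a)+wP)}
\end{pmatrix},
\]
with $a,b,P,Q,R$ evaluated at $(x,y,xw)$. Expanding the last coordinate as $w+x^cw^{c+1}\big(R-\lambda(1+a)-wP+\cdots\big)$ and using $R\in\mf m$, $a\in\mf m^2$, its leading coefficient is $R(0)-\lambda=-\lambda$. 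Reordering the coordinates to $(x,w,y)$, the lift has exactly the shape \refeqn{simplecorner} with common monomial $x^cw^c$ (so both divisor exponents equal $c\geq1$ and the transverse exponent is $0$), first eigenvalue $\lambda\in\nC^*$, second eigenvalue $-\lambda$, and free third coordinate $\mu y(1+b)+xwQ\in\mf m$; no divisibility condition is needed since the transverse exponent vanishes. As the ratio of the two divisor eigenvalues is $-\lambda/\lambda=-1\notin\nQ_{>0}$, this is a simple corner. This is consistent with the point-blow-up behaviour of \refprop{degspikeblowuppoint}: the fibre direction $[0:1]$ (transverse to $\{z=0\}$, i.e.\ the degenerate direction $\vect{v}$) yields a {\degspike}, while $[1:0]$ (the direction $\vect{w_1}$ of nonzero multiplier $\lambda$) yields a simple corner.

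To rule out other singularities on $\pi^{-1}(0)$, note that in the $[0:1]$ chart the fibre is $\{y=z=0\}$, and dividing $\wt f-\id$ by $z^c$ the saturated generator takes the value $(\lambda u,0,0)$ along it, which vanishes only at $u=0$, i.e.\ at $[0:1]$. Since this chart covers the whole fibre except $[1:0]$, and the computation above shows $[1:0]$ is a (singular) simple corner, the only singularities of $\hat{\wt\chi}$ on $\pi^{-1}(0)$ are $[0:1]$ and $[1:0]$. The main obstacle is the bookkeeping that yields the two clean normal forms: one must keep track of the extra factor of $z$ on $P,Q$ and of the normalization $\lambda x(1+a)$, $\mu y(1+b)$ (which records that $\{x=0\}$ and $\{y=0\}$ are the separatrices of $\hat\chi|_{\{z=0\}}$), since it is precisely these features that force the first coordinate to keep the factor $z^c$ at $[0:1]$ and produce the eigenvalue $-\lambda$ at $[1:0]$. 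A minor point worth flagging is that when $\mu/\lambda=-1$ (as for $p_1,p_2$) the restriction of $\hat{\wt\chi}$ to the exceptional divisor $\{x=0\}$ is a resonant $1{:}1$ node; this does not affect the conclusion, as being a simple corner is governed by the normal form \refeqn{simplecorner} and by the ratio $-1$ of the two divisor eigenvalues.
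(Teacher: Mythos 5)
Your proof is correct and takes essentially the same route as the paper's: you compute the lift in the same two affine charts of the blow-up of $\{x=z=0\}$, obtain the same expressions, identify a {\degspike} at $[0:1]$ and a simple corner (in the reordered coordinates $(x,w,y)$, with multipliers $\lambda$ and $-\lambda$) at $[1:0]$, and note that the saturated generator restricts to $(\lambda u,0,0)$ along the fiber, so no further singular points occur. The explicit linear change $u\mapsto u+\alpha y+\beta z$ killing the cross terms at $[0:1]$ (possible since $\lambda\neq 0$ and $\lambda\neq\mu$) is precisely the computation the paper leaves to the reader.
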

\begin{proof}
Computations are analogous to the ones performed in the previous sections, and left to the reader.

\end{proof}

We deduce that no non-degenerate non-exceptional characteristic directions may appear in any model $\pi:X_\pi \to (\nC^3,0)$ obtained via modification (not necessarily strongly) adapted to the dynamics, dominating either $p_1$ or $p_2$.

\subsubsection{Point modifications}\label{sssec:pointmodif}

With the same techniques adopted in \refsec{biratstudyaboveresol}, it is possible to study characteristic directions on any model $\pi:X_\pi \to (\nC^3,0)$ obtained via \emph{point modifications}.

If we only allow point modifications, we cannot resolve the singularities of the infinitesimal generator $\chi$ of $f$, and this leads to having to deal with singularities of the saturated infinitesimal generator $\wt{\chi}_\pi$ (on a given model $X_\pi$) that are not log-canonical.

We omit definitions and computations in this case because they would stretch the length of this paper excessively.
Just to give a hint of what happens in this case, let us follow the resolution of singularity above $p_5$.
At $p_5$, the map $f_1$ obtained as lift of $f$ by the blow-up of the origin takes the form \refeqn{taylorIy}, for which the linear part of the saturated infinitesimal generator is nilpotent, of rank $2$ if we assume $R_{040} \neq 0$.
Let us say that this germ is a {\iforma} ($N$ stands for \emph{nilpotent}).

After blowing-up $p_5$, we get a second form at the point $p_{5,1}$ corresponding to $[1:0:0]$. In this case the linear part of the infinitesimal generator has still rank $2$, but the exceptional divisor locally consists of two irreducible components. Say that we get a {\iiforma}.

Blowing-up $p_{5,1}$, we get a line $L$ of singularities, corresponding to the singular directions $[p:0:r]$ with $[p:r] \in \nP_\nC^1$.
In this case, at $[1:0:0]$ we get another {\iiforma}. At $[0:0:1]$ we obtain a singularity for the saturated infinitesimal generator with vanishing linear part, that we call {\aforma} ($H$ stands for \emph{higher order}). At $[p:0:r]$ with $p,r \neq 0$, we get another nilpotent singularity, call it {\iiiforma}. In other terms, we got a $\IIIfor$-pattern, special points $\IIfor$ and $\OneC$, and with core $L$.
If we blow-up $L$, we get the resolution of singularities $\pi_0$ described by \refprop{exampleresolution}.
If we blow-up points, we need for example to deal with the blow-up of {\aforma}s, which is quite intricate.
Fundamental for the definition of these classes is the identification of the right non-resonant conditions, in the same spirit of the ones appearing for simple corners, as well as suitable conditions on the higher order terms of the saturated infinitesimal generator.

The birational study can be completed for point modifications, and one can show that no non-degenerate characteristic directions can appear in this case.
However, the additional forms, and the appearence of several new types of patterns, make the birational study for all possible modifications (strongly) adapted to $f$ combinatorially much more involved. We suspect that no non-degenerate characteristic directions can be found in this way either.

\subsubsection{Dynamics over $q_1$}\label{sssec:overq1}

We have shown that at the point $q_1$ there are no formal $f_4$-invariant curves transverse to the exceptional divisor.
When blowing-up $q_1$, we find a $\SpinC$-$\HalfC$-pattern (denote by $\wt{f}_4$ the lift of $f$), and we showed in \refprop{halfcornerinvcurve} that we can find a (smooth) formal $\wt{f}_4$-invariant surface $\wt{S}$ transverse to the exceptional divisor.
The surface $\wt{S}$ is the strict transform of a smooth formal $f_4$-invariant surface $S$ at $q_1$.
The map $f_4|_S$ gives a (formal) $2$-dimensional {\tid} germ, with saturated infinitesimal generator $\hat{\xi}$ of order $2$.
A direct computation shows that $f_4|_S$ has exactly two characteristic directions, corresponding to the two irreducible components of the exceptional dvisor.
If $\hat{\xi}$ admits another complex separatrix, we can apply again the arguments of \refsssec{parmfldhalfcorner} to reduce $f$ in Ramis-Sibuya normal form and find parabolic manifolds.
If $\hat{\xi}$ has no other complex separatrices, one needs to study the dynamics of $f_4$ more in details, and to extend the results of \cite{lopezhernanz-ribon-sanzsanchez-vivas:stablemanifoldsbiholoCn} to the case where we have invariant surfaces instead of curves.

Notice also that $S$ descends to a $f$-invariant surface $S_0$, which is not smooth.
Hence, even in the case of $S$ being convergent, we cannot apply the results of \cite{lopezhernanz-rosas:chardirdim2} to find paraboic curves for $f|_{S_0}$.

\small
\bibliographystyle{alpha}
\bibliography{biblio}
\vspace{-1mm}

\end{document}